\providecommand{\opmyspan}{\operatorname*{Span}}
\providecommand{\opmyreshape}{\operatorname*{Reshape}}
\providecommand{\Dim}{\operatorname{dim}}            
\providecommand{\dim}{\Dim}
\providecommand{\opvec}{\operatorname {vec}}
\providecommand*{\Span}[1]{\operatorname{span}\{{#1}\}} 
\providecommand{\sym}{\operatorname{Sym}}
\providecommand{\colspan}{\operatorname{colspan}}
\providecommand{\Sym}{\sym}
\providecommand{\symten}[1]{\sym(\CT_D^{#1})}
\renewcommand{\Im}{\operatorname{Im}}             
\providecommand{\argmax}{\operatorname*{argmax}}  
\providecommand{\Id}{\Op{Id}}
\providecommand{\diag}{\operatorname{diag}}
\providecommand{\CA}{{\cal A}}
\providecommand{\CI}{{\cal I}}
\providecommand{\CN}{{\cal N}}
\providecommand{\CO}{{\cal O}}
\providecommand{\CT}{{\cal T}}
\providecommand{\bbE}{\mathbb{E}}
\providecommand{\bbN}{\mathbb{N}}
\providecommand{\bbP}{\mathbb{P}}
\providecommand{\bbR}{\mathbb{R}}
\providecommand{\bbS}{\mathbb{S}}
\providecommand*{\textN}[1]{\|{#1}\|} 
\providecommand*{\N}[1]{\left\|{#1}\right\|} 
\newcommand*{\SN}[1]{\left|{#1}\right|}      
\newcommand*{\Op}[1]{\mathsf{#1}} 
\newcommand{\mathscalebox}[2]{%
	\text{\scalebox{#1}{$\displaystyle #2$}}}
\theoremstyle{plain}
\newtheorem{theorem}{Theorem}
\newtheorem{proposition}[theorem]{Proposition}
\newtheorem{lemma}[theorem]{Lemma}
\newtheorem{corollary}[theorem]{Corollary}
\newtheorem{conjecture}[theorem]{Conjecture}
\algrenewcommand\algorithmicrequire{\textbf{Input:}}     
\algrenewcommand\algorithmicensure{\textbf{Returns:}}     
\newtheorem{def-prop}[theorem]{Definition/Proposition}
\theoremstyle{definition}
\newtheorem{definition}[theorem]{Definition}
\newtheorem{remark}[theorem]{Remark}
\title{Landscape analysis of an improved \\ power method for tensor decomposition}
\author{
  Joe Kileel \\
   UT Austin \\
 \And
  Timo Klock \\
  Simula Research Laboratory\\
  \And
  João M. Pereira \\
  UT Austin \\
}
\begin{document}

\maketitle

\begin{abstract}
In this work, we consider the optimization formulation for  symmetric tensor decomposition recently introduced in the Subspace Power Method (SPM) of Kileel and Pereira.
Unlike popular alternative functionals for tensor decomposition, the SPM objective function has the desirable properties that its maximal value is known in advance, and its global optima are exactly the rank-1 components of the tensor when the input is sufficiently low-rank.
We analyze the non-convex optimization landscape associated with the SPM objective.
Our analysis accounts for working with noisy tensors.
We derive quantitative bounds such that any second-order critical point with SPM objective value exceeding the bound must equal a tensor component in the noiseless case, and must approximate a tensor component in the noisy case.
For decomposing tensors of size $D^{\times m}$, we obtain a near-global guarantee up to rank $\widetilde{o}(D^{\lfloor m/2 \rfloor})$ under a random tensor model, and a global guarantee up to rank $\mathcal{O}(D)$ assuming deterministic frame conditions.
This implies that SPM with suitable initialization is a provable, efficient, robust algorithm for low-rank symmetric tensor decomposition.
We conclude with numerics that show a practical preferability for using the SPM functional over a more established \nolinebreak counterpart.
\end{abstract}

\section{Introduction}

From applied and computational mathematics \cite{kolda2009tensor} to machine learning \cite{sidiropoulos2017tensor} and multivariate statistics \cite{mccullagh2018tensor} to many-body quantum systems \cite{white1992density}, high-dimensional data sets arise that are naturally organized into higher-order arrays.
Frequently these arrays, known as hypermatrices or \textit{tensors}, are decomposed into low-rank representations.
In particular, the real symmetric CANDECOMP/PARAFAC (CP) decomposition \cite{comon2008symmetric} is often appropriate:
\begin{equation} \label{eq:sym-ten-decomp}
T = \sum_{i=1}^K \lambda_i a_i^{\otimes m}.
\end{equation}
Here, we are given $T$, a real symmetric tensor  of size $D^{\times m}$.
The goal is to expand $T$ as a sum of $K$ rank-$1$ terms, coming from scalar/vector pairs $(\lambda_i, a_i) \in \mathbb{R} \times \mathbb{R}^D$.
Importantly, the number of terms $K$ must be minimal possible for the given tensor, in which case $K$ is called the \textit{rank} of the input $T$.

 When $m > 2$ and $K = \mathcal{O}(D^m)$, CP decompositions 
are generically unique (up to permutation and scaling) by fundamental results in algebraic geometry \cite{chiantini2002weakly}.
An actionable interpretation \cite{JMLR:v15:anandkumar14b} is that
\textup{CP decomposition infers well-defined \textit{latent variables} $\{(\lambda_i, a_i) : i \in [K]\}$ encoded by  $T$}.
Indeed in learning applications, where symmetric tensors are formed from statistical moments (higher-order covariances) or multivariate derivatives (higher-order Hessians), CP decomposition has enabled parameter estimation for mixtures of Gaussians \cite{ge2015learning,sherman2020estimating}, generalized linear models \cite{sedghi2016provable}, shallow neural networks \cite{fornasiershallow,janzamin2015beating,zhong2017recovery}, deeper networks \cite{fiedler2021stable,fornasier2019robust,oymak2021learning}, hidden Markov models  \cite{anandkumar2012method}, among others.

 Unfortunately, CP decomposition is NP-hard in the worst case \cite{hillar2013most}.
In fact, it is believed to possess a \textup{computational-to-statistical gap} \cite{bandeira2018notes}, with efficient algorithms expected to exist for random tensors only up to rank $K = \mathcal{O}(D^{m/2})$.
For this reason, and due to its  non-convexity and high-dimensionality, CP decomposition has served theoretically as a key testing ground for better understanding mysteries of non-convex optimization landscapes.
To date, this focus has been on the non-convex \nolinebreak program
\begin{equation}
    \max_{x \in \mathbb{R}^D : \| x \|_2 = 1} \left\langle T, x^{\otimes m} \right\rangle.
    \tag{\textbf{PM-P}}
    \label{prob:PM}
\end{equation}
We label the problem \ref{prob:PM}, standing for Power Method Program, because projected gradient ascent applied to \ref{prob:PM} corresponds to the Shifted Symmetric Higher-Order Power Method of Kolda and Mayo  \cite{kolda2014adaptive}.
Important analyses of \ref{prob:PM} include Ge and Ma's  \cite{ge2020optimization} and the earlier \cite{anandkumar2017analyzing}
on overcomplete tensors, as well as \cite{sanjabi2019does} for low-rank tensors, and
\cite{qu2019geometric} which studied \ref{prob:PM} assuming the tensor components form a unit-norm tight frame
with low incoherence.

\begin{figure}
    \centering
    \includegraphics[width=\linewidth]{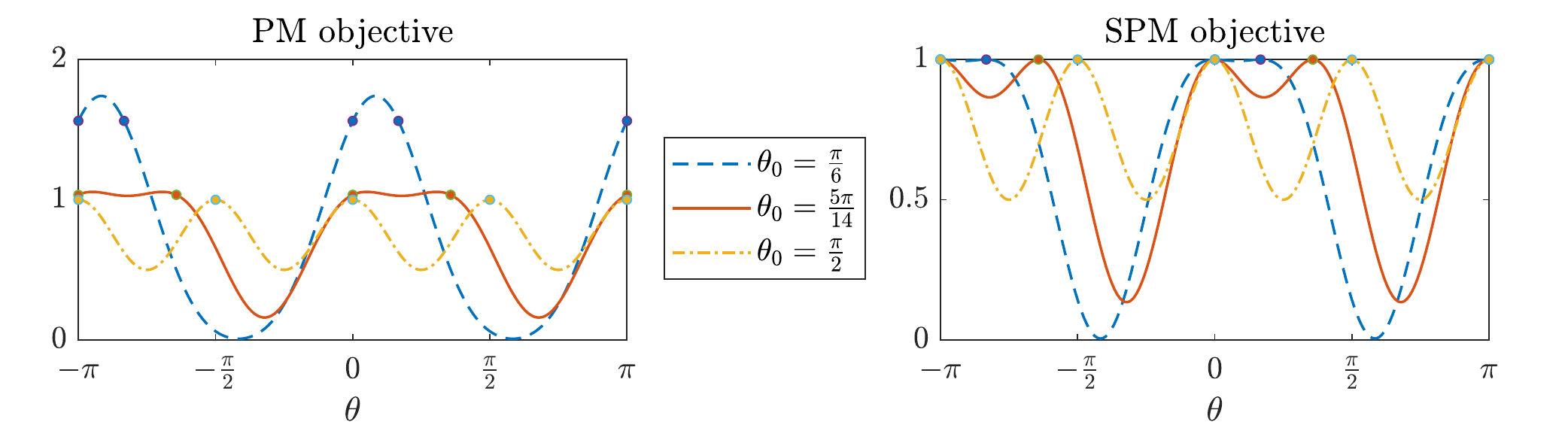}
    \caption{Illustration of \ref{prob:PM} objective (left) and \ref{prob:SPM} objective (right) for $D=K=2$ and $m=4$, where $\bbS^1$ is mapped to $(-\pi, \pi]$.
    We set $a_1 = (1,0)$, $a_2 = (\cos \theta_0,  \sin \theta_0)$ for different values of $\theta_0$.
    If $a_1$ and $a_2$ are not orthogonal ($\theta_0 \neq \frac{\pi}{2}$), then $a_1$ and $a_2$ do not coincide with global or local maximizers of \ref{prob:PM}.
    Too much correlation ($\theta_0 \le \frac{\pi}{3}$) leads to a collapsing of the two distinct maxima into one isolated maximum for \ref{prob:PM}. On the other hand, \ref{prob:SPM}
    is not affected in this way by correlation: local maxima coincide with global maxima which are the tensor components $a_1, a_2$. }
    \label{fig:spm_vs_pm_objective}
\end{figure}

In this paper, we perform an analysis of the non-convex optimization landscape associated with the Subspace Power Method (SPM) for computing symmetric tensor decompositions.
The first and third authors introduced SPM in \cite{kileel2019subspace}.
This method is based on the following non-convex program:
\begin{equation}
    \max_{x \in \mathbb{R}^D : \| x \|_2 = 1} F_{\mathcal{A}}(x),
    \tag{\textbf{SPM-P}}
    \label{prob:SPM}
\end{equation}
\vspace{-1em}
\begin{align*}
   &\textup{where } \, F_{\mathcal{A}}(x) := \| P_{\mathcal{A}}(x^{\otimes n}) \|_F^2, \,\,\,\, n := \lceil m/2 \rceil, \,\,\,\, \mathcal{A} := \textup{Span}\{a_1^{\otimes n}, \ldots, a_K^{\otimes n} \}, \\ &\textup{and } \, P_{\mathcal{A}} : (\mathbb{R}^{D})^{\otimes n} \rightarrow \mathcal{A} \textup{ is orthogonal projection with respect to Frobenius inner product}.
\end{align*}
Note that \ref{prob:SPM} is a particular polynomial optimization problem of degree $2n$ on the unit sphere.

There are at least two motivating reasons to study the optimization landscape of \ref{prob:SPM}.
Firstly, it was observed in numerical experiments in \cite{kileel2019subspace} that the SPM algorithm is competitive within its applicable rank range of $K = \mathcal{O}(D^{\lfloor m/2 \rfloor})$.
It gave a roughly one-order of magnitude speed-up over the decomposition methods in \cite{kolda2015numerical} as implemented in Tensorlab \cite{vervliet2016tensorlab}, while  matching the numerical stability of FOOBI \cite{foobi2007}.
Thus SPM is a \textit{practical} algorithm.
Secondly, from a theory standpoint, the program \ref{prob:SPM} has certain desirable properties which  \ref{prob:PM} lacks.
Specifically for an input tensor $T = \sum_{i=1}^{K} \lambda_i a_i^{\otimes m }$ with rank $K \lesssim D^{\lfloor m/2 \rfloor}$ and Zariski-generic\footnote{\textit{Zariski-generic} means that the failure set can be described by the vanishing non-zero polynomials \cite{harris2013algebraic}, so in particular, has Lebesgue measure $0$.} $\{(\lambda_i, a_i)\}_{i=1}^K$, \ref{prob:SPM} is such that:
\begin{itemize}
    \item Each component $\pm a_i$ is \textup{exactly} a global maximum, and there are \textup{no other} global maxima. 
    \item The globally maximal value is known in advance to be exactly $1$.  So the \textup{objective value} gives a certificate for global optimality, and non-global critical points can be discarded.
\end{itemize}
These properties were shown for \ref{prob:SPM} in \cite{kileel2019subspace}, but both fail for \ref{prob:PM} (see Figure~\ref{fig:spm_vs_pm_objective}).
Thus, \ref{prob:SPM} is more relevant  \textit{theoretically} than \ref{prob:PM} as a test problem for non-convex CP decomposition.

\textbf{Prior theory.} In \cite{kileel2019subspace}, it is proven that projected gradient ascent applied to \ref{prob:SPM}, initialized at almost all starting points with a constant explicitly-bounded step size, must converge to a second-order critical point of \ref{prob:SPM} at a power rate or faster.
However this left open the possible existence of \textit{spurious second-order critical points}, i.e., second-order points with reasonably high objective value that are not global maxima (unequal to, and possibly distant from, each CP component $\pm a_i$).
Such critical points could pose trouble for the successful optimization of \ref{prob:SPM}.
Furthermore all theory for \ref{prob:SPM} in \cite{kileel2019subspace} was restricted to the \textit{clean case:} that is, when the input tensor $T$ is \textup{exactly} of \nolinebreak a sufficiently low \nolinebreak rank \nolinebreak $K$.  The analysis for \ref{prob:PM} in \cite{ge2020optimization,qu2019geometric,sanjabi2019does} also assume noiseless inputs.
However, tensors arising in practice are invariably noisy due, e.g., to sampling or measurement errors.

\paragraph{Main contributions.}
We perform a landscape analysis of \ref{prob:SPM} by characterizing all second-order critical points,
using suitable assumptions on $a_1,\ldots,a_K$.
Under deterministic
frame conditions on $a_1, \ldots a_K$, which are satisfied by mutually incoherent vectors,
near-orthonormal systems, and random ensembles of size $K = \mathcal{O}(D)$,
Theorem \ref{thm:main_result_deterministic} shows that all second-order critical points of \ref{prob:SPM} coincide exactly with $\pm a_i$.
Theorem \ref{thm:main_result_for_now} shows the same result for overcomplete random ensembles of size $K = \tilde o(D^{\lfloor m/2\rfloor})$,
however requiring an additional superlevel set condition to exclude maximizers with vanishing objective values.
Both results
extend to noisy tensor decomposition, where SPM is applied to a perturbation $\hat T \approx T$. In this setting,
second-order critical points with objective values exceeding $\CO(\|\hat T-T\|_F)$ are $\CO(\|\hat T-T\|_F)$-near to
one of the components $\pm a_i$.
We also show in Lemma \ref{lem:perturbations_spurious_local_max} that spurious local maximizers (with lower objective values) do exist in the noisy \nolinebreak case.

The results imply a clear separation of the functional landscape between near-global maximizers,
with objective values close to $1$, and spurious local maximizers with small objective value. Hence,
the SPM objective can be used to validate a final iterate of projected gradient ascent in the noisy case.
In Theorem~\ref{thm:SPM_deflation_analysis}, we combine our landscape analysis with bounds on error propagation incurred during SPM's deflation steps. This gives guarantees for \textit{end-to-end} tensor decomposition using SPM.

Lastly,
we expose the relation between \ref{prob:PM}
and \ref{prob:SPM}.
Specifically, \ref{prob:SPM}
can be expressed as \ref{prob:PM} with the appropriate insertion of the inverse of the Grammian $(G_n)_{ij} := \langle a_i, a_j\rangle^n$.
The resulting
de-biasing effect on the local maximizers with respect to the components $\pm a_i$ (cf. Figure~\ref{fig:spm_vs_pm_objective})
is responsible for many advantages of \ref{prob:SPM}.
 Along the way,
we state a conjecture about the
minimal eigenvalue of $G_n$ when $a_i$ are i.i.d. uniform on the sphere, which may be of \nolinebreak independent \nolinebreak interest.

\section{Notation} \label{sec:notation}
\textbf{Vectors and matrices.}
When $x$ is a vector, $\| x \|_{p}$ is the $\ell_p$-norm ($p \in \mathbb{R}_{\geq 1} \cup \{ \infty \}$).
For $x, y \in \mathbb{R}^D$, the entrywise (or Hadamard) product is $x \odot y \in \mathbb{R}^D$, and the entrywise power is $x^{\odot s} := x \odot \ldots \odot x \in \mathbb{R}^D$ ($s \in \mathbb{N}$).
When $M$ is a matrix, $\| M \|_2$ is the spectral norm.
If $M$ is real symmetric, $\mu_j(M)$ is the eigenvalue of $M$ that is the $j$-th largest in absolute value. We denote the identity by $\Id_D \in \mathbb{R}^{D \times D}$.

\smallskip

\textbf{Tensors.}
A real tensor of length $D$ and order $m$ is an array of size $D \times D \times \ldots \times D$ ($m$ times) of real numbers.
Write $\CT_D^m := \left(\mathbb{R}^{D}\right)^{\otimes m} \cong \bbR^{D^m}$ for the  space of tensors of size $D^{\times m}$.
Meanwhile, $\symten{m} \subseteq \CT_D^m$ is the subspace of symmetric tensors (i.e., tensors unchanged by any permutation of indices).
The Frobenius inner product and norm are denoted by  $\langle \cdot, \cdot \rangle$ and $\N{\cdot}_F$, respectively.
Given any linear subspace of tensors $\mathcal{A} \subseteq \CT_D^m$, let
$P_{\mathcal{A}} : \CT_D^m \rightarrow \mathcal{A}$ denote the orthogonal projector onto $\mathcal{A}$ with respect to $\langle \cdot, \cdot \rangle$.
In the case $\mathcal{A} = \symten{m}$, the projector $P_{\symten{m}}$ is the symmetrization operator, $\operatorname{Sym}: \CT_D^m \rightarrow \symten{m}$.
Given $T \in \CT_D^{m_1}$ and $S \in \CT_D^{m_2}$, the tensor (or outer) product is $T \otimes S \in \CT_D^{m_1 + m_2}$, defined by
$(T \otimes S)_{i_1, \ldots, i_{m_1 + m_2}} := T_{i_1, \ldots, i_{m_1}} S_{i_{m_1 + 1}, \ldots, i_{m_2}}$.
For $T \in \CT_D^{m}$ and $s \in \mathbb{N}$,  the tensor power is
$T^{\otimes s} := T \otimes \ldots \otimes T \in \CT_D^{s m}$.
For $T \in \CT_D^{m_1}$, $S \in \CT_D^{m_2}$  with $m_1 \geq m_2$, the contraction $T \cdot S \in \CT_D^{m_1 - m_2}$ is defined by
 $\left(T \cdot S\right)_{i_1, \ldots, i_{m_1 - m_2}} := \sum_{j_1, \ldots, j_{m_2}} T_{i_1, \ldots, i_{m_1 - m_2}, j_1, \ldots, j_{m_2}} S_{j_1, \ldots, j_{m_2}}$.
Let $\opmyreshape(T, [d_1, \dots, d_{\ell}])$ be the function that reshapes the tensor $T$ to have dimensions $d_1, \dots, d_{\ell}$, as in corresponding Matlab/NumPy commands.

\smallskip

\textbf{Other.} The unit sphere in $\mathbb{R}^D$ is $\mathbb{S}^{D-1}$, and $\textrm{Unif}(\mathbb{S}^{D-1})$ is the associated  uniform probability distribution.
Given a function $f : \mathbb{R}^D \rightarrow \mathbb{R}$, the Euclidean gradient and Hessian matrix at $x \in \mathbb{R}^D$ are  $\nabla f(x) \in \mathbb{R}^D$ and $\nabla^2 f(x) \in \symten{2}$.
The Riemannian gradient and Hessian with respect to $\bbS^{D-1}$ at $x \in \bbS^{D-1}$ are  $\nabla_{\bbS^{D-1}} f(x)$ and $\nabla_{\bbS^{D-1}}^2 f(x)$ (see \cite{absil2009optimization}).
Write $\opmyspan$ for linear span,
$[K] := \{1, \ldots, K\}$, and $|A|$ for the cardinality of a finite set $A$.
Lastly, we use asymptotic notation \nolinebreak freely.

\section{Symmetric tensor decomposition via Subspace Power Method}
\label{sec:properties_SPM_objective}

In this section, we outline the tensor decomposition method SPM of \cite{kileel2019subspace}, and provide basic insights on the program \ref{prob:SPM}.
Throughout we assume that $m \geq 3$ is an integer and define $n := \lceil m/2\rceil$.

\textbf{SPM algorithm.} The input is a tensor $\hat T \in \symten{m}$, with the promise that $\hat T \approx T = \sum_{i=1}^{K} \lambda_i a_i^{\otimes m}$ for $\{(\lambda_i, a_i)\}_{i=1}^K$ Zariski-generic and $K \leq \binom{D+n-1}{n} - D$ (if $m$ is even) and $K \leq D^n$ (if $m$ is odd).
As a first step, SPM obtains the orthogonal projector $P_{\hat \CA} : \CT_D^m \rightarrow \hat \CA$ that projects onto
the column span of $\opmyreshape(\hat T, [D^{n}, D^{m-n}])$, by using matrix singular value decomposition.
Provided that $\hat T \approx T$, the associated subspace approximation error $\hat \CA \approx \CA = \Span{a_1^{\otimes n}, \ldots, a_K^{\otimes n}}$
defined by 
\begin{equation}
	\label{eq:DeltaA}
	\Delta_{\CA}:= \N{P_{\CA}-P_{\hat \CA}}_{F\rightarrow F} = \sup\limits_{T \in \CT_{D}^{n},\ \N{T}_F = 1}\N{P_{\CA}(T)-P_{\hat \CA}(T)}_F,
\end{equation}
can be bounded as follows. 
(Note that by \cite[Lem.~2.3]{chen2016perturbation}, we know $\Delta_{\CA} \leq 1$ a priori.)
\begin{lemma}[Error in subspace]
	\label{lem:error_in_subspace}
  Let $m \geq 3$, $n = \lceil \frac{m}{2} \rceil$, $T \in \symten{m}$ and assume that $M:=\opmyreshape(T, [D^{n}, D^{m-n}])$ has exactly $K$ nonzero singular values $\sigma_1(M) \geq \ldots \geq \sigma_K(M) > 0$.
  Let $\hat T \in \symten{m}$, $\hat M := \opmyreshape(T, [D^{n}, D^{m-n}])$.
  Assume $\Delta_M := \| M - \hat M \|_2 < \sigma_K(M)$. Then
  \begin{align}
    \label{eq:subspace_bound}
    \N{P_{\Im(M)}-P_{\Im_K(\hat M)}}_2 \leq \frac{\Delta_M}{\sigma_K(M) - \Delta_M},
  \end{align}
  where $\Im(M) \subseteq \mathbb{R}^{D^n}$ denotes the image of $M$ and $\Im_K(\hat M) \subseteq \mathbb{R}^{D^n}$ denotes the subspace spanned by the $K$ leading left singular vectors of $\hat M$.
  In particular, if $T = \sum_{i=1}^{K}\lambda_i a_i^{\otimes 2n}$, $\CA = \Span{a_i^{\otimes n} : i \in [K]}$, $\dim(\CA) = K$, and $\hat \CA$
  is the subspace spanned by $K$ leading  tensorized left singular vectors of $\hat M$,  the right-hand side
  of \eqref{eq:subspace_bound} upper-bounds $\Delta_{\CA}$.
\end{lemma}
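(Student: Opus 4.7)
The plan is to reduce the matrix inequality \eqref{eq:subspace_bound} to the classical Wedin $\sin\Theta$ theorem, and then to lift the resulting matrix inequality to the tensor statement by using that the reshape map is an isometry from $(\bbR^{D^n},\|\cdot\|_2)$ to $(\CT_D^n,\|\cdot\|_F)$.

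First I would verify that a strictly positive singular-value gap exists. Since $M$ has rank exactly $K$, $\sigma_{K+1}(M)=0$, and Weyl's inequality applied to $\hat M=M+(\hat M-M)$ gives $\sigma_{K+1}(\hat M)\leq \Delta_M$. Together with $\Delta_M<\sigma_K(M)$ this yields $\sigma_K(M)-\sigma_{K+1}(\hat M)\geq \sigma_K(M)-\Delta_M>0$. Wedin's theorem, applied to the top-$K$ left singular subspaces of $M$ and $\hat M$, then produces
\begin{equation*}
\|\sin\Theta(\Im(M),\Im_K(\hat M))\|_2 \;\leq\; \frac{\|M-\hat M\|_2}{\sigma_K(M)-\sigma_{K+1}(\hat M)} \;\leq\; \frac{\Delta_M}{\sigma_K(M)-\Delta_M}.
\end{equation*}
Combined with the standard identity $\|P_U-P_V\|_2=\|\sin\Theta(U,V)\|_2$ for subspaces of equal dimension (here both of dimension $K$), this gives \eqref{eq:subspace_bound}.

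For the ``in particular'' statement, unrolling $T=\sum_{i=1}^K\lambda_i a_i^{\otimes m}$ into the reshape gives
\begin{equation*}
M \;=\; \sum_{i=1}^{K}\lambda_i\,\vec\bigl(a_i^{\otimes n}\bigr)\,\vec\bigl(a_i^{\otimes(m-n)}\bigr)^{\top},
\end{equation*}
so $\Im(M)\subseteq \mathrm{span}\{\vec(a_i^{\otimes n}):i\in[K]\}$. The right-hand side is the image of $\CA$ under the reshape and has dimension $K$ by the assumption $\dim(\CA)=K$, while $\Im(M)$ itself has dimension $K$ by the rank hypothesis; hence the two subspaces agree under the reshape identification. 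Since $\hat\CA$ is by construction the reshape of $\Im_K(\hat M)$ and the reshape preserves Frobenius/Euclidean inner products, $\Delta_\CA=\|P_\CA-P_{\hat\CA}\|_{F\to F}=\|P_{\Im(M)}-P_{\Im_K(\hat M)}\|_2$, and the claimed bound on $\Delta_\CA$ is immediate from the matrix inequality.

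The only substantive step is invoking Wedin's theorem in the right form; I expect the remaining ingredients---Weyl's inequality, handling both the square ($m=2n$) and genuinely rectangular ($m=2n-1$) shapes of $M$ in a uniform manner, and the isometric bookkeeping for the reshape---to be routine.
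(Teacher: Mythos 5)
Your proof is correct, and it takes a genuinely shorter route than the paper. You invoke Wedin's $\sin\Theta$ theorem as a black box, whereas the paper reproves the needed matrix inequality from scratch: it first handles symmetric $W,\hat W$ by squeezing $\|V_1^\top(W-\hat W)\hat V_2\|_2$ between $\Delta_W$ from above and $(\sigma_r(W)-\Delta_W)\|V_1^\top\hat V_2\|_2$ from below (the core computation inside Wedin's proof, with Weyl's inequality supplying $\|\hat\Lambda_2\|_2\le\Delta_W$), and then passes to rectangular $W,\hat W$ via the symmetric dilation $H=\bigl(\begin{smallmatrix}0&W\\W^\top&0\end{smallmatrix}\bigr)$. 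The dilation trick lets the paper sidestep having to state and cite a precise rectangular form of Wedin's theorem; your route instead buys brevity by relying on that standard result. Both proofs use Weyl's inequality to establish the needed gap, and both close the tensor claim identically, by observing that $\opmyreshape$ is a Frobenius-to-Euclidean isometry so $\Delta_\CA=\|P_{\Im(M)}-P_{\Im_K(\hat M)}\|_2$. One point you handle explicitly that is also tacitly needed in the paper: $\Im_K(\hat M)$ genuinely has dimension $K$, because Weyl gives $\sigma_K(\hat M)\ge\sigma_K(M)-\Delta_M>0$; that is what licenses the equal-dimension identity $\|P_U-P_V\|_2=\|\sin\Theta(U,V)\|_2$.
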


\begin{remark}\label{rem:small-lambda}
If $T = \sum_{i=1}^K \lambda_i a_i^{\otimes m}$, one coefficient $\lambda_i$ is small and the vectors $\{a_i : i \in [K]\}$ are not too correlated, then the flattened tensor $\opmyreshape(T, [D^{n}, D^{m-n}])$ has a small eigenvalue.  This makes estimating the corresponding eigenvector sensitive to noise.  See Remark~\ref{rem:small-lambda} in the appendix.  
\end{remark}

Given $\hat \CA$, SPM seeks one tensor component $a_i$ by solving the noisy variant of \ref{prob:SPM} defined by
\begin{align}
  \tag{\textbf{nSPM-P}}
	\label{prob:nspm}
	\max_{x \in \bbS^{D-1}} F_{\hat \CA}(x),\quad \textrm{where}\quad F_{\hat \CA}(x) := \N{P_{\hat \CA}(x^{\otimes n})}_F^2.
\end{align}
Starting from a random initial point $x_0 \sim \textrm{Unif}(\bbS^{D-1})$, the projected gradient ascent iteration
\begin{equation} \label{eq:proj_grad}
    x \leftarrow \frac{x + \gamma  P_{\hat{\mathcal{A}}}(x^{\otimes n}) \cdot x^{\otimes (n-1)}}{\| x + \gamma  P_{\hat{\mathcal{A}}}(x^{\otimes n}) \cdot x^{\otimes (n-1)}\|_2},
\end{equation}
with a constant step-size $\gamma$,  is guaranteed to converge
to a second-order critical point of \ref{prob:nspm} almost surely by \cite{kileel2019subspace}.
Here we require that the step-size $\gamma$ is less than an explicit upper bound given in \cite{kileel2019subspace}.
Denoting by $\hat{a}_i$ the final iterate obtained by SPM, we accept the candidate approximate tensor component $\hat{a}_i$ if $F_{\hat \CA}(\hat a_i)$ is large enough; otherwise we draw a new starting point $x_0$   and re-run \eqref{eq:proj_grad}.

Next given $\hat{a}_i$,  SPM
evaluates a deflation formula based on Wedderburn rank reduction \cite{chu1995rank} from matrix algebra to compute
the corresponding weight $\hat{\lambda}_i$.
Then, we update the tensor $\hat{T} \leftarrow \hat{T} - \hat{\lambda}_i \hat{a}_i^{\otimes m}$.

To finish the tensor decomposition, SPM performs the projected gradient ascent and deflation steps $K$ times to compute all of the tensor components and weights
$\{(\hat{\lambda}_i, \hat{a}_i)\}_{i=1}^K$.

\smallskip

\textbf{Preparatory material about \ref{prob:nspm}.} The goal of this paper is to show that
second-order critical points of \ref{prob:nspm} with reasonable function value must be near the global maximizers $\pm a_1,\ldots,\pm a_K$
of \ref{prob:SPM}, under suitable incoherence assumptions on the rank-one components $a_1,\ldots,a_K$.
Naturally, the optimality conditions for \ref{prob:nspm} play an important part in this analysis.

\begin{proposition}[Optimality conditions]
\label{prop:optimality_conditions}
Let $x \in \bbS^{D-1}$ be first and second-order critical for \textup{\ref{prob:nspm}}.  Then for each $z \in \bbS^{D-1}$ with $z \perp x$, we have
\begin{align}
	\label{eq:stationary_point}
	& P_{\hat \CA}(x^{n}) \cdot x^{n-1} = F_{\hat \CA}(x)x, \\
	\label{eq:before_derived_second_order_optimality}
	&F_{\hat \CA}(x) \geq n\|P_{\hat \CA}(x^{n-1}z)\|_F^2 + (n-1)\langle P_{\hat \CA}(x^{n}), x^{n-2}z^{2}\rangle.
\end{align}
Furthermore, for any $y\in \bbS^{D-1}$ we have
\begin{equation}
\begin{aligned}
	\label{eq:derived_second_order_optimality}
	F_{\hat \CA}(x) \geq\ & n\N{P_{\hat \CA}(x^{n-1} y)}_F^2 + (n-1)\langle P_{\hat \CA}(x^{ n}), x^{n-2} y^{ 2}\rangle - 2(n-1)F_{\hat \CA}(x)\langle x, y\rangle^2.
\end{aligned}
\end{equation}
\end{proposition}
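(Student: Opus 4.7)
The plan is to derive \eqref{eq:stationary_point}--\eqref{eq:derived_second_order_optimality} directly from the standard Euclidean first- and second-order necessary conditions for spherically constrained optimization, specialized to $F_{\hat\CA}$, and then to pass from the tangent-direction inequality \eqref{eq:before_derived_second_order_optimality} to the ambient-direction inequality \eqref{eq:derived_second_order_optimality} by decomposing $y$ against $x$.

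First I would compute the Euclidean gradient and Hessian of $F_{\hat\CA}$. Writing $u(t) := (x+tz)^{\otimes n}$ and using that $P_{\hat\CA}$ is a self-adjoint idempotent (so $F_{\hat\CA}(x+tz) = \langle P_{\hat\CA}(u(t)), u(t)\rangle$), the Leibniz rule gives $u'(0) = n\,\sym(x^{n-1}z)$ and $u''(0) = n(n-1)\,\sym(x^{n-2}z^{2})$. Since $\hat\CA \subseteq \symten{n}$ and $\sym$ is a self-adjoint projection with $\ker(\sym)\perp \hat\CA$, we have $P_{\hat\CA}(\sym T) = P_{\hat\CA}(T)$; this together with the symmetry of $P_{\hat\CA}(x^{\otimes n})$ yields
\begin{equation*}
\nabla F_{\hat\CA}(x) = 2n\,P_{\hat\CA}(x^{\otimes n})\cdot x^{\otimes(n-1)}, \quad \langle z, \nabla^2 F_{\hat\CA}(x) z\rangle = 2n^2\|P_{\hat\CA}(x^{n-1}z)\|_F^2 + 2n(n-1)\langle P_{\hat\CA}(x^{n}), x^{n-2}z^{2}\rangle.
\end{equation*}
The Lagrange condition $\nabla F_{\hat\CA}(x) = 2\mu x$, combined with taking the inner product with $x$ to solve $\mu = nF_{\hat\CA}(x)$, produces \eqref{eq:stationary_point}. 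For \eqref{eq:before_derived_second_order_optimality}, I would then write out the Riemannian Hessian along a unit tangent $z\perp x$, which equals $\langle z, \nabla^2 F_{\hat\CA}(x) z\rangle - \langle x, \nabla F_{\hat\CA}(x)\rangle$; since $\langle x, \nabla F_{\hat\CA}(x)\rangle = 2nF_{\hat\CA}(x)$ by \eqref{eq:stationary_point}, the non-positivity of this quadratic form at a second-order critical point yields \eqref{eq:before_derived_second_order_optimality} after dividing by $2n$.

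Finally, for \eqref{eq:derived_second_order_optimality} and arbitrary $y\in\bbS^{D-1}$, I would decompose $y = \alpha x + \beta z$ with $\alpha := \langle x,y\rangle$, $z\perp x$, $\|z\|=1$, and $\alpha^2+\beta^2=1$. When $\beta\neq 0$, substituting $z = (y-\alpha x)/\beta$ into \eqref{eq:before_derived_second_order_optimality} and clearing $\beta^2$ generates cross terms that collapse through two identities coming from \eqref{eq:stationary_point}, namely $\langle P_{\hat\CA}(x^{n-1}y), P_{\hat\CA}(x^{n})\rangle = \alpha F_{\hat\CA}(x)$ and $\|P_{\hat\CA}(x^{n})\|_F^2 = F_{\hat\CA}(x)$. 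This reduces the left-hand side to $(\beta^2+(2n-1)\alpha^2)F_{\hat\CA}(x) = (1+2(n-1)\alpha^2)F_{\hat\CA}(x)$, which rearranges exactly into \eqref{eq:derived_second_order_optimality}. The degenerate case $y=\pm x$ (so $\beta=0$) is handled separately: direct substitution collapses both sides to $F_{\hat\CA}(x)$ using \eqref{eq:stationary_point}. I expect the main bookkeeping obstacle to lie in this final algebraic reduction, where each cross term arising from the expansions of $(y-\alpha x)^{\otimes 2}$ and $x^{n-1}(y-\alpha x)$ must be collapsed via the first-order identity so that the aggregate coefficient of $F_{\hat\CA}(x)$ comes out to precisely $1+2(n-1)\alpha^2$.
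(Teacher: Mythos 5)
Your proposal is correct and follows essentially the same route as the paper: Euclidean gradient/Hessian of $F_{\hat\CA}$, conversion to the Riemannian versions via the standard sphere formulas, first- and second-order conditions yielding \eqref{eq:stationary_point}--\eqref{eq:before_derived_second_order_optimality}, and then the decomposition $y=\alpha x+\beta z$ whose cross terms are killed by \eqref{eq:stationary_point} to produce the coefficient $1+2(n-1)\alpha^2$ and hence \eqref{eq:derived_second_order_optimality}. The only differences are cosmetic (directional-derivative computation of the Euclidean derivatives instead of an orthonormal-basis expansion, and explicit treatment of the degenerate case $y=\pm x$, which the paper's argument also covers).
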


In the analysis later, we make frequent use of expressing the objective $F_{\CA}(x)$
using the Gram matrix
\begin{equation} \label{eq:grammian-def}
G_n \in \symten{2} \quad \textup{ defined by } \,\,  (G_{n})_{ij} := \langle a_i^{\otimes n}, a_j^{\otimes n}\rangle= \langle a_i, a_j\rangle^n.
\end{equation}
Under linear independence of the tensors $a_1^{\otimes n},\ldots, a_K^{\otimes n}$, which is implied by our assumptions made later, 
 the inverse $G_n^{-1}$ exists and the noiseless program \ref{prob:SPM} can be expressed as follows.
\begin{lemma}
\label{lem:reformulation_frob_norms}
Let $A := [a_1|\ldots|a_K] \in \bbR^{D\times K}$ and $\{a_i^{\otimes n}: i \in [K]\}$ be linearly independent. We have
\begin{align}
\label{eq:norm_identity}
F_{\CA}(x) = \N{P_{\CA}(x^{\otimes n})}_F^2 = \left((A^\top x)^{\odot n}\right)^\top G_n^{-1} \left((A^\top x)^{\odot n}\right).
\end{align}
\end{lemma}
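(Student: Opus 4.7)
The plan is to exploit that $\{a_1^{\otimes n},\ldots,a_K^{\otimes n}\}$ is a basis of $\CA$ under the hypothesis of linear independence, and then compute the coefficients of the projection in that basis via the normal equations.

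First I would write $P_{\CA}(x^{\otimes n}) = \sum_{i=1}^K c_i\, a_i^{\otimes n}$ for some (unique) coefficient vector $c \in \bbR^K$. Taking the Frobenius inner product with each basis element $a_j^{\otimes n}$ and using that $P_{\CA}$ is self-adjoint together with $a_j^{\otimes n} \in \CA$, I get
\begin{equation*}
\langle x^{\otimes n}, a_j^{\otimes n}\rangle \;=\; \langle P_{\CA}(x^{\otimes n}), a_j^{\otimes n}\rangle \;=\; \sum_{i=1}^K c_i \langle a_i^{\otimes n}, a_j^{\otimes n}\rangle \;=\; (G_n c)_j.
\end{equation*}
Since $\langle x^{\otimes n}, a_j^{\otimes n}\rangle = \langle x, a_j\rangle^n = ((A^\top x)^{\odot n})_j$, defining $b := (A^\top x)^{\odot n} \in \bbR^K$ I obtain the normal equations $G_n c = b$. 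Linear independence of $\{a_i^{\otimes n}\}$ gives $G_n \succ 0$, hence $c = G_n^{-1} b$.

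Next I would compute the squared norm using idempotency and self-adjointness of $P_{\CA}$:
\begin{equation*}
F_{\CA}(x) \;=\; \langle P_{\CA}(x^{\otimes n}), P_{\CA}(x^{\otimes n})\rangle \;=\; \langle P_{\CA}(x^{\otimes n}), x^{\otimes n}\rangle \;=\; \sum_{i=1}^K c_i \langle a_i^{\otimes n}, x^{\otimes n}\rangle \;=\; c^\top b.
\end{equation*}
Substituting $c = G_n^{-1} b$ yields $F_{\CA}(x) = b^\top G_n^{-1} b$, which is the claimed identity.

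There is no real obstacle here; this is essentially the standard closed-form expression for an orthogonal projection onto a subspace given by a (non-orthonormal) basis, specialized to the tensor basis $\{a_i^{\otimes n}\}$. The only point worth checking is that linear independence of $\{a_i^{\otimes n}\}$ does imply invertibility of $G_n$, but this is immediate: $G_n$ is the Gram matrix of these vectors under $\langle\cdot,\cdot\rangle$, so it is positive semidefinite and nonsingular iff the vectors are linearly independent.
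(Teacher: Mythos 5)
Your proof is correct and follows essentially the same route as the paper's: both derive the coefficient vector of the projection onto $\operatorname{Span}\{a_i^{\otimes n}\}$ as $G_n^{-1}(A^\top x)^{\odot n}$ — the paper via the explicit matrix formula $A^{\bullet n}G_n^{-1}(A^{\bullet n})^{\top}$ for projection onto the column span of the Khatri–Rao power $A^{\bullet n}$, you via the equivalent normal equations — and then evaluate $\langle P_{\CA}(x^{\otimes n}), x^{\otimes n}\rangle$. The two arguments differ only in presentation, not in substance.
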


Lemma \ref{lem:reformulation_frob_norms} exposes the relation between \ref{prob:SPM} and \ref{prob:PM}. While
\ref{prob:PM} can be rewritten  $\langle T, x^{\otimes m} \rangle = \langle (A^\top x)^{\odot n}, (A^\top x)^{\odot n}\rangle$ if $m$ is even,
\ref{prob:SPM} takes into account correlations among the tensors
$a_1^{\otimes n},\ldots,a_K^{\otimes n}$ and inserts the Grammian $G_n^{-1}$ into \eqref{eq:norm_identity}.
Consequently,
correlations among the tensor components are considered in \ref{prob:SPM}, 
without any a priori knowledge of the tensors \nolinebreak $a_1^{\otimes n},\ldots,a_K^{\otimes n}$.

In the special case of orthonormal systems, or more generally systems that resemble equiangular tight frames \cite{fickus2012steiner,sustik2007existence}, the \ref{prob:SPM} and \ref{prob:PM}
objectives coincide up to shift and scaling.

\begin{lemma}
\label{pm_vs_spm}
Assume there exist $\rho \in (-1,1) \setminus \{ \tfrac{-1}{K-1} \}$ and $M \in \bbR$ such that $\langle a_i,a_j\rangle^n = \rho$ for
all $i \neq j$ and $\sum_{i \in [K]}\langle x,a_i\rangle^n = M$ for all $x \in \bbS^{D-1}$. Denote $A = [a_1|\ldots|a_K] \in \bbR^{D \times K}$. Then
\begin{equation}
F_{\CA}(x) = (1-\rho)^{-1}\|A^\top x\|_{2n}^{2n}-\left((1-\rho)^2+K\rho(1-\rho)\right)^{-1} \! \rho M^2.
\end{equation}
\end{lemma}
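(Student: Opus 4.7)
The plan is to invoke Lemma \ref{lem:reformulation_frob_norms} and then compute $G_n^{-1}$ in closed form using the special structure imposed by the hypotheses. Since the components $a_1,\ldots,a_K$ are unit vectors in the paper's standing setup, the diagonal entries of $G_n$ equal $\langle a_i,a_i\rangle^n = 1$, while by assumption the off-diagonal entries all equal $\rho$. Hence
\[
G_n = (1-\rho)\,\Id_K + \rho\,\mathbf{1}\mathbf{1}^\top,
\]
a rank-one perturbation of a scaled identity.

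Next I would apply the Sherman--Morrison formula. The condition $\rho \neq -1/(K-1)$ is precisely what ensures $1 + (K-1)\rho \neq 0$, so that $G_n$ is invertible and its inverse can be written as
\[
G_n^{-1} = (1-\rho)^{-1}\Id_K - \frac{\rho}{(1-\rho)^2 + K\rho(1-\rho)}\,\mathbf{1}\mathbf{1}^\top.
\]
(The coefficient of $\mathbf{1}\mathbf{1}^\top$ matches exactly the coefficient appearing in the statement of the lemma, which is a useful sanity check.)

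Finally, setting $v := (A^\top x)^{\odot n} \in \bbR^K$, so that $v_i = \langle x, a_i\rangle^n$, Lemma \ref{lem:reformulation_frob_norms} gives $F_{\CA}(x) = v^\top G_n^{-1} v$. Expanding with the Sherman--Morrison formula above yields
\[
F_{\CA}(x) = (1-\rho)^{-1}\|v\|_2^2 - \frac{\rho}{(1-\rho)^2 + K\rho(1-\rho)}(\mathbf{1}^\top v)^2.
\]
I would then identify $\|v\|_2^2 = \sum_{i=1}^K \langle x, a_i\rangle^{2n} = \|A^\top x\|_{2n}^{2n}$, and $\mathbf{1}^\top v = \sum_{i=1}^K \langle x,a_i\rangle^n = M$ by the second hypothesis on the frame. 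Substituting these two identities completes the proof. There is no real obstacle here: the only thing to be careful about is that the hypothesis $\sum_i \langle x,a_i\rangle^n = M$ is applied for the specific $x \in \bbS^{D-1}$ at which $F_{\CA}$ is being evaluated, and that the excluded value $\rho = -1/(K-1)$ corresponds exactly to the singularity of $G_n$.
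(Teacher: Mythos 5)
Your proposal is correct and follows essentially the same route as the paper: express $F_{\CA}(x)$ via Lemma~\ref{lem:reformulation_frob_norms}, observe that $G_n = (1-\rho)\Id_K + \rho\,\mathbbm{1}_K\mathbbm{1}_K^\top$, invert it by Sherman--Morrison, and use the two hypotheses to identify $\|v\|_2^2 = \|A^\top x\|_{2n}^{2n}$ and $\mathbbm{1}_K^\top v = M$. Your remark that $\rho \neq -1/(K-1)$ is exactly the invertibility condition (hence also the linear independence needed to invoke Lemma~\ref{lem:reformulation_frob_norms}) is a correct and worthwhile observation.
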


\section{Main results}
\label{sec:landscape_analysis}
In this section, we present the main results about local maximizers of the \ref{prob:nspm} program. Section
\ref{subsec:deterministic} is tailored to low-rank tensor models with $K = \CO(D)$ components that satisfy certain
deterministic frame conditions. Section \ref{subsec:average_case_analysis}  then considers the overcomplete case $K = \widetilde o(D^{\lfloor m/2\rfloor})$
in an average case scenario, where $a_1,\ldots,a_K$ are modeled as independent copies
of an isotropic random vector.

\subsection{Low-rank tensors under deterministic frame conditions}
\label{subsec:deterministic}
Motivated by frame constants in frame theory \cite{christensen2003introduction}, we measure the incoherence of the ensemble $a_1,\ldots,a_K$ by scalars
$\rho_s \in \bbR_{\geq 0}$, which are defined via
\begin{align}
\label{eq:def_rho}
\rho_{s} := \sup_{x \in \bbS^{D-1}} \sum_{i=1}^{K}\SN{\langle x, a_i\rangle}^s - 1.
\end{align}
They satisfy the order relation $\rho_s \leq \rho_{s'}$ for $s' \leq s$, due to $\N{a_i}_2 = 1$, and can be related
to extremal eigenvalues of Grammians $G_{s}$ and $G_{\lfloor s/2 \rfloor}$ as shown in the following result.
\begin{lemma}
\label{lem:incoherence_scalars}
Let $\{a_i : i \in [K]\} \subseteq \bbS^{D-1}$ and $(G_{s})_{ij} := \langle a_i, a_j\rangle^s$ for $s \in \bbN$. Then
\begin{align}
\label{eq:rho_s_grammian_characterization}
1-\rho_s \leq \mu_K(G_s) \leq \mu_1(G_s) \leq 1+\rho_s \leq \mu_1(G_{\lfloor s/2\rfloor}).
\end{align}
\end{lemma}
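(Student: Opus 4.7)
The plan is to prove the three nontrivial inequalities in turn: the two outer Gershgorin-type bounds $1-\rho_s \leq \mu_K(G_s)$ and $\mu_1(G_s) \leq 1+\rho_s$, and the final comparison $1+\rho_s \leq \mu_1(G_{\lfloor s/2 \rfloor})$. The middle inequality $\mu_K(G_s) \leq \mu_1(G_s)$ is immediate. Throughout, I will use that $\|a_i\|_2 = 1$ implies $(G_s)_{ii} = 1$ and $|\langle a_i, a_j\rangle|\leq 1$, so $|(G_s)_{ij}|\leq 1$.

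For the two outer Gershgorin bounds, I would observe that by taking $x = a_i$ in the definition \eqref{eq:def_rho},
\[
\sum_{j \neq i} |(G_s)_{ij}| \;=\; \sum_{j\neq i} |\langle a_i,a_j\rangle|^s \;=\; \Big(\sum_{j=1}^K |\langle a_i, a_j\rangle|^s\Big) - 1 \;\leq\; \rho_s.
\]
Since $G_s$ is real symmetric with unit diagonal, Gershgorin's theorem places every eigenvalue of $G_s$ in $\bigcup_i [1-r_i,\,1+r_i]$, where $r_i = \sum_{j\neq i}|(G_s)_{ij}| \leq \rho_s$. Hence all eigenvalues of $G_s$ lie in $[1-\rho_s,\,1+\rho_s]$, which simultaneously gives $\mu_K(G_s) \geq 1-\rho_s$ and $\mu_1(G_s) \leq 1+\rho_s$.

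The heart of the lemma is the last inequality, which relates the $\ell^s$-mass of the correlations to the top eigenvalue of the lower-order Grammian. Set $t := \lfloor s/2\rfloor$. Since $|\langle x, a_i\rangle| \leq 1$ and $s \geq 2t$, we have $|\langle x,a_i\rangle|^s \leq \langle x, a_i\rangle^{2t}$ for each $i$, with equality when $s$ is even. Summing and using the tensor identity $\langle x, a_i\rangle^{2t} = \langle x^{\otimes t}, a_i^{\otimes t}\rangle^2$, I would write, with $A_t := [a_1^{\otimes t}\,|\,\cdots\,|\,a_K^{\otimes t}]$ viewed as a $D^t \times K$ matrix (so that $A_t^\top A_t = G_t$),
\begin{align*}
\sum_{i=1}^K |\langle x, a_i\rangle|^s \;\leq\; \sum_{i=1}^K \langle x^{\otimes t}, a_i^{\otimes t}\rangle^2 \;=\; \|A_t^\top x^{\otimes t}\|_2^2 \;\leq\; \|A_t\|_2^2\, \|x^{\otimes t}\|_F^2 \;=\; \mu_1(G_t).
\end{align*}
Taking the supremum over $x \in \mathbb{S}^{D-1}$ yields $1 + \rho_s \leq \mu_1(G_t) = \mu_1(G_{\lfloor s/2\rfloor})$, completing the proof.

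The only step requiring any thought is the final one, where the key observations are (i) the monotonicity $|\langle x,a_i\rangle|^s \leq \langle x,a_i\rangle^{2\lfloor s/2\rfloor}$ that handles both parities of $s$ uniformly, and (ii) the identification of $\sum_i \langle x^{\otimes t},a_i^{\otimes t}\rangle^2$ with the squared $\ell^2$-norm of $A_t^\top x^{\otimes t}$, which is bounded by the operator norm of $A_t$ acting on the unit tensor $x^{\otimes t}$. The other inequalities are standard Gershgorin manipulations.
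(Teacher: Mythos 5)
Your proof is correct and follows essentially the same route as the paper's: Gershgorin's theorem (with $x=a_\ell$ in the definition of $\rho_s$ bounding the off-diagonal row sums) for the two outer inequalities, and the bound $\sum_i \langle x^{\otimes t}, a_i^{\otimes t}\rangle^2 \leq \mu_1(G_t)$ for the last one. The only cosmetic difference is that where the paper cites the frame-theory bound of \cite[Prop.~3.6.7]{christensen2003introduction} and treats even and odd $s$ separately, you unpack that bound directly as $\|A_t^\top x^{\otimes t}\|_2^2 \leq \|A_t\|_2^2 = \mu_1(G_t)$ and handle both parities at once via $|\langle x,a_i\rangle|^s \leq \langle x,a_i\rangle^{2\lfloor s/2\rfloor}$, which is a slightly more self-contained presentation of the same argument.
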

The characterization in Lemma \ref{lem:incoherence_scalars} allows to compute bounds for
$\rho_s$ for low-rank tensors with mutually incoherent components or rank-$\CO(D)$ tensor with random components.
We provide details on this in Remark \ref{rem:mutual_coherence} below, but first state the main
guarantee about local maximizers using $\rho_2$ and $\rho_n$.

\begin{theorem}[Main deterministic result]
	\label{thm:main_result_deterministic}
	Let $\{a_i : i \in [K] \} \subseteq \mathbb{S}^{D-1}$ and $\CA = \operatorname{Span}\{a_i^n : i \in [K]\}$.
	Let $\hat \CA \subseteq \symten{n}$ be a perturbation of $\CA$ with $\Delta_{\CA} = \| P_{\CA} - P_{\hat \CA}\|_{F \rightarrow F}$. Let
	\begin{equation} \label{eq:def-tau}
		\tau := \frac{1}{6} - n^2 \rho_2 - (n^2 + n) \rho_n\quad \text{and} \quad \Delta_0 := \frac{2 \tau}{2 + 4 \tau + 3 n^2}.
	\end{equation}
	Then, if $\Delta_{\CA} < \Delta_0$, the program \textup{\ref{prob:nspm}} has exactly $2K$ second-order critical points in the superlevel set where
	\begin{equation}\label{eq:deterministic-superlevel}
		F_{\hat \CA}(x) \ge  \frac{2 + 2\tau + 3n^2}{2 \tau} \Delta_{\CA}.
	\end{equation}
	Each of these critical points is a strict local maximizer for $F_{\hat \CA}$.
	Further for each such point $x^*$, there exists unique $i \in [K]$ and $s \in \{-1,1\}$ such that
	\begin{equation}
		\|x^* - s a_i\|_2^2  \le \frac{2 \Delta_{\CA}}{n}.
	\end{equation}
	In the noiseless case ($\Delta_{\CA} =0$), if $\tau \ge 0$ then there are precisely $2K$ second-order critical points of \textup{\ref{prob:SPM}} with positive functional value, and they are the global maximizers  $s a_i$ $(i \in [K], s \in \{-1,1\})$.
\end{theorem}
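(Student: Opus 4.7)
The first task is to show that every quantity featuring the noisy projector $P_{\hat\CA}$ is close to its noiseless counterpart, with error controlled by $\Delta_\CA$. Since $\|P_{\hat\CA}-P_\CA\|_{F\to F} = \Delta_\CA$ and each tensor $x^{\otimes n}$ or $x^{\otimes(n-1)}\otimes y$ with $x,y \in \bbS^{D-1}$ has unit Frobenius norm, elementary estimates give
\begin{equation*}
|F_{\hat\CA}(x) - F_\CA(x)| \le 2\Delta_\CA, \qquad \bigl|\|P_{\hat\CA}(x^{\otimes(n-1)}\!\otimes y)\|_F^2 - \|P_\CA(x^{\otimes(n-1)}\!\otimes y)\|_F^2\bigr| \le 2\Delta_\CA,
\end{equation*}
and an analogous $\Delta_\CA$-bound for the bilinear term $\langle P_{\hat\CA}(x^{\otimes n}), x^{\otimes(n-2)}\!\otimes y^{\otimes 2}\rangle$ in \eqref{eq:derived_second_order_optimality}. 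Substituting these into the noisy Hessian-test inequality yields an inequality in which only the noiseless quantities appear, at the cost of an additive error of order $n\,\Delta_\CA$. From here the analysis is conducted entirely on $F_\CA$.

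\textbf{Step 2: expand in the Grammian basis.} Writing $A = [a_1|\cdots|a_K]$ and $u := (A^\top x)^{\odot n} \in \bbR^K$, Lemma \ref{lem:reformulation_frob_norms} gives $F_\CA(x) = u^\top G_n^{-1} u$. In the same spirit, both $\|P_\CA(x^{\otimes(n-1)}\!\otimes a_i)\|_F^2$ and $\langle P_\CA(x^{\otimes n}), x^{\otimes(n-2)}\!\otimes a_i^{\otimes 2}\rangle$ admit compact expressions in $u$ and the Grammian-like matrices $(\langle a_i,a_j\rangle^k)_{i,j}$ for $k \in \{n-1,n\}$. Lemma \ref{lem:incoherence_scalars} pinches the spectrum of $G_n$ to $[1-\rho_n,\,1+\rho_n]$, turning each quadratic form into a scalar sandwich involving $\|u\|_2^2$, $F_\CA(x)$, and the coordinates $\alpha_i := \langle x,a_i\rangle$. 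The bound $\sum_i \alpha_i^2 \le 1+\rho_2$ controls cross-terms and is responsible for the $\rho_2$ contribution to $\tau$.

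\textbf{Step 3: localize via averaging over the components.} The core of the argument is to plug each $y = a_i$, $i \in [K]$, into the cleaned Hessian-test inequality from Step 1 and sum over $i$. Replacing every projection-based quantity by its Grammian expansion from Step 2, the summed inequality collapses to a scalar relation of the schematic form
\begin{equation*}
\bigl[1 + n^2\rho_2 + (n^2+n)\rho_n\bigr] F_\CA(x) + \mathcal{O}(n\,\Delta_\CA) \;\ge\; (2n-1)\,F_\CA(x) \;-\; (2n-2)\,\frac{F_\CA(x)\sum_i \alpha_i^{2n+2}}{\|u\|_2^2}.
\end{equation*}
Since $\sum_i \alpha_i^{2n+2}/\|u\|_2^2 \le \max_i \alpha_i^2$, this forces $\max_i \alpha_i^2 \ge 1 - \mathcal{O}(\Delta_\CA/(n\,F_\CA(x)))$ whenever $\tau = \tfrac16 - n^2\rho_2 - (n^2+n)\rho_n > 0$. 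Feeding the superlevel hypothesis $F_{\hat\CA}(x) \ge \tfrac{2+2\tau+3n^2}{2\tau}\Delta_\CA$ back in, the single dominant index $i^\star$ must satisfy $\alpha_{i^\star}^2 \ge 1 - \tfrac{2\Delta_\CA}{n}$, and the first-order relation \eqref{eq:stationary_point} then upgrades this to $\|x - s\,a_{i^\star}\|_2^2 \le \tfrac{2\Delta_\CA}{n}$ for the matching sign $s \in \{-1,+1\}$. Uniqueness of $(i^\star,s)$ is automatic since, for $\Delta_\CA < \Delta_0$ and the assumed incoherence, the balls of radius $\sqrt{2\Delta_0/n}$ around distinct $\pm a_i$ are pairwise disjoint.

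\textbf{Step 4: strict maxima and the noiseless case.} Near each $\pm a_{i^\star}$, the same Grammian expansion makes the Riemannian Hessian of $F_{\hat\CA}$ uniformly negative definite, with spectral gap of order $2n - \mathcal{O}(\rho_2+\rho_n+\Delta_\CA)$, so every one of the $2K$ candidate points is a strict local maximum. The noiseless statement then follows by specializing $\Delta_\CA = 0$: the superlevel condition reduces to $F_\CA(x) > 0$, and since each $\pm a_i$ realizes $F_\CA = 1$, the $2K$ second-order critical points identified above are precisely the global maximizers. I anticipate the main obstacle to be the precise bookkeeping in Step 3 that reproduces exactly $\tau = \tfrac16 - n^2\rho_2 - (n^2+n)\rho_n$ and $\Delta_0 = \tfrac{2\tau}{2+4\tau+3n^2}$: the numerical factor $\tfrac16$, and the separate dependence on $\rho_2$ versus $\rho_n$, must emerge from carefully optimizing the rearrangement of cross-terms, while the superlevel threshold has to match the resulting margin so that the cluster of near-$\pm a_i$ critical points is cleanly separated from the rest of the landscape.
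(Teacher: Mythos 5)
Your high-level skeleton (optimality conditions \eqref{eq:derived_second_order_optimality} with $y = a_i$, Grammian reformulation, concavity near $\pm a_i$) matches the paper's, but Step 3 departs in a way that I do not believe can work. You propose to plug $y = a_i$ for \emph{each} $i$ and sum; but the left side of the resulting inequality scales as $K \cdot F_\CA(x)$ (each summand contributes $\bigl(1 + 2(n-1)\zeta_i^2\bigr)F_{\hat\CA}(x)$, and the $\zeta_i^2$-sum is $\le 1 + \rho_2 \ll K$), while the right side does not grow with $K$, so summing gives a vacuous inequality. Your displayed "schematic form" has no $K$ factor, which silently contradicts the description of how it arose. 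What the paper actually does (Propositions~\ref{prop:auxliary_one_big} and \ref{prop:one_big_deterministic_case}) is pick the single index $i \in \argmax_\ell \sigma_\ell \zeta_\ell^{n-2}$, and then prove two structural facts about the expansion coefficients $\sigma = G_n^{-1}\zeta^{\odot n}$ — that $\sigma_i \ge (1 - 2\rho_n)\|\sigma\|_\infty$ and that $\min_{\ell\neq i} \sigma_\ell\zeta_\ell^{n-2} \ge -\sigma_i\zeta_i^{n-2}$ — which allow the remainder terms $R_{i,n}, R_{i,2}$ to be bounded in terms of $\sigma_i\zeta_i^{n-2}$ and divided out cleanly. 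Your proposal does not mention these sign/magnitude lemmas for $\sigma$, and they are essential.

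A second gap is the mechanism for $\|x^* - s a_i\|_2^2 \le 2\Delta_\CA/n$. You claim the first-order relation \eqref{eq:stationary_point} "upgrades" the correlation bound, but this is not how the paper gets it, and I don't see how it could: the correlation bound from the optimality conditions is $\|\zeta\|_\infty^2 \ge 1 - \mathcal O(\rho_2 + \rho_n + \Delta_\CA/F_\CA(x))$, which is much weaker than $1 - \Delta_\CA/n$ when $\rho_2, \rho_n$ are merely small constants. The actual $2\Delta_\CA/n$ bound comes from a separate argument (Proposition~\ref{prop:concavity_and_local_maxes}): once geodesic $n$-strong concavity on a spherical cap is established, one compares $F_{\hat\CA}(x^*)$ with $F_{\hat\CA}(s a_i) \ge 1 - \Delta_\CA$ via a Taylor expansion along the geodesic from $x^*$ to $s a_i$, and the $\Delta_\CA/n$ scaling drops out of the quadratic term. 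Finally, a smaller quantitative point: your Step 1 perturbation bound has a spurious factor of $2$ (the paper's Lemma~\ref{lem:DeltaCA_techlemma} gives $\le \Delta_\CA$, and using $\|\sym(x^{n-1}z)\|_F^2 = 1/n$ tightens the Hessian perturbation to $\Delta_\CA/n$, which matters for the final constants).
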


\begin{remark}
\label{rem:mutual_coherence} Using Lemma \ref{lem:incoherence_scalars}, we identify
two situations where $\rho_2$ and $\rho_n$ can be bounded from above.
\vspace{-1.2em}
\begin{enumerate}
\item \textit{Mutually incoherent ensembles.}
Let $a_1,\ldots,a_K$ have mutual incoherence $\rho := \max\limits_{i\neq j}\SN{\langle a_i, a_j\rangle}$.
Using Gershgorin's circle theorem, we obtain
\begin{align*}
\rho_s &\leq \mu_1(G_{\lfloor s/2\rfloor}) - 1 \leq \max_{i \in [K]}\sum_{j\neq i}\SN{\langle a_i, a_j\rangle}^{\lfloor s/2\rfloor}\leq (K-1)\rho^{\lfloor s/2\rfloor} \quad \textrm{ for any } s \in \bbN_{\geq 2},
\end{align*}
which implies that the conditions of Theorem \ref{thm:main_result_deterministic} are saisfied if $K\rho$ is sufficiently small. This setting is comparable
to the analysis for \ref{prob:PM} in \cite{sanjabi2019does}.
Moreover, if $K=D$ and $a_1, \ldots, a_K$ are mutually orthogonal, then $\rho_s = 0$ for each $s \geq 2$.  Therefore
Theorem~\ref{thm:main_result_deterministic} holds with $\tau = 1/6$ for \textup{orthogonally decomposable tensors} \cite{auddy2020perturbation}.

\item \textit{Low-rank random ensembles.} Let $a_1,\ldots,a_K$ be independent copies of an isotropic unit-norm random
vector $a$ with sub-Gaussian norm $\CO(1/\sqrt{D})$ (e.g., $a_i\sim \textrm{Unif}(\bbS^{D-1})$). With high probability, the ensemble can achieve arbitrarily
small $\rho_2$, provided that $K \leq CD$ for a sufficiently small constant $C > 0$. The proof of this fact relies on
$A=[a_1|\ldots|a_K]$ satisfying, with high probability, the so-called $(K, \delta)$ restricted isometry property \cite[Thm.~5.65]{vershynin2010introduction}
as defined in Definition \ref{def:RIP} in the next section. We note that despite requiring conditions  milder than those in the unit-norm tight frame analysis
for \ref{prob:PM} in \cite{qu2019geometric}, we achieve a comparable scaling of $K = \CO(D)$.
\end{enumerate}
\end{remark}

In the noiseless case where $\Delta_{\CA} = 0$, Theorem \ref{thm:main_result_deterministic} shows
that all local maximizers coincide with global maximizers $\pm a_1,\ldots,\pm a_K$,
provided the tensor components are sufficiently incoherent to ensure $\tau \geq 0$.
In the noisy case, all local maximizers with objective values $F_{\hat \CA}(x) \geq C(n,\tau)\Delta_{\CA}$ are $\CO(\Delta_{\CA})$-close to the
global optimizers of the noiseless objective $F_{\CA}$, where the constant $C(n,\tau)$ increases
as the incoherence of the vectors $a_1,\ldots,a_K$ shrinks.
Unfortunately, the presence of
spurious local maximizers of $F_{\hat \CA}$ with small objective values cannot be avoided under a deterministic noise model, as the next result shows.

\begin{lemma}
\label{lem:perturbations_spurious_local_max}
Let $\delta \in (0,1)$, $a \in \bbS^{D-1}$ and $\CA = \Span{a^{\otimes n}}$.
Then there exists a subspace $\hat \CA\subseteq \textup{Sym}(\CT_D^n)$  with $\dim(\hat \CA) = 1$ and
$\|P_{\CA} - P_{\hat \CA}\|_{F\rightarrow F} = \delta$ such that \textup{\ref{prob:nspm}}
possesses a strict local maximizer of objective \nolinebreak value exactly $\delta^2$.
\end{lemma}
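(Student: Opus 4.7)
The strategy is to exhibit a one-dimensional subspace $\hat{\CA}$ as an explicit, concrete perturbation of $\CA$. Fix any $b \in \mathbb{S}^{D-1}$ with $b \perp a$ and set
\[
\hat T := \sqrt{1-\delta^2}\,a^{\otimes n} - \delta\, b^{\otimes n}, \qquad \hat{\CA} := \operatorname{Span}\{\hat T\}.
\]
Because $\langle a^{\otimes n}, b^{\otimes n}\rangle = \langle a, b\rangle^n = 0$, we have $\|\hat T\|_F^2 = (1-\delta^2)+\delta^2 = 1$ and $\langle a^{\otimes n}, \hat T\rangle = \sqrt{1-\delta^2}$. To verify the subspace distance, note that $P_{\CA}-P_{\hat{\CA}}$ vanishes on the orthogonal complement of $\operatorname{Span}\{a^{\otimes n}, \hat T\}$, and inside that two-dimensional span—with principal angle $\theta$ satisfying $\cos\theta = \sqrt{1-\delta^2}$—the difference of the two rank-one projectors has eigenvalues $\pm\sin\theta$. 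Hence $\|P_{\CA}-P_{\hat{\CA}}\|_{F\to F} = \sin\theta = \delta$, as required.

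Since $\dim \hat{\CA} = 1$, the objective simplifies to
\[
F_{\hat{\CA}}(x) = \langle x^{\otimes n}, \hat T\rangle^2 = \bigl(\sqrt{1-\delta^2}\,\langle x,a\rangle^n - \delta\,\langle x, b\rangle^n\bigr)^2,
\]
and plugging in $x=b$ gives $F_{\hat{\CA}}(b) = \delta^2$ directly. It remains to verify that $b$ is a \emph{strict} local maximizer on $\mathbb{S}^{D-1}$. For an arbitrary unit tangent vector $z$ at $b$ (i.e.\ $\|z\|_2=1$, $z\perp b$), I parameterize the geodesic $x(t) = \cos(t)\,b + \sin(t)\,z$, set $\alpha := \langle z, a\rangle \in [-1,1]$, and define $h(t) := \sqrt{1-\delta^2}\,\alpha^n\sin^n(t) - \delta\cos^n(t)$, so that $F_{\hat{\CA}}(x(t)) = h(t)^2$. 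A direct computation yields $h(0) = -\delta$, $h'(0)=0$ (so the Riemannian gradient vanishes), and
\[
h''(0) = \begin{cases} 2\sqrt{1-\delta^2}\,\alpha^2 + 2\delta, & n = 2,\\ n\delta, & n \geq 3,\end{cases}
\]
which is strictly positive in both regimes. Consequently $\tfrac{d^2}{dt^2}F_{\hat{\CA}}(x(t))\big|_{t=0} = 2h'(0)^2 + 2h(0)h''(0) = -2\delta\,h''(0) < 0$ uniformly in $z$, so the Riemannian Hessian at $b$ is strictly negative definite and $b$ is a strict local maximizer.

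The only mildly subtle point is the $n=2$ case: there $\sin^{n-2}(0)$ does not vanish, and the $a^{\otimes n}$ contribution to $h''(0)$ enters at the same order as the $b^{\otimes n}$ contribution. The minus sign placed in front of $\delta\,b^{\otimes n}$ in $\hat T$ is precisely what makes the two contributions add (rather than cancel), giving the uniform positivity of $h''(0)$ needed above. For $n \geq 3$ the $a$-term is higher order in $t$ and negativity of the Hessian is immediate; aside from this sign bookkeeping, the proof is routine.
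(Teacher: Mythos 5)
Your proof is correct and takes essentially the same route as the paper: the same rank-one perturbation $\hat T = \sqrt{1-\delta^2}\,a^{\otimes n} - \delta\,b^{\otimes n}$ with $b\perp a$, the same verification that $\|P_{\CA}-P_{\hat{\CA}}\|_{F\to F}=\delta$ and $F_{\hat{\CA}}(b)=\delta^2$, and the same conclusion via negative definiteness of the Riemannian Hessian at $b$. The only cosmetic difference is that you verify the second-order condition by parameterizing geodesics and computing $\tfrac{d^2}{dt^2}h(t)^2\big|_{t=0}$ directly, whereas the paper plugs into its closed-form Riemannian Hessian expression (Lemma~\ref{lem:riemmanian_derivatives}); the two calculations yield the identical bound $-2n\delta^2 - 2n(n-1)\delta\sqrt{1-\delta^2}\langle a,z\rangle^2\,\mathbf{1}\{n=2\}$.
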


\subsection{Average case analysis of overcomplete tensors}
\label{subsec:average_case_analysis}
The overcomplete case with $K = \widetilde o(D^{\lfloor m/2\rfloor})$ falls outside the range of
Theorem \ref{thm:main_result_deterministic}, because $\tau$ in \eqref{eq:def-tau}
becomes negative when $K\gg D$.
Instead, our analysis for the overcomplete case relies on $A = [a_1|\ldots|a_K] \in \bbR^{D \times K}$ obeying the $(p,\delta)$-restricted isometry property (RIP) for $p = \CO(D/\log(K))$.

\begin{definition}
	\label{def:RIP}
	Let $A \in \bbR^{D\times K}$, $1\leq p \leq K$ be an integer, and $\delta \in (0,1)$.
	We say that $A$ is \textit{$(p, \delta)$-RIP} if every $D\times p$ submatrix $A_p$ of $A$ satisfies
	$\|A_p^\top A_p - \Id_p\|_{2}\leq \delta$.
\end{definition} 

A consequence of the RIP, which is particularly useful in the analysis of overcomplete tensor models, is
that the correlation coefficients $\{\langle a_i, x\rangle : i \in [K]\}$ can naturally be split into two groups.
\begin{lemma}[RIP-induce partitioning of correlation coefficients]
\label{prop:RIP_consequence}
Suppose that  $A = [a_1|\ldots|a_K] \in \bbR^{D \times K}$ satisfies the $(p,\delta)$-RIP for $p = \lceil c_\delta D/\log(K)\rceil$. Let  $\tilde c_\delta := (1+\delta)/c_\delta$. Then for all $x\in \bbS^{D-1}$ there is a subset of indices $\CI(x) \subseteq [K]$ with cardinality $p$ such that
\begin{align}
	\label{eq:rip_requirement_1}
	1 - \delta \leq \sum_{i \in \CI(x)}\langle a_i, x\rangle^2 \leq 1 + \delta\quad \text{and}\quad \langle a_i, x\rangle^2 \leq 	\tilde c_\delta \frac{\log(K)}{D} \text{ for } i \not \in \CI(x),
\end{align}
\end{lemma}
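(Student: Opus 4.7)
The plan is to take $\CI(x) \subseteq [K]$ to be the set of indices of the $p$ largest values among $|\langle a_1,x\rangle|,\ldots,|\langle a_K,x\rangle|$ (ties broken arbitrarily), so that $|\CI(x)|=p$ by construction. With this choice, each of the three stated claims can be handled via the $(p,\delta)$-RIP of the corresponding column submatrix $A_{\CI(x)} \in \bbR^{D\times p}$.

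For the upper bound, write $\sum_{i\in\CI(x)}\langle a_i,x\rangle^2 = \|A_{\CI(x)}^\top x\|_2^2 = x^\top A_{\CI(x)} A_{\CI(x)}^\top x$. The nonzero eigenvalues of $A_{\CI(x)} A_{\CI(x)}^\top$ coincide with those of $A_{\CI(x)}^\top A_{\CI(x)}$, which lie in $[1-\delta,1+\delta]$ by RIP, so the quadratic form is at most $(1+\delta)\|x\|_2^2 = 1+\delta$. The complementary bound $\langle a_j,x\rangle^2 \leq \tilde c_\delta \log(K)/D$ for $j \notin \CI(x)$ then follows by a one-line pigeonhole: the $j$-th squared correlation is dominated by the minimum squared correlation inside $\CI(x)$, which is in turn at most the average $\tfrac{1}{p}\sum_{i\in\CI(x)}\langle a_i,x\rangle^2 \leq (1+\delta)/p$; substituting $p \geq c_\delta D/\log K$ and $\tilde c_\delta = (1+\delta)/c_\delta$ yields the claim.

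The lower bound $\sum_{i\in\CI(x)}\langle a_i,x\rangle^2 \geq 1-\delta$ is the main obstacle. A direct application of RIP only gives $\|A_{\CI(x)}^\top x\|_2^2 \geq (1-\delta)\|P_S x\|_2^2$, where $S$ denotes the column span of $A_{\CI(x)}$, and this reaches $1-\delta$ only if $x$ lies essentially in $S$ — which is not automatic from RIP alone when $p<D$. To close this gap, the plan is to allow $\CI(x)$ to be chosen adaptively rather than as the strict top-$p$: starting from the top-$p$ set, iteratively swap low-contribution indices for the maximizer of $|\langle a_i, x - P_S x\rangle|$ over $i \notin \CI(x)$, using the RIP on the updated $p$-column submatrix to guarantee that each swap strictly increases $\|P_S x\|_2^2$. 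After sufficiently many swaps the resulting $\CI(x)$ will satisfy $\|P_S x\|_2^2 \geq 1 - O(\delta)$, which combined with the RIP estimate gives the lower bound. I expect the termination and contraction bookkeeping of this greedy swap procedure, and verifying that it preserves the complementary bound on indices outside $\CI(x)$, to be the most technical step.
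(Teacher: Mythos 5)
You are right that the upper bound and the complement bound follow from applying RIP to the top-$p$ submatrix and a pigeonhole argument; that is exactly the paper's proof. You are also right to flag the lower bound $\sum_{i\in\CI(x)}\langle a_i,x\rangle^2\ge 1-\delta$ as the obstacle: RIP controls the eigenvalues of $A_p^\top A_p \in \bbR^{p\times p}$, which are only the \emph{nonzero} eigenvalues of $A_p A_p^\top \in \bbR^{D\times D}$, and when $p<D$ (always the case in the overcomplete regime) the latter matrix has a $(D-p)$-dimensional kernel, so $x^\top A_p A_p^\top x$ can vanish. Notably, the paper's own proof asserts $x^\top A_p A_p^\top x \geq (1-\delta)\|x\|_2^2$ directly ``by RIP,'' which falls into exactly this trap, so the gap you identified is genuine and present in the published argument.

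Your proposed greedy swap cannot repair it, however, because the failure is not caused by a suboptimal choice of $\CI(x)$. Take $K=p$ with $a_1,\ldots,a_p$ orthonormal in $\bbR^D$ for $D > p$ (consistent with $p=\lceil c_\delta D/\log K\rceil$ by choosing $D\approx p\log p/c_\delta$): then $(p,0)$-RIP holds, yet for any $x\perp\Span{a_1,\ldots,a_p}$ we have $\sum_{i\in\CI(x)}\langle a_i,x\rangle^2=0$ for \emph{every} choice of $\CI(x)$, so there is nothing to swap toward. The lower bound is simply false for general $x\in\bbS^{D-1}$ under the stated hypotheses; it would require an additional frame-type assumption forcing $x$ to be nearly in the span of some $p$-subset of the ensemble, or a restriction on the quantifier over $x$ (e.g., to a superlevel set of $F_{\CA}$). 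A scan of the downstream uses in Lemma~\ref{lem:RIP_inequalities} and Proposition~\ref{prop:zetainf_lower_bound} suggests that the upper bound $\sum_{i\in\CI}\zeta_i^2\le 1+\delta$ and the complement bound are what is actually load-bearing, so the natural fix is to weaken the lemma statement rather than to strengthen its proof.
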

Let us now collect all assumptions needed to analyze the overcomplete case.
\begin{definition}
\label{def:assumptions_overcomplete}
Let $A := [a_1|\ldots|a_K]$. We require the following
assumptions:
\begin{enumerate}[leftmargin=1.5cm,label=\textbf{A\arabic*}]
  \item\label{enum:RIP} There exists $c_{\delta}>0$, depending only on $\delta$, such that $A$ is $(\lceil c_{\delta}D/\log(K) \rceil,\delta)$-RIP.
  \item\label{enum:correlation} There exists  $c_1 > 0$, independent of $K,D$, such that $\max_{i,j:i\neq j}\langle a_i, a_j\rangle^2 \leq c_1\log(K)/D$.
  \item \label{enum:GInverse} There exists $c_2 > 0$, independent of $K,D$, such that $\N{G_{n}^{-1}}_2 \leq c_2$.
\end{enumerate}
\end{definition}

To the best of our knowledge, there are no known deterministically constructed systems of size $K = \widetilde o(D^{\lfloor m/2\rfloor})$ which
satisfy \ref{enum:RIP} - \ref{enum:GInverse} for small $\delta \ll 1$ and
constants $c_{\delta},c_1,c_2$ that do not depend on $K,D$.
However, by modeling components via a sufficiently-spread random vector,
i.e., by considering an average case scenario with $a_i \sim \textup{Unif}({\bbS^{D-1}})$, they hold with high probability.

\begin{proposition}[\ref{enum:RIP} - \ref{enum:GInverse} for random ensembles]
\label{lemma:random_results}
Let $a\sim \textup{Unif}(\bbS^{D-1})$ and assume $a_1,\ldots,a_K$ are $K$ independent copies of $a$, where
$\log(K) = o(D)$.
Fix an arbitrary constant $\delta \in (0,1)$.
There exist constants $C > 0$ and $D_0 \in \bbN$ depending only on $\delta$ such that
for all $D \geq D_0$, and with probability at least $1-K^{-1}-2\exp(-C\delta^2 D)$,
conditions \textup{\ref{enum:RIP}} and \textup{\ref{enum:correlation}} (with $c_1\leq C$) hold.
Furthermore,
if $n = 2$ and $K = o(D^2)$, \textup{\ref{enum:GInverse}} holds with probability
$1-C(e D/\sqrt{K})^{-C\sqrt{K}}$ for $c_2\leq C$.
\end{proposition}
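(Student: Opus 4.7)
I would prove the three conditions \ref{enum:RIP}, \ref{enum:correlation}, \ref{enum:GInverse} sequentially, handling the first two by routine sub-Gaussian concentration and spending the bulk of the effort on the minimum eigenvalue bound for $G_n$.

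\textbf{Conditions \ref{enum:RIP} and \ref{enum:correlation}.} A random vector $a\sim \textup{Unif}(\bbS^{D-1})$ is isotropic up to scale $1/D$ with sub-Gaussian norm of order $1/\sqrt{D}$. For \ref{enum:RIP}, I would invoke the standard result that a matrix with independent columns of this type is $(p,\delta)$-RIP with probability $1-2\exp(-C\delta^2 D)$ whenever $p\log(K/p)\lesssim \delta^2 D$ (e.g.\ via Bernstein plus a covering of $\{x\in\bbS^{K-1}: |\textrm{supp}(x)|\leq p\}$), so taking $c_{\delta}=C'\delta^2$ handles the claim. For \ref{enum:correlation}, conditionally on $a_i$ the scalar $\langle a_i,a_j\rangle$ is sub-Gaussian with norm $O(1/\sqrt{D})$, giving $\bbP(\SN{\langle a_i,a_j\rangle}>t)\leq 2e^{-cDt^2}$. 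Setting $t^2=c_1\log(K)/D$ with $c_1$ large enough yields a per-pair failure probability $\leq 2K^{-3}$, and a union bound over the $\binom{K}{2}$ pairs gives failure probability $\leq K^{-1}$.

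\textbf{Condition \ref{enum:GInverse}.} For $n=2$ I would first decompose $G_2$ using the centered rank-one lifts $\bar u_i := a_i a_i^\top - I/D$. A direct computation gives $\langle \bar u_i,\bar u_j\rangle = \langle a_i,a_j\rangle^2 - 1/D$, so
\begin{equation*}
G_2 = \bar G + \tfrac{1}{D}\mathbf{1}\mathbf{1}^\top, \qquad \bar G_{ij} := \langle \bar u_i,\bar u_j\rangle,
\end{equation*}
and Weyl's inequality reduces the bound on $\N{G_2^{-1}}_2$ to a lower bound on $\lambda_{\min}(\bar G)$. Writing $\bar V=[\bar u_1\,|\,\ldots\,|\,\bar u_K]$ as a $D^2\times K$ matrix whose columns live in the $(\binom{D+1}{2}-1)$-dimensional subspace of traceless symmetric matrices, we have $\bar G = \bar V^\top \bar V$. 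The columns of $\bar V$ are iid, mean zero, with $\|\bar u_i\|_F^2 = 1 - 1/D$ exactly, so $\bbE[\bar G] = (1-1/D)\Id_K$. The task is thus to show that for $K=o(D^2)$ the empirical Gram $\bar G$ stays close to a multiple of identity.

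For fixed $x\in\bbS^{K-1}$, the quadratic form $x^\top \bar G x = \|\sum x_i\bar u_i\|_F^2$ is a sum of squared entries of a sum of $K$ iid mean-zero bounded random matrices in a $\sim D^2/2$ dimensional space; concentration around its expectation $(1-1/D)\|x\|^2$ follows from Hanson-Wright (or matrix Bernstein applied after decoupling). I would then discretize $\bbS^{K-1}$ with an $\varepsilon$-net of cardinality $(3/\varepsilon)^K$, take a union bound, and balance $\varepsilon$ against the per-point tail, using the key fact that each $\bar u_i$ has small operator norm $O(1)$ while the effective variance parameter scales like $1/D^2$. Choosing $\varepsilon \asymp \sqrt{K}/D$ optimally, the failure probability becomes
\begin{equation*}
(3/\varepsilon)^K \cdot \exp(-c\sqrt{K}\log(D/\sqrt{K})) \;=\; O\bigl((eD/\sqrt{K})^{-C\sqrt{K}}\bigr),
\end{equation*}
which gives the stated form.

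\textbf{Main obstacle.} The hardest step is obtaining the precise probability bound $1-C(eD/\sqrt{K})^{-C\sqrt{K}}$ rather than a weaker $\log K$-dependent tail. A crude matrix Bernstein applied to $\sum \bar u_i\bar u_i^\top$ controls $\bar G$ only up to logarithmic factors, which does not cover the full regime $K=o(D^2)$. The sharper bound requires chaining the $\varepsilon$-net argument with a tail estimate for quadratic forms in the random rank-one lifts $\bar u_i$ (e.g.\ via Hanson--Wright on the traceless symmetric subspace), carefully exploiting that the fourth moment of $a^\top M a$ concentrates at scale $1/D$ for traceless $M$. Getting the exponent $\sqrt{K}$ correct (rather than $K$ or $K/\log D$) is the delicate point and the technical heart of this proposition.
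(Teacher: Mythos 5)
For conditions \ref{enum:RIP} and \ref{enum:correlation} your argument is essentially the paper's: the paper also invokes the standard sub-Gaussian RIP theorem (\cite{vershynin2010introduction}, Thm.~5.65) with $c_\delta \asymp \delta^2$, and proves \ref{enum:correlation} by a sub-Gaussian tail bound on $\langle a_i,a_j\rangle$ (via rotational symmetry, reducing to a single coordinate) plus a union bound over pairs. No issues there.

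For \ref{enum:GInverse} your centering step also matches the paper: the paper works with $X_i \propto \operatorname{Vec}(Da_i^2 - \Id_D)$ and derives exactly your identity $G_2 = \frac{D-1}{D}M^\top M + \frac{1}{D}\mathbbm{1}_K\mathbbm{1}_K^\top$, reducing the problem to $\|M^\top M - \Id_K\|_2$. The genuine gap is in how you propose to finish. Your net-plus-union-bound arithmetic does not close: with $\varepsilon \asymp \sqrt{K}/D$ the net has cardinality $\exp(K\log(3D/\sqrt{K}))$, which overwhelms a per-point tail of $\exp(-c\sqrt{K}\log(D/\sqrt{K}))$ rather than combining with it to give the claimed bound; more fundamentally, a vanilla $\varepsilon$-net argument with a per-point tail strong enough to beat the net would produce an exponent linear in $K$, so the $\sqrt{K}$ in the target probability cannot arise this way. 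The paper sidesteps this by treating the hard concentration step as a black box: it verifies that the centered lifted columns are sub-exponential with $\CO(1)$ norm (using the convex concentration property of $\textrm{Unif}(\bbS^{D-1})$ and a Hanson--Wright-type inequality, following \cite{fengler2019restricted}), and then invokes \cite[Thm.~3.3]{adamczak2011restricted}, the RIP theorem for matrices with independent sub-exponential columns of fixed norm, which is precisely where the $\frac{\sqrt{K}}{D}\log(eD/\sqrt{K})$ deviation and the $(eD/\sqrt{K})^{-C\sqrt{K}}$ failure probability come from (its proof uses a coordinate-splitting argument, not a plain net). You correctly identify this as the technical heart, but as written your sketch neither supplies that argument nor cites a result that does, so the proof of \ref{enum:GInverse} is incomplete.
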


\begin{remark}[\ref{enum:GInverse} when $n > 2$]
Following \cite{adamczak2011restricted,fengler2019restricted}, we give a self-contained proof for \ref{enum:GInverse} when $n = 2$ in the appendix.
We are currently not able to extend the proof to $n > 2$, because some technical tools such as
the Hanson-Wright inequality \cite{rudelson2013hanson} and an extension of \cite[Thm.~3.3]{adamczak2011restricted},
which ensures the RIP for matrices whose columns have sub-exponential tails,
have not yet been fully developed for random vectors with dependent entries and heavier tails (so-called $\alpha$-sub-exponential tails
or sub-Weibull tails).
However we strongly believe that \ref{enum:GInverse} holds for $n > 2$ and $K = o(D^n)$.
For now, we formulate this as a conjecture, supplemented with numerical evidence presented in Figure \ref{fig:grammian_wellposedness_suppl}. We also add that the conjecture, once proven, would complement recent advances on the well-posedness of random tensors with fewer statistical dependencies among the components \cite{bryson2019marchenko, vershynin2020concentration}.
\end{remark}

\begin{figure}
\centering
\includegraphics[width = \textwidth]{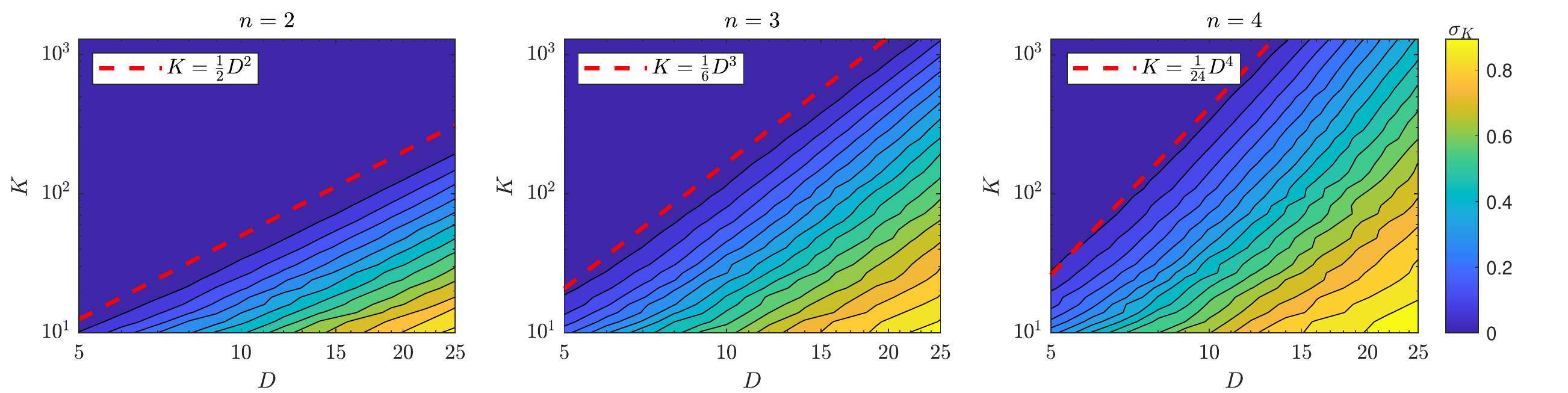}
\caption{The average smallest eigenvalue of the Grammian $(G_{n})_{ij}=\langle a_i, a_j\rangle^n$ for a random ensemble $a_1,\ldots,a_K$
consisting of independent copies of the random vector $a \sim \textrm{Unif}(\bbS^{D-1})$, over 100 runs per $K,D,n$.
From left to right, we consider different tensor orders $n=2,3,4$. As long as $K\leq C(n) D^n$ for some constant $C(n)$ depending only on $n$ (red line),
inverting the Grammian is well-posed and the smallest eigenvalue of $G_n$ is bounded from below, in accord with Conjecture \ref{conj:grammians}.}
\label{fig:grammian_wellposedness_suppl}
\end{figure}

\begin{conjecture}[Grammians of independent symmetric rank-one tensors]
\label{conj:grammians}
Let $a_1,\ldots,a_K$ be independent copies of the random vector $a \sim \textup{Unif}(\bbS^{D-1})$
and fix $\epsilon > 0$ arbitrarily.
Then there exists some constant $\kappa_n > 0$ and an increasing function $\gamma_n:(0, \kappa_n) \to (0,1)$, both depending only on $n$,
such that $\gamma_n(\kappa) \rightarrow 1$ as $\kappa \rightarrow 0$, and if $K\leq \kappa D^n$ for some $\kappa < \kappa_n$ we have
\begin{align}
\label{eq:conjecture_2}
\bbP(\N{G_{n}^{-1}}_2 \geq \gamma_n(\kappa) -\epsilon) \rightarrow 1\quad \textrm{as}\quad D\rightarrow \infty.
\end{align}
In particular, if $K = o(D^n)$ then $\|G_{n}^{-1}\|_2\rightarrow 1$ as $D \rightarrow \infty$.
\end{conjecture}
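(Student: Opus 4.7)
The plan is to reduce Conjecture~\ref{conj:grammians} to a concentration question about the sample covariance of the symmetric tensor lifting $a \mapsto a^{\otimes n}$, and then to deploy concentration-of-measure tools adapted to the $\alpha$-sub-exponential tails of its coordinates.

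\textbf{Step 1 (Lift to symmetric tensor space).} Write $G_n = V V^\top$ where $V \in \bbR^{K \times D^n}$ has rows $\phi(a_i)^\top := \opvec(a_i^{\otimes n})^\top$. Since $V V^\top$ and $V^\top V$ share the same nonzero spectrum and $\|G_n^{-1}\|_2 = 1/\mu_K(G_n)$, the conjecture reduces to lower-bounding the $K$-th largest eigenvalue of the sample covariance $\tfrac{1}{K} V^\top V$ of the i.i.d.\ vectors $\phi(a_i)$.

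\textbf{Step 2 (Identify the population covariance).} Compute $\Sigma := \bbE[\phi(a)\phi(a)^\top]$ for $a \sim \textup{Unif}(\bbS^{D-1})$ via rotational invariance and the Funk--Hecke formula. This produces an explicit block-diagonal decomposition of $\Sigma$ over the spherical-harmonic subspaces of degrees $n, n-2, n-4, \ldots$, whose smallest positive eigenvalue scales like $c_n/D^n$ with an explicit $c_n > 0$. In the regime $K = \kappa D^n$, the product $K\cdot\mu_{\min}(\Sigma|_{\range \Sigma})$ is a constant depending only on $n, \kappa$, which is the candidate for $1/\gamma_n(\kappa)$ in the conjecture. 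Apply a one-sided matrix deviation inequality, restricted to $\range \Sigma$, to establish $\bigl\| \tfrac{1}{K} V^\top V - \Sigma \bigr\|_{2} \leq \eta$ with $\eta$ vanishing relative to $\mu_{\min}(\Sigma|_{\range \Sigma})$. By Weyl's inequality, the $K$ nonzero eigenvalues of $V^\top V / K$ then all lie in the interval $[\mu_{\min}(\Sigma|_{\range \Sigma}) - \eta,\ \mu_1(\Sigma) + \eta]$, yielding $\mu_K(G_n) \geq K(\mu_{\min}(\Sigma|_{\range \Sigma}) - \eta)$ and consequently $\|G_n^{-1}\|_2 \leq 1/(K\mu_{\min}(\Sigma|_{\range \Sigma}) - K\eta)$, with $\gamma_n(\kappa) \to 1$ as $\kappa \to 0$ following from $\eta \to 0$.

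\textbf{Step 3 (Main obstacle).} The hard part, flagged in the remark preceding the conjecture, is delivering the matrix deviation bound above for $K$ nearly as large as $D^n$. The entries of $\phi(a) = a^{\otimes n}$ are degree-$n$ homogeneous polynomials in the coordinates of $a$, with heavy inter-coordinate dependence and only $\alpha$-sub-exponential tails for $\alpha = 2/n$. For $n=2$ the Hanson--Wright inequality \cite{rudelson2013hanson} and Adamczak's RIP for matrices with sub-exponential columns \cite{adamczak2011restricted} suffice, and this is exactly how the appendix argument proceeds; for $n \geq 3$ neither tool is available at full strength. A promising route is truncation: decompose on the typical event $\{\|a\|_\infty \lesssim \sqrt{\log(D)/D}\}$, on which each coordinate of $a^{\otimes n}$ is bounded by $(\log(D)/D)^{n/2}$ and matrix Bernstein applies cleanly, and control the atypical event by a union bound together with a moment estimate on $\|\phi(a)\|_2$. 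This approach should yield assumption \ref{enum:GInverse} in the weakened form $K = o(D^n/\textup{polylog}(D))$; removing the polylog, which is what the conjecture actually asserts, would require a genuine polynomial-chaos Hanson--Wright / RIP extension to $\alpha$-sub-exponential columns with dependent entries, which is the principal missing ingredient.
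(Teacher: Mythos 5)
First, note that the target here is stated in the paper as a \emph{conjecture}: the paper proves it only for $n=2$ (Proposition \ref{prop:grammian_result} in the appendix, via centering $a_i^{\otimes 2}$, a Hanson--Wright inequality for vectors with the convex concentration property, and Adamczak's RIP theorem for matrices with sub-exponential columns) and explicitly leaves $n \geq 3$ open. Your Step 3 concedes the same obstruction the paper identifies and admits that your truncation route would at best yield a polylog-weakened statement, so what you have written is a research plan rather than a proof; that alone means the proposal does not establish the statement.

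More importantly, Step 2 is built on the wrong concentration regime and would fail even if the tail issues of Step 3 were resolved. With $K = \kappa D^n$ samples and $\kappa$ small, the sample second-moment matrix $\tfrac{1}{K}V^\top V$ has rank at most $K$, whereas $\Sigma = \bbE[\phi(a)\phi(a)^\top]$ has rank $\binom{D+n-1}{n} \approx D^n/n!$ with all nonzero eigenvalues of order $c_n/D^n$ (its trace is $\bbE\|a\|_2^{2n}=1$). Hence $\|\tfrac{1}{K}V^\top V - \Sigma\|_2 \geq \mu_{\min}(\Sigma|_{\range(\Sigma)})$ always, so your $\eta$ can never be small relative to $\mu_{\min}(\Sigma|_{\range(\Sigma)})$, and the Weyl step $\mu_K(\hat\Sigma) \geq \mu_{\min}(\Sigma|_{\range(\Sigma)}) - \eta$ is vacuous. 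The numerology also points the wrong way: $K\,\mu_{\min}(\Sigma|_{\range(\Sigma)}) \asymp \kappa c_n \to 0$ as $\kappa \to 0$, which is incompatible with your candidate identification $1/\gamma_n(\kappa) = K\,\mu_{\min}(\Sigma|_{\range(\Sigma)})$ and with $\gamma_n(\kappa)\to 1$. The correct mechanism --- the one the paper's $n=2$ proof uses --- is dual to yours: in the regime $K \ll D^n$ one shows directly that the $K\times K$ Gram matrix of the centered, normalized vectors $\opvec(a_i^{\otimes n})$ concentrates around $\Id_K$, i.e., an RIP-type bound $\|M^\top M - \Id_K\|_2 = o(1)$ for a matrix with few, long, nearly orthogonal columns; $\mu_K(G_n)\to 1$ then follows because the off-diagonal entries $\langle a_i,a_j\rangle^n$ are collectively negligible, not because the sample covariance approximates $\Sigma$. (A minor separate point: as printed, \eqref{eq:conjecture_2} is oriented so that it is trivially true, since $\|G_n^{-1}\|_2 \geq 1$ always; it must be read as a lower bound on $\mu_K(G_n)$, equivalently an upper bound on $\|G_n^{-1}\|_2$, which is how you implicitly interpret it.)
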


We now present our main theorem about local maximizers of \ref{prob:nspm} in the overcomplete \nolinebreak case.

\begin{theorem}[Main random overcomplete result]
\label{thm:main_result_for_now}
Let $K, D \in \bbN$, define $\varepsilon_K := K\log^n(K)/D^n$, 
and suppose that $\lim_{D\to\infty} \varepsilon_K = 0$.
Assume $a_1,\ldots,a_K\in \bbR^D$ satisfy \textup{\ref{enum:RIP}} - \textup{\ref{enum:GInverse}}
for some $\delta,c_1,c_2 > 0$. Then
there exist $\delta_0$, depending only on $n$, $c_1$ and $c_2$, and $D_0, \Delta_0, C$, which depend additionally on $c_\delta$,
such that if $\delta < \delta_0$, $D > D_0$,
and $\Delta_{\CA}\leq \Delta_0$, the program \textup{\ref{prob:nspm}} has exactly
$2K$ 
second-order critical points in the superlevel set
\begin{equation}\label{eq:level_set_def}
\left\{x\in\bbS^{D-1}: F_{\hat \CA}(x) \ge C \varepsilon_K + 5 \Delta_{\CA} \right\}.
\end{equation}
Each of these critical points is a strict local maximizer for $F_{\hat \CA}$.
	Further for each such point $x^*$, there exists unique $i \in [K]$ and $s \in \{-1,1\}$ such that
	\begin{equation}
		\|x^* - s a_i\|_2^2  \le \frac{2 \Delta_{\CA}}{n}.
	\end{equation}
		In particular, in the noiseless case ($\Delta_{\CA} =0$), the second-order critical points in the superlevel set \eqref{eq:level_set_def} are exactly the global maximizers  $s a_i$ $(i \in [K], s \in \{-1,1\})$.
\end{theorem}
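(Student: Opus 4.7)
The plan is to mirror the proof of Theorem~\ref{thm:main_result_deterministic}, but replace the deterministic frame bounds $\rho_2,\rho_n$ by the RIP-induced partition of correlation coefficients supplied by Lemma~\ref{prop:RIP_consequence}. In the overcomplete regime one cannot uniformly control $\sum_i \langle a_i, x\rangle^s$, so the strategy is to split at the set $\CI(x)$ of \emph{dominant} indices, on which RIP acts as an approximate isometry, and treat the \emph{tail} indices $[K]\setminus\CI(x)$, where each correlation is at most $\sqrt{\tilde c_\delta \log(K)/D}$, via $\varepsilon_K$-sized error terms.

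First I would pass from $F_{\hat\CA}$ to the clean $F_\CA$ via $|F_{\hat\CA}(x)-F_\CA(x)| \le 2\Delta_\CA$ (a consequence of $\|P_{\hat\CA}-P_\CA\|_{F\to F}=\Delta_\CA$ and $\|x^{\otimes n}\|_F=1$), so it is enough to analyze $F_\CA$ on the superlevel set $\{F_\CA(x)\ge C\varepsilon_K + 3\Delta_\CA\}$. Combining Lemma~\ref{lem:reformulation_frob_norms} with \ref{enum:GInverse} gives $F_\CA(x)\le c_2\sum_i \langle a_i,x\rangle^{2n}$. Splitting the sum at $\CI(x)$ and using \eqref{eq:rip_requirement_1}, the tail contributes at most $c_2\tilde c_\delta^{\,n}\varepsilon_K$, while
\begin{equation*}
\sum_{i\in\CI(x)}\langle a_i,x\rangle^{2n} \le (1+\delta)\max_{i\in\CI(x)}|\langle a_i,x\rangle|^{2n-2}.
\end{equation*}
Choosing $C$ a modest multiple of $c_2 \tilde c_\delta^{\,n}$ then forces some $j=j(x)\in\CI(x)$ to have $\alpha:=|\langle a_j,x\rangle|$ bounded below by a constant $\beta=\beta(n,c_2,\delta)>0$. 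Uniqueness of $j$ is enforced by \ref{enum:correlation}: two distinct dominant indices would contradict $|\langle a_i, a_j\rangle|^2 \le c_1\log(K)/D = o(1)$ once $D$ is large. Set $s=\mathrm{sgn}\langle a_j,x\rangle$.

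Next I would apply the second-order condition \eqref{eq:derived_second_order_optimality} with $y=sa_j$. Expanding $\|P_\CA(x^{n-1}y)\|_F^2$ and $\langle P_\CA(x^n), x^{n-2}y^2\rangle$ via the Grammian expression of Lemma~\ref{lem:reformulation_frob_norms} and again splitting at $\CI(x)$, the diagonal index $i=j$ contributes the leading signal $n\alpha^{2n-2}$ on the right-hand side; cross-terms over $i\in\CI(x)\setminus\{j\}$ are bounded using \ref{enum:RIP} together with $|\langle a_i, a_j\rangle|^2 \le c_1\log(K)/D$; and tail contributions are controlled via $\varepsilon_K$-type estimates. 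Combining with the first-order bound $F_\CA(x)\le c_2(1+\delta)\alpha^{2n-2}+\CO(\varepsilon_K)$, the Hessian inequality reduces (after dividing through by $\alpha^{2n-2}$) to a polynomial inequality in $\alpha^2$ that, for $\delta$ small relative to $1/(nc_2)$ and $\varepsilon_K$ small, forces $1-\alpha^2 \le 2\Delta_\CA/n$; equivalently, $\|x-sa_j\|_2^2 \le 2\Delta_\CA/n$. Existence of exactly one strict local maximizer in each neighborhood $B(\pm a_i,\sqrt{2\Delta_\CA/n})$ then follows by continuity: at $\Delta_\CA=0$ the Riemannian Hessian of $F_\CA$ at $\pm a_i$ is strictly negative definite on the tangent space (a by-product of the first- and second-order estimates above applied to $x=\pm a_i$), and this property is preserved under a perturbation of magnitude $\Delta_\CA\le\Delta_0$, yielding exactly $2K$ second-order critical points in the superlevel set.

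The step I expect to be most delicate is the bookkeeping in the second-order analysis. Expressing $P_\CA$ via $G_n^{-1}$ produces mixed sums of the form
\begin{equation*}
\sum_{i,i'}(G_n^{-1})_{ii'}\langle a_i,x\rangle^{n-1}\langle a_i,y\rangle \langle a_{i'},x\rangle^{n-1}\langle a_{i'},y\rangle,
\end{equation*}
whose off-diagonal pieces must be handled using only the global spectral bound \ref{enum:GInverse} (no $\ell_1$-style bound on the entries of $G_n^{-1}$ is available). The natural tool is Cauchy--Schwarz with respect to the $G_n^{-1}$ inner product applied separately to the dominant and tail blocks; it is here that the scaling $\varepsilon_K = K\log^n(K)/D^n \to 0$ enters crucially, as each tail factor gains a $(\log(K)/D)^{n-1}$ saving that, multiplied by the RIP energy on $\CI(x)$, produces precisely the $\varepsilon_K$ error budget appearing in the statement.
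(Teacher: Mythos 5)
Your overall architecture (split the correlation coefficients at the RIP-selected set $\CI(x)$, treat the tail via $\varepsilon_K$-sized errors, then localize to caps around $\pm a_i$) is the right one and matches the paper's, but there is a genuine gap at the very first quantitative step. You claim that the superlevel condition $F_{\CA}(x)\ge C\varepsilon_K+3\Delta_{\CA}$, combined with $F_{\CA}(x)\le c_2(1+\delta)\max_{i\in\CI(x)}|\langle a_i,x\rangle|^{2n-2}+c_2\tilde c_\delta^{\,n}\varepsilon_K$, forces $\alpha:=\max_i|\langle a_i,x\rangle|$ to be bounded below by a \emph{constant} $\beta(n,c_2,\delta)>0$. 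It does not: the superlevel threshold tends to $0$, so this chain only yields $\alpha^{2n-2}\gtrsim \varepsilon_K+\Delta_{\CA}$, a vanishing lower bound (indeed a generic point of the sphere lies in the superlevel set asymptotically and has all correlations of order $D^{-1/2}$). A constant lower bound on $\alpha$ is unobtainable from the function value alone; it requires the \emph{second-order optimality condition}. The paper's route (Proposition~\ref{prop:auxliary_one_big} summed over $i\in\CI(x)$, giving Proposition~\ref{prop:zetainf_lower_bound}) produces $\alpha^2\ge 1-O(\delta)-\bar C(\varepsilon_K/F_{\CA}(x))^{1/n}-3\Delta_{\CA}/F_{\CA}(x)$, and then a bootstrap via $F_{\CA}(x)\ge\|\zeta\|_{\infty}^{2n}$ upgrades the initial weak bound $\alpha^2\ge\frac{1}{8n}$ to a constant lower bound on $F_{\CA}(x)$ and hence to $\alpha^2\ge 1-O(\varepsilon_K^{1/n}+\Delta_{\CA})$. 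Without this, your subsequent uniqueness-of-$j$ argument via \ref{enum:correlation} also fails, since two indices can each have correlation $\beta$ with $x$ while $a_i,a_j$ are nearly orthogonal.

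A second, related problem is the claim that plugging $y=sa_j$ into the Hessian inequality \eqref{eq:derived_second_order_optimality} "forces $1-\alpha^2\le 2\Delta_{\CA}/n$." That inequality necessarily carries the $\delta$- and $\varepsilon_K$-error terms from the RIP and tail estimates, so by itself it can only give $1-\alpha^2\lesssim\delta+\varepsilon_K^{1/n}+\Delta_{\CA}$, which does not vanish when $\Delta_{\CA}=0$. The clean bound $\|x^*-sa_i\|_2^2\le 2\Delta_{\CA}/n$ comes from a different mechanism: geodesic $n$-strong concavity of $F_{\hat\CA}$ on a cap around $sa_i$ together with the comparison $F_{\hat\CA}(x^*)\ge F_{\hat\CA}(sa_i)\ge 1-\Delta_{\CA}$ and a Taylor expansion along the geodesic (Proposition~\ref{prop:concavity_and_local_maxes}). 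Your pointwise second-order analysis should therefore be used only to place $x^*$ inside such a cap; the sharp distance bound and the exact count of $2K$ (at most one critical point per cap by strict concavity, at least one by compactness with value $\ge 1-\Delta_{\CA}$ exceeding the threshold) must then be extracted from the concavity argument, not from the optimality conditions alone.
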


By Theorem \ref{thm:main_result_for_now}, all non-degenerate local maximizers in the superlevel set \eqref{eq:level_set_def}
are close to global maximizers of the noiseless objective $F_{\CA}$. In the noiseless case
$\Delta_{\CA} = 0$, they coincide with global maximizers and the right hand side in \eqref{eq:level_set_def} tends to $0$
as $D\rightarrow \infty$. Hence, we have a clear separation between the global maximizers $\pm a_1,\ldots, \pm a_K$ and
degenerate local maximizers with vanishing objective value, so that the objective
value acts as a certificate for the validity \nolinebreak of \nolinebreak an \nolinebreak identified \nolinebreak maximizer.

Theorem \ref{thm:main_result_for_now} is not a global guarantee because
a random starting point $x_0 \sim \textrm{Unif}(\bbS^{D-1})$, used for starting the projected gradient ascent iteration \eqref{eq:proj_grad},
is, with high probability, not contained in \eqref{eq:level_set_def}.

\begin{remark}[Objective value at random initialization]
For a random sample $x_0 \sim \textrm{Unif}(\bbS^{D-1})$ in the clean case $\Delta_{\CA}=0$, we empirically observe the objective value
$ F_{\CA}(x_0) \approx CK/D^n$ as illustrated in Figure \ref{fig:expected_value}.
As such, we conjecture $\bbE_{x_0, a_1, \ldots, a_k \sim \operatorname{Unif}(\mathbb{S}^{D-1})}[F_{\CA}(x_0)] = \mathcal{O}(\frac{K}{D^n})$.
Comparing the objective to the level set condition in Theorem~\ref{thm:main_result_for_now},
\begin{equation}
\label{eq:level_set_suppl_aux}
\left\{x \in \bbS^{D-1} : F_{\CA}(x) \geq C\frac{K\log^n(K)}{D^n}+ 5\Delta_{\CA}\right\},
\end{equation}
 a random starting point $x_0$ 
 therefore falls short of satisfying \eqref{eq:level_set_suppl_aux} by a $\log^n(K)$-factor only.
 In this sense, Theorem \ref{thm:main_result_for_now} furnishes a ``near-global" guarantee for \ref{prob:nspm} in the random overcomplete \nolinebreak case.
\end{remark}
 
\begin{figure}
	\centering
	\includegraphics[width = .5\textwidth]{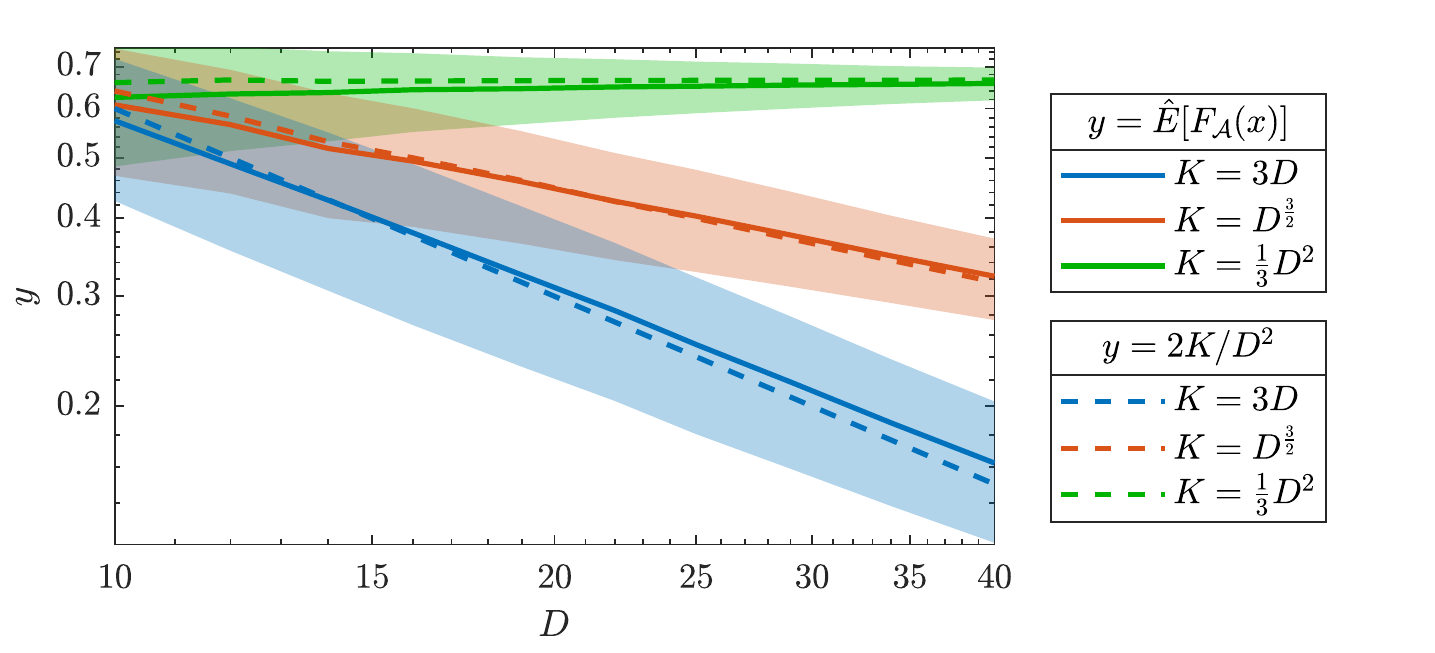}
	\caption{The empirical average of $F_{\CA}(x)$ over 10000 trials of a random ensemble $a_1,\ldots,a_K, x$ consisting of $K+1$ independent copies of the random vector $a \sim \textrm{Unif}(\bbS^{D-1})$ for different scalings of $K$ and $D$.  The shaded areas indicate plus/minus one empirical standard deviation.}
	\label{fig:expected_value}
\end{figure}

\subsection{End-to-end tensor decomposition}

By combining our landscape analyses with bounds for error propagation during deflation, we obtain a theorem about end-to-end tensor decomposition using the SPM algorithm.  
That is, under the conditions of Theorem~\ref{thm:main_result_deterministic} or \ref{thm:main_result_for_now}, a tweaking of SPM (Algorithm~\ref{alg:modified_SPM} in the appendix) recovers the entire CP decomposition   exactly in the noiseless case ($\hat T = T$).
In the noisy regime, it obtains an approximate CP decomposition, and we bound the error in terms of $\Delta_{\CA}$. Due to space constraints, we leave precise descriptions of Algorithm~\ref{alg:modified_SPM} and our deflation bounds
to the supplementary material.

\begin{theorem}[Main result on end-to-end tensor decomposition]\label{thm:SPM_deflation_analysis}
	Let $T = \sum_{i=1}^{K}\lambda_i a_i^{\otimes m} \in \symten{m}$ and $M:=\opmyreshape(T, [D^{n}, D^{m-n}])$.  
	Let  $\sigma_1(M) \geq \ldots \geq \sigma_K(M)$ be the singular values of $M$, and assume $\sigma_K(M) > 0$.
	For other tensor $\hat T \in \symten{m}$, let $\hat M =\opmyreshape(\hat T, [D^{n}, D^{m-n}])$, assume $\Delta_M := \| M - \hat M \|_2 < \frac12\sigma_K(M)$ and let $\hat \Delta_{\CA} = \frac{\Delta_M}{\sigma_K(M) - \Delta_M}$. 
	Suppose that $T$ satisfies the assumptions of either Theorem \ref{thm:main_result_deterministic} or Theorem \ref{thm:main_result_for_now}, define $\Delta_0$ as in the corresponding theorem statement and let $\ell(\Delta_\CA)$ be the corresponding level set threshold.\footnote{In both theorem statements, the level set threshold depends on $\Delta_\CA$.} Then there exist constants $C_1,C_2$, not depending on $\hat T$ or $\Delta_M$, such that if we define $\tilde\Delta_\CA := C_1 \hat \Delta_{\CA} + C_2\sqrt{\hat \Delta_{\CA}}$ the following holds.  Assume $\tilde\Delta_\CA < \Delta_0$  and
	when we apply Algorithm~\ref{alg:modified_SPM} to $\hat{T}$ each run of projected gradient ascent converges to a point with functional value at least $\ell(\tilde\Delta_{\CA})$. Then, letting $(\hat a_i, \hat \lambda_i)_{i \in [K]}$ be the output of Algorithm~\ref{alg:modified_SPM} applied to $\hat T$, there exist a permutation $\pi \in \operatorname{Perm}(K)$ and signs $s_1, \ldots, s_K \in \{-1, +1\}$ with
	\begin{equation*}
	\|s_i a_{\pi(i)} - \hat a_i \|_2 \le \sqrt{\frac{2 \hat \Delta_{\CA}}{n }} \quad \text{and} \quad \SN{\frac{s_i^m}{\lambda_{\pi(i)}} - \frac1{\hat \lambda_i}} \le 
	\frac{2\sqrt{m/n} }{\sigma_K(M)}\sqrt{\hat \Delta_{\CA}}  +  \frac{4}{\sigma_K(M)}\hat \Delta_{\CA} \quad \forall \, i \in [K].
\end{equation*}
	In particular, in the noiseless case ($\Delta_M=0$), Algorithm 1 returns the exact CP decomposition of $T$. 
\end{theorem}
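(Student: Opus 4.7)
The plan is to proceed by induction on the deflation step $k = 1, \ldots, K$, maintaining the invariant that at the start of iteration $k$ the residual tensor $\hat T^{(k)} := \hat T - \sum_{j<k}\hat\lambda_j\hat a_j^{\otimes m}$ satisfies a subspace-error bound below $\Delta_0$, so that the landscape theorem (Theorem~\ref{thm:main_result_deterministic} or~\ref{thm:main_result_for_now}) can be applied afresh. For the base case ($k=1$), Lemma~\ref{lem:error_in_subspace} converts $\Delta_M < \tfrac12\sigma_K(M)$ into a bound on $\Delta_{\CA^{(1)}} \le \hat\Delta_{\CA}$, and then the landscape theorem together with the assumed objective-value lower bound $\ell(\tilde\Delta_{\CA})$ implies that the final iterate $\hat a_1$ is a second-order critical point lying in the superlevel set. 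Hence $\hat a_1$ must be within $\sqrt{2\hat\Delta_{\CA}/n}$ of some $\pm a_{\pi(1)}$, yielding the first half of the claimed bound.

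For the weight recovery, the Wedderburn-style deflation formula used by Algorithm~\ref{alg:modified_SPM} computes $1/\hat\lambda_k$ as the inner product $\langle \hat T^{(k)}, \hat a_k^{\otimes m}\rangle^{-1}$ (up to normalization via the residual subspace; the supplementary material spells this out). I would expand this as
\begin{equation*}
\frac{s_k^m}{\lambda_{\pi(k)}} - \frac{1}{\hat\lambda_k}
= \frac{s_k^m}{\lambda_{\pi(k)}}\bigl(1 - \lambda_{\pi(k)}s_k^{-m}\langle \hat T^{(k)}, \hat a_k^{\otimes m}\rangle/\text{(denom.)}\bigr),
\end{equation*}
and control each contribution using (i) $\|\hat a_k - s_k a_{\pi(k)}\|_2 \le \sqrt{2\hat\Delta_{\CA}/n}$, which contributes the $\sqrt{\hat\Delta_{\CA}}$-term through the $m$-linear expansion of $\hat a_k^{\otimes m} - (s_k a_{\pi(k)})^{\otimes m}$ (Taylor-bounding gives a factor $m$, and after pulling out $\sigma_K(M)^{-1}$ from a reciprocal one arrives at the coefficient $2\sqrt{m/n}/\sigma_K(M)$), and (ii) the residual subspace misfit $\hat\Delta_{\CA}$, which contributes the linear $\hat\Delta_{\CA}/\sigma_K(M)$ term scaled by the absolute constant $4$.

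For the inductive step I need to propagate the subspace error through deflation. Writing
\begin{equation*}
\hat T^{(k+1)} - (T - \sum_{j\le k}\lambda_{\pi(j)}a_{\pi(j)}^{\otimes m}) = (\hat T - T) + \sum_{j\le k}(\lambda_{\pi(j)}a_{\pi(j)}^{\otimes m} - \hat\lambda_j\hat a_j^{\otimes m}),
\end{equation*}
each of the $k$ terms in the sum is bounded in Frobenius norm by a constant multiple of $\sqrt{\hat\Delta_{\CA}}$ via the already-established vector and weight bounds for step $j$. Applying Lemma~\ref{lem:error_in_subspace} to the corresponding flattening—using the new smallest singular value (which degrades by a factor independent of $K$ because the remaining components are still controlled by the frame assumptions \ref{enum:RIP}--\ref{enum:GInverse} or the deterministic incoherence bounds)—yields the recursive bound on $\Delta_{\CA^{(k+1)}}$ that is absorbed into $\tilde\Delta_{\CA} = C_1\hat\Delta_{\CA} + C_2\sqrt{\hat\Delta_{\CA}}$, provided $C_1,C_2$ are chosen large enough (but independently of $k$ and $\hat T$) to swamp the geometric accumulation across all $K$ steps.

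The main obstacle is proving that the recursion closes uniformly in $k$: naively, deflation errors could compound exponentially in $k$. The key is that the partial residual $\hat T^{(k+1)} - T + \sum_{j\le k}\lambda_{\pi(j)}a_{\pi(j)}^{\otimes m}$ is measured against a subspace of dimension $K-k$ whose associated flattening's $K$-th singular value is comparable to $\sigma_K(M)$ up to constants depending only on the frame conditions, so the per-step degradation is additive rather than multiplicative. This is precisely where the detailed deflation bounds from the supplementary material do the heavy lifting, and where the choice of $C_1,C_2$ is dictated. Once this uniform control is secured, induction closes, and in the noiseless case $\Delta_M=0$ implies $\hat\Delta_{\CA}=0$, so Theorem~\ref{thm:main_result_deterministic}/\ref{thm:main_result_for_now} forces $\hat a_k = s_k a_{\pi(k)}$ exactly and the Wedderburn formula recovers $\hat\lambda_k = s_k^m \lambda_{\pi(k)}$, yielding the exact CP decomposition.
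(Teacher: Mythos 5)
Your plan analyzes a different algorithm from the one the theorem is about, and this matters for both conclusions. Algorithm~\ref{alg:modified_SPM} never forms a residual tensor $\hat T^{(k)} = \hat T - \sum_{j<k}\hat\lambda_j\hat a_j^{\otimes m}$ and never re-flattens: it keeps the original truncated flattening $\hat M_K$ throughout and deflates at the \emph{subspace} level, $\hat\CA_{k+1} = \hat\CA_k \cap ((\hat M_K^\top)^\dagger\opvec(\hat a_k^{\otimes m-n}))^\perp$. The reason this works is the algebraic identity $(A^{\bullet n})^\top (M^\top)^\dagger A^{\bullet m-n} = \Lambda^{-1}$, which makes the deflation direction exactly orthogonal to $a_i^{\otimes n}$ for $i\neq k$ even when the components are correlated. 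Your route --- re-applying Lemma~\ref{lem:error_in_subspace} to the flattening of a residual tensor whose smallest nonzero singular value is ``comparable to $\sigma_K(M)$ up to constants'' --- asserts exactly the uniform control that needs to be proved, and it is not clear how to establish it: the $(K-k)$-th singular value of the residual flattening is not controlled by the frame conditions in any obvious way. The paper's Proposition~\ref{lem:deflation_error_bounds} instead bounds $\|P_{\CA_k}-P_{\hat\CA_k}\|_2$ directly, with an error that is explicitly additive in $\sqrt{\sum_{j<k}\|a_{\pi(j)}-s_j\hat a_j\|^2}\le\sqrt{2(m-n)K\hat\Delta_\CA/n}$; this is where the $C_2\sqrt{\hat\Delta_\CA}$ term in $\tilde\Delta_\CA$ comes from, and it is what makes the recursion close uniformly in $k$.

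Two further gaps. First, your weight formula $1/\hat\lambda_k = \langle\hat T^{(k)},\hat a_k^{\otimes m}\rangle^{-1}$ is not the one in the algorithm and is biased for correlated components (even with exact data, $\langle T, a_k^{\otimes m}\rangle = \sum_j\lambda_j\langle a_j,a_k\rangle^m\neq\lambda_k$); the actual formula \eqref{eq:lambda_formula} uses the pseudo-inverse $(\hat M_K^\top)^\dagger$ precisely to de-bias this, and its stability is controlled by Lemma~\ref{lem:pseudo-inverse-bound}, which yields the $4\hat\Delta_\CA/\sigma_K(M)$ term in the stated reciprocal bound. Second, your induction would only give $\|\hat a_k - s_k a_{\pi(k)}\|\le\sqrt{2\tilde\Delta_{\CA}/n}$ for $k>1$ (the landscape theorem applied to the deflated, noisier subspace), which degrades with $k$ and is weaker than the claimed uniform $\sqrt{2\hat\Delta_\CA/n}$. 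The paper recovers the sharp bound via the two-stage structure of the algorithm: the deflated functional $F_{\hat\CA_k}$ only locates the cap, after which the power method is re-run on the \emph{original} functional $F_{\hat\CA}$ starting from $\tilde a_k$, converging to the unique maximizer in that cap and resetting the error to one governed by $\hat\Delta_\CA$. Your proposal omits this polish step, so even granting your recursion it would not prove the theorem as stated.
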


We conclude that the Subspace Power Method, with an initialization scheme for \ref{prob:nspm} that gives $x\in \bbR^D$ where $F_{\hat{ \mathcal{A}}}(x) > \ell(\Delta_{\CA})$, is a guaranteed algorithm for low-rank tensor decomposition.

\section{Numerical experiments}
\label{sec:numerical}

\begin{figure}
\begin{subfigure}{\textwidth}
  \centering
  \includegraphics[width=\textwidth]{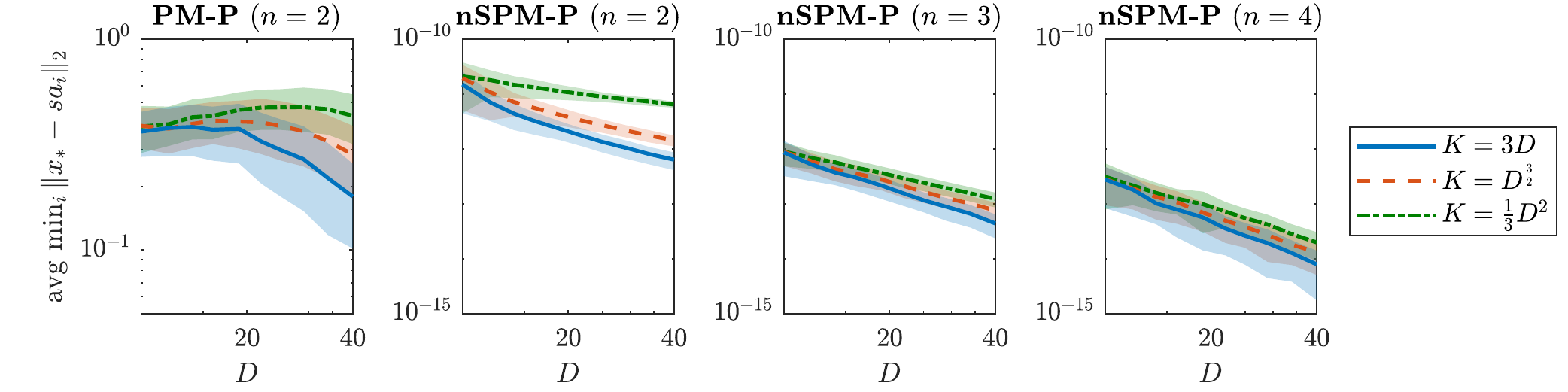}
  \caption{Recovering individual tensor components $a_i$ using \ref{prob:PM} and \ref{prob:nspm} in the noiseless case;}
  \label{fig:PM_vs_SPM_L}
\end{subfigure}
\begin{subfigure}{.31\linewidth}
	\centering
	\includegraphics[width=\textwidth]{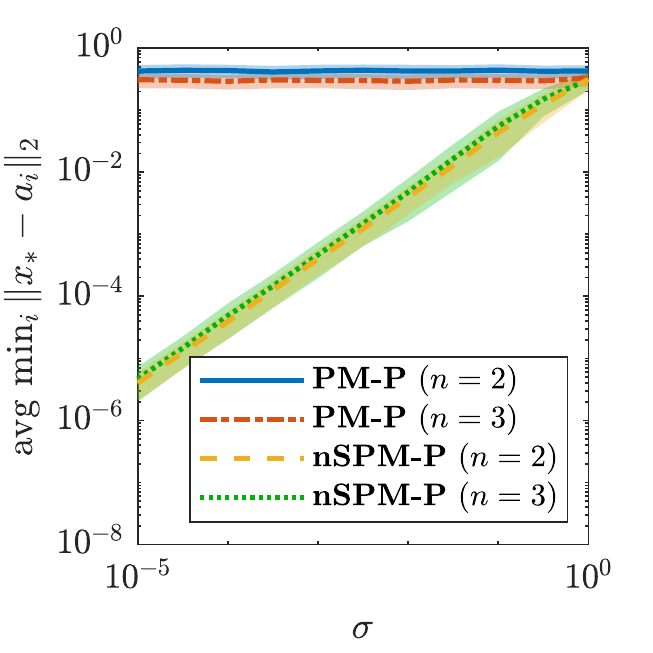}
	\caption{\ref{prob:PM} and \ref{prob:nspm} applied to noisy tensors;}
	\label{fig:PM_vs_SPM_noise}
\end{subfigure}
\begin{subfigure}{.68\linewidth}
	\centering
	\includegraphics[width=\textwidth]{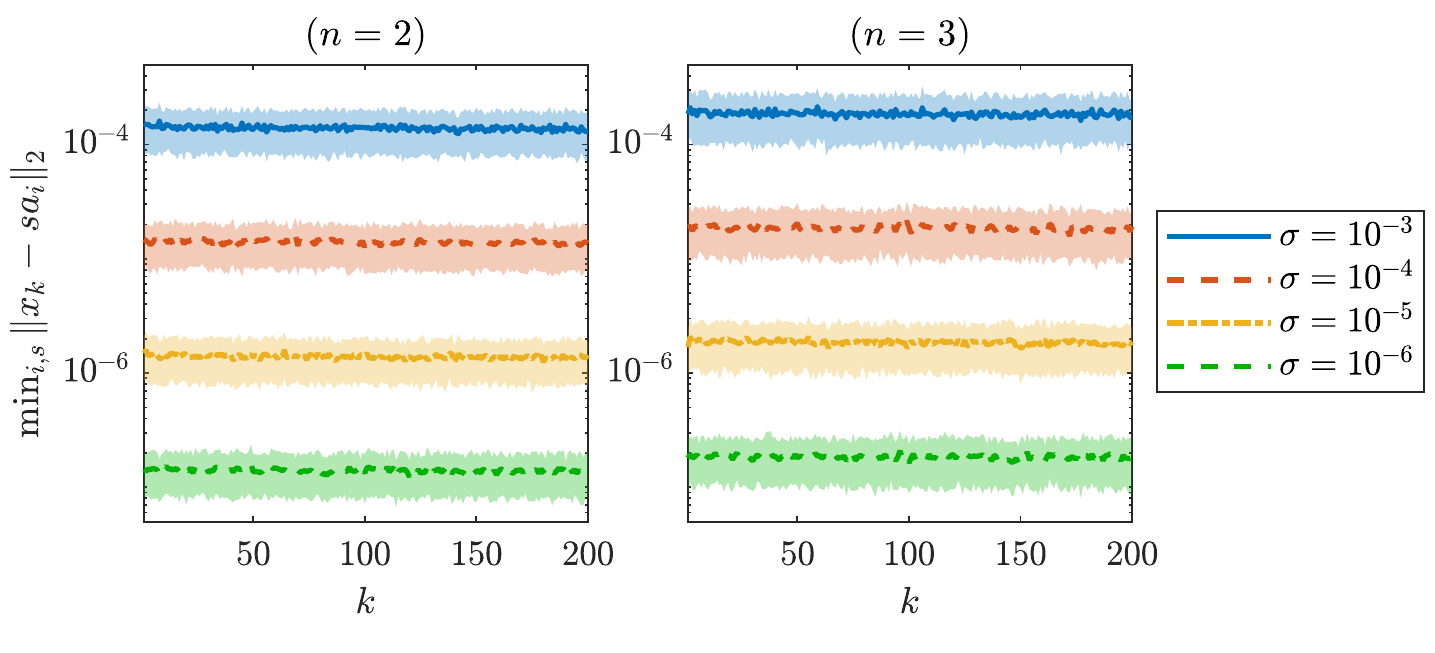}
	\caption{Error of individual recovery with \ref{prob:nspm} after $k$ deflations.\newline}
	\label{fig:SPM_deflation}
\end{subfigure}

\caption{Numerical results for symmetric tensor decomposition using \ref{prob:PM} versus \ref{prob:nspm}. The shaded areas indicate plus/minus one standard deviation. The experiments are described in Section~\ref{sec:numerical}.}
\label{fig:PM_vs_SPM_numsect}
\end{figure}

Here we present numerical experiments that corroborate the theoretical findings of Section \ref{sec:landscape_analysis}.
We illustrate that  SPM identifies exact tensor components in the noiseless case (in contrast to PM),
and that SPM behaves robustly in noisy tensor decomposition.
We use the implementation of SPM of the first and third authors, available at \verb|https://github.com/joaompereira/SPM|, which is licensed under the MIT license.
All of our experiments presented below may be conducted on a standard laptop computer within a few hours.
For further
numerical experiments, we refer the reader to
\cite{kileel2019subspace}, where SPM was tested in a variety of other scenarios, justifying it as a possible replacement for
state-of-the-art symmetric tensor decomposition methods such as FOOBI \cite{foobi2007} or Tensorlab \cite{vervliet2016tensorlab}.

\paragraph{Global optimizers and noise robustness.}
In the first set of experiments, we are interested in the recovery of individual tensor components $a_i$
for different $D$, $K$ and $m$ from noisy approximately low rank tensors.
With $m=2n$ as the tensor order, we create noiseless tensors with $a_i \sim \textrm{Unif}(\bbS^{D-1})$) as the tensor components and $\lambda_i =  \sqrt{D^m / K}\,\tilde \lambda_i $ as the tensor weights, where $\tilde \lambda_i \sim \text{Unif}([1/2, 2])$. This way, the variance of each entry of the tensor is about $1$.
We construct noisy tensors $\hat T \approx T$ by adding independent copies of $\epsilon \sim \CN(0,m! \sigma^2)$
to each entry of the tensor and then project onto $\symten{m}$. The entries with all distint indices of the projected noise tensor have variance $\sigma^2$;
the variance of the remaining entries is also a multiple of $\sigma^2$ and the number of such entries is $o(D^m)$.

After constructing the tensor, we sample $10$ points from $\textrm{Unif}(\bbS^{D-1})$ as initializations for
 projected gradient ascent applied to \ref{prob:PM} and \ref{prob:nspm}. We iterate until convergence
and record the distance between the final iterate and the closest $\pm a_i$ among the tensor components.
Averages and standard deviations (represented by shaded areas) over 1000 distances (10 distances per tensor, 100 tensors) are depicted in Figures \ref{fig:PM_vs_SPM_L} and \ref{fig:PM_vs_SPM_noise}.
In Figure~\ref{fig:PM_vs_SPM_L}, we use noiseless tensors with $\sigma = 0$, while in Figure~\ref{fig:PM_vs_SPM_noise} $\sigma$ ranges from $10^{-5}$ to $1$, and the plots for $4^\text{th}$ and $6^\text{th}$ order tensors are superposed. We set $D=20, K=100$ for the $4^\text{th}$ order tensor $(n=2)$ and $D=10, K=100$ for the $6^\text{th}$ order $(n=3)$.

Figure \ref{fig:PM_vs_SPM_L} illustrates that projected gradient ascent applied to \ref{prob:nspm}
always converges to the global maxima in the noiseless case (up to numerical error),
provided the scaling is $K = \CO(D^{\lfloor m/2\rfloor}) = \CO(D^{2})$ and the constant adheres to the rank constraints
pointed out in \cite[Prop.~4.2]{kileel2019subspace}. This is in agreement with Theorem \ref{thm:main_result_for_now},
which shows local maxima with sufficiently large objective have to coincide with global maximizers. In the noisy case,
Figure \ref{fig:PM_vs_SPM_noise} illustrates the robustness of \ref{prob:nspm}, giving an error of $\CO(\sigma)$.
In contrast to \ref{prob:nspm}, the \ref{prob:PM} objective suffers from a large bias and does not
recover exact tensor components. The effect of the bias even dominates
errors induced by moderately-sized entrywise noise.

\paragraph{Deflation with SPM.}
In the second experiment, we recover complete tensors by using the deflation procedure described in \cite{kileel2019subspace}. We are
mostly interested in the noisy case, since deflation with exact $a_i$'s, as
identified by \ref{prob:nspm} in the noiseless case, does not induce any additional error.

We test $4^{\text{th}}$ order tensors $(n=2)$ with $D = 40, K = 200$ and  $6^{\text{th}}$ order tensors $(n=3)$ with $D = 15, K = 200$, vary the noise level $\sigma$, and construct random tensors as in the previous experiment.
Figure \ref{fig:SPM_deflation} plots the average error over 100 repetitions of successively recovered tensor components,
where the $x$-axis ranges from the first recovered component at $1$ to the last component at index $300$. The figure illustrates
that \ref{prob:nspm} combined with modified deflation allows for recovering all tensor components
up to an error of $\CO(\sigma)$.
Surprisingly, we do not observe error propagation, despite the fact that noisy recovered tensor components
are being used within each deflation step.

\section{Conclusion}
We presented a quantitative picture
for the optimization landscape of a recent formulation \cite{kileel2019subspace} of the non-convex, high-dimensional problem of symmetric tensor decomposition.
We identified different assumptions on the tensor components $a_1, \ldots, a_K$ and bounds on the rank $K$ so that all
second-order critical points of the optimization problem \ref{prob:SPM} with sufficiently high functional value must equal one of the input tensor's CP components.
In Theorem~\ref{thm:main_result_deterministic} the assumptions were deterministic frame and low-rank conditions, while in Theorem~\ref{thm:main_result_for_now} the hypotheses were random components and an overcomplete rank.
Our proofs accommodated noise in the input tensor's entries, and we 
obtained robust results for only by analyzing the program \ref{prob:nspm}.
 Our analysis has algorithmic implications.  As the Subspace Power Method of \cite{kileel2019subspace} is guaranteed to converge to second-order critical points, by combining with analysis of deflation, it follows that SPM (with sufficient initialization) is provable under our assumptions.  
 In Theorem~\ref{thm:SPM_deflation_analysis} we gave guarantees for end-to-end  decomposition using \nolinebreak SPM.

Compared to the usual power method functional, the  novelty of the SPM functional is the de-biasing role played by the inverse of a Grammian matrix recording correlations between rank-1 tensors (recall Eq.~\eqref{eq:grammian-def}).
This Grammian matrix is responsible for many of the SPM functional's desirable properties, but it also complicated our analysis.
We showed that there are theoretical and numerical advantages in using the SPM functional for tensor decomposition over its usual power method counterpart.

This paper suggests several directions for future research:
\begin{itemize}
\item What are the average-case properties of the Grammian matrix $G_n$?  We formulated Conjecture~\ref{conj:grammians} about the minimal eigenvalue of $G_n$.  How about the other eigenvalues?
\item In the random overcomplete setting, if we assume no noise ($\Delta_{\CA}=0$) then can we dispense with the superlevel condition in  Theorem~\ref{thm:main_result_for_now}?  This would give a fully global guarantee.
\item Why do we see no error propagation when using deflation and sequential solves of \ref{prob:nspm} for CP  decomposition?
Can errors accumulate if we choose the noise deterministically?
\end{itemize}

\section*{Acknowledgements and funding disclosure}
We thank Massimo Fornasier for suggesting this collaboration in the beginning, after a fortuitous exchange in  Oaxaca, Mexico at the \textit{Computational Harmonic Analysis and Data Science} workshop (October 2019).
J.K. acknowledges partial support from start-up grants provided by the College of Natural Sciences and Oden Institute
for Computational Engineering and Sciences at UT Austin. 

\bibliographystyle{abbrv}
{\small
\bibliography{references_all.bib}
}

\appendix

\renewcommand{\theequation}{S.\arabic{equation}}
\setcounter{equation}{0}
\renewcommand{\thetheorem}{S.\arabic{theorem}}
\setcounter{theorem}{0}
\renewcommand{\thealgorithm}{S.\arabic{algorithm}}

\section*{\LARGE{Supplementary material}}

\smallskip

In these appendices, we supply proofs for the  statements made in the main body of the paper. Results and equations solely within this supplementary material are labeled with a leading \nolinebreak S. 
%

\medskip

\textbf{Organization.}
\vspace{-0.5em}
\begin{itemize}[label=\textup{Section}]
    \item \ref{sec:proof_ingredients}: Overview of main proofs
    \item \ref{app:setup}: Setup and notation
    \item \ref{sec:suppl_isolated_statements}: Proofs of lemmas in Sections~\ref{sec:properties_SPM_objective} and \ref{sec:landscape_analysis}
    \item \ref{sec:suppl_preparatory_material}: Derivation of Riemannian derivatives and optimality conditions for \ref{prob:nspm}
    \item \ref{sec:uniform_ensembles}: Preparatory results on random ensembles over the sphere and Conjecture~\ref{conj:grammians}
    \item \ref{subsec:prep_material_proofs_main}: Technical tools for the proofs of Theorems \ref{thm:main_result_deterministic} and
     \ref{thm:main_result_for_now}
    \item \ref{subsec:proof_theorem_main_deterministic}: Proof of Theorem \ref{thm:main_result_deterministic}
    \item \ref{subsec:proof_theorem_main_overcomplete}: Proof of Theorem \ref{thm:main_result_for_now}
    \item \ref{sec:deflation}: Deflation bounds and proof of Theorem~\ref{thm:SPM_deflation_analysis} 
\end{itemize}

\smallskip

\section{Overview of main proofs}
\label{sec:proof_ingredients}
Before commencing with the exact details, let us first give a high-level overview of the main steps in the proofs of Theorems \ref{thm:main_result_deterministic} and \ref{thm:main_result_for_now}.
We recall the definition $A := [a_1|\ldots|a_K] \in \bbR^{D\times K}$.
The proofs consist of three steps. We begin by plugging in $y = a_i$, where $i$ is the index such that $|\langle a_i, x\rangle| = \|A^\top x\|_{\infty}$, in  \eqref{eq:derived_second_order_optimality}. Then using either incoherence constants $\rho_s$ and $\rho_n$ for Theorem \ref{thm:main_result_deterministic}, or using Assumptions \ref{enum:RIP}-\ref{enum:GInverse} for Theorem \ref{thm:main_result_for_now},
a suitable rearrangement of \eqref{eq:derived_second_order_optimality} immediately gives
\begin{align}
\label{eq:coefficient_inequality}
\|A^\top x\|_{\infty}^2 \geq  \begin{cases}
1- 2\rho_2 - 2\frac{\rho_n}{1-2\rho_n} - 3\frac{\Delta_{\CA}}{F_{\CA}(x)},& \textrm{for Theorem \ref{thm:main_result_deterministic}},\\
1- 2\delta - \bar C\left(\frac{\varepsilon_K}{F_{\CA}(x)}\right)^{\!\frac{1}{n}} - 3\frac{\Delta_{\CA}}{F_{\CA}(x)},& \textrm{for Theorem \ref{thm:main_result_for_now}},
\end{cases}
\end{align}
where the constant $\bar C$ in the second case depends on $n$, $c_1$ and $c_2$.
For Theorem \ref{thm:main_result_deterministic}, this immediately
suggests the largest correlation coefficient satisfies $\|A^\top x\|_{\infty} \geq 1- \CO(\rho_2+\rho_n + \Delta_{\CA})$,
provided $F_{\CA}(x) \gg \Delta_{\CA}$ as assumed in the statement. In the case of Theorem \ref{thm:main_result_for_now}, and
within the level set $F_{\hat \CA}(x) \geq C\varepsilon_K + 5 \Delta_{\CA}$, \eqref{eq:coefficient_inequality} first implies
that $\|A^\top x\|_{\infty}$ is bounded from below by a constant that is independent of $D,K$.
However, by $F_{\CA}(x) \geq  \|A^\top x\|_{\infty}^{2n}$, we can update the
lower bound on $F_{\CA}(x)$ and re-use it again in \eqref{eq:derived_second_order_optimality}. The resulting bootstrap argument implies
$\|A^\top x\|_{\infty} \geq 1 - \CO(\varepsilon_K^{1/n} + \Delta_{\CA})$, i.e., there
must be at least one large correlation coefficient.

The second step is to show that the Riemannian Hessian $\nabla^2_{\bbS^{D-1}}F_{\hat \CA}$
is negative definite on spherical caps around the tensor components $\pm a_i$.
We prove, uniformly for all unit norm $z \perp x$, the bound
\begin{align}
\label{eq:strict_concavity}
\frac{1}{2n}z^\top \nabla_{\bbS^{D-1}}^2 \hat f(x) z \leq \begin{cases}
-1+ n\frac{2-\rho_n}{1-\rho_n}\eta + \frac{3n-2}{1-\rho_n}\rho_n + 4\Delta_{\CA},&  \textrm{for Theorem \ref{thm:main_result_deterministic}},\smallskip\\
-1 +  \bar C\sqrt{\eta} + \tilde C \varepsilon_K^{\frac{n-1}{2n}} + 4\Delta_{\CA},&  \textrm{for Theorem \ref{thm:main_result_for_now}},
\end{cases}
\end{align}
where $\eta := 1-\max_{i \in [K]}\langle a_i, x\rangle^{2n}$, and $\bar C,\tilde C$ depend only on $n,c_1,c_2$ and $c_{\delta}$. 
Hence, the Riemannian Hessian is negative definite and $F_{\hat \CA}$ is strictly concave in spherical caps around $\pm a_i$.

In a third step, we consider a spherical cap $B_{sa_i}$ ($s\in \{-1,1\}$) around $s a_i$.
By compactness, a local maximizer $x^*$ of $F_{\hat \CA}$ must exist within the cap $B_{sa_i}$
and by strict concavity it must be unique. Moreover, its objective is bounded from below via $F_{\hat \CA}(x^*)\geq F_{\hat \CA}(a_i) \geq 1-\Delta_{\CA}$.
Using a Taylor expansion of the function $g(t) = F_{\hat \CA}(\gamma(t))$ where $\gamma$ is the geodesic path from $x^*$ to $a_i$,
and combining this with the upper bound in \eqref{eq:strict_concavity}, we obtain $\|x^*-sa_i\|_2^2 \leq 2\Delta_{\CA}/n$.
Since we can start the argument with a spherical cap which is strictly larger than $\{y \in \bbS^{D-1} : \langle y, sa_i\rangle \geq 1-\Delta_{\CA}/n\}$, it follows that $x^*$ is not on the boundary
of $B_{sa_i}$, and thus $x^*$ is a local maximizer of \ref{prob:nspm}. 

We complete the proof by noting that all local maximizers of \ref{prob:nspm} contained
in the superlevel set, which is specified in the respective theorem, satisfy
$\|A^\top x\|_{\infty} \geq 1 - \epsilon$, where $\epsilon$ depends on $\Delta_{\CA}$ and $\rho_2, \rho_n$, or $\varepsilon_K$.
In particular, they are contained in spherical caps $B_{s a_i}$ constructed in the third step.

\medskip

\section{Setup and notation} \label{app:setup}

\textbf{Additional notation.} Besides the notation in Section~\ref{sec:notation}, we will use the following in the rest of the supplementary materials.
\begin{itemize}\setlength\itemsep{0.5em}
\item For tensors $S, T$, we abbreviate their tensor product via concatenation, $ST := S \otimes T$, and the tensor power by \nolinebreak $S^{n} := S^{\otimes n}$. (It will be clear from context when concatenation denotes matrix multiplication instead.)
\item We use $\operatorname{Vec}(T)$ to denote the vectorization of a tensor $T \in \CT_D^n$, i.e., $\operatorname{Vec}(T) := \operatorname{Reshape}(T, D^n)$.
Also, $\operatorname{Tensor}(-) := \operatorname{Reshape}(-, [D, \ldots, D])$ indicates the inverse mapping, which tensorizes a vector.

\item We denote the (columnwise) Khatri-Rao power by a bullet: if $a \in \mathbb{R}^D$ then $a^{\bullet n} := \operatorname{Vec}(a^{\otimes n}) \in \mathbb{R}^{D^n}$.
If $A = [a_1 | \ldots | a_K ] \in \mathbb{R}^{D \times K}$ then  $A^{\bullet n} := [ a_1^{\bullet n}|\ldots| a_K^{\bullet n}] \in \bbR^{D^n\times K}$.
\item We write the Tucker tensor product as follows:
 for $T \in \CT_D^n$ and $M^{(1)}, \ldots, M^{(n)} \in \mathbb{R}^{E \times D}$, the Tucker product is denoted $T \times_1 M^{(1)} \times_2 \ldots \times_n M^{(n)} \in \CT_{E}^n$ and defined by
\begin{equation*}
\left(T \times_1 M^{(1)} \times_2 \ldots \times_n M^{(n)}\right)_{i_1, \ldots, i_n} := \sum_{j_1=1}^{D} \ldots \sum_{j_n=1}^{D} T_{j_1, \ldots, j_n} M^{(1)}_{i_1, j_1} \ldots M^{(n)}_{i_n,j_n}.
\end{equation*}
\end{itemize}

\smallskip

The next definition serves to collect together the key quantities appearing later in our proofs.

\smallskip

\begin{definition} \label{def:basic-setup} 
   Let $\{ a_i : i \in [K]\} \subseteq \mathbb{S}^{D-1}$ be a given set of unit-norm vectors.  Write $A = [a_1 | \ldots | a_K] \in \mathbb{R}^{D \times K}$.  Consider the corresponding set of tensors $\{a_i^n : i \in [K]\}$, and the subspace $\mathcal{A} = \operatorname{Span}\{a_i^n : i \in [K]\} \subseteq \symten{n}$.
   Let $P_{\CA} : \CT^n_D \rightarrow \CA$ denote orthogonal projection onto $\CA$.
   Write $A^{\bullet n} := [\operatorname{Vec}(a_1^{n}) | \ldots | \operatorname{Vec}(a_K^{n})] \in \mathbb{R}^{D^n \times K}$ for the $n$-th columnwise Khatri-Rao power of $A$.
   \begin{enumerate}\setlength\itemsep{1pt}
       \item Define the $n$-th \textit{Grammian matrix} to be  $G_n = (A^{\bullet n})^{\top} A^{\bullet n} \in \Sym(\CT_K^2)$.  Thus
        \begin{align}
       (G_n)_{ij} := \langle a_i^n, a_j^n \rangle = \langle a_i, a_j \rangle^n \quad \quad \forall \, i, j \in [K].
          \end{align}
       \item Let $x \in \mathbb{S}^{D-1}$.  Define the \textit{correlation coefficients} of $x$ with respect to $\{a_i : i \in [K]\}$ to be $\zeta = \zeta(x) := A^{\top}x  \in \mathbb{R}^K$.  Thus
   \begin{align}
       \zeta_i := \langle x, a_i \rangle  \quad \quad \forall \, i \in [K].
   \end{align}
       \item Let $x, y \in \mathbb{S}^{D-1}$ and $s \in \{0, 1, \ldots, n\}$.  Define the \textit{correlation coefficients} of $x^s y^{n-s}$ with respect to $\{a_i^n : i \in [K]\}$ to be $\eta = \eta(x^s y^{n-s}) = (A^{\bullet n})^{\top} \operatorname{Vec}(x^{s}y^{n-s}) \in \mathbb{R}^K$.  Thus
       \begin{align}
           \eta_i := \langle x^s y^{n-s}, a_i^n \rangle = \langle x, a_i \rangle^s \langle y, a_i \rangle^{n-s} \quad \quad \forall \, i \in [K].
       \end{align}
       \item Let $x \in \mathbb{S}^{D-1}$.  Define the \textit{expansion coefficients} of $P_{\CA}(x^n)$ with respect to $\{a_i^n : i \in [K]\}$ to be $\sigma = \sigma(x) \in \mathbb{R}^K$ so that the following expansion holds
       \begin{equation} \label{eq:def-sigma}
       P_{\CA}(x^n) =: \sum_{i=1}^K \sigma_i a_i^n.
       \end{equation}
       (If $a_1^n, \ldots, a_K^n$ are linearly independent, then Eq.~\eqref{eq:def-sigma} uniquely defines $\sigma$; otherwise, we fix once and for all a choice of $\sigma$ so that \eqref{eq:def-sigma} holds.)
       \item Let $x \in \mathbb{S}^{D-1}$.  For $i \in [K]$ and $s \in \{0, \ldots, n\}$,
       define the \textit{remainder term} for $P_{\CA}(x^n)$ with respect to $x^{n-s}a_i^s$ to be $R_{i,s} = R_{i,s}(x) \in \mathbb{R}$ so that the following equation holds
       \begin{equation} \label{eq:remainder}
       \langle P_{\CA}(x^n), x^{n-s}a_i^s \rangle =: \sigma_i \zeta_i^{n-s} + R_{i,s}.
       \end{equation}
       Last, define the maximal $s$-th \textit{remainder size} to be $R_s = R_s(x) :=  \max_{i \in [K]} |R_{i,s}| \in \mathbb{R}$.
   \end{enumerate}
\end{definition}

\smallskip

\begin{remark}
Arguably all the quantities in Definition~\ref{def:basic-setup} are natural to consider, with the possible exception of $R_{i,s}$ and $R_s$.  In fact these terms play a key role in our proofs; see inequality \eqref{eq:starting_point_max_correlation_coeff} in Proposition~\ref{prop:auxliary_one_big}.  We call $R_{i,s}$ a ``remainder" because, under the assumptions of Theorems~\ref{thm:main_result_deterministic} and \ref{thm:main_result_for_now}, as we will see it holds that $\langle P_{\CA}(x^n), x^{n-s} a_i^s \rangle \approx \sigma_i \zeta_i^{n-s}$, whence $R_{i,s}$ and $R_s$ are small.
\end{remark}

\medskip

\section{Proofs of lemmas in Sections~\ref{sec:properties_SPM_objective} and \ref{sec:landscape_analysis}} 
\label{sec:suppl_isolated_statements}
In this section, we prove the various isolated results stated in Sections~\ref{sec:properties_SPM_objective} and \ref{sec:landscape_analysis} of the main body.

\subsection{Proof of Lemma~\ref{lem:error_in_subspace}}

\begin{proof}
	We note that \eqref{eq:subspace_bound} is valid for any matrices $W, \hat W \in \bbR^{p \times q}$, such that the rank of $W$ is $r$. That is, defining $\Delta_W:= \|W-\hat W\|_2<\sigma_r(W)$ and $\Im(W), \Im_r(\hat W)$ as in the statement, we have
	\begin{align}
		\label{eq:subspace_bound_supplementary_W}
		\N{P_{\Im(W)}-P_{\Im_r(\hat W)}}_2 \leq \frac{\Delta_W}{\sigma_r(W) - \Delta_W}.
	\end{align}
	We first show the result if $W, \hat W$ are symmetric matrices. In that case, we denote the eigenvalue decomposition of $W,\hat W$ as
	\begin{align*}
		W = \begin{pmatrix}
			V_1 & V_2
		\end{pmatrix}
		\begin{pmatrix}
			\Lambda_1 & 0 \\
			0 & \Lambda_2
		\end{pmatrix}
		\begin{pmatrix}
			V_1^\top \\
			V_2^\top
		\end{pmatrix}
		\quad \textrm{ and }\quad
		\hat W = \begin{pmatrix}
			\hat V_1 & \hat V_2
		\end{pmatrix}
		\begin{pmatrix}
			\hat \Lambda_1 & 0 \\
			0 & \hat \Lambda_2
		\end{pmatrix}
		\begin{pmatrix}
			\hat V_1^\top \\
			\hat V_2^\top
		\end{pmatrix},
	\end{align*}
	where $\Lambda_1$ and $\hat\Lambda_1$ are diagonal matrices with the largest $K$ eigenvalues, in magnitude, of $W$ and $\hat W$ on the diagonal, respectively. Letting $\Sigma_1$ be a diagonal matrix with the singular values of $W$, we have $|\Lambda_1| = \Sigma_1$, and the same holds for $\hat W$.
	From the eigenvector decomposition, we can write $P_{\Im(W)} = V_1 V_1^\top$ and $P_{\Im_r(\hat W)} = \hat V_1\hat V_1^\top$. Lemma 2.3 in \cite{chen2016perturbation} implies that
	$$\N{P_{\Im(W)} - P_{\Im_r(\hat W)}}_2 = \|V_1^\top \hat V_2\|_2.$$
	Then \eqref{eq:subspace_bound_supplementary_W} holding for symmetric matrices follows from
	\begin{align*}
	\Delta_W &= \|W - \hat W\|_2 \ge \|V_1^\top (W - \hat W) \hat V_2\|_2 = \|\Lambda_1 V_1^\top \hat V_2 - V_1^\top \hat V_2 \hat \Lambda_2\|\\
	&\ge \|\Lambda_1 V_1^\top \hat V_2\|_2 - \|V_1^\top \hat V_2 \hat \Lambda_2\|_2
	\ge	\sigma_r(W)\|V_1^\top \hat V_2\|_2 - \|\hat \Lambda_2\|_2 \|V_1^\top \hat V_2\|_2\\
	&\ge (\sigma_r(W) -\Delta_W)\N{P_{\Im(W)} - P_{\Im_r(\hat W)}}_2.
	\end{align*}
	Here we used the following facts:
	\begin{itemize}
		\item $\|W - \hat W\|_2 \ge \|V_1^\top (W - \hat W) \hat V_2\|_2$ since $V_1$ and $V_2$ have orthonormal columns.
		\item Since $\hat V_2^\top V_1 (\Lambda_1^2 - \sigma_r(W)^2 \Id_r) V_1^\top \hat V_2$ is positive semi-definite, we have
		$$\|\Lambda_1 V_1^\top \hat V_2\|_2 = \sqrt{\|\hat V_2^\top V_1 \Lambda_1^2 V_1^\top \hat V_2\|_2}\ge \sigma_r(W) \sqrt{\|\hat V_2^\top V_1 V_1^\top \hat V_2\|_2} = \sigma_r(W) \|V_1^\top \hat V_2\|_2.$$
		\item By Weyl's inequality \cite{weyl1912asymptotische}, it holds $\|\hat \Lambda_2\|\le \Delta_W$.
	\end{itemize}

	Now, if $W, \hat W$ are not symmetric, we apply the result for the symmetric matrices
	\begin{align*}
	H =	\begin{pmatrix}
		0 & W \\ W^\top & 0
	\end{pmatrix}
	\quad \textrm{ and }\quad
	\hat H = \begin{pmatrix}
			0 & \hat W \\ \hat W^\top & 0
		\end{pmatrix}.
	\end{align*}
	Denote the singular vector decomposition of $W$ and $\hat W$ by $W = U \Sigma V^\top$ and $\hat W = \hat U \hat \Sigma \hat V^\top$. We can further split $U= (U_1, U_2)$, $\Sigma = \operatorname{Blockdiag}(\Sigma_1, \Sigma_2)$ and $V = (U_1, U_2)$, where $U_1, \Sigma_1, V_1$ correspond to the largest $K$ singular values of $W$, and split $\hat U, \hat \Sigma, \hat V$ analogously. The eigen-decomposition of $H$ is related to the singular vector decomposition of $W$ as follows
	\begin{equation}\label{eq:Heigdec}
		H = \begin{pmatrix}
			\frac{1}{\sqrt2} U & \frac{1}{\sqrt2} U\\
			\frac{1}{\sqrt2} V & -\frac{1}{\sqrt2} V
		\end{pmatrix}
		\begin{pmatrix}
			\Sigma & 0 \\
			0 & -\Sigma
		\end{pmatrix}
		\begin{pmatrix}
			\frac{1}{\sqrt2} U^\top & \frac{1}{\sqrt2} V^\top\\
			\frac{1}{\sqrt2} U^\top & -\frac{1}{\sqrt2} V^\top
		\end{pmatrix},
	\end{equation}
	and an analogous relation holds between $\hat H$ and $\hat W$. It can be checked that this is an eigen-decomposition by using that $W = U \Sigma V^\top$ to show that the RHS in \eqref{eq:Heigdec} is equal to $H$, and use that $U$ and $V$ have orthonormal columns to show$$\begin{pmatrix}
		\frac{1}{\sqrt2} U & \frac{1}{\sqrt2} U\\
		\frac{1}{\sqrt2} V & -\frac{1}{\sqrt2} V
	\end{pmatrix}^\top\begin{pmatrix}
	\frac{1}{\sqrt2} U & \frac{1}{\sqrt2} U\\
	\frac{1}{\sqrt2} V & -\frac{1}{\sqrt2} V
\end{pmatrix} = \Id_{2 r}.$$
Thus, if $W$ has rank $r$, $H$ has rank $2r$, and since $H$ is symmetric,
	\begin{equation}
		\label{eq:subspace_bound_supplementary_H}
		\N{P_{\Im(H)}-P_{\Im_{2r}(\hat H)}}_2 \leq \frac{\Delta_H}{\sigma_{2r}(H) - \Delta_H}
	\end{equation}
	Regarding the right-hand side, denote $\delta W = \hat W - W$. Equation  \eqref{eq:Heigdec} implies $\sigma_{2r}(H) = |\lambda_{2r}(H)| = \sigma_{r}(W)$, while
	\begin{align*}
	 \Delta_H &= \|H - \hat H\|_2 = \N{\begin{pmatrix}
	 		0 & \delta W \\ \delta W^\top & 0
	 \end{pmatrix}}_2 = \N{\begin{pmatrix}
	 0 & \delta W \\ \delta W^\top & 0
 \end{pmatrix}^2}_2^{\frac12}\\
	&= \N{\begin{pmatrix}
			\delta W \delta W^\top & 0 \\ 0 & \delta W^\top \delta W
		\end{pmatrix}}_2^{\frac12} = \max\{\|\delta W \delta W^\top\|_2, \|\delta W^\top \delta W\|_2\}^\frac12\\
	&= \|\delta W \|_2 = \Delta_W
	\end{align*}
	On the left-hand side, we have
	\begin{align*}
	P_{\Im(H)} &=\begin{pmatrix}
				\frac{1}{\sqrt2} U_1 & \frac{1}{\sqrt2} U_1\\
				\frac{1}{\sqrt2} V_1 & -\frac{1}{\sqrt2} V_1
			\end{pmatrix}
			\begin{pmatrix}
				\frac{1}{\sqrt2} U_1^\top & \frac{1}{\sqrt2} V_1^\top\\
				\frac{1}{\sqrt2} U_1^\top & -\frac{1}{\sqrt2} V_1^\top
			\end{pmatrix}
		= \begin{pmatrix}
			U_1 U_1^\top & 0\\
			0 & V_1 V_1^\top
		\end{pmatrix},
	\end{align*}
	thus
	\begin{align*}
	\N{P_{\Im(H)} - P_{\Im_{2r}(\hat H)}}_2 &= \N{\begin{pmatrix}
			U_1 U_1^\top - \hat U_1 \hat U_1^\top& 0\\
			0 & V_1 V_1^\top - \hat V_1 \hat V_1^\top
		\end{pmatrix}}_2 \\
	&= \max\left\{\N{P_{\Im(W)} - P_{\Im_{r}(\hat W)}}_2, \N{P_{\Im(W^\top)} - P_{\Im_{r}(\hat W^\top)}}_2\right\}
	\end{align*}
	Replacing these in \eqref{eq:subspace_bound_supplementary_H}, we show \eqref{eq:subspace_bound_supplementary_W} when $W, \hat W$ are not symmetric.
	$$	\N{P_{\Im(W)} - P_{\Im_{r}(\hat W)}}_2\le \N{P_{\Im(H)} - P_{\Im_{2r}(\hat H)}}_2 \le \frac{\Delta_H}{\sigma_{2r}(H) - \Delta_H}= \frac{\Delta_W}{\sigma_{r}(W) - \Delta_W}.$$
	The bound for $\Delta_{\CA}$ follows from
	\begin{align*}
		\Delta_{\CA} &= \sup\limits_{T \in \CT_{D}^{n},\ \N{T}_F = 1}\N{P_{\CA}(T)-P_{\hat \CA}(T)}_F =
		\sup\limits_{u \in \bbR^{D^n},\ \N{u}_2 = 1}\N{P_{\Im(M)}(u)-P_{\Im_K(\hat M)}(u)}_2 \\
		&=\N{P_{\Im(M)}-P_{\Im_K(\hat M)}}_2 . \qedhere
	\end{align*}
\end{proof}

\smallskip

\subsection{Proof of Lemma~\ref{lem:reformulation_frob_norms}}

\begin{proof}
This is the case $n = s$ in \eqref{eq:norm_identity_supplementary} in Lemma \ref{lem:frob_norms_extended} below, as $\eta(x^n) = \zeta(x)^{\odot n} = (A^{\top}x)^{\odot n}$.
\end{proof}

\begin{lemma}\label{lem:frob_norms_extended}
Assume $\{a_i^n : i \in [K]\} \subseteq \symten{n}$ is linearly independent.
Then in the setup of Definition~\ref{def:basic-setup},  we have the following formulas in terms of the inverse Grammian: 
\begin{align}
& \N{P_{\CA}(x^s y^{n-s})}_F^2 = \eta(x^sy^{n-s})^\top G_n^{-1}\eta(x^sy^{n-s}) \quad \text{ for all } x, y \in \mathbb{S}^{D-1}, \label{eq:norm_identity_supplementary}
 \\[0.5em]
&\text{and } \quad \sigma(x) = G_n^{-1} \eta (x^n) = G_n^{-1} (\zeta(x)^{\odot n}) \quad  \text{for all } x \in \mathbb{S}^{D-1}.  \label{eq:coefficient_identity}
\end{align}
\end{lemma}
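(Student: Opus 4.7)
The plan is to reduce everything to linear algebra on $\mathbb{R}^{D^n}$ by vectorizing. Set $M := A^{\bullet n} \in \mathbb{R}^{D^n \times K}$, whose columns are $\operatorname{Vec}(a_i^n)$. By definition $G_n = M^\top M$, and since $\{a_i^n\}_{i \in [K]}$ is linearly independent the matrix $M$ has full column rank, so $G_n$ is invertible and the orthogonal projector onto $\operatorname{Vec}(\mathcal{A}) = \operatorname{colspan}(M)$ equals $M G_n^{-1} M^\top$. Hence for any tensor $T \in \operatorname{Sym}(\CT_D^n)$, one has (in vectorized form) $P_{\mathcal{A}}(T) = M G_n^{-1} M^\top \operatorname{Vec}(T)$.

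The first key observation is that the vector $M^\top \operatorname{Vec}(T)$ is exactly the vector of inner products $(\langle a_i^n, T \rangle)_{i \in [K]}$. Specializing to $T = x^s y^{n-s}$ gives $(M^\top \operatorname{Vec}(T))_i = \langle a_i, x \rangle^s \langle a_i, y \rangle^{n-s} = \eta_i(x^s y^{n-s})$, i.e.\ $M^\top \operatorname{Vec}(x^s y^{n-s}) = \eta(x^s y^{n-s})$. Therefore
\[
P_{\mathcal{A}}(x^s y^{n-s}) = \sum_{i=1}^K \bigl(G_n^{-1} \eta(x^s y^{n-s})\bigr)_i \, a_i^n,
\]
and taking squared Frobenius norm yields
\[
\|P_{\mathcal{A}}(x^s y^{n-s})\|_F^2 = \eta^\top G_n^{-1} M^\top M G_n^{-1} \eta = \eta^\top G_n^{-1} G_n G_n^{-1} \eta = \eta^\top G_n^{-1} \eta,
\]
which is \eqref{eq:norm_identity_supplementary}.

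For \eqref{eq:coefficient_identity}, specialize the displayed expansion of $P_{\mathcal{A}}$ to $s=n$ (so $y$ drops out) to get $P_{\mathcal{A}}(x^n) = \sum_{i=1}^K (G_n^{-1} \eta(x^n))_i \, a_i^n$. Since $\{a_i^n\}_{i \in [K]}$ is linearly independent, the expansion coefficients of $P_{\mathcal{A}}(x^n)$ in this basis are unique, so by definition of $\sigma(x)$ we conclude $\sigma(x) = G_n^{-1} \eta(x^n) = G_n^{-1}(\zeta(x)^{\odot n})$, where the last equality uses $\eta_i(x^n) = \langle a_i, x \rangle^n = \zeta_i(x)^n$.

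The argument is essentially a bookkeeping exercise: the only substantive input is that linear independence of $\{a_i^n\}$ makes $G_n = M^\top M$ invertible and allows the closed-form projector $M G_n^{-1} M^\top$; after that the identifications $M^\top \operatorname{Vec}(x^s y^{n-s}) = \eta(x^s y^{n-s})$ and uniqueness of coefficients do all the work. I don't anticipate a genuine obstacle — the main care point is simply to recognize $\operatorname{Sym}(\CT_D^n)$ as a subspace of $\CT_D^n$ so that the standard formula for orthogonal projection onto the column space of $M$ applies verbatim after vectorization, and to keep $\eta$ and $\zeta$ straight in the case $s = n$.
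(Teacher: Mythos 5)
Your proof is correct and takes essentially the same route as the paper: vectorize, identify $P_{\CA}$ with the column-space projector $M G_n^{-1} M^\top$ of $M = A^{\bullet n}$, and read off both identities. The only cosmetic difference is that you compute $\|P_{\CA}(T)\|_F^2$ by expanding $\eta^\top G_n^{-1}M^\top M G_n^{-1}\eta$ and cancelling, whereas the paper uses $\|P_{\CA}(T)\|_F^2 = \langle P_{\CA}(T), T\rangle$; both amount to the same cancellation of $G_n$.
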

\begin{proof}
Let $A^{\bullet n} := [\operatorname{Vec} (a_1^{\otimes n})|\ldots|\operatorname{Vec}(a_K^{\otimes n})] \in \bbR^{D^n\times K}$.
Up to reshapings, $P_{\CA}$ is projection onto the column space of $A^{\bullet n}$.
Since $A^{\bullet n}$ has full column rank by assumption, projection onto the column space of  $A^{\bullet n}$ is represented by the matrix
$$
A^{\bullet n} \left( (A^{\bullet n})^{\top} A^{\bullet n} \right)^{-1} (A^{\bullet n})^{\top} = A^{\bullet n} G_n^{-1} (A^{\bullet n})^{\top}.
$$
It follows that for $T \in \CT_D^n$, we have
\begin{align}
P_{\CA}(T) &= \operatorname{Tensor}\left(A^{\bullet n} G_n^{-1} (A^{\bullet n})^{\top} \operatorname{Vec}(T) \right), \label{eq:Gn-1_first} \\
\N{P_{\CA}(T)}_F^2 &= \left\langle P_{\CA}(T), T \right\rangle = \operatorname{Vec}(T)^{\top}A^{\bullet n} G_n^{-1} (A^{\bullet n})^{\top} \operatorname{Vec}(T).  \label{eq:Gn-1_second}
\end{align}
Substituting $T = x^s y^{n-s}$ in \eqref{eq:Gn-1_first}
and using $(A^{\bullet n})^{\top} \operatorname{Vec}(T) = \eta(x^s y^{n-s})$ yields \eqref{eq:norm_identity_supplementary},
while substituting $T = x^n$ into \eqref{eq:Gn-1_second} gives \eqref{eq:coefficient_identity}.  This completes the calculation.
\end{proof}

\medskip

\subsection{Proof of~Lemma \ref{pm_vs_spm}}

\begin{proof}
Write $\zeta := \zeta(x) = A^\top x$.
From $\langle a_i^n, a_j^n \rangle = \langle a_i,a_j\rangle^n = \rho$ whenever $i \neq j$ and $\langle a_i^n, a_i^n \rangle = \langle a_i, a_i \rangle^n = 1$ for each $i$, we can write the Grammian of $a_1^n, \ldots, a_K^n$ as
\begin{equation}
G_n = (1-\rho)\Id_K + \rho \mathbbm{1}_K \otimes \mathbbm{1}_K,
\end{equation}
where $\mathbbm{1}_K \in \mathbb{R}^D$ denotes the all-ones vector.
By the Sherman-Morrison inversion formula,
\begin{align*}
G_n^{-1} &= \left((1-\rho)\Id_K + \rho \mathbbm{1}_{K}\otimes \mathbbm{1}_K\right)^{-1}
= \frac{1}{1-\rho}\left(\Id_K + \frac{\rho}{1-\rho}\mathbbm{1}_{K}\otimes \mathbbm{1}_K\right)^{-1}\\
&= \frac{1}{1-\rho}\left(\Id_K - \frac{\frac{\rho}{1-\rho}}{1+K\frac{\rho}{1-\rho}}\mathbbm{1}_{K}\otimes \mathbbm{1}_K\right)
=\frac{1}{1-\rho} \Id_K - \frac{\rho}{(1-\rho)^2+K\rho(1-\rho)} \mathbbm{1}_K \otimes \mathbbm{1}_K.
\end{align*}
Using that $\sum_{i \in [K]}\langle x,a_i\rangle^n = M$ uniformly on $\bbS^{D-1}$, we have $\langle \mathbbm{1}_K, \zeta(x)^{\odot n} \rangle = M$.
By writing $F_{\CA}$ as in Lemma \ref{lem:reformulation_frob_norms}, we can complete the proof:
\begin{align*}
F_{\CA}(x) = \zeta^{\odot n} G_n^{-1}\zeta^{\odot n} &= \frac{1}{1-\rho}\N{\zeta}_{2n}^{2n}-\frac{\rho}{(1-\rho)^2+K\rho(1-\rho)}\left(\sum_{i=1}^{K}\langle a_i, x\rangle^n\right)^2 \\
& =\frac{1}{1-\rho}\N{\zeta}_{2n}^{2n}-\frac{\rho M^2}{(1-\rho)^2+K\rho(1-\rho)}. \hfill \qedhere
\end{align*}
\end{proof}

\smallskip

\subsection{Proof of Lemma~\ref{lem:incoherence_scalars}}

\begin{proof}
By Gershgorin's circle theorem and the fact $\N{a_i}_2 = 1$, the eigenvalue $\mu_{\ell}(G_{s})$ for each  $\ell=1, \ldots, K$ adheres to
\begin{align*}
|\mu_\ell(G_{s}) - 1| \leq \sum_{\substack{i: i\neq \ell}} \SN{\langle a_\ell, a_i\rangle}^{s} \leq \rho_s,\quad \textrm{hence}\quad \mu_1(G_s) \leq 1+\rho_s\quad \textrm{and}\quad
\mu_K(G_s) \geq 1-\rho_s.
\end{align*}
First, we suppose that $s$ is even.
By a classic result in frame theory \cite[Prop.~3.6.7]{christensen2003introduction}, the extremal eigenvalues of the Grammian $G_{s/2}$ satisfy
\begin{align} \label{eq:classic-frame}
\mu_K(G_{s/2}) \leq \sum_{i=1}^{K}\langle T, a_i^{s/2}\rangle^2\leq \mu_1(G_{s/2}), \quad \textrm{for all unit-norm } T \in \Span{a_i^{s/2} : i \in [K]}\!.
\end{align}
The right-most inequality of \eqref{eq:classic-frame} implies
\begin{equation} \label{eq:from-classic-frame}
    \sum_{i=1}^{K}\langle T, a_i^{s/2}\rangle^2 = \sum_{i=1}^K \left\langle P_{\Span{a_i^{s/2} : i \in [K]}} (T), a_i^{s/2} \right\rangle^2 \leq \mu_1(G_{s/2}) \quad \textrm{for all unit-norm } T \in \CT_D^{s/2}\!.
\end{equation}
Substituting $T = x^{s/2}$ into \eqref{eq:from-classic-frame} and using that $s$ is even, we obtain
\begin{equation*}
\sum_{i=1}^K |\langle x, a_i \rangle|^s = \sum_{i=1}^K \langle x^{s/2}, a_i^{s/2} \rangle^2 \leq \mu_1(G_{s/2}) \quad \textrm{for all }x \in \mathbb{S}^{D-1},
\end{equation*}
whence $\rho_s \leq \mu_1(G_{s/2}) - 1$. 
If instead $s$ is odd, we may similarly derive the upper bound $\rho_s \leq \mu_1(G_{\lfloor s/2\rfloor})$,
so that we have the following relation in general
\begin{equation*}
1-\rho_s \leq \mu_K(G_s) \leq \mu_1(G_s) \leq 1+\rho_s \leq \mu_1(G_{\lfloor s/2\rfloor}).  \hfill \qedhere
\end{equation*}
\end{proof}

\medskip

\subsection{Proof of Lemma~\ref{lem:perturbations_spurious_local_max}}

\begin{proof}
This argument relies on Lemma~\ref{lem:riemmanian_derivatives}, which is proven independently in the next section.
Choose $b \in \bbS^{D-1}$ with $b \perp a$.  Define $S := \sqrt{1-\delta^2} a^n - \delta b^n \in \symten{n}$, and
consider the corresponding subspace $\hat \CA := \Span{S}$.

For each $T \in \CT_D^n$ with
$\N{T}_F = 1$, using $\|a^n\|_F = \|b^n\|_F =1 $, $\langle a^n, b^n \rangle = 0$ and $\|S\|_F=1$,  
\begin{align*}
    \N{P_{\CA}(T)-P_{\hat \CA}(T)}_F^2
    &= \N{\langle a^n, T \rangle a^n - \langle \sqrt{1-\delta^2} a^n - \delta b^n, T \rangle (\sqrt{1-\delta^2} a^n - \delta b^n)}_F^2 \\
    &= \N{\langle \delta^2 a^n + \delta \sqrt{1 - \delta^2} b^n, T\rangle a^n + \langle \delta \sqrt{1 -\delta^2} a^n - \delta^2 b^n, T\rangle b^n}_F^2 \\
    & = \langle \delta^2 a^n + \delta \sqrt{1 - \delta^2} b^n, T\rangle^2 + \langle \delta \sqrt{1 -\delta^2} a^n - \delta^2 b^n, T\rangle^2 \\
    & = \delta^2 \langle a^n, T \rangle^2 + \delta^2 \langle b^n, T \rangle^2.
\end{align*}
This quadratic is maximized over the unit sphere in $\CT_D^n$ at any unit-norm $T \in \Span{a^n, b^n}$, where its value is $\delta^2$.
It follows $\N{P_{\mathcal{A}} - P_{\hat{\mathcal{A}}
}}_{F \rightarrow F} = \delta$.

Also, we have
\begin{equation*}
F_{\hat{\mathcal{A}}}(b) = \N{\langle S, b^n \rangle S}_F^2 = \langle S, b^n \rangle^2 = \delta^2.
\end{equation*}

We now verify that $b$ is a local maximizer of $F_{\hat \CA}$ by checking the optimality conditions.
By \eqref{eq:riemannian_gradient},
\begin{align*}
    \nabla_{\mathbb{S}^{D-1}}F_{\hat{\mathcal{A}}}(b)
    &= 2n P_{\hat{\mathcal{A}}}(b^n) \cdot b^{n-1} - 2nF_{\hat{\mathcal{A}}}(b)b \\
    &= 2n \langle S, b^n \rangle S \cdot b^{n-1} - 2n \delta^2n \\
    &= 2n(-\delta)(-\delta b) - 2n\delta^2 b \\
    & = 0,
\end{align*}
which shows that $b$ is a stationary point of $F_{\hat{\mathcal{A}}}$.
Now taking any unit norm $z \perp b$ and using \eqref{eq:riemannian_hessian},
\begin{align*}
    z^{\top} \nabla^2_{\mathbb{S}^{D-1}} F_{\hat{\mathcal{A}}}(b) z
    &= 2n^2 \N{P_{\hat{\mathcal{A}}}(b^{n-1}z)}_F^2 + 2n(n-1)\langle P_{\hat{\mathcal{A}}}(b^n), b^{n-2}z^2  \rangle - 2n F_{\hat{\mathcal{A}}}(b) \\
    &= 2n^2 \N{\langle S, b^{n-1}z \rangle S}_F^2 + 2n(n-1) \langle \langle S, b^n \rangle S, b^{n-2}z^2 \rangle - 2n \delta^2 \\
   &= -2n(n-1) \delta \sqrt{1 - \delta^2} \langle a, b \rangle^{n-2} \langle a, z \rangle^2 - 2n\delta^2 \\
   &\leq -2n \delta^2 \\
   &< 0,
\end{align*}
where we used the fact $n \geq 2$ in third equality.
Thus, the Riemannian Hessian of $F_{\hat{\mathcal{A}}}$ is strictly negative-definite at $b$.
Therefore, $b$ is a strict local maximizer of $F_{\hat{\mathcal{A}}}$ as we wanted.
\end{proof}

\medskip

\section{Derivation of Riemannian derivatives and optimality conditions for \ref{prob:nspm}}
\label{sec:suppl_preparatory_material}
In this section, we derive the Riemannian derivatives and optimality conditions for \ref{prob:nspm}.

\subsection[Riemmanian derivatives of nSPM-P on the sphere]{Riemannian derivatives of $F_{\hat \CA}$ on the sphere}

\begin{lemma}
	\label{lem:riemmanian_derivatives}
	Let $x,z \in \bbS^{D-1}$ with $z \perp x$.
	The Riemannian gradient and Hessian of $F_{\hat \CA}$ satisfy
	\begin{align}
		\label{eq:riemannian_gradient}
		\nabla_{\bbS^{D-1}}F_{\hat \CA}(x) &= 2n P_{\hat \CA}(x^{ n}) \cdot x^{ n-1} - 2nF_{\hat \CA}(x)x,\\
		\label{eq:riemannian_hessian}
		z^\top \nabla_{\bbS^{D-1}}^2 F_{\hat \CA}(x) z &=2n^2 \|P_{\hat \CA}(x^{ n-1} z)\|_F^2 + 2n(n-1)\langle P_{\hat \CA}(x^n), x^{ n-2} z^{ 2}\rangle - 2nF_{\hat \CA}(x).
	\end{align}
\end{lemma}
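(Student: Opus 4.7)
The plan is to compute the Euclidean gradient and Hessian of $F_{\hat \CA}$ directly and then apply the standard formulas converting Euclidean derivatives to Riemannian ones on $\bbS^{D-1}$. Throughout, I will exploit two key structural facts: $P_{\hat \CA}$ is an orthogonal projector (so self-adjoint and idempotent), and $\hat \CA \subseteq \symten{n}$ (so $P_{\hat \CA}(T) = P_{\hat \CA}(\operatorname{Sym}(T))$ for any $T \in \CT_D^n$, and $P_{\hat \CA}(x^n)$ is itself symmetric). The starting point is to rewrite
$$ F_{\hat \CA}(x) = \langle P_{\hat \CA}(x^{\otimes n}), P_{\hat \CA}(x^{\otimes n})\rangle = \langle P_{\hat \CA}(x^{\otimes n}), x^{\otimes n}\rangle, $$
using $P_{\hat \CA}^2 = P_{\hat \CA} = P_{\hat \CA}^*$. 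Setting $g(x) = x^{\otimes n}$, a direct calculation with the product rule yields $dg(x)[h] = \sum_{k=1}^{n} x^{\otimes(k-1)} \otimes h \otimes x^{\otimes(n-k)} = n \operatorname{Sym}(h \otimes x^{\otimes(n-1)})$ and $d^2 g(x)[z,z] = n(n-1)\operatorname{Sym}(z^{\otimes 2} \otimes x^{\otimes(n-2)})$.

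For the gradient, I would differentiate once:
$$ dF_{\hat \CA}(x)[h] = 2\langle P_{\hat \CA}(x^{\otimes n}), dg(x)[h]\rangle = 2n \langle P_{\hat \CA}(x^{\otimes n}), x^{\otimes(n-1)} \otimes h\rangle, $$
where the symmetrization factor $n$ appears because $P_{\hat \CA}(x^{\otimes n})$ is symmetric. This identifies the Euclidean gradient as $\nabla F_{\hat \CA}(x) = 2n P_{\hat \CA}(x^{\otimes n}) \cdot x^{\otimes(n-1)}$. Projecting onto $T_x \bbS^{D-1}$ via $(\Id - xx^\top)$ gives
$$ \nabla_{\bbS^{D-1}} F_{\hat \CA}(x) = 2n P_{\hat \CA}(x^{\otimes n}) \cdot x^{\otimes(n-1)} - 2n \langle P_{\hat \CA}(x^{\otimes n}), x^{\otimes n}\rangle\, x, $$
and the second term collapses to $2n F_{\hat \CA}(x) x$, yielding \eqref{eq:riemannian_gradient}.

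For the Hessian, the product rule gives
$$ d^2 F_{\hat \CA}(x)[z,z] = 2 \langle P_{\hat \CA}\, dg(x)[z], dg(x)[z]\rangle + 2\langle P_{\hat \CA}(x^{\otimes n}), d^2 g(x)[z,z]\rangle. $$
Using $P_{\hat \CA}(\operatorname{Sym}(z \otimes x^{\otimes(n-1)})) = P_{\hat \CA}(x^{\otimes(n-1)} z)$ (since $P_{\hat \CA}$ factors through $\operatorname{Sym}$), the first term simplifies to $2n^2 \|P_{\hat \CA}(x^{\otimes(n-1)}z)\|_F^2$, while the second becomes $2n(n-1) \langle P_{\hat \CA}(x^{\otimes n}), x^{\otimes(n-2)} z^{\otimes 2}\rangle$. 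Finally, I invoke the standard sphere formula $z^\top \nabla^2_{\bbS^{D-1}} f(x) z = z^\top \nabla^2 f(x) z - \langle \nabla f(x), x\rangle \|z\|^2$ for $z \in T_x \bbS^{D-1}$ (see \cite{absil2009optimization}), combined with $\langle \nabla F_{\hat \CA}(x), x\rangle = 2n F_{\hat \CA}(x)$ and $\|z\|_2 = 1$, to obtain \eqref{eq:riemannian_hessian}.

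The only non-routine point is keeping track of symmetrization factors correctly; the fact that $\hat \CA \subseteq \symten{n}$ ensures one can freely replace $\operatorname{Sym}(x^{\otimes(n-1)} z)$ by $x^{\otimes(n-1)} z$ inside $P_{\hat \CA}(\cdot)$ and the Frobenius pairing with $P_{\hat \CA}(x^{\otimes n})$, which is what produces the clean final form. No deeper obstacle is anticipated.
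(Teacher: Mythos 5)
Your proof is correct and follows essentially the same route as the paper: compute the Euclidean gradient and Hessian, then convert to Riemannian derivatives via the standard projection formula $(\Id - xx^\top)$. The only stylistic difference is that the paper expands $P_{\hat\CA}$ in an orthonormal basis $\{U_i\}$ of $\hat\CA$ and invokes the known formulas $\nabla\langle T, x^n\rangle = nT\cdot x^{n-1}$ for symmetric $T$, whereas you differentiate $g(x) = x^{\otimes n}$ directly and track the symmetrization factors by hand; these are equivalent calculations and your bookkeeping of the factors $n$ and $n(n-1)$ is correct.
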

\begin{remark}
In this lemma $z$ is a \textup{tangent vector} to the unit sphere at $x$, since $z \perp x$.
For convenience we work with \textup{normalized} tangent vectors, which is why there is the further assumption that $\|z \|_2 =1$.
This is without loss of generality:
second-order criticality for \ref{prob:nspm} is the condition that the quadratic form on the tangent space given by $\nabla^2_{\mathbb{S}^{D-1}} F_{\hat \CA}(x)$ is negative semi-definite. 
By homogeneity of the quadratic form, this is verified by just checking normalized tangent
vectors.
\end{remark}
\begin{proof}
We first calculate the Euclidean gradient and Hessian of $F_{\hat \CA}$.
Let $U_1, \dots, U_K \in \sym(\CT_{D}^n)$ form an orthonormal basis of $\hat\CA$.
Then for all $S, T \in \CT_{D}^n$,
\begin{equation*}P_{\hat \CA}(T) = \sum_{i=1}^K \langle U_i, T\rangle U_i ,\quad \langle P_{\hat \CA}(T), S \rangle = \sum_{i=1}^K \langle U_i, T \rangle \langle U_i, S\rangle,\quad \text{and} \quad \|P_{\hat \CA}(T)\|_F^2 = \sum_{i=1}^K \langle U_i, T\rangle^2.\end{equation*}
Also, direct calculations verify that if $T\in \sym(\CT_{D}^n)$, then $\nabla \langle T, x^n\rangle=n T \cdot x^{n-1}$ and $\nabla^2 \langle T, x^n\rangle= n (n-1) T \cdot x^{n-2}$ (see \cite[Lem.~3.1, Lem.~3.3]{kolda2011shifted}).
Therefore,
\begin{align*}
\nabla F_{\hat \CA}(x) &= \nabla\|P_{\hat \CA}(x^n)\|_F^2= \sum_{i=1}^K \nabla \langle U_i, x^n \rangle^2 \\
&= 2 \sum_{i=1}^K \langle U_i, x^n \rangle \nabla \langle U_i, x^n \rangle = 2n \sum_{i=1}^K \langle U_i, x^n \rangle U_i \cdot x^{n-1} = 2n P_{\hat \CA}(x^{ n}) \cdot x^{ n-1},
\end{align*}
and
\begin{align*}
\nabla^2 F_{\hat \CA}(x) &= \sum_{i=1}^K \nabla^2 \langle U_i, x^n \rangle^2 = 2 \sum_{i=1}^K (\nabla\langle U_i, x^n \rangle) (\nabla \langle U_i, x^n \rangle)^\top +2 \sum_{i=1}^K \langle U_i, x^n \rangle \nabla^2 \langle U_i, x^n \rangle \\
&= 2n^2 \sum_{i=1}^K (U_i \cdot x^{n-1}) (U_i \cdot x^{n-1})^\top + 2n(n-1) \sum_{i=1}^K \langle U_i, x^n \rangle U_i \cdot x^{n-2}.\\
\end{align*}
Thus,
\begin{align*}
z^\top \nabla^2 F_{\hat \CA}(x) z
&= 2n^2 \sum_{i=1}^K \langle U_i, x^{n-1} z \rangle^2 + 2n(n-1) \sum_{i=1}^K \langle U_i, x^{n} \rangle \langle U_i, x^{n-2} z^2 \rangle\\
&= 2n^2 \|P_{\hat \CA}(x^{ n-1} z)\|_F^2 + 2n(n-1)\langle P_{\hat \CA}(x^n), x^{ n-2} z^{ 2}\rangle.
\end{align*}
We now calculate the Riemannian gradient and Hessian of $F_{\hat \CA}$.
In general, for a twice-differentiable function $g:\bbR^n\to \bbR$, the Riemannian gradient and Hessian of the restriction of $g$ to the unit sphere $\mathbb{S}^{D-1}$ are related to the Euclidean counterparts of $g$ as follows (see \cite[Ex.~3.6.1]{absil2009optimization} and \cite[Eq.~(10), Sec.~4.2]{absil2013extrinsic}):
\begin{align*}
 \nabla_{\bbS^{D-1}}g(x) &= (I - x x^\top) \nabla g(x),\\
 \text{and} \quad \nabla^2_{\bbS^{D-1}}g(x) &= (I - x x^\top) (\nabla^2 g(x) - (x^\top \nabla g(x)) \Id_D ) (I - x x^\top).
\end{align*}
Applying this to $F_{\hat \CA}$, and using $x^\top \nabla F_{\hat \CA}(x) = 2n  P_{\hat \CA}(x^{ n}) \cdot x^{n}  = 2n F_{\hat \CA}(x)$, $(I - x x^\top) z = z$ (since $z\perp x$) and $z^{\top}z =1$ (since $z \in \mathbb{S}^{D-1}$), yields the result.
\end{proof}

\medskip

\subsection{Proof of Proposition~\ref{prop:optimality_conditions}}

\begin{proof}
	The first-order (stationary point) condition is given by setting the Riemannian gradient \eqref{eq:riemannian_gradient} to zero.
	The second-order condition is the requirement that the Riemannian Hessian be negative semi-definite, i.e.,
	that \eqref{eq:riemannian_hessian} is non-positive for all unit norm $z \perp x$.
	Dividing by $2n$ and rearranging yields \eqref{eq:stationary_point} and \eqref{eq:before_derived_second_order_optimality} respectively.

	It remains to show \eqref{eq:derived_second_order_optimality}.
	Assume $x \in \mathbb{S}^{D-1}$ is first and second-order critical for \textup{\ref{prob:nspm}}.
	For each $y\in \bbS^{D-1}$, let $\alpha, \beta\in [0,1]$, $z\in \bbS^{D-1}$ be such that $\beta = \sqrt{1-\alpha^2}$, $z\perp x$ and $y = \alpha x + \beta z$. 
  Then,
	\begin{align}
	\nonumber
\N{P_{\hat \CA}(x^{n-1}y)}_F^2
&= \N{\alpha P_{\hat \CA}(x^n) + \beta P_{\hat \CA}(x^{n-1}z)}_F^2 \nonumber \\
&= \alpha^2 \N{P_{\hat \CA}(x^n)}_F^2 + \beta^2 \N{P_{\hat \CA}(x^{n-1}z)}_F^2 + 2 \alpha \beta \langle P_{\hat \CA}(x^n), P_{\hat \CA}(x^{n-1}z) \rangle \nonumber \\
&= \alpha^2 F_{\hat \CA}(x) + \beta^2 \N{P_{\hat \CA}(x^{n-1}z)}_F^2 + 2 \alpha \beta \langle P_{\hat \CA}(x^n), x^{n-1}z \rangle \nonumber \\
&= \alpha^2 F_{\hat \CA}(x) + \beta^2 \N{P_{\hat \CA}(x^{n-1}z)}_F^2 + 2 \alpha \beta \langle P_{\hat \CA}(x^n) \cdot x^{n-1}, z \rangle \nonumber \\
&= \alpha^2 F_{\hat \CA}(x) + \beta^2 \N{P_{\hat \CA}(x^{n-1}z)}_F^2 + 2 \alpha \beta \langle F_{\hat \CA}(x) x, z \rangle \label{eq:opt_cond_1_1} \\
&= \alpha^2 F_{\hat \CA}(x) + \beta^2 \N{P_{\hat \CA}(x^{n-1}z)}_F^2, \label{eq:opt_cond_1_2}
\end{align}
	where we used the stationary point condition \eqref{eq:stationary_point} in \eqref{eq:opt_cond_1_1} and $x\perp z$ in \eqref{eq:opt_cond_1_2}.
	By similar logic,
	\begin{align}
		\nonumber
		\langle P_{\hat \CA}(x^{ n}), x^{n-2} y^{ 2}\rangle &= \langle P_{\hat \CA}(x^{ n}), x^{n-2} (\alpha x + \beta z)^2\rangle \\
		&= \nonumber\alpha^2 \N{P_{\hat \CA}(x^{n})}_F^2 + 2\alpha \beta \langle P_{\hat \CA}(x^{n}),x^{n-1}z\rangle + \beta^2 \langle P_{\hat \CA}(x^{ n}), x^{n-2} z^{ 2}\rangle\\
		&=\label{eq:opt_cond_2_2}
		 \alpha^2 F_{\hat \CA}(x) + \beta^2 \langle P_{\hat \CA}(x^{ n}), x^{n-2} z^{ 2}\rangle.
	\end{align}
	Finally, we substitute \eqref{eq:opt_cond_1_2} and \eqref{eq:opt_cond_2_2} into \eqref{eq:derived_second_order_optimality}, and use the second-order condition \eqref{eq:before_derived_second_order_optimality}:
	\begin{align}
	\hspace{40pt}&\hspace{-40pt} n\N{P_{\hat \CA}(x^{n-1} y)}_F^2 + (n-1)\langle P_{\hat \CA}(x^{ n}), x^{n-2} y^{ 2}\rangle \nonumber \\
	&= (2n -1) \alpha^2 F_{\hat \CA}(x) + \beta^2\left(n\N{P_{\hat \CA}(x^{n-1} z)}_F^2 + (n-1)\langle P_{\hat \CA}(x^{ n}), x^{n-2} z^{ 2}\rangle\right) \nonumber \\
	&\le  (2n -1)\alpha^2 F_{\hat \CA}(x) + \beta^2 F_{\hat \CA}(x) \nonumber \\
	&=  (2n -1)\alpha^2 F_{\hat \CA}(x) + (1-\alpha^2)F_{\hat \CA}(x) \nonumber\\
	&= (1 + (2n-2)\alpha^2) F_{\hat \CA}(x). \label{eq:almost_derived}
	\end{align}
	Substituting $\alpha = \langle x, y\rangle$ into \eqref{eq:almost_derived} gives  \eqref{eq:derived_second_order_optimality} as desired.
\end{proof}

\section{Preparatory results on random ensembles over the sphere and Conjecture~\ref{conj:grammians}} 
\label{sec:uniform_ensembles}
In this section, we prove the results on random ensembles over the sphere that are used in the analysis of the overcomplete random tensor model. Specifically, we prove Lemma \ref{prop:RIP_consequence} and Proposition \ref{lemma:random_results}. 

\subsection{Proof of Lemma~\ref{prop:RIP_consequence}}

\begin{proof}
Fix $x \in \bbS^{D-1}$, and assume without loss of generality that the indices are ordered
according to $\langle a_1,x\rangle^2 \geq \ldots\geq \langle a_K, x\rangle^2$.
We claim that
the index set $\CI(x)=[p]$ satisfies the specified conditions.
Denote by
$A_p \in \bbR^{D\times p}$ the submatrix of $A$ consisting of the first $p$ columns.
The $(p,\delta)$-RIP implies
\begin{align}
1+\delta = (1 + \delta) \| x \|_2^2 \geq x^\top A_{p}A_{p}^\top x = \sum_{i =1}^{p}\langle a_i, x\rangle^2 \geq (1 - \delta) \| x \|_2^2 = 1 - \delta. \label{eq:RIP-first}
\end{align}
Hence, $\mathcal{I}(x)$ satisfies the first condition.
As for the second condition, by the ordering assumption,
\begin{equation} \label{eq:RIP-second}
    \sum_{i =1}^{p}\langle a_i, x\rangle^2 \geq p \langle a_p, x \rangle^2.
\end{equation}
Inequalities \eqref{eq:RIP-first} and \eqref{eq:RIP-second}
imply $\langle a_p, x \rangle^2\leq (1+\delta)p^{-1}$.  Then again by the ordering relation,
\begin{equation*}
\max_{i \not\in \CI(x)}\langle a_i, x\rangle^2 \leq \langle a_p, x\rangle^2 \leq \frac{1+\delta}{p} \leq \frac{1+\delta}{c_\delta}\frac{\log(K)}{D} = \tilde c_{\delta} \frac{\log(K)}{D}. \hfill \qedhere
\end{equation*}
\end{proof}

\medskip

\subsection{Proof of Proposition~\ref{lemma:random_results}}

Now consider a random ensemble of independent copies $a_1,\ldots,a_K$ of the random
vector $a \sim \textrm{Unif}(\bbS^{D-1})$.
Before showing Proposition~\ref{lemma:random_results}, which states that for this ensemble the assumptions \ref{enum:RIP}-\ref{enum:correlation} hold with high probability for all $n$, and \ref{enum:GInverse} holds with high probability if $n=2$, we will recall these assumptions.
Write $A := [a_1|\ldots|a_K] \in \mathbb{R}^{D \times K}$. 
\begin{enumerate}[leftmargin=1.5cm,label=\textbf{A\arabic*}]
	\item\label{enum:RIP_supp} There exists $c_{\delta}>0$, depending only on $\delta$, such that $A$ is $(\lceil c_{\delta}D/\log(K) \rceil,\delta)$-RIP. 
	\item\label{enum:correlation_supp} There exists  $c_1 > 0$, independent of $K,D$, such that $\max_{i\neq j}\langle a_i, a_j\rangle^2 \leq c_1 \log (K)/D$.
	\item \label{enum:GInverse_supp} There exists $c_2 > 0$, independent of $K,D$, such that $\N{G_{n}^{-1}}_2 \leq c_2$.
\end{enumerate}

The proof for \ref{enum:GInverse} is lengthy and more technically  involved (even though we only prove it for the case of $n = 2$).
So, we first give the arguments for \ref{enum:RIP}-\ref{enum:correlation}. 

\begin{lemma}[\ref{enum:RIP}-\ref{enum:correlation} in Proposition \ref{lemma:random_results}]
\label{lem:random_results_supplementary}
Let $a_1,\ldots,a_K$ be independent copies of the random vector $a \sim \textup{Unif}(\bbS^{D-1})$,
and define $A := [a_1|\ldots|a_K] \in \bbR^{D\times K}$. Assume $\log K = o(D)$.

\noindent 1. Fix  $\delta \in (0,1)$. There exists a universal constant $C >0$ and constants $D_0 \in \bbN, c_{\delta} > 0$ depending only on $\delta$
such that, if $D \geq D_0$ and we put $p := \lceil c_\delta D/\log(K) \rceil$, then $A$ is $(p,\delta)$-RIP with probability at least $1-2\exp(-C\delta^2 D)$.

\noindent 2. There exists a universal constant $c_1 > 0$ such that 
\begin{align*}
\bbP\left(\max_{i,j: i\neq j} \langle a_i, a_j\rangle^2 \leq  c_1 \frac{\log(K)}{D}\right) \geq 1 - \frac{1}{K}.
\end{align*}
\end{lemma}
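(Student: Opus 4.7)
The plan is to dispense with Part~2 first (a short sub-Gaussian tail bound) and then handle Part~1 by lifting to the Gaussian ensemble and applying the Gaussian RIP.

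\textbf{Part~2.} I fix any pair $i \neq j$ and condition on $a_j$. Since $a_i$ is independent of $a_j$ and uniform on $\bbS^{D-1}$, the inner product $\langle a_i, a_j\rangle$ has the same distribution as the first coordinate of a uniform vector on $\bbS^{D-1}$, which is a classical sub-Gaussian random variable satisfying
\begin{equation*}
\bbP\bigl(|\langle a_i, a_j\rangle| > t\bigr) \;\leq\; 2\exp(-cDt^2) \qquad \textrm{for all } t > 0,
\end{equation*}
with an absolute constant $c > 0$. Taking $t = \sqrt{c_1\log(K)/D}$ with $c_1$ large enough that $c\cdot c_1 \geq 3$, and union-bounding over the at most $K^2$ ordered pairs $(i,j)$ with $i \neq j$, yields the failure-probability estimate $\leq 1/K$.

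\textbf{Part~1.} I couple $a_i = g_i/\|g_i\|_2$ with $g_1, \ldots, g_K$ i.i.d.\ $\CN(0, \Id_D)$, setting $G := [g_1|\ldots|g_K] \in \bbR^{D\times K}$ and $\Lambda := \diag(\|g_1\|_2, \ldots, \|g_K\|_2)$, so that $A = G\Lambda^{-1}$. For any $S \subseteq [K]$ with $|S| = p$, the identity
\begin{equation*}
A_S^\top A_S - \Id_p \;=\; \Lambda_S^{-1}\bigl(G_S^\top G_S - D\,\Id_p\bigr)\Lambda_S^{-1} \;+\; \bigl(D\,\Lambda_S^{-2} - \Id_p\bigr)
\end{equation*}
reduces the bound to two separate concentration statements. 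For the first summand I invoke Vershynin's Theorem~5.65 \cite{vershynin2010introduction}: the rows of $G$ are i.i.d.\ $\CN(0, \Id_K)$ sub-Gaussian isotropic vectors in $\bbR^K$, so $D^{-1/2}G$ is $(p, \delta/3)$-RIP with probability at least $1 - 2\exp(-c\delta^2 D)$, provided $D \geq C\delta^{-2} p \log(eK/p)$. This hypothesis is verified by choosing $p = \lceil c_\delta D/\log(K)\rceil$ with $c_\delta$ a sufficiently small function of $\delta$, using $\log(eK/p) \lesssim \log(K)$. For the second summand, Laurent-Massart concentration of $\chi^2$ norms combined with a union bound over $i \in [K]$ gives $\max_i \bigl|\|g_i\|_2^2/D - 1\bigr| \leq \delta/6$ on an event of probability $\geq 1 - 2K\exp(-c'\delta^2 D)$.

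On the intersection of both events, the triangle inequality applied to the decomposition above, together with the crude bound $\|\Lambda_S^{-1}\|_2^2 \leq D^{-1}(1-\delta/6)^{-1}$, gives $\|A_S^\top A_S - \Id_p\|_2 \leq \delta$ uniformly over $S$, proving the RIP claim. The final bookkeeping step is to absorb the extra factor of $K$ from the column-norm union bound: using $\log K = o(D)$, once $D \geq D_0(\delta)$ we can replace $2K\exp(-c'\delta^2 D)$ by $2\exp(-C\delta^2 D)$ with a slightly smaller constant, matching the stated probability. The main obstacle is purely the bookkeeping of constants (splitting the $\delta$ budget between the two summands, and picking $c_\delta$ and $D_0$ so that all hypotheses line up simultaneously); no concentration machinery beyond Vershynin's RIP theorem and standard $\chi^2$ concentration is required.
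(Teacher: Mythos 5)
Your Part~2 is the same argument as the paper's (rotational invariance to reduce to a single coordinate, sub-Gaussian tail, union bound over pairs), modulo trivial constant bookkeeping around the factor of $2$ in $2K^{2-cc_1}\le K^{-1}$.

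Your Part~1 is correct but takes a genuinely different route. The paper observes that $\sqrt{D}\,a_i$ is an isotropic sub-Gaussian vector of almost-surely constant Euclidean norm $\sqrt{D}$, so $A$ falls directly under the independent-columns model of \cite[\S 5.4.2]{vershynin2010introduction}, and \cite[Thm.~5.65]{vershynin2010introduction} then gives the $(p,\delta)$-RIP for $A$ itself in one step; all that remains is verifying $D\ge C'\delta^{-2}p\log(eK/p)$ for the chosen $p$, which is exactly your final bookkeeping paragraph. You instead lift to a Gaussian matrix via the coupling $a_i=g_i/\|g_i\|_2$, split $A_S^\top A_S-\Id_p$ into a Gram-deviation term for $G_S$ and a column-normalization term, apply the independent-rows version of the sub-Gaussian RIP to $D^{-1/2}G$, and control the normalization by $\chi^2$ (Laurent--Massart) concentration plus a union bound over columns. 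The $\delta$-budget you picked ($\delta/3$ for the Gram part, $\delta/6$ for the norms, the crude $\|\Lambda_S^{-1}\|_2^2\le D^{-1}(1-\delta/6)^{-1}$) does close: I get $2\delta/5+\delta/5\le\delta$ uniformly over $S$, and the extra $K$ in the norm union bound is absorbed by $\log K=o(D)$ exactly as you say. What your route buys is that you only need the most classical Gaussian RIP statement plus $\chi^2$ concentration, at the cost of an extra decomposition and one more union bound; the paper's route avoids that decomposition entirely by using the more general (but less familiar) bounded-norm independent-columns model, so it is shorter. Both are valid proofs of the same claim.
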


\begin{proof}
1. By \cite[Thm.~5.65]{vershynin2010introduction},  $A$
satisfies the $(p,\delta)$-RIP with probability at least
$1-2\exp(-C\delta^2 D)$, provided that $D \geq C'\delta^{-2} p\log(e K/p)$,
where $C, C' > 0$ are universal constants.
Thus it suffices to check the latter inequality holds if $c_{\delta}, D_0$ are appropriately chosen.
Because $\log K = o(D)$,
 for each fixed $c_{\delta} >0$, there exists $D_0$ such that $p > e$ whenever $D \geq D_0$.
In this case, we have
\begin{align}
C'\delta^{-2} p\log(e K/p)  & \, <  \, C'\delta^{-2} p\log(K) \, = \, C' \delta^{-2} \left\lceil c_{\delta}D / \log(K) \right\rceil \log(K) \nonumber \\   & \, \leq \,  C'\delta^{-2}c_\delta D + C' \delta^{-2} \log(K)  \, < \, C' \delta^{-2} \tfrac{3}{2} c_{\delta}D, \label{eq:A1-RIP}
\end{align}
where the last inequality in \eqref{eq:A1-RIP} is because $p > e$ implies $\log(K) < \tfrac{1}{2} c_{\delta} D$.
We note that the right-most quantity in  \eqref{eq:A1-RIP} is bounded above by $D$, as desired, if we choose $c_{\delta} \leq 2 \delta^{2}/(3C')$.

\smallskip

2.  Let $d_{ij} := \SN{\langle a_i, a_j\rangle}$. By rotational symmetry, $d_{ij} \stackrel{d}{=} \SN{a_{11}}$ for $i\neq j$ in
distribution, where $a_{11} := \langle a_{1}, e_{1} \rangle$ denotes the first coordinate of $a_1$.  By \cite[Thm.~3.4.6]{vershynin2018high},  $a_{11}$ is a sub-Gaussian random variable with sub-Gaussian norm $\N{a_{11}}_{\psi_2}\leq c'_1 D^{-\frac{1}{2}}$ for a universal constant $c'_1 > 0$.
Using this, a union bound and a tail bound for sub-Gaussian random variables \cite[Eq.~2.14]{vershynin2018high},
\begin{align}
\bbP\left(\max_{i\neq j} d_{ij} \leq t\right) &\, = \, 1 - \bbP\left(\textrm{there exists } i\neq j \textrm{ s.t. } d_{ij} > t\right)
\, \geq \, 1 - \sum_{i=1}^{K}\sum_{j=i+1}^{K}\bbP\left(d_{ij} > t\right) \nonumber \\
& \,=\, 1 - \frac{K(K-1)}{2}\bbP\left(\SN{a_{11}} > t\right) \, > \, 1 - K^2 \exp\left(-\frac{t^2D}{c''_1}\right) \nonumber
\end{align}
for a universal constant $c''_1 > 0$.
We finish by setting $t = \sqrt{3 c''_1 \log(K)/D}$ and \nolinebreak $c_1 = 3c''_1$.
\end{proof}

\medskip

Next we come to \ref{enum:GInverse} for a random ensemble on the sphere when $n=2$.

\begin{proposition}[\ref{enum:GInverse} in Proposition \ref{lemma:random_results}]
\label{prop:grammian_result}
Let $K,D \in \bbN$ satisfy $1\leq K \leq D^2$.
Let $a_1,\ldots,a_K$ be independent copies of the random vector $a \sim \textup{Unif}(\bbS^{D-1})$.
Define the Grammian $G_2 \in \operatorname{Sym}(\mathcal{T}^2_K)$ by
$(G_2)_{ij} := \langle a_i^{\bullet 2}, a_j^{\bullet 2} \rangle = \langle a_i, a_j\rangle^2$. Then there exists a universal constant $C > 0$ such that
the minimal eigenvalue of $G_2$ satisfies
\begin{equation}  \label{eq:bound-Grammian-n2}
\mu_K(G_2) \geq 1 - (C+1)\frac{\sqrt{K}}{D}\log\left(\frac{eD}{\sqrt{K}}\right)\!, \,\,\,\, \text{with probability at least} \,\,\,\, 1 - C \left( \frac{eD}{\sqrt{K}} \right)^{\!\!-C\sqrt{K}}
\end{equation}
In particular, if $K = o(D^2)$, then for each constant $c_2 > 1$, we have $\| G_2^{-1} \|_2 \leq c_2$  with probability tending to $1$ as $D \rightarrow \infty$.
\end{proposition}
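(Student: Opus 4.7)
The plan is to reduce the claim to a lower bound on the smallest singular value of a suitably centered $D^2\times K$ random matrix, and then invoke the sub-exponential singular value machinery from \cite{adamczak2011restricted,fengler2019restricted}.  Note $\mu_K(G_2) = \sigma_{\min}^2(A^{\bullet 2})$.  Since $\langle a_i^{\otimes 2},\operatorname{vec}(I_D)\rangle = \|a_i\|_2^2 = 1$ is deterministic, I first split each column as $a_i^{\otimes 2} = D^{-1}\operatorname{vec}(I_D) + \tilde a_i$ with $\tilde a_i \perp \operatorname{vec}(I_D)$ and $\bbE[\tilde a_i]=0$.  Letting $\tilde A := [\tilde a_1|\ldots|\tilde a_K]$, a short computation using $\operatorname{vec}(I_D)^\top \tilde a_i = 0$ gives
\begin{equation*}
G_2 \;=\; D^{-1}\mathbf{1}_K\mathbf{1}_K^\top + \tilde A^\top \tilde A,
\end{equation*}
and since the rank-one summand is PSD, $\mu_K(G_2) \geq \sigma_{\min}^2(\tilde A)$.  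It therefore suffices to lower-bound $\sigma_{\min}^2(\tilde A)$.

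Next I will identify $\tilde a_i$ as an isotropic sub-exponential vector on a subspace.  Every $\tilde a_i$ is supported in $W := \symten{2}\cap \operatorname{vec}(I_D)^\perp$, of dimension $N := D(D+1)/2 - 1$.  Using the standard uniform-on-sphere moments $\bbE[a_k^4] = 3/(D(D+2))$ and $\bbE[a_k^2 a_l^2] = 1/(D(D+2))$ for $k\neq l$, a direct calculation yields $\bbE[\tilde a_i \tilde a_i^\top] = \tfrac{2}{D(D+2)}P_W$, so the rescaled $\bar a_i := \sqrt{D(D+2)/2}\,\tilde a_i$ is isotropic on $W$.  For its tails, given unit $v\in W$ with corresponding symmetric $V := \operatorname{vec}^{-1}(v)$ (so $\|V\|_F = 1$), write $\langle v,\tilde a_i\rangle = a_i^\top V a_i - D^{-1}\operatorname{tr}(V)$ and apply a Hanson--Wright inequality on the sphere, most cleanly obtained by Gaussianizing via $a_i = g_i/\|g_i\|$ with $g_i\sim\CN(0,I_D)$, to obtain $\|a_i^\top V a_i - D^{-1}\operatorname{tr}(V)\|_{\psi_1} \lesssim \|V\|_F/D$.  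Multiplying by $\sqrt{D(D+2)/2}$ yields $\|\langle v,\bar a_i\rangle\|_{\psi_1} \lesssim 1$ uniformly in unit $v\in W$, i.e., $\bar a_i$ is isotropic sub-exponential on $W$.

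I then invoke the smallest-singular-value bound for $N\times K$ random matrices with i.i.d.\ isotropic $\psi_1$ columns from \cite{adamczak2011restricted,fengler2019restricted}: with probability at least $1 - C(eN/K)^{-c\sqrt{K}}$,
\begin{equation*}
\sigma_{\min}(\bar A)\;\geq\;\sqrt{N} - C'\sqrt{K}\log(eN/K).
\end{equation*}
Dividing by $\sqrt{D(D+2)/2}$, using $N/(D(D+2)/2) = 1 - 1/D - O(1/D^2)$, $\sqrt{K/N}\lesssim \sqrt{K}/D$, and $\log(eN/K)\asymp \log(eD/\sqrt K)$, and combining with the earlier decomposition, produces $\mu_K(G_2) \geq 1 - 1/D - C''\sqrt K/D\cdot \log(eD/\sqrt K)$.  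Since $\log(eD/\sqrt K)\geq 1$ (as $K\leq D^2$) and $\sqrt K\geq 1$, the $1/D$ term absorbs into the main term, recovering \eqref{eq:bound-Grammian-n2}.  The ``in particular'' claim is then immediate: for $K = o(D^2)$ the right-hand side tends to $1$, so $\|G_2^{-1}\|_2 = 1/\mu_K(G_2)\leq c_2$ for any fixed $c_2 > 1$ once $D$ is large enough, with probability tending to $1$.

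The main obstacle is the sub-exponential smallest-singular-value bound itself.  Because the columns $\bar a_i$ are only $\psi_1$ (each linear marginal being a centered quadratic form in a uniform-on-sphere vector), one loses a $\log(eN/K)$ factor relative to the sub-Gaussian Bai--Yin scaling, and the heavier tail yields the unusual $\sqrt K$ exponent in the failure probability.  For a self-contained derivation I would adapt the chaining-with-peeling argument of \cite{adamczak2011restricted}: cover $\bbS^{K-1}$ by an $\epsilon$-net, apply a Bernstein-type inequality to $\sum_i \langle v,\bar a_i\rangle^2$ at each net point, and handle columns of atypically large norm via a level-set peeling that transfers the heavy tail into the extra logarithms.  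The structural inputs (isotropy on $W$ and the uniform $\psi_1$ bound) are the routine moment and Hanson--Wright computations above; the technical work lies in packaging the singular value argument so as to produce exactly the quantitative form required by \eqref{eq:bound-Grammian-n2}.
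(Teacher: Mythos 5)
Your proposal is correct and takes essentially the same route as the paper: both center $a_i^{\otimes 2}$ by subtracting the deterministic component along $\operatorname{vec}(\Id_D)$, reduce $\mu_K(G_2)$ to the smallest singular value of the resulting centered Khatri--Rao matrix (dropping a PSD rank-one summand), verify sub-exponential tails of the linear marginals via a Hanson--Wright-type inequality, and invoke the restricted-isometry / smallest-singular-value bound of Adamczak et al.\ for matrices with independent sub-exponential columns. The only cosmetic deviation is that you restrict to the $N = D(D+1)/2 - 1$ dimensional subspace $W$ where the rescaled columns $\bar a_i$ are genuinely isotropic with $\|\bar a_i\|_2 = \sqrt N$, while the paper keeps the ambient dimension $D^2$ and uses the normalization $\|X_i\|_2 = D$ to match Adamczak's hypotheses directly; after rescaling, both yield the same final estimate up to universal constants.
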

The statement is implied by $(p,\delta)$-RIP with $p = K$ for matrices like $[a_1^{\bullet 2}|\ldots|a_K^{\bullet 2}]$
(columns are Khatri-Rao squares of vectors).
Such RIP statements have been
analyzed in \cite{fengler2019restricted}.
The main
technical ingredients are \cite[Thm.~3.3]{adamczak2011restricted}, which proves RIP
for matrices containing columnwise sub-exponential random vectors, and the fact
that certain vectorized  Khatri-Rao products of sub-Gaussian random vectors are sub-exponential with favorable sub-exponential norm.
For completeness,
we include a self-contained proof of Proposition \ref{prop:grammian_result}.
First, here is a version of \cite[Thm.~3.3]{adamczak2011restricted} tailored to our needs.

\smallskip

\begin{proposition}[Tailored version of {\cite[Thm.~3.3]{adamczak2011restricted}}]
\label{thm:adamczak_restricted}
Let $K,D \in \bbN$ satisfy $1\leq K \leq D^2$.
Let $X_1,\ldots,X_K \in \bbR^{D^2}$ be
independent copies of a sub-exponential random vector $X \in \bbR^{D^2}$ with $\CO(1)$ sub-exponential norm (i.e., upper bounded independently
of $D$ and $K$).
Furthermore, assume that $\N{X}_2 = D$  almost surely.
Define $M := \frac{1}{D} [X_1|\ldots|X_K] \in \bbR^{D^2\times K}$.
Then there exists a universal constant $C > 0$ such that \begin{align} \label{eq:MMt-bound-prob}
    \left\| M^{\top} M - \Id_K \right\|_2 \leq C\frac{\sqrt{K}}{D}\log\left(\frac{eD}{\sqrt{K}}\right)\!, \,\,\,\, \text{with probability at least} \,\,\, 1 - C \left(\frac{eD}{\sqrt{K}}\right)^{\!\!-C\sqrt{K}}
\end{align}
In particular, in this event the minimal eigenvalue of the associated Grammian satisfies
\begin{align} \label{eq:MMt-bound}
\mu_K(M^\top M) \geq 1 - C\frac{\sqrt{K}}{D}\log\left(\frac{eD}{\sqrt{K}}\right).
\end{align}
\end{proposition}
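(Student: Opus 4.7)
The plan is to invoke the sub-exponential restricted isometry theorem \cite[Thm.~3.3]{adamczak2011restricted} of Adamczak, Latała, Litvak, Pajor, and Tomczak-Jaegermann, specialized to the sparsity level $p = K$. Since $M$ has exactly $K$ columns, the $(K,\delta)$-RIP for $M$ is equivalent to $\|M^\top M - \Id_K\|_2 \leq \delta$, which is precisely the spectral bound we seek in \eqref{eq:MMt-bound-prob}. Thus no iteration over submatrices is needed: the single inequality \eqref{eq:MMt-bound-prob} is the $p=K$ case of the general RIP statement.

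First I would verify the hypotheses of the cited theorem for the normalized matrix $M = \frac{1}{D}[X_1|\ldots|X_K]$: the columns $M_i$ are independent copies of a random vector in $\bbR^{D^2}$ with $\|M_i\|_2 = 1$ almost surely (from $\|X\|_2 = D$ a.s.) and with sub-exponential norm bounded by an absolute constant (from the $\mathcal{O}(1)$ assumption on $\|X\|_{\psi_1}$). Applying the theorem with ambient dimension $N = D^2$ and sparsity parameter $p = K$, the generic bound of the form $\delta_p \lesssim \sqrt{p/N}\,\log(eN/p)$ specializes to
\begin{equation*}
\|M^\top M - \Id_K\|_2 \;\lesssim\; \frac{\sqrt{K}}{D}\log\!\left(\frac{eD^2}{K}\right),
\end{equation*}
which matches \eqref{eq:MMt-bound-prob} after using $\log(eD^2/K) = 2\log(eD/\sqrt{K})$ and absorbing constants into $C$. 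The failure probability $C(eD/\sqrt{K})^{-C\sqrt{K}}$ is what the referenced theorem yields at $p = K$. The eigenvalue bound \eqref{eq:MMt-bound} then follows immediately from Weyl's inequality, since $|\mu_K(M^\top M) - 1| \leq \|M^\top M - \Id_K\|_2$.

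The main obstacle is the translation of hypotheses between the two frameworks. In \cite{adamczak2011restricted} the theorem is phrased under an isotropy assumption $\bbE[X X^\top] = \Id_{D^2}$, whereas here only $\|X\|_2 = D$ a.s. and bounded sub-exponential norm are assumed. In the intended application to $X = D\cdot a^{\bullet 2}$ with $a\sim \textup{Unif}(\bbS^{D-1})$ (used in Proposition~\ref{prop:grammian_result}), the vector $X$ is not isotropic in $\bbR^{D^2}$; for instance $\bbE[\langle a_i, a_j\rangle^2] = 1/D$ for $i\neq j$, so off-diagonal entries of $M^\top M$ have a small but nonzero mean. One must therefore check that the chaining/net argument driving \cite[Thm.~3.3]{adamczak2011restricted} still goes through without genuine isotropy, using instead that the diagonal of $M^\top M$ equals $1$ exactly (from the a.s.\ norm condition) and that the off-diagonal bias is of order $1/D$, which is absorbed into the right-hand side of \eqref{eq:MMt-bound-prob} whenever $K \geq 1$. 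This bookkeeping is the only delicate point; the remainder of the argument is a direct specialization of the cited RIP theorem to $p = K$, followed by Weyl's inequality.
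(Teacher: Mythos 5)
Your proposal follows essentially the same route as the paper: directly invoke \cite[Thm.~3.3]{adamczak2011restricted} at sparsity level $p=K$, use the observation that for a matrix with exactly $K$ columns the $(K,\delta)$-RIP condition is literally $\|M^\top M - \Id_K\|_2 \le \delta$, and then deduce \eqref{eq:MMt-bound} from the spectral-norm definition (which is what the paper does, equivalent to your appeal to Weyl). The paper's proof is terser and simply records the exact parameter dictionary into the cited theorem's notation ($n = D^2$, $m = K$, $N = K$, $r=1$, $K=1$, $K'=1+\epsilon$, $\theta'\to 0$), which saves re-deriving the rate $\sqrt{K}/D \cdot \log(eD/\sqrt{K})$; otherwise the content is the same.

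Your closing worry about isotropy is misdirected on two counts. First, you mischaracterize the intended application: in Proposition~\ref{prop:grammian_result} the vectors fed into Proposition~\ref{thm:adamczak_restricted} are not $D\,a^{\bullet 2}$ but the \emph{centered and rescaled} vectors $X = \sqrt{D/(D-1)}\,\operatorname{Vec}(D\,a\,a^\top - \Id_D)$, for which $\mathbb{E}[X] = 0$; the ``nonzero mean of the off-diagonal entries of $M^\top M$'' you point to does not arise. Second, and more fundamentally, the cited theorem is formulated for independent columns that are almost-surely norm-normalized with bounded sub-exponential ($\psi_1$) norm --- not for isotropic vectors --- so no isotropy hypothesis needs to be discharged. (Indeed the a.s.\ spherical support makes exact isotropy a special case, not a requirement; and the $\psi_1$ bound already forces $\|\mathbb{E}[X]\|_2 = O(1)$ so any residual off-diagonal bias in $M^\top M$ is $O(1/D^2)$, negligible relative to the target rate.) So there is no gap to bridge in either your argument or the paper's; your last paragraph can simply be deleted.
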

\begin{proof}
The first part of the statement is \cite[Thm.~3.3]{adamczak2011restricted},
in the special case (relating their notation to our notation)
$n = D^2$, $m = K$, $N=  K$, $r = 1$, $K  =1$, $K' = 1+\epsilon$ for any $\epsilon \in (0,1)$
and $\theta'\rightarrow 0$, where we note that $M$ satisfies $(K, \delta)$-RIP if and only if $\| M^{\top}M - \Id_K\|_2 \leq \delta$, since $M$ has $K$ columns.
The second part follows immediately from the definition of the spectral norm.
\end{proof}

\medskip

To  deduce Proposition \ref{prop:grammian_result}, we apply Proposition \ref{thm:adamczak_restricted}
to a centered and scaled version of the Khatri-Rao squares $a_i^{\bullet 2} = \operatorname{Vec}(a_i^{2})$, following \cite{fengler2019restricted}.
\begin{proof}[Proof of Proposition \ref{prop:grammian_result}]
Consider the Khatri-Rao square $A^{\bullet 2} := [a_1^{\bullet 2} | \ldots | a_K^{\bullet 2}] \in \mathbb{R}^{D^2 \times K}$ and note that $G_2 = (A^{\bullet 2})^{\top} A^{\bullet 2}$.
Instead of working with $a_1^{\bullet 2},\ldots,a_K^{\bullet 2}$, we introduce auxiliary variables
\begin{equation*}
X_i :=
\sqrt{\frac{D}{D-1}}\operatorname{Vec}(D a_i^2 - \Id_D) \in \mathbb{R}^{D^2}\!\!,
\end{equation*}
in order to be able to apply
Proposition \ref{thm:adamczak_restricted}.
Then $X_1, \ldots, X_K$ are independent copies of the random vector $X \in \mathbb{R}^{D^2}$ given by
\begin{equation}
X := \sqrt{\frac{D}{D-1}} \operatorname{Vec}(Da^2 - \Id_D), \quad \textup{where } a \sim \operatorname{Unif}(\mathbb{S}^{D-1}). \label{eq:def-X}
\end{equation}

Note that
$\mathbb{E}[X_i] = \sqrt{\frac{D}{D-1}} \left( \operatorname{Vec} (D \mathbb{E}[a_i^2] - \Id_D) \right) = 0$.
Using $a_i \in \mathbb{S}^{D-1}$, we also have
\begin{align*}
    \| X_i \|_2^2 &= \frac{D}{D-1} \|D a_i a_i^{\top} - \Id_D \|_F^2 = \frac{D}{D-1} \left( \| D a_i a_i^{\top} \|_F^2 - 2 \langle D a_i a_i^{\top}, \Id_D \rangle + \| \Id_D \|_F^2 \right) \\
    &= \frac{D}{D-1}\left( D^2 \|a_i\|_2^4 - 2D \|a_i\|_2^2 + D \right) = \frac{D}{D-1} \left( D^2 - 2D + D \right) = D^2,
\end{align*}
so that $\|X_i\|_2 = D$ as required in Proposition \ref{thm:adamczak_restricted}.

Define $M := \frac{1}{D}[X_1 | \ldots | X_K] \in \mathbb{R}^{D^2 \times K}$.  For each $i,j \in [K]$,  again using $a_i, a_j \in \mathbb{S}^{D-1}$, we have
\begin{align*}
   \left( M^{\top} M \right)_{i,j} &= \frac{1}{D(D-1)} \langle D a_i a_i^{\top} - \Id_D, D a_j a_j^{\top} - \Id_D \rangle_F \\
   &= \frac{1}{D(D-1)}\left( D^2 \langle a_i a_i^{\top}, a_j a_j^{\top} \rangle_F - D \langle a_i a_i^{\top} + a_j a_j^{\top}, \Id_D \rangle_F  + \langle \Id_D, \Id_D \rangle_F \right) \\
   &= \frac{1}{D(D-1)} \left( D^2 \langle a_i, a_j \rangle^2 - D \| a_i \|_2^2 - D \|a_j\|_2^2 + D \right) \\
   &= \frac{D}{D-1} \langle a_i, a_j \rangle^2 - \frac{1}{D-1}.
\end{align*}
Hence, $M^{\top}M = \frac{D}{D-1} G_2 - \frac{1}{D-1} \mathbbm{1}_K \mathbbm{1}_K^{\top}$, which may be rewritten $G_2 = \frac{D-1}{D} M^{\top} M + \frac{1}{D} \mathbbm{1}_K \mathbbm{1}_K^{\top}$.
This implies
\begin{equation} \label{eq:shifted-grammian}
\mu_K(G_2) \geq \frac{D-1}{D} \mu_K(M^{\top}M).
\end{equation}

Thus, we focus on the Grammian associated with the shifted and centered random variables $X_1, \ldots, X_K$.
To apply Proposition~\ref{thm:adamczak_restricted} to $M^{\top}M$,
it remains to show that the sub-exponential norm of $X$ in \eqref{eq:def-X} is
bounded by some universal constant, independent of $K,D$.

We briefly indicate how this is done.
We first note that
all random vectors $\sqrt{D}a_1,\ldots,\sqrt{D}a_K$ have the so-called convex concentration property for some universal constant $C'>0$, see \cite[Thm.~6, Def.~7, Thm.~8]{fengler2019restricted}.
Following \cite{fengler2019restricted} and the references therein, this can be used to prove the Hanson-Wright type inequality
\begin{align}
\label{eq:aux_hanson_wright}
\bbP\left(\SN{D(a_i^\top Y a_i - \bbE[a_i^\top Y a_i])} > t\right) \leq  2\exp\left(-C'\min\left(\frac{t^2}{\N{Y}_F^2},\frac{t}{\N{Y}_2}\right)\right),
\end{align}
for all (deterministic) $Y \in \mathbb{R}^{D \times D}$ and $t > 0$.
Taking now an arbitrary unit-norm vector $y \in \bbR^{D^2}$, denoting $Y \in \bbR^{D\times D}$
as the matrix satisfying $y = \opvec(Y)$, and using $\mathbb{E}[aa^{\top}] = \frac{1}{D} \Id_D$, we have
\begin{align}
    \langle X, y \rangle &= \sqrt{\frac{D}{D-1}} \langle D a a^{\top} - \Id_D, Y \rangle \nonumber \\
    &= \sqrt{\frac{D}{D-1}} \langle D a a^{\top} - D \mathbb{E}[aa^{\top}], Y \rangle \nonumber \\
    &= \sqrt{\frac{D}{D-1}} D(a^{\top}Ya - \mathbb{E}[a^{\top}Ya]). \label{eq:easy-swap-Y-y}
\end{align}
Combining \eqref{eq:easy-swap-Y-y}, \eqref{eq:aux_hanson_wright} and $\N{Y}_2 \leq \N{Y}_F = 1$, we obtain
\begin{align*}
\bbP\left(\langle X, y\rangle > \sqrt{\frac{D}{D-1}} \, t \right) \leq 2\exp(-C'\min(t^2, t)).
\end{align*}
This implies $\langle X, y\rangle$ is sub-exponential with sub-exponential norm $C'\sqrt{D/(D-1)} = \CO(1)$.
Taking a supremum over all unit-norm vectors $y \in \bbR^{D^2}$ does not change this bound, since it is independent of $y$,
and so the random vector $X$ has sub-exponential norm $\CO(1)$.
Thus, Proposition~\ref{thm:adamczak_restricted} applies and implies there exists a universal constant $C > 0$ such that \eqref{eq:MMt-bound} holds with the probability in~\eqref{eq:MMt-bound-prob}.

We now notice that \eqref{eq:bound-Grammian-n2} follows by substituting \eqref{eq:MMt-bound} into \eqref{eq:shifted-grammian} and using that $1 \leq K \leq D^2$ implies $\frac{1}{D} \leq \frac{\sqrt{K}}{D}\log(eD/\sqrt{K})$, because then
\begin{align} \label{eq:Gram-n2-almostdone}
    \mu_{K}(G_2) \geq (1 - \frac{1}{D}) \left( 1 - C\frac{\sqrt{K}}{D}\log\left(\frac{eD}{\sqrt{K}}\right) \right) \geq 1 - (C+1) \frac{\sqrt{K}}{D} \log\left( \frac{eD}{\sqrt{K}}\right).
\end{align}

To conclude, we justify the last sentence in Proposition~\ref{prop:grammian_result} where $K = o(D^2)$.
It only remains to note that, in this case, the right-most quantity in \eqref{eq:Gram-n2-almostdone} tends to $1$ as $D \rightarrow \infty$.
However, this holds because
 $\frac{\sqrt{K}}{D} \rightarrow 0^{+}$ by $K = o(D^2)$, and $\lim_{x \rightarrow 0^{+}} x\log(\frac{1}{x}) = 0$ by L'H{\^o}pital's rule.
\end{proof}

\medskip

As mentioned in the main body of the paper, this proof technique cannot be immediately applied to the case $n > 2$
because some key technical results are missing. 
We are not aware of
extensions of Proposition \ref{thm:adamczak_restricted} to random variables with tails heavier than
subexponential random variables. 
Further we do not know how to show that
an auxiliary variable $\tilde z$ formed from higher-order vectorized tensors $a_i^{n}$
satisfies a suitable tail bound, which would be needed in an extended Proposition \ref{thm:adamczak_restricted}.

\section{Technical tools for proving Theorems \ref{thm:main_result_deterministic} and \ref{thm:main_result_for_now}}
\label{subsec:prep_material_proofs_main}

In this section we prove the key technical tools to be used in the proofs of our landscape theorems.

\begin{lemma}[Subspace perturbation effect]\label{lem:DeltaCA_techlemma}
Let $\CA$, $\hat{\CA}$ be any two subspaces of $\symten{n}$, and \nolinebreak define
\begin{equation*}
	\Delta_{\CA} := \|P_\CA - P_{\hat{\CA}}\|_{F\rightarrow F} = \sup_{T \in \symten{n},\ \N{T}_F = 1}\|P_\CA(T) - P_{\hat{\CA}}(T)\|_F.
\end{equation*}
Then for all $S, T$ in $\CT_D^{n}$, we have
\begin{equation}\label{eq:deltaCA_dot_bound}
|\langle P_\CA(T) - P_{\hat{\CA}}(T), S\rangle| \le \Delta_{\CA}\|\sym(T)\|_F\|\sym(S)\|_F  \le \Delta_{\CA}\|T\|_F\|S\|_F,
\end{equation}
and
\begin{equation}\label{eq:deltaCA_norm_bound}
\left|\|P_\CA(T)\|_F^2 - \|P_{\hat{\CA}}(T)\|_F^2\right| \le \Delta_{\CA}\|\sym(T)\|_F^2  \le \Delta_{\CA}\|T\|_F^2.
\end{equation}
\end{lemma}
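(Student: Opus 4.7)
The proof rests on a single structural observation: since $\CA,\hat\CA\subseteq\symten{n}$, both projectors vanish on the orthogonal complement of $\symten{n}$. Concretely, $P_{\CA}=P_{\CA}\circ\sym$ and $P_{\hat\CA}=P_{\hat\CA}\circ\sym$, because $T-\sym(T)\perp\symten{n}\supseteq\CA\cup\hat\CA$. Moreover, the range of $P_{\CA}-P_{\hat\CA}$ also lies in $\symten{n}$, so for any $V\in\CT_D^{n}$ the inner product $\langle P_{\CA}(T)-P_{\hat\CA}(T),V\rangle$ is unchanged if we replace $V$ by $\sym(V)$ (using that $\sym$ is the orthogonal projection onto $\symten{n}$).

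For inequality \eqref{eq:deltaCA_dot_bound}, the plan is to use the two observations above to rewrite
\[
\langle P_{\CA}(T)-P_{\hat\CA}(T),S\rangle=\langle (P_{\CA}-P_{\hat\CA})(\sym T),\sym(S)\rangle,
\]
then apply Cauchy--Schwarz in the Frobenius inner product and invoke the definition of $\Delta_{\CA}$, which gives $\|(P_{\CA}-P_{\hat\CA})(\sym T)\|_F\le\Delta_{\CA}\|\sym T\|_F$ because $\sym T\in\symten{n}$. The final estimate $\|\sym(T)\|_F\le\|T\|_F$ and $\|\sym(S)\|_F\le\|S\|_F$ is immediate from $\sym$ being an orthogonal projection.

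For inequality \eqref{eq:deltaCA_norm_bound}, I would use self-adjointness and idempotence of the projectors to write
\[
\|P_{\CA}(T)\|_F^2-\|P_{\hat\CA}(T)\|_F^2=\langle P_{\CA}(T),T\rangle-\langle P_{\hat\CA}(T),T\rangle=\langle(P_{\CA}-P_{\hat\CA})(T),T\rangle,
\]
and then apply \eqref{eq:deltaCA_dot_bound} with $S=T$. The second inequality again follows from $\|\sym T\|_F\le\|T\|_F$.

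There is no real obstacle here; the whole argument is a bookkeeping exercise that exploits the symmetry of $\CA$ and $\hat\CA$. The one subtlety worth stating cleanly is the transfer step $\langle(P_{\CA}-P_{\hat\CA})(T),S\rangle=\langle(P_{\CA}-P_{\hat\CA})(\sym T),\sym(S)\rangle$, which is what allows the $\|\sym(\cdot)\|_F$ tightening in the middle inequalities and is the reason the bound is not merely $\Delta_{\CA}\|T\|_F\|S\|_F$.
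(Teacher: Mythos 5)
Your proof is correct and takes essentially the same route as the paper: both exploit $\CA,\hat\CA\subseteq\symten{n}$ to replace $T,S$ by $\sym(T),\sym(S)$, then apply Cauchy--Schwarz and the definition of $\Delta_{\CA}$ for the first bound, and deduce the second by setting $S=T$ and using self-adjointness and idempotence of orthogonal projectors. Your write-up is somewhat more explicit about the factorizations $P_{\CA}=P_{\CA}\circ\sym$ and the fact that the range of $P_{\CA}-P_{\hat\CA}$ lies in $\symten{n}$, but this is the same argument the paper leaves implicit.
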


\begin{proof}
    Using the fact $\CA, \hat{\CA} \subseteq \symten{n}$, the Cauchy-Schwarz inequality, and the definition of $\Delta_{\CA}$,
    \begin{align*}
        \left| \langle P_{\CA}(T) - P_{\hat \CA}(T), S \rangle \right| &= \left| \langle P_{\CA}(\operatorname{Sym}(T)) - P_{\hat \CA}(\operatorname{Sym}(T)), \operatorname{Sym}(S) \rangle \right| \\
        & \leq \left\| P_{\CA}(\operatorname{Sym}(T)) - P_{\hat \CA}(\operatorname{Sym}(T)) \right\|_F \left\| \operatorname{Sym}(S) \right\|_F \\
        & \leq \Delta_{\CA} \left\| \operatorname{Sym}(T) \right\|_F \| \operatorname{Sym}(S) \|_F,
    \end{align*}
    which gives \eqref{eq:deltaCA_dot_bound}.  Equation \eqref{eq:deltaCA_norm_bound} then follows by setting $S = T$ in \eqref{eq:deltaCA_dot_bound}, and using $\langle P_{\CA}(T) - P_{\hat \CA}(T), T \rangle = \| P_{\CA}(T) \|_F^2 - \| P_{\hat \CA}(T)\|_F^2$ which holds since $P_{\CA}$ and $P_{\hat \CA}$ are orthogonal projectors.
\end{proof}

Next we recall the remainder terms for $P_{\CA}(x^n)$ with respect to $x^{n-s}a_i^s$ \eqref{eq:remainder}. These are defined by
\begin{equation*}
    \langle P_{\CA}(x^n), x^{n-s}a_i^s \rangle =: \sigma_i \zeta_i^{n-s} + R_{i,s}.
\end{equation*}

\smallskip

\begin{lemma} \label{lem:trivial-Ris-stuff}
    The following alternative expressions for the remainder terms hold:
\begin{enumerate}
    \item For any $x \in \mathbb{S}^{D-1}$, $i \in [K]$ and $0 \leq s \leq n$, we have
    \begin{equation}  \label{eq:Ris-another}
        R_{i,s} = \langle P_{\CA}(x^n), x^{n-s}a_i^s \rangle - \sigma_i \zeta_i^{n-s}.
    \end{equation}
    \item For any $x \in \mathbb{S}^{D-1}$ and $i \in [K]$, we have
    \begin{equation} \label{eq:sigma_and_zeta}
        R_{i,n} = \zeta_i^n - \sigma_i.
    \end{equation}
    \end{enumerate}
\end{lemma}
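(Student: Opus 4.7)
The plan is to observe that both assertions are essentially bookkeeping consequences of the two defining equations, namely the expansion $P_{\CA}(x^n) = \sum_{j=1}^K \sigma_j a_j^n$ from \eqref{eq:def-sigma} and the implicit definition of the remainder in \eqref{eq:remainder}.

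For part (1), I would simply subtract $\sigma_i \zeta_i^{n-s}$ from both sides of the defining equation
\begin{equation*}
\langle P_{\CA}(x^n), x^{n-s}a_i^s\rangle = \sigma_i \zeta_i^{n-s} + R_{i,s},
\end{equation*}
which yields \eqref{eq:Ris-another} directly. No further content is needed here; the point of isolating \eqref{eq:Ris-another} is notational, so that the remainder terms can be bounded analytically in later proofs.

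For part (2), I would specialize part (1) to $s=n$, giving $R_{i,n} = \langle P_{\CA}(x^n), a_i^n\rangle - \sigma_i$. The only thing left is to recognize that $a_i^n \in \CA$ by the definition $\CA = \operatorname{Span}\{a_j^n : j \in [K]\}$, so $P_{\CA}(a_i^n) = a_i^n$. Using self-adjointness of the orthogonal projector $P_{\CA}$ and the tensor identity $\langle x^n, a_i^n\rangle = \langle x, a_i\rangle^n = \zeta_i^n$, we obtain
\begin{equation*}
\langle P_{\CA}(x^n), a_i^n\rangle = \langle x^n, P_{\CA}(a_i^n)\rangle = \langle x^n, a_i^n\rangle = \zeta_i^n,
\end{equation*}
whence $R_{i,n} = \zeta_i^n - \sigma_i$. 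There is no real obstacle here; the lemma is purely definitional unpacking, and its only purpose is to give a convenient alternate form (in particular, the $s=n$ case directly connects $R_{i,n}$ to the discrepancy between the correlation coefficients $\zeta_i^n$ and the expansion coefficients $\sigma_i$, which will be the quantity controlled by the Grammian $G_n$ via Lemma~\ref{lem:frob_norms_extended}).
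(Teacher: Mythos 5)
Your proof is correct and matches the paper's treatment: the paper states this lemma without proof, regarding both parts as immediate from the defining equations \eqref{eq:def-sigma} and \eqref{eq:remainder}, exactly as you argue (part (1) is a rearrangement of \eqref{eq:remainder}, and part (2) follows from $s=n$ together with $P_{\CA}(a_i^n)=a_i^n$, self-adjointness of $P_{\CA}$, and $\langle x^n,a_i^n\rangle=\zeta_i^n$).
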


In the first step of the proofs of our two main results, we derive lower bounds for $\N{A^\top x}_{\infty}$, where $x$ is a  second-order critical point.
For both theorems, this step is based on the following inequality.

\smallskip

\begin{proposition}[Tool \#1]
\label{prop:auxliary_one_big}
For any second-order critical point $x$ of \textup{\ref{prob:nspm}} and $i \in [K]$, it holds
    \begin{align}
	\label{eq:starting_point_max_correlation_coeff}
	(1+2(n-1)\zeta_i^2) F_{\CA}(x) &\geq (2n-1)\sigma_i\zeta_i^{n-2} + n\zeta_i^{n-2}R_{i,n} + (n-1)R_{i,2} - (4n-2)\Delta_{\CA}.
\end{align}
\end{proposition}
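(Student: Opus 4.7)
The plan is to apply the second-order optimality condition \eqref{eq:derived_second_order_optimality} with the test vector $y = a_i$, and then convert everything from the noisy objective $F_{\hat\CA}$ to the noiseless $F_{\CA}$ using the perturbation bounds of Lemma~\ref{lem:DeltaCA_techlemma}, while rewriting the resulting noiseless quantities in terms of $\sigma_i$, $\zeta_i$, $R_{i,2}$ and $R_{i,n}$.

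Concretely, substituting $y = a_i$ (a unit vector) into \eqref{eq:derived_second_order_optimality} and using $\langle x, a_i\rangle = \zeta_i$, I rearrange to obtain
\begin{equation*}
\bigl(1+2(n-1)\zeta_i^{2}\bigr)F_{\hat\CA}(x)\;\geq\; n\bigl\|P_{\hat\CA}(x^{n-1}a_i)\bigr\|_F^{2} + (n-1)\bigl\langle P_{\hat\CA}(x^{n}),x^{n-2}a_i^{2}\bigr\rangle .
\end{equation*}
Next I invoke Lemma~\ref{lem:DeltaCA_techlemma} three times: with $T=x^{n}$ to get $F_{\hat\CA}(x)\leq F_{\CA}(x)+\Delta_{\CA}$, with $T=x^{n-1}a_i$ to get $\|P_{\hat\CA}(x^{n-1}a_i)\|_F^{2}\geq\|P_{\CA}(x^{n-1}a_i)\|_F^{2}-\Delta_{\CA}$, and with $(T,S)=(x^{n},x^{n-2}a_i^{2})$ to get $\langle P_{\hat\CA}(x^{n}),x^{n-2}a_i^{2}\rangle\geq\langle P_{\CA}(x^{n}),x^{n-2}a_i^{2}\rangle-\Delta_{\CA}$ (all Frobenius norms of the tensor arguments are $1$). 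The accumulated error terms are then bounded using $1+2(n-1)\zeta_i^{2}\leq 2n-1$, collecting everything into $(4n-2)\Delta_{\CA}$.

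It remains to identify the two purely noiseless quantities. For the mixed contraction, the definition \eqref{eq:remainder} gives directly $\langle P_{\CA}(x^{n}),x^{n-2}a_i^{2}\rangle=\sigma_i\zeta_i^{n-2}+R_{i,2}$. The key observation — and the only nontrivial step — is how to lower bound $\|P_{\CA}(x^{n-1}a_i)\|_F^{2}$. Since $a_i^{n}\in\CA$ is a unit-norm element, orthogonal projection yields
\begin{equation*}
\bigl\|P_{\CA}(x^{n-1}a_i)\bigr\|_F^{2}\;\geq\;\bigl\langle x^{n-1}a_i,a_i^{n}\bigr\rangle^{2}\;=\;\zeta_i^{2n-2}.
\end{equation*}
Then I use \eqref{eq:sigma_and_zeta}, i.e.\ $\sigma_i+R_{i,n}=\zeta_i^{n}$, to factor $\zeta_i^{2n-2}=\zeta_i^{n-2}(\sigma_i+R_{i,n})$, which is precisely the combination that, together with $(n-1)(\sigma_i\zeta_i^{n-2}+R_{i,2})$, produces the coefficient $(2n-1)\sigma_i\zeta_i^{n-2}$ on the right-hand side of \eqref{eq:starting_point_max_correlation_coeff}.

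The main obstacle is exactly this last trick: recognizing that a crude projection of $x^{n-1}a_i$ onto the single direction $a_i^{n}\in\CA$ already provides the right bound, and that the identity $\sigma_i+R_{i,n}=\zeta_i^{n}$ lets us trade the power $\zeta_i^{2n-2}$ for the mixed term $\zeta_i^{n-2}\sigma_i+\zeta_i^{n-2}R_{i,n}$ needed to combine cleanly with the other noiseless contribution. Everything else is careful bookkeeping of $\Delta_{\CA}$ terms.
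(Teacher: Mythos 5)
Your proposal is correct and follows essentially the same route as the paper's proof: substitute $y=a_i$ into \eqref{eq:derived_second_order_optimality}, transfer from $\hat\CA$ to $\CA$ via Lemma~\ref{lem:DeltaCA_techlemma} (with total error coefficient $n+(n-1)+(1+2(n-1)\zeta_i^2)\le 4n-2$), lower-bound $\|P_{\CA}(x^{n-1}a_i)\|_F^2$ by projecting onto $\Span{a_i^n}\subseteq\CA$ to get $\zeta_i^{2n-2}=\zeta_i^{n-2}(\sigma_i+R_{i,n})$, and identify the mixed term via the definition of $R_{i,2}$. No gaps.
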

\begin{proof}
   We substitute $y = a_i$ into the inequality \eqref{eq:derived_second_order_optimality}, which is a consequence of $x$ being first and second-order critical for \textup{\ref{prob:nspm}}:
    \begin{align}
\label{eq:det_second_order_optimality_alt_1}
(1+2(n-1)\zeta_i^2)F_{\hat \CA}(x) &\geq n \N{P_{\hat \CA}(x^{n-1}a_i)}_F^2 + (n-1)\langle P_{\hat \CA}(x^n), x^{n-2}a_i^2\rangle.
\end{align}
Then by the subspace perturbation results \eqref{eq:deltaCA_dot_bound} and \eqref{eq:deltaCA_norm_bound}, we have
\begin{align}
  &  F_{\hat \CA}(x) = \| P_{\hat \CA}(x^n) \|_F^2 \leq \| P_{\CA}(x^n) \|_F^2 + \Delta_{\CA} = F_{\CA}(x) + \Delta_{\CA}, \label{eq:subperturb-1} \\[0.7em]
  & \|P_{\hat \CA}(x^{n-1} a_i)\|_F^2 \geq \|P_{\CA}(x^{n-1}a_i)\|_F^2 - \Delta_{\CA} \geq \|P_{\operatorname{Span}\{a_i^n\}}(x^{n-1}a_i)\|_F^2 - \Delta_{\CA} \nonumber \\
  & \hspace{6.75em} = \zeta_i^{2n-2} - \Delta_{\CA} = \zeta_i^{n-2}(R_{i,n} + \sigma_i) - \Delta_{\CA}, \label{eq:subperturb-2} \\[0.7em]
  &\langle P_{\hat \CA}(x^n), x^{n-2} a_i^2 \rangle \geq \langle P_{\CA}(x^n), x^{n-2} a_i^2 \rangle - \Delta_{\CA} = R_{i,2} + \sigma_i \zeta_i^{n-2} - \Delta_{\CA}. \label{eq:subperturb-3}
\end{align}
Here we used $\operatorname{Span}\{a_i^n\} \subseteq \CA$ in the second inequality of \eqref{eq:subperturb-2}, the identity \eqref{eq:sigma_and_zeta} in the second equality of \eqref{eq:subperturb-2}, and the identity \eqref{eq:Ris-another} in the equality of \eqref{eq:subperturb-3}.
Substituting \eqref{eq:subperturb-1}, \eqref{eq:subperturb-2} and \eqref{eq:subperturb-3} into \eqref{eq:det_second_order_optimality_alt_1} completes the proof.
\end{proof}

\medskip

In the second part of both proofs, we show concavity holds in a spherical cap near each $s a_i$, $i\in[K],\,s\in\{-1,1\}$. As a starting point in both arguments, we use the next statement to upper bound the eigenvalues of the Riemannian Hessian. 
\begin{proposition}[Tool \#2]
	\label{prop:concavity_base}
	For any $x, z \in \bbS^{D-1}$ with $z \perp x$, the Riemannian Hessian of $F_{\hat\CA}$ satisfies,
  for any $i \in [K]$,
	\begin{align}
		\label{eq:lb_hessian_value}
		\frac{1}{2n}z^\top \nabla_{\bbS^{D-1}}^2 F_{\hat \CA}(x) z \leq n\N{P_{\CA}(x^{n-1}z)}_F^2 + (n-1)\langle P_{\CA}(x^n), x^{n-2}z^2\rangle - \zeta_i^{2n} + 4\Delta_{\CA}.
	\end{align}
\end{proposition}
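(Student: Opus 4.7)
My plan is to start from the exact formula for the Riemannian Hessian provided by Lemma~\ref{lem:riemmanian_derivatives}, namely
\begin{equation*}
\frac{1}{2n}z^\top \nabla_{\bbS^{D-1}}^2 F_{\hat \CA}(x) z \;=\; n\|P_{\hat \CA}(x^{n-1} z)\|_F^2 \;+\; (n-1)\langle P_{\hat \CA}(x^n), x^{n-2} z^{2}\rangle \;-\; F_{\hat \CA}(x),
\end{equation*}
and then to upper bound each of the three terms on the right-hand side, in each case replacing $P_{\hat \CA}$ (respectively $F_{\hat\CA}$) by $P_{\CA}$ (respectively by $\zeta_i^{2n}$) at a cost of order $\Delta_{\CA}$.

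For the first two terms I would apply Lemma~\ref{lem:DeltaCA_techlemma}. The key observation is that both $P_{\CA}$ and $P_{\hat\CA}$ map into $\sym(\CT_D^n)$, so their difference acts only on the symmetric part of a test tensor; this lets me use the tighter symmetric-norm forms of \eqref{eq:deltaCA_dot_bound}--\eqref{eq:deltaCA_norm_bound}. A direct combinatorial computation using $\|x\|_2=\|z\|_2=1$ and $z\perp x$ gives $\|\sym(x^{n-1}z)\|_F^2 = 1/n$ and $\|\sym(x^{n-2}z^2)\|_F^2 = 2/(n(n-1))$, whence
\begin{align*}
n\|P_{\hat \CA}(x^{n-1}z)\|_F^2 &\leq n\|P_\CA(x^{n-1}z)\|_F^2 + \Delta_{\CA}, \\
(n-1)\langle P_{\hat\CA}(x^n), x^{n-2}z^2\rangle &\leq (n-1)\langle P_\CA(x^n), x^{n-2}z^2\rangle + \sqrt{2(n-1)/n}\,\Delta_{\CA}.
\end{align*}
For the third term, the inclusion $a_i^n \in \CA$ gives $\|P_\CA(x^n)\|_F^2 \geq \langle x^n, a_i^n\rangle^2 = \zeta_i^{2n}$; combined with \eqref{eq:deltaCA_norm_bound} applied to $T=x^n$, this yields $F_{\hat\CA}(x) \geq \zeta_i^{2n}-\Delta_{\CA}$, hence $-F_{\hat\CA}(x) \leq -\zeta_i^{2n}+\Delta_{\CA}$. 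Summing the three contributions produces the right-hand side of \eqref{eq:lb_hessian_value} with total $\Delta_{\CA}$-coefficient $2 + \sqrt{2(n-1)/n}$, which is bounded above by $2+\sqrt{2} < 4$ for every $n\geq 2$, so the advertised constant $4$ comes out as a clean overestimate.

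There is no genuine obstacle beyond careful bookkeeping; the only subtlety is that the symmetric-projection observation in the second paragraph is what allows the constant $4$ to be uniform in $n$. A naive use of Lemma~\ref{lem:DeltaCA_techlemma} with $\|\cdot\|_F$ in place of $\|\sym(\cdot)\|_F$ would produce an error of order $2n\,\Delta_{\CA}$, which already exceeds $4\Delta_{\CA}$ as soon as $n\geq 3$, so the symmetrization step is load-bearing rather than cosmetic.
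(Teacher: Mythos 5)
Your proof is correct and follows essentially the same route as the paper's: start from the exact Riemannian Hessian formula in Lemma~\ref{lem:riemmanian_derivatives}, apply the symmetrized form of Lemma~\ref{lem:DeltaCA_techlemma} to each of the three terms using $\|\sym(x^{n-s}z^s)\|_F=\binom{n}{s}^{-1/2}$, and lower bound $F_{\hat\CA}(x)\ge \zeta_i^{2n}-\Delta_\CA$ via $a_i^n\in\CA$. The paper simplifies the middle error term to $2\Delta_\CA$ while you keep the sharper $\sqrt{2(n-1)/n}\,\Delta_\CA$, but both land below the stated $4\Delta_\CA$, and your observation about the symmetrization being load-bearing is exactly why the lemma devotes half its proof to computing $\|\sym(x^{n-s}z^s)\|_F$.
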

\begin{proof}
	We first show that for all integers $0 \leq s \leq n$
	\begin{equation} \label{eq:sym_xzlemma}
		\left\| \sym(x^{n-s}z^s) \right\|_F = \binom{n}{s}^{\!-1/2}.
	\end{equation}
	Let $R \in \textup{SO}(D)$ (special orthogonal group).  Then we have (denoting Tucker product on the RHS)
	\begin{equation} \label{eq:tucker-1}
		\left\| \sym(x^{n-s}z^s) \right\|_F = \left\| \sym(x^{n-s}z^s) \times_1 R \times_2 \ldots \times_n R \right\|_F
	\end{equation}
	by rotational invariance of Frobenius norm.
	Furthermore,
	\begin{equation} \label{eq:tucker-2}
		\sym(x^{n-s}z^s) \times_1 R \times_2 \ldots \times_n R = \sym(x^{n-s} z^s \times_1 R \times_2 \ldots \times_n R) = \sym((Rx)^{n-s} (Rz)^s)
	\end{equation}
	by a direct calculation.
	Inserting \eqref{eq:tucker-2} into \eqref{eq:tucker-1}, and choosing $R$ appropriately,
	\begin{equation*}
		\left\| \sym(x^{n-s}z^s) \right\|_F  = \left\| \sym(e_1^{n-s}e_2^s) \right\|_F
	\end{equation*}
	where $e_1, e_2$ are the first two standard basis vectors of $\mathbb{R}^D$.
	However, $\sym(e_1^{n-s}e_2^s)$ is the tensor
	whose $(i_1, \ldots, i_n)$-entry equals $\binom{n}{s}^{-1}$ if $(i_1, \ldots, i_n)$ consists of $n-s$ ones and $s$ twos (in some order), and equals $0$ otherwise.  Hence,
	\begin{equation*}
		\left\| \sym(e_1^{n-s}e_2^s) \right\|_F = \sqrt{\binom{n}{s} \cdot \binom{n}{s}^{\!-2}} = \binom{n}{s}^{\!-1/2}.
	\end{equation*}

	Then, by the formula for the Riemannian Hessian \eqref{eq:riemannian_hessian},
	\begin{align}
		\label{eq:starting_point_concativity_}
		\frac{1}{2n}z^\top \nabla_{\bbS^{D-1}}^2 F_{\hat \CA}(x) z = n\N{P_{\hat \CA}(x^{n-1}z)}_F^2 + (n-1)\langle P_{\hat \CA}(x^n), x^{n-2}z^2\rangle - F_{\hat \CA}(x).
	\end{align}
	To upper-bound this with quantities involving $\CA$ rather than $\hat \CA$, we use the subspace perturbation result Lemma~\ref{lem:DeltaCA_techlemma}, and \eqref{eq:sym_xzlemma}:
	\begin{align*}
		\N{P_{\hat \CA}(x^{n-1}z)}_F^2 &\leq \N{P_{\CA}(x^{n-1}z)}_F^2 + \Delta_{\CA}\left\|\operatorname{Sym}(x^{n-1}z)\right\|_F^2 = \N{P_{\CA}(x^{n-1}z)}_F^2 + \frac1{n}\Delta_{\CA}, \\[0.9em]
		\langle P_{\hat \CA}(x^n), x^{n-2}z^2\rangle &\leq \langle P_{\CA}(x^n), x^{n-2}z^2\rangle + \left\|\operatorname{Sym}(x^{n-2}z^2)\right\|_F \Delta_{\CA}\\
		&=\langle P_{\CA}(x^n), x^{n-2}z^2\rangle +  \frac{\sqrt{2}}{\sqrt{n(n-1)}}\Delta_{\CA} \leq \langle P_{\CA}(x^n), x^{n-2}z^2\rangle + \frac{2}{n-1}\Delta_{\CA},\\[0.9em]
		F_{\hat \CA}(x) \hspace{20pt}&\hspace{-20pt}= \N{P_{\hat \CA}(x^n)}_F^2 \geq \N{P_{\CA}(x^n)}_F^2 - \Delta_{\CA} \geq \N{P_{\operatorname{Span}(a_i^n)}(x^n)}_F^2 - \Delta_{\CA} =  \zeta_i^{2n} - \Delta_{\CA}.
	\end{align*}
	Inserting these bounds into \eqref{eq:starting_point_concativity_} gives \eqref{eq:lb_hessian_value} as announced.
\end{proof}

We now recall some basics about geodesic convexity on the sphere.

\smallskip

\begin{definition}
Spherical caps and geodesics are defined on the unit sphere as follows.
\begin{enumerate} \setlength\itemsep{1em}
\item For $ y \in \mathbb{S}^{D-1}$ and $r \in  (0,1)$, define the \textit{spherical cap} with center $y$ and height $r$ by
\begin{equation*}
     B_r(y) := \{ x \in \mathbb{S}^{D-1} : \langle x, y \rangle \geq 1 - r \} \subseteq \mathbb{S}^{D-1}.
\end{equation*}
\item Given distinct points $x_1, x_2 \in B_{r}(y)$, the  \textit{geodesic segment} connecting $x_1$ to $x_2$ is the curve $c: [0, 1] \rightarrow B_r(y)$ defined by
\begin{equation} \label{eq:geo-def}
    c(t) :=  \cos(t\theta) x_1 + \sin(t\theta) x_2^{\perp} \quad \text{for } 0 \leq t \leq 1.
\end{equation}
Here $\theta := \cos^{-1}(\langle x_1, x_2 \rangle) \in (0, \pi)$ is the angle between $x_1$ and $x_2$,
and $x_2^{\perp}$ is the component of $x_2$ that is orthogonal to $x_1$ (normalized to lie on the unit sphere) given by
\begin{equation*}
x_2^{\perp} := \frac{x_2 - \langle x_1, x_2 \rangle x_1}{\|x_2 - \langle x_1, x_2 \rangle x_1\|_2} = \frac{-\langle x_1, x_2 \rangle}{\sqrt{1 - \langle x_1, x_2 \rangle^2}} x_1 + \frac{1}{\sqrt{1 - \langle x_1, x_2 \rangle^2}} x_2 \in \mathbb{S}^{D-1}.
\end{equation*}
(If $x_1 = x_2$, the geodesic segment connecting $x_1$ to $x_2$ is the constant curve at $x_1$.)
\end{enumerate}
\end{definition}

\smallskip

The standard notion of concavity for twice-differentiable functions on the sphere is as follows. 

\smallskip

\begin{definition}\label{def:geo-conc}
   Let $F$ be a real-valued function defined on an open subset of $\mathbb{S}^{D-1}$ containing the spherical cap $B_r(y)$.
    Assume that $F$ is twice-differentiable.
    \begin{enumerate}
        \item We say $F$ is \textit{geodesically strictly concave} on $B_r(y)$ if the Riemannian Hessian of $F$ is negative definite throughout $B_r(y)$,
        \begin{equation} \label{eq:strict-concave-def}
            z^{\top} \nabla^{2}_{\mathbb{S}^{D-1}}F(x)z < 0, \quad \quad \quad \forall x \in B_r(y) \,\, \forall z \in \mathbb{S}^{D-1} \text{ with } x \perp z.
        \end{equation}
        \item Let $\mu >0$.  We say $F$ is \textit{geodesically $\mu$-strongly concave} on $B_r(y)$ if the Riemannian Hessian of $F$ satisfies the following eigenvalue bound throughout $B_r(y)$,
        \begin{equation} \label{eq:strong-concave-def}
         z^{\top} \nabla^{2}_{\mathbb{S}^{D-1}}F(x)z \leq -\mu, \quad \quad \quad \forall x \in B_r(y) \,\, \forall z \in \mathbb{S}^{D-1} \text{ with } x \perp z.
        \end{equation}
    \end{enumerate}
\end{definition}

\smallskip

The terminology in Definition~\ref{def:geo-conc}is justified by the following standard lemma.

\smallskip

\begin{lemma}[Restricting to geodesics] \label{lem:restrict-geo}
     Let $F$ be a real-valued function defined on an open subset of $\mathbb{S}^{D-1}$ containing the spherical cap $B_r(y)$.
    Assume that $F$ is twice-differentiable.
    \begin{enumerate}
        \item If $F$ is geodesically strictly concave on $B_r(y)$, then for all geodesic segments $c:[0,1] \rightarrow B_r(y)$ with image contained in $B_r(y)$ and $c(0) \neq c(1)$, the pulled-back function $F \circ c :[0,1] \rightarrow \mathbb{R}$ is strictly concave on $[0,1]$. 
        \item Let $\mu >0$. If $F$ is geodesically $\mu$-strongly concave on $B_r(y)$, then for all geodesic segments $c:[0,1] \rightarrow B_r(y)$ with image contained in $B_r(y)$ and $c(0) \neq c(1)$, the pulled-back function $F \circ c :[0,1] \rightarrow \mathbb{R}$ is $\mu L(c)^2$-strongly concave on $[0,1]$.  Here $L(c) := \int_{0}^{1} \| \dot{c}(t) \|_2 dt =  \cos^{-1}(\langle c(0), c(1) \rangle)$ denotes the length of $c$. 
    \end{enumerate}
\end{lemma}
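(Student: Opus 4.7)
The plan is to reduce the statement to the standard second-derivative test for strict/strong concavity of the real-valued function $g(t) := (F \circ c)(t)$ on $[0,1]$, by computing $g''(t)$ and relating it to the Riemannian Hessian of $F$ at $c(t)$. Both parts of the lemma follow from the same computation, with the only difference being a quantitative bookkeeping step at the end.

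First, parametrize the geodesic as $c(t) = \cos(t\theta) x_1 + \sin(t\theta) x_2^{\perp}$, with $\theta := \cos^{-1}(\langle x_1, x_2\rangle) = L(c)$, so that $\dot c(t) = \theta(-\sin(t\theta) x_1 + \cos(t\theta) x_2^{\perp})$ satisfies $\|\dot c(t)\|_2 = \theta$ and $\langle \dot c(t), c(t)\rangle = 0$, while $\ddot c(t) = -\theta^2 c(t)$. Thus the acceleration points purely in the radial direction (normal to $\mathbb{S}^{D-1}$), which is the defining feature of a geodesic on the sphere.

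Second, apply the chain rule:
\begin{equation*}
g''(t) = \dot c(t)^\top \nabla^2 F(c(t))\, \dot c(t) + \nabla F(c(t))^\top \ddot c(t) = \dot c(t)^\top \bigl(\nabla^2 F(c(t)) - c(t)^\top \nabla F(c(t))\, \Id_D\bigr) \dot c(t),
\end{equation*}
where we used $\ddot c(t) = -\theta^2 c(t) = -\|\dot c(t)\|_2^2\, c(t)$ to convert the gradient term into a symmetric quadratic form in $\dot c(t)$. Since $\dot c(t) \perp c(t)$, we have $(\Id_D - c(t)c(t)^\top)\dot c(t) = \dot c(t)$, and hence by the formula for the Riemannian Hessian on the sphere recalled in the proof of Lemma~\ref{lem:riemmanian_derivatives},
\begin{equation*}
g''(t) = \dot c(t)^\top \nabla^2_{\mathbb{S}^{D-1}} F(c(t))\, \dot c(t) = \|\dot c(t)\|_2^2 \, z(t)^\top \nabla^2_{\mathbb{S}^{D-1}} F(c(t)) z(t),
\end{equation*}
where $z(t) := \dot c(t)/\|\dot c(t)\|_2 \in \mathbb{S}^{D-1}$ satisfies $z(t) \perp c(t)$.

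Third, conclude. Since $c(0) \neq c(1)$ and the image of $c$ lies in $B_r(y)$, we have $\theta \in (0, \pi)$ and $c(t) \in B_r(y)$ for all $t \in [0,1]$. For part~(1), the hypothesis \eqref{eq:strict-concave-def} applied at $x = c(t)$ with $z = z(t)$ yields $g''(t) < 0$ for every $t \in [0,1]$, so $g$ is strictly concave. For part~(2), the hypothesis \eqref{eq:strong-concave-def} yields $g''(t) \leq -\mu \|\dot c(t)\|_2^2 = -\mu \theta^2 = -\mu L(c)^2$ for every $t \in [0,1]$, which is precisely the definition of $\mu L(c)^2$-strong concavity of $g$ on $[0,1]$. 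No step here presents a real obstacle; the only subtle point is the cancellation that turns the combined Euclidean Hessian plus gradient term into the intrinsic Riemannian Hessian, which hinges on the geodesic identity $\ddot c(t) = -\|\dot c(t)\|_2^2 c(t)$.
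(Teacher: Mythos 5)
Your proof is correct, and it takes a genuinely different route from the paper: the paper's own proof of Lemma~\ref{lem:restrict-geo} consists solely of citations to \cite{boumal2020intromanifolds} (Theorem~11.19 and Definitions~11.3, 11.5), whereas you derive the key identity $(F\circ c)''(t) = \dot c(t)^{\top}\nabla^2_{\mathbb{S}^{D-1}}F(c(t))\,\dot c(t)$ from first principles. The crux of your argument is the observation that for a spherical geodesic the acceleration is purely normal, $\ddot c(t) = -\|\dot c(t)\|_2^2\,c(t)$, so that the chain-rule term $\nabla F(c(t))^{\top}\ddot c(t)$ is exactly the correction $-\|\dot c(t)\|_2^2\,(c(t)^{\top}\nabla F(c(t)))$ appearing in the ambient-to-Riemannian Hessian formula (the one recalled in the proof of Lemma~\ref{lem:riemmanian_derivatives}), and the two combine into the intrinsic Riemannian quadratic form. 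From there the second-derivative test gives both parts immediately, with $\|\dot c(t)\|_2 = \theta = L(c)$ supplying the $L(c)^2$ factor. What your approach buys is self-containedness and transparency about where the spherical geometry enters; the cost is that it tacitly assumes $F$ admits a smooth ambient extension so that the Euclidean gradient and Hessian make sense, which is justified for the paper's application ($F_{\hat\CA}$ is polynomial on $\mathbb{R}^D$) but is a small extra hypothesis relative to the purely intrinsic statement, which the cited textbook result handles directly.
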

\begin{proof}
    See \cite[Thm.~11.19(3), Def.~11.3]{boumal2020intromanifolds} and \cite[Thm.~11.19(2), Def.~11.5]{boumal2020intromanifolds} respectively.
\end{proof}

\smallskip

The usual implications of geodesic concavity are as follows.

\smallskip

\begin{lemma} \label{lem:geoconcave-implications}
    Assume the setup of Lemma~\ref{lem:restrict-geo}.
    \begin{enumerate}
        \item If $F$ is geodesically strictly concave on $B_r(y)$,  there exists at most one point $x \in B_r(y)$ where $\nabla_{\mathbb{S}^{D-1}}F(x) = 0$.  Such a point $x$ is automatically a strict local maximizer of $F$.
        \item Let $\mu > 0$.  If $F$ is geodesically $\mu$-strongly concave on $B_r(y)$, then for all geodesics segments $c$ as in the lemma above, $F \circ c$ is upper-bounded by a concave quadratic function via:
             \begin{equation*}
           (F \circ c)(t) \, \leq \, (F \circ c)(0)   +   (F \circ c)'(0) \, t   -   \frac{\mu L(c)^2}{2} t^2, \quad \quad \quad \forall t \in [0,1].
      \end{equation*}
    \end{enumerate}
\end{lemma}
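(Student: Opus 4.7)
The plan is to reduce both claims to elementary one-dimensional calculus on pullbacks $g := F \circ c$ along geodesic segments, invoking Lemma~\ref{lem:restrict-geo} as a black box. A small preliminary I would verify separately is the \emph{geodesic convexity} of $B_r(y)$, i.e., that for any $x_1, x_2 \in B_r(y)$ the geodesic $c$ defined by \eqref{eq:geo-def} satisfies $c([0,1]) \subseteq B_r(y)$. This is standard because $B_r(y)$ sits inside an open hemisphere (its angular radius $\arccos(1-r)$ is strictly less than $\pi/2$ for $r \in (0,1)$), and a short computation reveals that $\langle c(t), y\rangle$ has the form $A\cos(t\theta - \phi)$ for appropriate $A,\phi$, whose minimum on $[0,1]$ must be attained at an endpoint. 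With this in hand, Lemma~\ref{lem:restrict-geo} applies to any geodesic between two points of $B_r(y)$, and I will also use the identity $g'(0) = \langle \nabla_{\mathbb{S}^{D-1}} F(c(0)), \dot c(0)\rangle$, which holds because $\dot c(0)$ is tangent to the sphere at $c(0)$.

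For part (1), strict local maximality of any critical point $x \in B_r(y)$ is immediate from the second-order sufficient condition: $\nabla_{\mathbb{S}^{D-1}} F(x) = 0$ by assumption, and the Riemannian Hessian $\nabla^2_{\mathbb{S}^{D-1}} F(x)$ is negative definite by the geodesic strict concavity hypothesis. For uniqueness, I would argue by contradiction: if $x_1, x_2 \in B_r(y)$ are two distinct critical points and $c$ is the geodesic from $x_1$ to $x_2$, then Lemma~\ref{lem:restrict-geo} gives that $g = F \circ c$ is strictly concave on $[0,1]$. A strictly concave $C^1$ function has strictly decreasing derivative (otherwise $g$ would be affine on some subinterval, contradicting strict concavity), so the criticality condition $g'(0) = 0$ forces $g'(t) < 0$ for all $t \in (0,1]$, whence $F(x_2) = g(1) < g(0) = F(x_1)$. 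Reversing the roles of $x_1$ and $x_2$ yields the opposite strict inequality, a contradiction.

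For part (2), Lemma~\ref{lem:restrict-geo} yields that $g = F \circ c$ is $\mu L(c)^2$-strongly concave on $[0,1]$. Under the twice-differentiability assumption on $F$, this translates to the pointwise second-derivative bound $g''(t) \leq -\mu L(c)^2$ on $[0,1]$, and the announced inequality then follows from Taylor's theorem with integral remainder:
\begin{equation*}
g(t) \;=\; g(0) + g'(0)\, t + \int_{0}^{t} (t-s)\, g''(s)\, ds \;\leq\; g(0) + g'(0)\, t - \frac{\mu L(c)^2}{2}\, t^2
\end{equation*}
for every $t \in [0,1]$. There is no real obstacle in either part; the only mild points of care are the standard one-dimensional equivalence between the Hessian-eigenvalue formulation of $\mu$-strong concavity used in Definition~\ref{def:geo-conc} and the pointwise second-derivative bound on the pullback, together with the geodesic convexity of $B_r(y)$ needed to ensure that the geodesics invoked in part (1) remain in $B_r(y)$ and thus trigger Lemma~\ref{lem:restrict-geo}.
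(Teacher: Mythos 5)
Your proof is correct and follows essentially the same route as the paper's: reduce to the one-dimensional pullback $g = F\circ c$, use the strictly decreasing derivative of $g$ to rule out two critical points in part (1), and apply Taylor's theorem with the second-derivative bound $g'' \le -\mu L(c)^2$ in part (2). The only differences are cosmetic — you check geodesic convexity of the cap explicitly (the paper builds it into its definition of geodesic segments), and you derive the contradiction in part (1) from the function values $g(0), g(1)$ rather than from $g'(0)=g'(1)=0$.
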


\begin{proof}
    1. We first show that for all geodesic segments $c : [0,1] \rightarrow B_r(y)$ with $c(0) \neq c(1)$, the derivative $(F \circ c)'$ is strictly decreasing on $[0,1]$.  To see this, note that  since $F$ is twice-differentiable, $F \circ c$ is twice-differentiable; and since $c$ is a geodesic it holds
    \begin{equation} \label{eq:diff-curve}
            (F \circ c)''(t) \, = \, \dot{c}(t)^{\top} \nabla^2_{\mathbb{S}^{D-1}} F(c(t)) \dot{c}(t),
        \end{equation}
      by \cite[Eq.~(5.30), page~106]{boumal2020intromanifolds}.
      The right-hand side of \eqref{eq:diff-curve} is strictly negative by \eqref{eq:strict-concave-def} and the fact $\dot{c}(t) \neq 0$ (note $\| \dot{c}(t)\|_2 = \cos^{-1}(\langle c(0), c(1) \rangle)$ and we are assuming $c(0) \neq c(1)$).
      Thus $(F \circ c)''$ is strictly negative on $[0,1]$.  Hence by the mean value theorem, $(F \circ c)'$ is strictly decreasing on $[0,1]$.

      Now, to deduce item 1 in the lemma, assume for a contradiction that there exist two distinct points $x_1, x_2 \in B_r(y)$ where the Riemannian gradient vanishes.
      Let $c$ be the geodesic segment connecting $x_1$ to $x_2$.
      Again by  \cite[Eq.~(5.30), page~106]{boumal2020intromanifolds},
      \begin{equation*}
          (F \circ c)'(t) \, = \, \dot{c}(t)^{\top} \nabla_{\mathbb{S}^{D-1}} F(c(t)).
      \end{equation*}
      By the vanishing Riemannian gradient assumption, this implies $(F \circ c)'(0) = 0$ and $(F \circ c)'(1) =0$.  But that contradicts the fact that $(F \circ c)'$ is strictly decreasing.  So item 1 follows. \\

      2. By Taylor's theorem applied to $F \circ c$, for each $t \in [0,1]$ there exists $\xi \in [0,t]$ such that
      \begin{equation} \label{eq:taylor-almost-done}
          (F \circ c)(t) \, = \, (F \circ c)(0) + (F \circ c)'(0)\,t \, + \frac{(F \circ c)''(\xi)}{2} t^2.
      \end{equation}
      From \eqref{eq:diff-curve}, \eqref{eq:strong-concave-def} and $\|\dot{c}(\xi)\|_2 = L(c)$, we get $(F \circ c)''(\xi) \leq -\mu L(c)^2$.  Insert this into \nolinebreak \eqref{eq:taylor-almost-done}.
\end{proof}

\smallskip

The next tool is how we complete the proofs of our main results, by showing a second-order critical point with sufficiently large functional value must land in a spherical cap around one of $sa_i$.

\smallskip

\begin{proposition}[Tool \#3]
\label{prop:concavity_and_local_maxes}
Let $\{a_i : i \in [K]\} \subseteq \bbS^{D-1}$ be any system of vectors, let $\CA = \operatorname{Span}\{a_i^n : i \in [K]\} \subseteq \symten{n}$ be the corresponding subspace of tensors and let $\hat \CA \subseteq \symten{n}$ be another subspace of tensors which is a perturbation of $\mathcal{A}$ with approximation error $\Delta_{\CA}$.
Let $0 < r \leq R < 1$, so there is an inclusion of spherical caps $B_r(sa_i) \subseteq B_R(sa_i)$.
Assume $F_{\hat \CA}$ is geodesically $n$-strongly concave on the inner cap $B_r(sa_i)$, the height of the inner cap satisfies $r > \Delta_{\CA}/n$, and $F_{\hat \CA}$ is geodesically strictly concave on the outer cap $B_R(sa_i)$.
Then, in $B_R(sa_i)$ there exists a unique first-order critical point $x^*$ of \textup{\ref{prob:nspm}}.
Further, this point is a strict local maximizer for  \textup{\ref{prob:nspm}} on $\mathbb{S}^{D-1}$.
Finally, the distance between $x^*$ and $sa_i$ is upper-bounded independently of $R$ and $r$ via:
\begin{equation} \label{eq:det_existence_max}
    \left\| x^* - sa_i \right\|_2^2 \leq \frac{2 \Delta_{\CA}}{n}.
    \end{equation}
\end{proposition}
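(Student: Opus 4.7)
The plan is to reduce everything to two elementary facts about the objective value: the trivial upper bound $F_{\hat \CA}(x) = \|P_{\hat \CA}(x^n)\|_F^2 \leq 1$ on all of $\bbS^{D-1}$, and the lower bound $F_{\hat \CA}(sa_i) \geq 1 - \Delta_{\CA}$. The latter holds because $a_i^n \in \CA$ forces $\|P_{\CA}((sa_i)^n)\|_F^2 = 1$, after which Lemma~\ref{lem:DeltaCA_techlemma} transfers the value from $\CA$ to $\hat \CA$ with error at most $\Delta_{\CA}$. Together these two bookend a ``gap'' of size $\Delta_{\CA}$ that will squeeze any maximizer on $B_r(sa_i)$ close to $sa_i$.

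First I would use compactness of $B_r(sa_i)$ together with continuity of $F_{\hat \CA}$ to extract a maximizer $x^\ast$ of $F_{\hat \CA}$ over $B_r(sa_i)$. Since a spherical cap of height less than $1$ is geodesically convex, there is a geodesic $c:[0,1] \to B_r(sa_i)$ from $c(0)=x^\ast$ to $c(1)=sa_i$, of length $L = \cos^{-1}(\langle x^\ast, sa_i\rangle)$. Because $x^\ast$ maximizes over $B_r(sa_i)$ and the tangent $\dot c(0)$ points inward, $(F_{\hat \CA}\circ c)'(0)\leq 0$ (with equality if $x^\ast$ is interior). Applying Lemma~\ref{lem:restrict-geo}(2) via the $n$-strong concavity hypothesis then yields
\begin{equation*}
F_{\hat \CA}(sa_i) = (F_{\hat \CA}\circ c)(1) \leq F_{\hat \CA}(x^\ast) + (F_{\hat \CA}\circ c)'(0) - \tfrac{nL^2}{2} \leq F_{\hat \CA}(x^\ast) - \tfrac{nL^2}{2},
\end{equation*}
and combining with the two bookends gives $\tfrac{nL^2}{2}\leq F_{\hat \CA}(x^\ast)-F_{\hat \CA}(sa_i)\leq \Delta_{\CA}$. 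Using $\|x^\ast-sa_i\|_2^2 = 2(1-\cos L)\leq L^2 \leq 2\Delta_{\CA}/n$ delivers the distance bound \eqref{eq:det_existence_max}.

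To finish, I would observe that if $x^\ast$ sat on the boundary $\{x : \langle x, sa_i\rangle = 1-r\}$ of $B_r(sa_i)$, then $\|x^\ast-sa_i\|_2^2 = 2r > 2\Delta_{\CA}/n$ by hypothesis, contradicting the distance bound just established. Hence $x^\ast$ lies in the open interior, so $x^\ast$ is a first-order critical point of \ref{prob:nspm}. Uniqueness of first-order critical points in the outer cap $B_R(sa_i) \supseteq B_r(sa_i)$ then follows from Lemma~\ref{lem:geoconcave-implications}(1) applied to the strict concavity of $F_{\hat \CA}$ on $B_R(sa_i)$, which also gives the strict local maximality at $x^\ast$ via negative definiteness of the Riemannian Hessian there. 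The main subtlety I expect is the boundary case of the geodesic argument: ensuring $(F_{\hat \CA}\circ c)'(0) \leq 0$ uniformly whether $x^\ast$ lies in the interior or on the boundary of $B_r(sa_i)$, and justifying geodesic convexity of spherical caps of height less than $1$. Once this setup is in place, the slack factor of $2$ between the hypothesis $r > \Delta_{\CA}/n$ and the conclusion $\|x^\ast-sa_i\|_2^2 \leq 2\Delta_{\CA}/n$ is exactly what traps $x^\ast$ in the interior.
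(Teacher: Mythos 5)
Your proposal is correct and follows essentially the same route as the paper's proof: compactness gives a maximizer $x^*$ of $F_{\hat\CA}$ on the inner cap, the sandwich $1-\Delta_{\CA}\le F_{\hat\CA}(sa_i)\le F_{\hat\CA}(x^*)\le 1$ combined with the geodesic strong-concavity estimate and $(F_{\hat\CA}\circ c)'(0)\le 0$ yields $L(c)^2\le 2\Delta_{\CA}/n$, the inequality $2-2\cos\theta\le\theta^2$ converts this to the Euclidean bound, interiority follows from $r>\Delta_{\CA}/n$, and uniqueness plus strict local maximality follow from strict geodesic concavity on the outer cap. The subtleties you flag (the sign of $(F_{\hat\CA}\circ c)'(0)$ and geodesic convexity of the cap) are handled in the paper exactly as you anticipate.
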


\begin{proof}
By Lemma~\ref{lem:geoconcave-implications}(a) and the assumption that  $F_{\hat \CA}$ is strictly concave on the outer cap, there exists at most one critical point of $F_{\hat \CA}$ in $B_R(sa_i)$.
    To prove the rest of the proposition, we let the maximum of $F_{\hat \CA}$ in the inner cap $B_r(sa_i)$ be attained at $x^* \in B_r(sa_i)$; such a point exists by compactness of $B_r(sa_i)$.
    We will first show that $x^*$ satisfies \eqref{eq:det_existence_max}, which is the main assertion.
To this end, by the choice of $x^*$,  note $F_{\hat \CA}(x^*) \geq F_{\hat \CA}(sa_i)$.
Of course, $1 \geq F_{\hat \CA}(x^*)$.
By the perturbation bound \eqref{eq:deltaCA_norm_bound},
$F_{\hat \CA}(sa_i) \geq F_{\CA}(sa_i) - \Delta_{\CA} = 1 - \Delta_{\CA}$.
Combining the last three sentences gives
\begin{equation} \label{eq:concave-pf1}
    -\Delta_{\CA} \leq F_{\hat \CA}(s a_i) - F_{\hat \CA}(x^*).
\end{equation}

Now let $c$ be the geodesic segment \eqref{eq:geo-def} connecting $x^*$ to $sa_i$.  By Lemma~\ref{lem:geoconcave-implications}(b) and the assumption that $F_{\hat \CA}$ is $n$-strictly concave on the inner cap, we have $F_{\hat \CA}(s a_i) - F_{\hat \CA}(x^*) \leq (F \circ c)'(0) - \frac{n L(c)^2}{2}$.
By the choice of $x^*$,  $(F \circ c)(0) \geq (F \circ c)(t)$ for all $t \in [0,1]$, whence $(F \circ c)'(0) \leq 0$.  It follows
\begin{equation} \label{eq:concave-pf2}
    F_{\hat \CA}(s a_i) - F_{\hat \CA}(x^*) \leq  - \frac{n L(c)^2}{2}.
\end{equation}
    Putting Eq.~\eqref{eq:concave-pf1} and \eqref{eq:concave-pf2} together, we get
    \begin{equation} \label{eq:concave-pf3}
        - \Delta_{\CA} \leq - \frac{n L(c)^2}{2} \,\,\, \Longleftrightarrow \,\,\, L(c)^2 \leq \frac{2 \Delta_{\CA}}{n}.
    \end{equation}
    Here the geodesic length $L(c)$ is $\theta_0$, where $\theta_0 = \cos^{-1}(\langle x^*, sa_i \rangle)$ is the angle between $x^*$ and $sa_i$ in radians.
    This is related to the squared Euclidean distance between $x^*$ and $sa_i$ as follows:
    \begin{equation*}
        \| x^* - sa_i \|_2^2 = 2 - 2\langle x^*, sa_i \rangle = 2 - 2\cos(\theta_0).
    \end{equation*}
    Using the elementary inequality $\cos(\theta) \geq 1 - \tfrac{1}{2}\theta^2$, which is true for all $\theta \in \mathbb{R}$, we see
    \begin{equation*}
        \| x^* - sa_i \|_2^2 \, \leq \, \theta_0^2 = L(c)^2.
    \end{equation*}
    Thus \eqref{eq:det_existence_max} follows from \eqref{eq:concave-pf3} as desired.
    We can quickly obtain the rest of the statement, and see that $x^*$ is a strict local maximizer of $F_{\hat \CA}$ on $\mathbb{S}^{D-1}
    $ (hence also a first-order critical point), by noting
    \begin{align*}
       2 - 2 \langle x^*, sa_i \rangle  =  \| x^* - sa_i \|_2^2  \leq  \frac{2 \Delta_{\CA}}{n}   \,\,\,\, \implies \,\,\,\,  1 - \langle x^*, sa_i \rangle   \leq  \frac{\Delta_{\CA}}{n}.
    \end{align*}
    For then from the assumption that $r > \Delta_{\CA}/n$, we get $1 - \langle x^*, sa_i \rangle < r$.  Hence $x^*$ lies strictly in the interior of the inner  cap $B_r(sa_i)$.  Therefore $x^*$ is a local maximizer of $F_{\hat \CA}$ on $\mathbb{S}^{D-1}$.  By Lemma~\ref{lem:geoconcave-implications}(a) and strict concavity, $x^*$ is in fact a strict local maximizer.  This ends the proof.
    \end{proof}

\medskip

\section{Proof of Theorem \ref{thm:main_result_deterministic}}
\label{subsec:proof_theorem_main_deterministic}

In this section we prove our deterministic theorem. 
Here we define $\zeta$, $\sigma$ and $R_{i,s}$ as in Definition~\ref{def:basic-setup}.

\subsection[Bounds involving error terms]{Bounds on $R_{i,s}$ and $\| G_n^{-1}\|_{2}$}
We start by relating the auxiliary scalars $R_{i,s}$ to the frame constants $\rho_s$.

\begin{lemma} \label{lem:det-Ris-bound}
    For each $i \in [K]$ and $s \in [n]$, we have
    \begin{align} \label{eq:Ris-all-bound}
 |R_{i,s}| \, \leq \, \max_{\ell: \ell \neq i} |\sigma_{\ell} \zeta_{\ell}^{n-s}| \, \rho_s \, \leq \, \| \sigma \odot \zeta^{\odot n-s} \|_{\infty} \, \rho_s.
    \end{align}
    If $s$ is even, then we have a slight refinement:
    \begin{align} \label{eq:Ris-even-bound}
     \min_{\ell: \ell \neq i} \sigma_{\ell} \zeta_{\ell}^{n-s} \rho_s  \, \leq \, R_{i,s} \, \leq \, \max_{\ell: \ell \neq i} \sigma_{\ell} \zeta_{\ell}^{n-s} \rho_s.
     \end{align}
\end{lemma}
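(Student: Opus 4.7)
The plan is to expand $R_{i,s}$ in terms of the $\sigma_j$'s and then apply the definition of $\rho_s$ at the point $x = a_i$. Starting from \eqref{eq:Ris-another} and substituting $P_{\CA}(x^n) = \sum_{j=1}^K \sigma_j a_j^n$ (the defining equation \eqref{eq:def-sigma} of $\sigma$), I compute
\begin{equation*}
R_{i,s} = \sum_{j=1}^K \sigma_j \langle a_j, x \rangle^{n-s} \langle a_j, a_i \rangle^s - \sigma_i \zeta_i^{n-s} = \sum_{j \neq i} \sigma_j \zeta_j^{n-s} \langle a_j, a_i \rangle^s,
\end{equation*}
where the $j = i$ term cancels because $\langle a_i, a_i \rangle^s = 1$.

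The main ingredient is the observation that setting $x = a_i$ in the definition \eqref{eq:def_rho} of $\rho_s$ gives
\begin{equation*}
\sum_{j \neq i} |\langle a_j, a_i \rangle|^s = \sum_{j=1}^K |\langle a_j, a_i \rangle|^s - 1 \leq \rho_s,
\end{equation*}
since the $j=i$ term contributes exactly $1$. For the general bound \eqref{eq:Ris-all-bound}, I then apply the triangle inequality:
\begin{equation*}
|R_{i,s}| \leq \sum_{j \neq i} |\sigma_j \zeta_j^{n-s}| \, |\langle a_j, a_i \rangle|^s \leq \bigl(\max_{\ell \neq i} |\sigma_\ell \zeta_\ell^{n-s}|\bigr) \sum_{j \neq i} |\langle a_j, a_i \rangle|^s \leq \bigl(\max_{\ell \neq i} |\sigma_\ell \zeta_\ell^{n-s}|\bigr)\rho_s,
\end{equation*}
and the second inequality of \eqref{eq:Ris-all-bound} follows since removing the restriction $\ell \neq i$ can only enlarge the max.

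For the refinement \eqref{eq:Ris-even-bound} when $s$ is even, the point is that $\langle a_j, a_i \rangle^s \geq 0$ for every $j$, so no absolute values are needed. Writing $c_j := \langle a_j, a_i \rangle^s \geq 0$, I get
\begin{equation*}
\bigl(\min_{\ell \neq i} \sigma_\ell \zeta_\ell^{n-s}\bigr) \sum_{j \neq i} c_j \leq R_{i,s} = \sum_{j \neq i} \sigma_j \zeta_j^{n-s} c_j \leq \bigl(\max_{\ell \neq i} \sigma_\ell \zeta_\ell^{n-s}\bigr) \sum_{j \neq i} c_j,
\end{equation*}
and then I substitute $\sum_{j \neq i} c_j \leq \rho_s$ on each side. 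The only minor care needed is that the extremal coefficient $\max_{\ell \neq i} \sigma_\ell \zeta_\ell^{n-s}$ (resp.\ min) may be negative, but multiplication by the nonnegative quantity $\sum_{j \neq i} c_j$ preserves the direction of the inequalities, so both bounds go through. I do not anticipate any real obstacle: the whole argument is a single-line expansion followed by an evaluation of $\rho_s$ at a tensor component.
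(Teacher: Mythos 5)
Your argument follows the paper's proof exactly: expand $R_{i,s}=\sum_{\ell\neq i}\sigma_\ell\zeta_\ell^{n-s}\langle a_i,a_\ell\rangle^s$, bound by the triangle inequality (or by nonnegativity of $\langle a_i,a_\ell\rangle^s$ when $s$ is even), and evaluate the definition of $\rho_s$ at $x=a_i$. Your closing remark about signs, however, is aimed at the wrong step: multiplying the pointwise bound $\min\le\sigma_j\zeta_j^{n-s}\le\max$ by $c_j\ge 0$ and summing is unproblematic regardless of the signs of the extrema; the step that actually requires attention is replacing $\sum_{j\neq i}c_j$ by the larger number $\rho_s$ on each side of $\min\cdot\sum c_j\le R_{i,s}\le\max\cdot\sum c_j$, which preserves the upper bound only if $\max_{\ell\neq i}\sigma_\ell\zeta_\ell^{n-s}\ge 0$ and the lower bound only if $\min_{\ell\neq i}\sigma_\ell\zeta_\ell^{n-s}\le 0$. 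The paper's proof has the same implicit sign assumption (and it is harmless where the even-case refinement is actually invoked), so this is a cosmetic imprecision rather than a gap, but your stated justification for it does not address the step that needs one.
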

\begin{proof}
    For each $i$ and $s$, use the definition of $R_{i,s}$, the triangle inequality and the definition of $\rho_s$:
    \begin{align*}
        |R_{i,s}| & = |\sum_{\ell: \ell \neq i} \sigma_{\ell} \zeta_{\ell}^{n-s} \langle a_{i}, a_{\ell} \rangle^s|  \leq \sum_{\ell: \ell \neq i} |\sigma_{\ell} \zeta_{\ell}^{n-s}| |\langle a_{i}, a_{\ell} \rangle|^s \leq \max_{\ell : \ell \neq i} |\sigma_{\ell} \zeta_{\ell}^{n-s}| \sum_{\ell : \ell \neq i} |\langle a_i, a_{\ell} \rangle|^s \\
        & \leq \max_{\ell : \ell \neq i} | \sigma_{\ell} \zeta_{\ell}^{n-s}| ( \sum_{\ell =1}^K |\langle a_i, a_{\ell} \rangle|^s  - 1) \leq \max_{\ell : \ell \neq i} | \sigma_{\ell} \zeta_{\ell}^{n-s}| \rho_s \leq \| \sigma \odot \zeta^{\odot n-s} \|_{\infty} \rho_s.
    \end{align*}
    If $s$ is even, then each term $\langle a_i, a_{\ell} \rangle^{s}$ is nonnegative.  So we have
    \begin{align*}
        R_{i,s} = \sum_{\ell: \ell \neq i} \sigma_{\ell} \zeta_{\ell}^{n-s} \langle a_{i}, a_{\ell} \rangle^s \leq \max_{\ell : \ell \neq i} \sigma_{\ell} \zeta_{\ell}^{n-s} \sum_{\ell: \ell \neq i} \langle a_i, a_{\ell} \rangle^s \leq \max_{\ell : \ell \neq i} \sigma_{\ell} \zeta_{\ell}^{n-s} \rho_s,
    \end{align*}
    and likewise $R_{i,s} \geq \min_{\ell : \ell \neq i} \sigma_{\ell} \zeta_{\ell}^{n-s} \rho_s$ as announced.
\end{proof}
\begin{remark}
The frame constants are small under the  assumptions of Theorem~\ref{thm:main_result_deterministic} with $\tau > 0$, for then:
\begin{align}
  n^2 \rho_2 \leq n^2 \rho_2 + (n^2 + n)\rho_n < \frac{1}{6}\quad &\implies \quad \rho_2 < \frac{1}{6n^2} \leq \frac{1}{24}  \label{eq:deterministic-bound-rho-2} \\
    \textup{and}  \,\,\,\, (n^2 + n)\rho_n  \leq n^2\rho_2 + (n^2 + n)\rho_n < \frac{1}{6} \quad &\implies \quad \rho_n < \frac{1}{6(n^2 + n)} = \frac{1}{36}. \label{eq:deterministic-bound-rho-n}
\end{align}
\end{remark}
Consequently in the setting of Theorem~\ref{thm:main_result_deterministic}, the system $\{ a_i^n : i \in [K]\}$ is linearly independent.
\begin{corollary} \label{cor:assumptions-invertible}
    Under the assumptions of Theorem~\ref{thm:main_result_deterministic}, the Grammian matrix given by $(G_n)_{i,j} = \langle a_i, a_j \rangle^n$ is invertible.  In fact,
    \begin{equation*} 
    \| G_n^{-1}\|_{2} \leq \frac{1}{1-\rho_n} \leq \frac{36}{35}.
    \end{equation*}
\end{corollary}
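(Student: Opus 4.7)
The plan is to invoke Lemma~\ref{lem:incoherence_scalars} together with the incoherence estimate \eqref{eq:deterministic-bound-rho-n} highlighted in the preceding remark. First, I would note that by Lemma~\ref{lem:incoherence_scalars} applied with $s = n$, the smallest eigenvalue of $G_n$ satisfies $\mu_K(G_n) \geq 1 - \rho_n$. Since $G_n$ is symmetric, invertibility of $G_n$ and the spectral norm bound $\|G_n^{-1}\|_2 = 1/\mu_K(G_n)$ follow immediately whenever $\rho_n < 1$.

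Next, I would feed in the quantitative bound on $\rho_n$ that is baked into the hypotheses of Theorem~\ref{thm:main_result_deterministic}. Specifically, $\tau \geq 0$ combined with $n \geq 2$ (recall $n = \lceil m/2\rceil$ and $m \geq 3$, so $n^2 + n \geq 6$) gives the chain displayed in \eqref{eq:deterministic-bound-rho-n}, namely $\rho_n < 1/(6(n^2+n)) \leq 1/36$. Substituting into the eigenvalue estimate yields $\mu_K(G_n) > 35/36 > 0$, which proves invertibility of $G_n$, and rearranging then gives
\begin{equation*}
\|G_n^{-1}\|_2 \;=\; \frac{1}{\mu_K(G_n)} \;\leq\; \frac{1}{1-\rho_n} \;\leq\; \frac{36}{35},
\end{equation*}
as claimed.

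There is no real obstacle here: the corollary is essentially a direct readout from Lemma~\ref{lem:incoherence_scalars} and the pre-established numerical bound on $\rho_n$. The only item worth being explicit about is the inequality $n^2 + n \geq 6$ so that the final constant $36/35$ is obtained (rather than the weaker bound that would follow from $\tau \geq 0$ alone for hypothetical smaller $n$).
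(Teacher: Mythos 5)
Your proof is correct and follows exactly the same path as the paper's: apply Lemma~\ref{lem:incoherence_scalars} with $s = n$ to get $\mu_K(G_n) \geq 1 - \rho_n$, then use the bound $\rho_n < 1/(6(n^2+n)) \leq 1/36$ from \eqref{eq:deterministic-bound-rho-n}. Your version is just more explicit about the intermediate steps (the identity $\|G_n^{-1}\|_2 = 1/\mu_K(G_n)$ for a positive semidefinite Gram matrix, and the observation that $n \geq 2$ forces $n^2 + n \geq 6$), which is fine.
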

\begin{proof}
    This follows immediately from \eqref{eq:deterministic-bound-rho-n}, and the bound  $1-\rho_n \leq \mu_n(G_n)$ in Lemma~\ref{lem:incoherence_scalars}.
\end{proof}

\subsection{Lower bound on maximum correlation coefficient}

We begin the proof of Theorem \ref{thm:main_result_deterministic} by showing that any constrained
second-order critical point $x$ has at least one large correlation coefficient.

\begin{proposition}
\label{prop:one_big_deterministic_case}
Consider a system $\{a_i : i \in [K]\}$ that satisfies the assumptions of Theorem \ref{thm:main_result_deterministic}.
For a second-order critical point $x$ of \textup{\ref{prob:nspm}}, then either $F_{\mathcal{A}}(x) = 0$ or we have
\begin{align*}
\|\zeta\|_{\infty}^2 &\geq  1 -\left(1+\frac{n}{2(n-1)}\right)  \frac{\rho_2}{1+\rho_2} - \frac{n}{2(n-1)}\frac{\rho_n}{(1-2\rho_n)(1+\rho_2)}  - \frac{4n-2}{2(n-1)}\frac{\Delta_{\CA}}{F_{\CA}(x)}\\
&\geq 1 - 2\rho_2 - 2 \rho_n - 3\frac{\Delta_{\CA}}{F_{\CA}(x)}. 
\end{align*}
\end{proposition}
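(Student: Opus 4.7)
The plan is to select the index $i^* = \arg\max_i |\zeta_i|$ (so $\|\zeta\|_\infty = |\zeta_{i^*}|$, WLOG taking $\zeta_{i^*} \geq 0$ by flipping the sign of $x$ if necessary) and to apply Proposition~\ref{prop:auxliary_one_big} at $i = i^*$.  Using the identity $\sigma_{i^*} = \zeta_{i^*}^n - R_{i^*, n}$ from Eq.~\eqref{eq:sigma_and_zeta}, I would rewrite the right-hand side of Eq.~\eqref{eq:starting_point_max_correlation_coeff} so that
\[
(1 + 2(n-1)\zeta_{i^*}^2) F_{\CA}(x) \,\geq\, (2n-1)\zeta_{i^*}^{2n-2} - (n-1) \zeta_{i^*}^{n-2} R_{i^*, n} + (n-1) R_{i^*, 2} - (4n-2)\Delta_\CA,
\]
reducing the task to controlling $R_{i^*, n}, R_{i^*, 2}$ and relating $\zeta_{i^*}^{2n-2}$ to $F_\CA(x)$.

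For the remainder terms, I would invoke Lemma~\ref{lem:det-Ris-bound} to get $|R_{i^*, n}| \leq \rho_n \|\sigma\|_\infty$ and $|R_{i^*, 2}| \leq \rho_2 \|\sigma\|_\infty \|\zeta\|_\infty^{n-2}$.  A self-consistent iteration of $\sigma_i = \zeta_i^n - R_{i, n}$, taking $\max_i$ on both sides, produces the infinity-norm bound $\|\sigma\|_\infty \leq \|\zeta\|_\infty^n / (1 - \rho_n)$; feeding this back into $\sigma_{i^*} \geq \zeta_{i^*}^n - \|\sigma\|_\infty \rho_n$ yields the sharper lower bound $\sigma_{i^*} \geq \zeta_{i^*}^n (1 - 2\rho_n)/(1 - \rho_n)$, which is the source of the $(1-2\rho_n)$ factor in the target.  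To translate $\zeta_{i^*}^{2n-2}$ into $F_\CA(x)$, I would combine Lemma~\ref{lem:reformulation_frob_norms} with $\|G_n^{-1}\|_2 \leq 1/(1-\rho_n)$ (from Lemma~\ref{lem:incoherence_scalars}) and the elementary estimate $\sum_i \zeta_i^{2n} \leq \|\zeta\|_\infty^{2n-2} \sum_i \zeta_i^2 \leq (1+\rho_2)\|\zeta\|_\infty^{2n-2}$ to obtain
\[
F_\CA(x) = (\zeta^{\odot n})^\top G_n^{-1} \zeta^{\odot n} \leq \frac{(1+\rho_2)\,\zeta_{i^*}^{2n-2}}{1-\rho_n}, \quad \text{i.e.,} \quad \zeta_{i^*}^{2n-2} \geq \frac{(1-\rho_n) F_\CA(x)}{1+\rho_2}.
\]

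Assembling these bounds into the main inequality, dividing through by $F_\CA(x)$ (assuming $F_\CA(x) > 0$, else the first case of the claim holds), and solving for $\zeta_{i^*}^2$ should give the first inequality of the statement.  The second inequality then follows by elementary simplifications such as $\rho_2/(1+\rho_2) \leq \rho_2$, $(3n-2)/(2(n-1)) \leq 2$ for $n \geq 2$, and $\rho_n/((1-2\rho_n)(1+\rho_2)) \leq 2\rho_n$ under the smallness of $\rho_n$ guaranteed by the hypotheses of Theorem~\ref{thm:main_result_deterministic}.  The main difficulty is careful bookkeeping of the $(1-\rho_n)$ factors so that the sharp $(1-2\rho_n)$ denominator is preserved in the $\rho_n$ term; this requires applying the refined lower bound on $\sigma_{i^*}$ only in the $(2n-1)\sigma_{i^*}\zeta_{i^*}^{n-2}$ contribution and bounding $|R_{i^*, n}|$ via $|\zeta_{i^*}^n - \sigma_{i^*}|$ rather than the cruder $\|\sigma\|_\infty \rho_n$.
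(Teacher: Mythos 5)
Your proposal diverges from the paper at the very first step, and the divergence turns out to be fatal for the first displayed inequality (though not for the weaker second one).

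The paper does \emph{not} take $i^* = \argmax_\ell |\zeta_\ell|$. It takes $i \in \argmax_\ell \sigma_\ell \zeta_\ell^{n-2}$. This choice is what powers the crucial estimate \eqref{eq:det_aux_lhs}:
\[
F_{\CA}(x) \;=\; \sum_{\ell=1}^K \sigma_\ell \zeta_\ell^{n-2}\,\zeta_\ell^2 \;\le\; \bigl(\textstyle\max_\ell \sigma_\ell \zeta_\ell^{n-2}\bigr)\sum_{\ell}\zeta_\ell^2 \;\le\; (1+\rho_2)\,\sigma_i\zeta_i^{n-2},
\]
i.e.\ a direct upper bound on $F_\CA(x)$ with \emph{no} $1/(1-\rho_n)$ factor, obtained without touching $G_n^{-1}$. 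Your route is forced to pass through $\|G_n^{-1}\|_2 \le 1/(1-\rho_n)$ to get $\zeta_{i^*}^{2n-2} \ge (1-\rho_n)F_\CA(x)/(1+\rho_2)$, and that factor is exactly what spoils the constants. Carrying your plan through to the end (including the refined $\sigma_{i^*} \ge \zeta_{i^*}^n(1-2\rho_n)/(1-\rho_n)$ used only in the main term — the two $(1-\rho_n)$'s indeed cancel), the bracket simplifies to
\[
(n-1)\tfrac{1-2\rho_n}{1-\rho_n}+n-\tfrac{(n-1)\rho_2}{1-\rho_n},
\]
and after multiplying by $\tfrac{1-\rho_n}{1+\rho_2}$ you obtain
\[
\zeta_{i^*}^2 \;\ge\; \frac{1}{1+\rho_2}-\frac{(3n-2)\rho_n}{2(n-1)(1+\rho_2)}-\frac{n\,\rho_2}{2(n-1)(1+\rho_2)}-\frac{4n-2}{2(n-1)}\frac{\Delta_\CA}{F_\CA(x)}.
\]
The stated first inequality has $\frac{n}{2(n-1)}\frac{\rho_n}{(1-2\rho_n)(1+\rho_2)}$ where you get $\frac{(3n-2)\rho_n}{2(n-1)(1+\rho_2)}$. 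Requiring $(3n-2)\le n/(1-2\rho_n)$ forces $\rho_n\ge (n-1)/(3n-2)\ge 1/4$, which is incompatible with the regime $\rho_n<1/36$ guaranteed by the hypotheses (Eq.\ \eqref{eq:deterministic-bound-rho-n}); so your bound is strictly \emph{weaker} than the first displayed inequality and cannot be massaged into it. The second, coarser inequality does follow from your version (using $\tfrac{3n-2}{2n-2}\le 2$ for $n\ge 2$), but the Proposition as written also asserts the sharper first line.

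Two smaller remarks. First, the paper's sign normalization is to flip $a_\ell \mapsto -a_\ell$ so that all $\zeta_\ell\ge0$; your flip $x\mapsto -x$ only controls $\zeta_{i^*}$, which is enough for your computation but not for the sign-chasing arguments the paper needs with its index choice. Second, your self-consistency bound $\|\sigma\|_\infty\le\|\zeta\|_\infty^n/(1-\rho_n)$, the refined $\sigma_{i^*}\ge\zeta_{i^*}^n(1-2\rho_n)/(1-\rho_n)$, and the Grammian-based $\zeta_{i^*}^{2n-2}\ge(1-\rho_n)F_\CA(x)/(1+\rho_2)$ are each correct in isolation; the missing idea is the alternative index $\argmax_\ell\sigma_\ell\zeta_\ell^{n-2}$, which lets the paper pair $F_\CA(x)$ with $\sigma_i\zeta_i^{n-2}$ without losing the $(1-\rho_n)$ factor.
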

\begin{proof}
Fix a second-order critical point $x\in \bbS^{D-1}$ of \textup{\ref{prob:nspm}}, and let $\zeta = \zeta(x) = A^\top x \in \mathbb{R}^K$.
Since we can change $a_{\ell}$ to $-a_{\ell}$ without altering the
function $F_{\CA}$ or the constants $\rho_s$, we may assume without loss of generality that $\zeta_{\ell} \geq 0$ for each $\ell \in [K]$.
We may also assume that $F_{\CA}(x) \neq 0$.

Our starting point is Eq.~\eqref{eq:starting_point_max_correlation_coeff} in Proposition~\ref{prop:auxliary_one_big},
which says that for each $i \in [K]$ we have
\begin{align}
\label{eq:starting_point_max_correlation_coeff_used}
\left(1+2(n-1)\zeta_i^2\right) F_{\CA}(x) &\geq (2n-1)\sigma_i\zeta_i^{n-2} + n\zeta_i^{n-2}R_{i,n} + (n-1)R_{i,2} - (4n-2)\Delta_{\CA}.
\end{align}
Since we want to deduce that $\| \zeta \|_{\infty}$ is large,
we fix $i \in [K]$ to be an index such that the first (main) term on the right-hand side of \eqref{eq:starting_point_max_correlation_coeff_used} is maximal; that is,
 let $i \in \argmax_{\ell \in [K]} \sigma_{\ell} \zeta_{\ell}^{n-2}$.
 Now notice
\begin{equation}\label{eq:first-easy-one}
0 < F_{\CA}(x) = \langle P_{\CA}(x^n), x^n\rangle = \langle \sum_{\ell=1}^K \sigma_{\ell} a_{\ell}^n, x^n \rangle =  \sum_{\ell=1}^{K}\sigma_{\ell}\zeta_{\ell}^n \leq \sigma_i\zeta_i^{n-2}\sum_{\ell=1}^{K}\zeta_\ell^2,
\end{equation}
from which it follows that $\sigma_i \zeta_i^{n-2} > 0$. 
In \eqref{eq:starting_point_max_correlation_coeff_used}, our goal is to bound $F_{\CA}(x)$ on the left-hand side from above, and $R_{i,n}$, $R_{i,2}$ on the right-hand side from below, all
by quantities involving $\zeta_i$.

Firstly using \eqref{eq:first-easy-one}, the fact $\sigma_i \zeta_i^{n-2} > 0$ and the definition of $\rho_2$, we have
\begin{align}
\label{eq:det_aux_lhs}
F_{\CA}(x) \leq \sum_{\ell=1}^{K}\sigma_{\ell}\zeta_{\ell}^n \leq \sigma_i \zeta_{i}^{n-2} \sum_{\ell=1}^{K} \zeta_\ell^2 = \sigma_i \zeta_{i}^{n-2}(1+\rho_2).
\end{align}

Then by Lemma~\ref{lem:det-Ris-bound}, Eq.~\eqref{eq:Ris-even-bound} with $s=n$ we have
\begin{align}
\label{eq:det_aux_rhs_1}
    R_{i,2} \geq \min_{\ell: \ell \neq i}\sigma_{\ell}\zeta_{\ell}^{n-2}\rho_2,
\end{align}
while by \eqref{eq:Ris-all-bound} with $s=n$ we have
\begin{align}
\label{eq:det_aux_rhs_2}
\zeta_i^{n-2}R_{i,n} \geq -\zeta_i^{n-2}\N{\sigma}_{\infty}\rho_n.
\end{align}
To further bound the right-hand sides in \eqref{eq:det_aux_rhs_1} and \eqref{eq:det_aux_rhs_2} in terms of $\zeta_i$, we now show that $\sigma_{i}\approx \N{\sigma_{\infty}}$ and that
$\min_{\ell \neq i}\sigma_{\ell}\zeta_{\ell}^{n-2} \geq -\sigma_i \zeta_{i}^{n-2}$.

\medskip

\noindent
\underline{\textit{Showing $\sigma_{i}\approx \N{\sigma}_{\infty}$}}:
Let $j \in [K]$ be an index such that $\SN{\sigma_j} = \N{\sigma}_{\infty}$.
We note that $\sigma_j \geq 0$,
because if $\sigma_j < 0$,  then using $\zeta_j \geq 0$ and \eqref{eq:sigma_and_zeta} we would have
\begin{align*}
\N{\sigma}_{\infty}&= \SN{\sigma_j} \leq \SN{\sigma_j-\zeta_j^n} = 
\SN{R_{j,n}}
|\sum_{\ell \neq j}\sigma_\ell \langle a_j, a_\ell\rangle^n| \leq \N{\sigma}_{\infty}\rho_n.
\end{align*}
This contradicts the assumption $\rho_n < 1$, so indeed $\sigma_j \geq 0$. 
Now consider the three facts:
\begin{enumerate}
    \item $\sigma_i \zeta_i^{n-2} \geq \sigma_j \zeta_i^{n-2}$ by definition of $i$;
    \item $\sigma_j \geq \sigma_i$ by $\sigma_j \geq 0$ and definition of $j$;
    \item $\zeta_i \geq 0$ and $\zeta_j \geq 0$.
\end{enumerate}
It follows that $\zeta_i \geq \zeta_j$.
Using this and the bound for the auxiliary constants \eqref{eq:Ris-all-bound}, we can estimate:
\begin{align*}
\sigma_j - \sigma_i \leq \sigma_j - \zeta_j^{n} + \zeta_i^{n} - \sigma_i = -R_{j,n} \!+\! R_{i,n} \leq
|R_{j,n}| + |R_{i,n}|
\leq 2\sigma_j\rho_n.
\end{align*}

Rearranging gives
\begin{align}\label{eq:this-is-good-one}
\sigma_j \geq \sigma_i \geq (1-2\rho_n)\sigma_j = (1-2\rho_n)\N{\sigma}_{\infty}.
\end{align}
By the assumptions in Theorem~\ref{thm:main_result_deterministic} and \eqref{eq:deterministic-bound-rho-n}, in particular $\rho_n/(1 - 2\rho_n) > 0$.  Hence multiplying throughout by $\rho_n/(1 - 2\rho_n)$ in \eqref{eq:this-is-good-one} yields
\begin{align*}
    \frac{\rho_n}{1 - 2 \rho_n} \sigma_i \leq \rho_n \| \sigma \|_{\infty},
\end{align*}
and then substituting into \eqref{eq:det_aux_rhs_2} we obtain
\begin{align}
\label{eq:det_aux_rhs_3}
\zeta_i^{n-2}R_{i,n} \geq -\zeta_i^{n-2}\N{\sigma}_{\infty}\rho_n \geq -\zeta_i^{n-2}\frac{\rho_n}{1-2\rho_n}\sigma_i.
\end{align}

\medskip

\noindent
\underline{\textit{Showing $\min_{\ell \neq i}\sigma_{\ell}\zeta_{\ell}^{n-2} \geq -\sigma_i \zeta_{i}^{n-2}$}}:
Assume to the contrary that $\min_{\ell \neq i}\sigma_{\ell}\zeta_{\ell}^{n-2} < -\sigma_i \zeta_{i}^{n-2}$
and let $k$ be an index attaining the minimum on the left-hand side.
Since $\sigma_i \zeta_{i}^{n-2} > 0$ and $\zeta_{k}^{n-2} \geq 0$, it must be that $\sigma_{k} < 0$.   Using $\sigma_{k} < 0$ and $\zeta_k \geq 0$, we estimate
\begin{align*}
\SN{\sigma_k} \leq \SN{\sigma_k - \zeta_k^n}_{\infty} = |R_{k,n}| \leq \N{\sigma}_{\infty}\rho_n.
\end{align*}
This implies
\begin{align*}
\N{\sigma \odot \zeta^{n-2}}_{\infty} = \max\{\sigma_i \zeta_i^{n-2}, -\sigma_k\zeta_k^{n-2}\} = - \sigma_k\zeta_k^{n-2} \leq \N{\sigma}_{\infty}\zeta_k^{n-2} \rho_n.
\end{align*}
Similarly to the previous part, using $-\sigma_k \zeta_k^{n-2} > \sigma_k \zeta_j^{n-2}$ we can see $\zeta_k \geq \zeta_j$, where $j$ is again the index
such that $\sigma_j = \N{\sigma}_{\infty}$.
Hence, as before, we have
\begin{equation*}
\sigma_j - \sigma_k = \sigma_j - \zeta_j^n + \zeta_k^n - \sigma_k = -R_{j,n} + R_{k,n} \leq 2 \sigma_j \rho_n,
\end{equation*}
which rearranges to $\sigma_k \geq (1-2\rho_n)\sigma_j$.
Since we have previously shown $\sigma_j \geq 0$ and we  know $\rho_n < 1/2$ by \eqref{eq:deterministic-bound-rho-n}, 
this is in contradiction with $\sigma_k < 0$.
Thus it follows that $\min_{\ell: \ell \neq i}\sigma_{\ell}\zeta_{\ell}^{n-2} \geq -\sigma_i \zeta_{i}^{n-2}$ as desired.
Substituting into \eqref{eq:det_aux_rhs_1} yields
\begin{align}
\label{eq:det_aux_rhs_4}
R_{i,2} \geq \min_{\ell: \ell \neq i}\sigma_{\ell}\zeta_{\ell}^{n-2}\rho_2 \geq -  \sigma_i \zeta_i^{n-2}\rho_2.
\end{align}

\medskip

\noindent
We can now complete the proof of the proposition.
Substituting the bounds \eqref{eq:det_aux_lhs}, \eqref{eq:det_aux_rhs_3}, and \eqref{eq:det_aux_rhs_4} into
the second-order criticality inequality \eqref{eq:starting_point_max_correlation_coeff_used}, we get
\begin{align*}
\left(1+2(n-1)\zeta_i^2\right)(1+\rho_2)\sigma_i\zeta_i^{n-2} &\geq (2n-1)\sigma_i\zeta_i^{n-2} - n \frac{\rho_n}{1-2\rho_n}\sigma_i\zeta_i^{n-2} \\
&\quad - (n-1)\rho_2 \sigma_i \zeta_i^{n-2} - (4n-2)\Delta_{\CA}.
\end{align*}
Dividing by $(1+\rho_2)\sigma_i \zeta_i^{n-2} >0$, subtracting $1$ and then dividing by $2(n-1)$, this \nolinebreak simplifies~to
\begin{align*}
\zeta_i^2 \geq \frac{1}{1+\rho_2}-  \frac{n}{2(n-1)}\left(\frac{\rho_n}{(1-2\rho_n)(1+\rho_2)} + \frac{\rho_2}{1+\rho_2}\right)  - \frac{4n-2}{2(n-1)}\frac{\Delta_{\CA}}{(1+\rho_2) \sigma_i \zeta_i^{n-2}}.
\end{align*}
We replace the denominator in the last term by $F_{\CA}(x)$ by re-using the bound $F_{\CA}(x) \leq \sigma_i \zeta_i^{n-2}(1+\rho_2)$  \eqref{eq:det_aux_lhs}.
Then we rearrange the terms and substitute $\|\zeta\|_{\infty}^2 \geq \zeta_i^2$.  This gives the first inequality in the proposition.
The second stated inequality is an immediate consequence of the first, where we simplify each term up to a constant factor using
 $\frac{n}{2(n-1)} \leq 1$, $1 + \rho_2 \geq 1$, $1-2\rho_n > \frac{1}{2}$ and $\frac{4n-2}{2(n-1)} \leq 3$ .
\end{proof}

\subsection{Concavity}

In the next proof step we analyze the Riemannian Hessian and show that it is strictly negative definite.
\begin{proposition}
\label{prop:det_concavity}
Consider a system $\{a_i : i \in [K]\}$ that satisfies the assumptions in Theorem \ref{thm:main_result_deterministic}.
For any $x, z \in \bbS^{D-1}$ with $z \perp x$, the Riemannian Hessian of $F_{\hat\CA}$ satisfies
\begin{align}
\label{eq:det_lb_hessian_value}
z^\top \nabla_{\bbS^{D-1}}^2 F_{\hat \CA}(x) z \leq -2n+ 2n^2\frac{2-\rho_n}{1-\rho_n}\left(1-\|\zeta\|_{\infty}^{2n}\right) + 2n(3n-2)\frac{\rho_n}{1-\rho_n} + 8n\Delta_{\CA}.
\end{align}
\end{proposition}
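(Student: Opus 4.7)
The plan is to apply Tool~\#2 (Proposition~\ref{prop:concavity_base}) with the choice $i \in \argmax_{j \in [K]} \zeta_j^{2n}$, so that $\zeta_i^{2n} = \|\zeta\|_\infty^{2n} = 1 - \eta$, where $\eta := 1 - \|\zeta\|_\infty^{2n}$. Writing $T_1 := \|P_{\CA}(x^{n-1}z)\|_F^2$ and $T_2 := \langle P_{\CA}(x^n),\,x^{n-2}z^2\rangle$, this reduces the Hessian bound (after dividing by $2n$ and rearranging) to establishing
\[
nT_1 + (n-1)T_2 \;\leq\; \left(n\,\tfrac{2-\rho_n}{1-\rho_n}-1\right)\eta + \tfrac{(3n-2)\rho_n}{1-\rho_n}.
\]

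The pivotal step will be a sharp bound on $S_1 := \sum_j \zeta_j^{2(n-1)} w_j^2$, where $w_j := \langle z, a_j\rangle$. I will observe that $S_1 = z^\top M(x)\, z$ for the PSD matrix $M(x) := \sum_j \zeta_j^{2(n-1)}\, a_j a_j^\top$, whose trace equals $\sum_j \zeta_j^{2(n-1)} \leq 1 + \rho_{2n-2} \leq 1 + \rho_n$ (by definition of $\rho_s$ and its monotonicity from Lemma~\ref{lem:incoherence_scalars}) and whose Rayleigh quotient at $x$ is $\sum_j \zeta_j^{2n} \geq \zeta_i^{2n} = 1 - \eta$. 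Since $z \perp x$ and both have unit norm, extending $\{x,z\}$ to an orthonormal basis of $\bbR^D$ and using PSDness of $M(x)$ yields $S_1 \leq \operatorname{tr}(M(x)) - x^\top M(x)\, x \leq \eta + \rho_n$. This trace-minus-diagonal move is the key idea: naive coordinatewise bounds on $S_1$ lose a factor of $2$ in the $\eta$ coefficient and only give errors in $\rho_2$ rather than $\rho_n$.

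The bounds on $T_1$ and $T_2$ will then follow by Grammian manipulations. By Lemma~\ref{lem:frob_norms_extended}, $T_1 = u^\top G_n^{-1} u$ with $u_j = \zeta_j^{n-1}w_j$ and $\|u\|_2^2 = S_1$, so Corollary~\ref{cor:assumptions-invertible} gives $T_1 \leq S_1/(1-\rho_n)$. For $T_2$, I use \eqref{eq:sigma_and_zeta} to write $\sigma_j = \zeta_j^n - R_{j,n}$, producing the decomposition $T_2 = S_1 - E_2$ with $E_2 := \sum_j R_{j,n}\,\zeta_j^{n-2}w_j^2$. Cauchy--Schwarz then yields $|E_2| \leq \|R_{\cdot,n}\|_2 \cdot (\sum_j \zeta_j^{2(n-2)} w_j^4)^{1/2}$; since $R_{\cdot,n} = (I - G_n^{-1})\zeta^{\odot n}$, the first factor is at most $\tfrac{\rho_n \sqrt{1+\rho_n}}{1-\rho_n}$, and Young's inequality with conjugate exponents $p = n/(n-2)$, $q = n/2$ (trivial when $n = 2$) gives $\sum_j \zeta_j^{2(n-2)} w_j^4 \leq \tfrac{n-2}{n}\sum_j \zeta_j^{2n} + \tfrac{2}{n}\sum_j w_j^{2n} \leq 1 + \rho_n$, so $|E_2| \leq \rho_n(1+\rho_n)/(1-\rho_n)$.

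Assembling, I obtain $nT_1 + (n-1)T_2 \leq \tfrac{2n-1-(n-1)\rho_n}{1-\rho_n}\,S_1 + (n-1)|E_2|$. Plugging in the bounds on $S_1$ and $|E_2|$, the $\eta$-free piece telescopes cleanly (the $\pm(n-1)\rho_n^2$ cross-terms cancel) to yield $(3n-2)\rho_n/(1-\rho_n)$, while the $\eta$-coefficient equals $\tfrac{2n-1-(n-1)\rho_n}{1-\rho_n} = n\tfrac{2-\rho_n}{1-\rho_n} - 1$, matching the target. Substituting back into Tool~\#2 via $-\zeta_i^{2n} = -1+\eta$ and multiplying by $2n$ will produce \eqref{eq:det_lb_hessian_value}. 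The main obstacle is identifying the right pivot $S_1$ and the trace argument that yields $S_1 \leq \eta + \rho_n$ with only $\rho_n$ (not the potentially much larger $\rho_2$) in the error; this is precisely what ensures the final bound depends on $\rho_n$ alone as stated.
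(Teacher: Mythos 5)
Your proof is correct and reaches the paper's bound exactly. The overall architecture is the same as the paper's: you invoke Tool~\#2, separate the Hessian bound into the term $T_1 = \|P_{\CA}(x^{n-1}z)\|_F^2$, the term $T_2 = \langle P_{\CA}(x^n), x^{n-2}z^2\rangle$, and the level term $-\zeta_i^{2n}$; you bound $T_1$ via $\|G_n^{-1}\|_2$ from Corollary~\ref{cor:assumptions-invertible}; and you reduce everything to the quantity $S_1 = \sum_j \zeta_j^{2(n-1)} w_j^2$. The two places where your details diverge from the paper's are worth noting. First, your trace-minus-diagonal bound $S_1 \leq \operatorname{tr}(M(x)) - x^\top M(x)x$ is a tidy repackaging of what the paper does by hand: the paper applies Bessel's inequality only on the $i$-th term ($\zeta_i^{2n-2}w_i^2 \leq \zeta_i^{2n-2}(1 - \zeta_i^2)$) and the crude bound $w_\ell^2 \leq 1$ on the rest, while your inequality applies Bessel simultaneously on every index. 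Both deliver $S_1 \leq \eta + \rho_n$, so the gain is cosmetic (your intermediate step is tighter by $\sum_{\ell \neq i}\zeta_\ell^{2n}$, but both discard this when lower-bounding $\sum_\ell \zeta_\ell^{2n}$ by $\zeta_i^{2n}$). In particular your remark that a ``coordinatewise bound loses a factor of 2'' is not quite right as a contrast with the paper --- the paper's coordinatewise scheme gives the identical $\eta + \rho_n$ --- though it is accurate if one really throws away all structure and uses $\zeta_\ell^{2(n-1)}\le 1$. Second, for the remainder $E_2 = \sum_j R_{j,n}\,\zeta_j^{n-2}w_j^2$ you route through the $\ell^2$ norm (Cauchy--Schwarz, plus the spectral identity $R_{\cdot,n} = (I - G_n^{-1})\zeta^{\odot n}$ and $\|I - G_n^{-1}\|_2 = \rho_n/(1-\rho_n)$, plus Young's inequality on $\sum_j \zeta_j^{2(n-2)}w_j^4$), whereas the paper routes through the $\ell^\infty$ norm ($\|\sigma - \zeta^{\odot n}\|_\infty \leq \|\sigma\|_\infty\rho_n$ together with H\"older on $\sum_j |\zeta_j|^{n-2}w_j^2$). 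Both give $|E_2| \leq \rho_n(1+\rho_n)/(1-\rho_n)$, and your final assembly of coefficients ($(2n-1-(n-1)\rho_n)/(1-\rho_n) = n(2-\rho_n)/(1-\rho_n) - 1$ and the cancellation to $(3n-2)\rho_n/(1-\rho_n)$) is verified correct.
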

\begin{proof}
Let $i \in [K]$ be such that $|\zeta_i| = \N{\zeta}_{\infty}$ where $\zeta = \zeta(x) = A^\top x$.   Our starting point is \eqref{eq:lb_hessian_value} in Proposition \ref{prop:concavity_base}, which reads
\begin{align}
\label{eq:det_concav_tbc}
	\frac{1}{2n}z^\top \nabla_{\bbS^{D-1}}^2 F_{\hat \CA}(x) z \leq n\N{P_{\CA}(x^{n-1}z)}_F^2 + (n-1)\langle P_{\CA}(x^n), x^{n-2}z^2\rangle - \zeta_i^{2n} + 4\Delta_{\CA}.
\end{align}
Expanding $P_{\CA}(x^n)=\sum_{\ell=1}^{K}\sigma_{\ell}a_\ell^n$ as in Definition~\ref{def:basic-setup}, we can upper-bound
the second term on the right-hand side of \eqref{eq:det_concav_tbc} as follows by applying H\"older's inequality and the definition of $\rho_n$:
\begin{align}
\nonumber
\langle P_{\CA}(x^n), x^{n-2}z^2\rangle &= \sum_{\ell=1}^{K}\sigma_{\ell}\zeta_\ell^{n-2}\langle a_\ell, z\rangle^2=
 \sum_{\ell=1}^{K}\zeta_\ell^{2n-2}\langle a_\ell, z\rangle^2 + \sum_{\ell=1}^{K}(\sigma_\ell - \zeta_\ell^{n})\zeta_\ell^{n-2}\langle a_\ell, z\rangle^2\\
 \nonumber
 & \leq \sum_{\ell=1}^{K}\zeta_\ell^{2n-2}\langle a_\ell, z\rangle^2 + \| \sigma - \zeta^{\odot n}\|_{\infty} \sum_{\ell=1}^{K}\zeta_\ell^{n-2}\langle a_\ell, z\rangle^2\\ \nonumber
 &\leq \sum_{\ell=1}^{K}\zeta_\ell^{2n-2}\langle a_\ell, z\rangle^2 + \N{\sigma - \zeta^{\odot n}}_{\infty} \left(\sum_{\ell=1}^{K}\zeta_\ell^{n}\right)^{\frac{n-2}{n}} \left(\sum_{\ell=1}^{K}\langle a_\ell, z\rangle^n\right)^{\frac{2}{n}}\\
 \label{eq:det_concav_tbc_2}
 &\leq \sum_{\ell=1}^{K}\zeta_\ell^{2n-2}\langle a_\ell, z\rangle^2 + \N{\sigma - \zeta^{\odot n}}_{\infty}(1+\rho_n).
\end{align}
Further we can quickly bound the second term in the right-hand side of \eqref{eq:det_concav_tbc_2}.
Firstly by \eqref{eq:sigma_and_zeta}  and \eqref{eq:Ris-all-bound}, we have
$\| \sigma - \zeta^{\odot n} \|_{\infty} = \max_{i \in [K]} |R_{i,n}| \leq \| \sigma \|_{\infty} \rho_n$.
Then we can bound $\| \sigma \|_{\infty}$ via
\begin{align*}
  \N{\sigma}_{\infty} &= \N{\sigma - \zeta^{\odot n}+\zeta^{\odot n}}_{\infty}  \leq 1 + \N{\sigma - \zeta^{\odot n}}_{\infty} \leq 1 + \N{\sigma}_{\infty}\rho_n \,\, \implies \,\, \N{\sigma}_{\infty}\leq (1-\rho_n)^{-1}.
\end{align*}
Putting the last two sentences together gives us 
\begin{align} \label{eq:second-term-cool}
    \| \sigma - \zeta^{\odot n}\|_{\infty} (1 + \rho_n) \leq \| \sigma \|_{\infty} \rho_n (1+\rho_n) \leq \frac{1 + \rho_n}{1 - \rho_n}\rho_n.
\end{align}
As for the first term on the right-hand side of \eqref{eq:det_concav_tbc_2}, we first split the sum:
\begin{equation} \label{eq:split-sum}
    \sum_{\ell=1}^K \zeta_{\ell}^{2n-2} \langle a_{\ell}, z \rangle^2 = \zeta_i^{2n-2} \langle a_i, z \rangle^2 + \sum_{\ell: \ell \neq i} \zeta_{\ell}^{2n-2} \langle a_{\ell}, z\rangle^2.
\end{equation}
We use Bessel's inequality and the assumptions $x, z \in \mathbb{S}^{D-1}$ and $x \perp z$ to get
\begin{align} \label{eq:split-sum-1}
    \zeta_i^{2n-2} \langle a_i, z \rangle^2 = \zeta_i^{2n-2}(\langle a_i, z \rangle^2 + \langle a_i, x \rangle^2 - \zeta_i^2)
    \leq \zeta_i^{2n-2}(\| a_i \|_2^2 - \zeta_i^2) \leq \zeta_i^{2n-2}(1 - \zeta_i^2).
\end{align}
Then we estimate
\begin{align} \label{eq:split-sum-2}
    \sum_{\ell: \ell\neq i} \zeta_\ell^{2n-2}\langle a_\ell, z\rangle^2 &\leq \sum_{\ell: \ell\neq i} \zeta_\ell^{2n-2} =  \sum_{\ell = 1}^{K}\zeta_\ell^{2n-2} -\zeta_i^{2n-2} \le 1 + \rho_{2n-2}-\zeta_i^{2n-2} \leq 1 + \rho_{n}-\zeta_i^{2n-2},
\end{align}
where the last inequality used $2n-2 \geq n$. Finally we substitute \eqref{eq:split-sum-1} and \eqref{eq:split-sum-2} into \eqref{eq:split-sum} to get
\begin{equation} \label{eq:wow-first-term}
    \sum_{\ell=1}^K \zeta_{\ell}^{2n-2} \langle a_{\ell}, z \rangle^2 \leq 1 - \zeta_{i}^{2n} + \rho_n.
\end{equation}
Combining \eqref{eq:det_concav_tbc_2}, \eqref{eq:wow-first-term}, \eqref{eq:second-term-cool} we arrive at:
\begin{align} \label{eq:super-nice}
\langle P_{\CA}(x^n), x^{n-2}z^2 \rangle \leq 1 - \zeta_{i}^{2n} + \rho_n + \frac{1+\rho_n}{1-\rho_n}\rho_n.
\end{align}

At this point, we turn to the first term on the right-hand side of \eqref{eq:det_concav_tbc}.
Recalling Lemma \ref{lem:frob_norms_extended}, Definition~\ref{def:basic-setup} and Corollary~\ref{cor:assumptions-invertible},
we may write
\begin{align} \label{eq:pretty-OK}
\N{P_{\CA}(x^{n-1}z)}_F^2  =  \eta(x^{n-1}z)^{\!\top}  G_n^{-1}\eta(x^{n-1}z),
\end{align}
where $\eta(x^{n-1}z)_{\ell} := \langle x^{n-1}z, a_{\ell}^n  \rangle = \zeta_{\ell}^{n-1} \langle a_{\ell}, z \rangle$.
Clearly \eqref{eq:pretty-OK} is less than or equal to
\begin{align*}
    \N{G_n^{-1}}_2 \| \eta(x^{n-1}z)\|_2^2 = \N{G_n^{-1}}_2\sum_{\ell=1}^{K}\zeta_\ell^{2n-2}\langle a_\ell, z\rangle^2.
\end{align*}
Re-using the just-proven \eqref{eq:wow-first-term} as well as Corollary~\ref{cor:assumptions-invertible}, it follows that
\begin{equation}\label{eq:nice-very-nice}
   \N{P_{\CA}(x^{n-1}z)}_F^2 \leq  \N{G_n^{-1}}_2\sum_{\ell=1}^{K}\zeta_\ell^{2n-2}\langle a_\ell, z\rangle^2 \leq \frac{1 - \zeta_{i}^{2n} + \rho_n}{1 - \rho_n}.
\end{equation}
To complete the proof, we plug \eqref{eq:nice-very-nice} and \eqref{eq:super-nice} into \eqref{eq:det_concav_tbc} and rearrange terms to get \eqref{eq:det_lb_hessian_value}.
\end{proof}

 As a corollary of \ref{prop:det_concavity}, we can now prove the existence of exactly one local maximizer
within spherical caps of the global maximizers $\pm a_i's$ of the clean objective $F_{\CA}$.

\begin{corollary}
\label{cor:det_unique_maximizer}
Consider a system $\{a_i : i \in [K]\}$ that satisfies the assumptions in Theorem \ref{thm:main_result_deterministic}.
Define the height $r_+ \in (0,1)$ by
\begin{align*}
1 - r_{+} &:=  \left(1 - \frac{1}{n} \frac{1-\rho_n}{2-\rho_n} + \frac{3n-2}{n}  \frac{\rho_n}{2-\rho_n} + \frac{4}{n} \frac{1-\rho_n}{2-\rho_n}\Delta_{\CA}\right)^{\!\frac{1}{2n}} \\
&\,\,\leq \left(1 - \frac{1}{3n} + 2\rho_n + \frac{2}{n} \Delta_{\CA}\right)^{\!\frac{1}{2n}}.
\end{align*}
Then the $2K$ spherical caps $B_{r_+}(a_1), B_{r_+}(-a_1), B_{r_+}(a_2), \ldots, B_{r_+}(-a_K)$ are disjoint.  Further for each $s \in \{-1, +1\}$ and $i \in [K]$, there exists exactly one first-order critical point $x_*$ of $F_{\hat \CA}$ in $B_{r_+}(sa_i)$.  Morerover $x_*$ is a strict local maximum of $F_{\hat \CA}$ (hence second-order critical), and
 \begin{equation*}
     \| x_* - sa_i \|_2^2 \leq \frac{2\Delta_{\CA}}{n}.
 \end{equation*}
\end{corollary}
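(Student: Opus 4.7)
The proof naturally splits into two threads: proving pairwise disjointness of the $2K$ spherical caps, and invoking Proposition~\ref{prop:concavity_and_local_maxes} (Tool~\#3) on each cap to produce a unique strict local maximizer with the desired distance bound.

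For disjointness, I would use the elementary fact that if $x$ lies in both $B_{r_+}(y_1)$ and $B_{r_+}(y_2)$, then by the triangle inequality $\|y_1 - y_2\|_2^2 \le 8 r_+$, i.e.\ $\langle y_1, y_2 \rangle \ge 1 - 4 r_+$. Applied to distinct centers $y_1 = s_1 a_{i_1}$ and $y_2 = s_2 a_{i_2}$: if $i_1 = i_2$ and $s_1 = -s_2$ the inner product is $-1$, a contradiction once $r_+ < \tfrac12$; if $i_1 \ne i_2$ then $|\langle y_1, y_2 \rangle| = |\langle a_{i_1}, a_{i_2}\rangle| \le \sqrt{\rho_2}$ because $\langle a_{i_1}, a_{i_2}\rangle^2$ is one summand in the definition of $\rho_2$. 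Using the inequality form of $r_+$ together with $\rho_2 < 1/(6n^2)$ (which follows from $\tau > 0$, cf.\ \eqref{eq:deterministic-bound-rho-2}), one checks $\sqrt{\rho_2} < 1 - 4 r_+$, completing disjointness.

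For the local maximizer statement, I would apply Proposition~\ref{prop:concavity_and_local_maxes} to each cap with $r = R = r_+$. The crucial verification is geodesic $n$-strong concavity of $F_{\hat{\mathcal{A}}}$ on $B_{r_+}(sa_i)$. For $x \in B_{r_+}(sa_i)$ we have $|\zeta_i(x)| = |\langle x, sa_i\rangle| \ge 1 - r_+$, so $\|\zeta\|_\infty^{2n} \ge (1 - r_+)^{2n}$. Substituting this lower bound into the Hessian estimate \eqref{eq:det_lb_hessian_value} of Proposition~\ref{prop:det_concavity}, the defining equation $(1 - r_+)^{2n} = 1 - \tfrac{1}{n}\tfrac{1-\rho_n}{2-\rho_n} + \tfrac{3n-2}{n}\tfrac{\rho_n}{2-\rho_n} + \tfrac{4}{n}\tfrac{1-\rho_n}{2-\rho_n}\Delta_{\CA}$ was chosen precisely so that a direct algebraic simplification yields $z^\top \nabla^2_{\mathbb{S}^{D-1}} F_{\hat{\mathcal{A}}}(x) z \le -2n \cdot \tfrac12 = -n$ for every unit tangent $z \perp x$, matching the $n$-strong concavity hypothesis of Tool~\#3.

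The remaining hypothesis of Tool~\#3 is $r_+ > \Delta_{\CA}/n$. Using $1 - (1-r_+)^{2n} \le 2 n r_+$ together with the explicit formula for $r_+$, this reduces to verifying $1 - (3n-1)\rho_n > [4(n+1) - (2n+4)\rho_n]\,\Delta_{\CA}$, which can be deduced from the bound $\Delta_{\CA} < \Delta_0$ and the estimates $\rho_n < 1/(6n(n+1))$, $\rho_2 < 1/(6n^2)$. With $n$-strong concavity and $r_+ > \Delta_{\CA}/n$ established, Proposition~\ref{prop:concavity_and_local_maxes} delivers the unique first-order critical point $x_* \in B_{r_+}(sa_i)$, shows it is a strict local maximizer, and yields $\|x_* - sa_i\|_2^2 \le 2 \Delta_{\CA}/n$. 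The main obstacle is bookkeeping: ensuring simultaneously that the definition of $r_+$ meshes exactly with the Hessian bound and that the smallness assumptions on $\Delta_{\CA},\rho_2,\rho_n$ built into $\tau > 0$ are strong enough to give both disjointness and $r_+ > \Delta_{\CA}/n$.
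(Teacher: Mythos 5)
Your disjointness argument is a valid alternative to the paper's: you use the Euclidean triangle inequality plus $\langle a_{i_1},a_{i_2}\rangle^2\le\rho_2$ (taking $x=a_{i_1}$ in the definition of $\rho_2$), where the paper uses the geodesic triangle inequality and a midpoint argument to derive the much cruder contradiction $\rho_2>1-\tfrac1n$. Your route needs the extra estimate $r_+\lesssim\tfrac{1}{3n^2}$ (so that $1-4r_+>\sqrt{\rho_2}$), which does follow from $(1-r_+)^{2n}\ge 1-\tfrac{1}{2n}$, so this part is fine, just slightly more delicate than the paper's.

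The concavity step, however, contains a genuine error. You apply Tool~\#3 with $r=R=r_+$ and claim that the defining equation of $r_+$ yields $z^\top\nabla^2_{\mathbb{S}^{D-1}}F_{\hat\CA}(x)z\le -n$ throughout $B_{r_+}(sa_i)$. It does not: substituting $1-\|\zeta\|_\infty^{2n}\le 1-(1-r_+)^{2n}$ into \eqref{eq:det_lb_hessian_value} and using the definition of $r_+$ makes the right-hand side equal \emph{exactly} $0$, i.e.\ $r_+$ is chosen maximal so that the Hessian bound is $\le 0$ on $B_{r_+}(sa_i)$ (strict concavity in the interior), and the bound degenerates to $0$ at the boundary. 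To get $\le -n$ from the same bound you would need $2n^2\tfrac{2-\rho_n}{1-\rho_n}\bigl(1-(1-r_+)^{2n}\bigr)+2n(3n-2)\tfrac{\rho_n}{1-\rho_n}+8n\Delta_{\CA}\le n$, which forces a strictly smaller cap (the first term in the definition of $1-r_+$ would be $\tfrac{1}{2n}\tfrac{1-\rho_n}{2-\rho_n}$ rather than $\tfrac1n\tfrac{1-\rho_n}{2-\rho_n}$). The two radii in Tool~\#3 cannot be collapsed: the paper keeps the outer cap $B_{r_+}(sa_i)$ for strict concavity (hence uniqueness of the critical point) and verifies $n$-strong concavity only on a small inner cap of height just above $\Delta_{\CA}/n$, using $(1-\Delta_{\CA}/n)^{2n}\ge 1-2\Delta_{\CA}$ and the smallness of $\rho_n,\Delta_{\CA}$ to check the resulting inequality $(4n(2-\rho_n)+8(1-\rho_n))\Delta_{\CA}+(6n-3)\rho_n<1$. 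Without $n$-strong concavity on the inner cap, Tool~\#3 gives you neither the existence of an interior maximizer nor the bound $\|x_*-sa_i\|_2^2\le 2\Delta_{\CA}/n$, which is derived from the quadratic upper bound of Lemma~\ref{lem:geoconcave-implications}(b). Your verification of $r_+>\Delta_{\CA}/n$ is in the right spirit but is attached to the wrong cap; the strong-concavity check must be done at height $\approx\Delta_{\CA}/n$, not at height $r_+$.
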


\begin{proof}
First, we check that $r_+ \in (0,1)$.  Indeed $r_+ > 0$ is clear from \eqref{eq:deterministic-bound-rho-n}, and $r_+ < 1$ is equivalent to $(3n-1)\rho_n + 4(1-\rho_n)\Delta_{\CA}<1$ which holds by the assumptions in Theorem~\ref{thm:main_result_deterministic}.
Also $1 - r_+ \leq (1 - \tfrac{1}{3n} + 2\rho_n + \tfrac{2}{n}\Delta_{\CA})^{\frac{1}{2n}}$ as stated, by the bound on $\rho_n$ \eqref{eq:deterministic-bound-rho-n}.

Next, we check that the $2K$ spherical caps are disjoint.  For a contradiction, suppose there exist distinct $(s, i), (s', i') \in \{-1,1\} \times [K]$ and $x \in \mathbb{S}^{D-1}$ such that $x \in B_{r_+}(sa_i) \cap B_{r_+}(s'a_{i'})$.    Thus $\langle x, sa_i \rangle \geq 1 - r_+$ and $\langle x, s' a_{i'} \rangle \geq 1 - r_{+}$.  By the triangle inequality with respect to geodesic distances on the sphere, we know $\arccos{\langle sa_i, s' a_{i'} \rangle} \leq \arccos{\langle sa_i, x \rangle} + \arccos{\langle x, s' a_{i'} \rangle} \leq 2 \arccos{(1 - r_+)}$, hence by the double angle formula for cosine $\langle sa_i, s' a_{i'} \rangle \geq 2(1-r_+)^2 - 1$.  But note that this gives a lower bound on $\rho_2$:  letting $y \in \mathbb{S}^{D-1}$ be the midpoint of the arc joining $sa_i$ and $s'a_{i'}$, we have
\begin{align*}
    \rho_2 \geq \langle y, a_i \rangle^2 + \langle y, a_{i'} \rangle^2 - 1 = \langle y, sa_i \rangle^2 + \langle y, s'a_{i'} \rangle^2 - 1 = \langle sa_i, s' a_{i'} \rangle  \geq 2(1 - r_{+})^2 - 1,
\end{align*}
where the second equality is again by the double angle formula.  Therefore
\begin{align*}
\rho_2 \geq 2(1-r_+)^2 - 1 \geq 2\left(1 - \frac{1}{2n}\right)^{\frac{1}{n}} - 1 > 2\left(1 - \frac{1}{2n}\right) - 1 = 1 - \frac{1}{n},
\end{align*}
which contradicts the implied upper bound on $\rho_2$ \eqref{eq:deterministic-bound-rho-2}.  So the $2K$ spherical caps are disjoint.

For the remaining statements we apply Proposition~\ref{prop:concavity_and_local_maxes}.  Thus it suffices to check that $F_{\hat \CA}$ is  strictly $n$-concave on $B(sa_i, 1 - \frac{\Delta_{\CA}}{n})$ and strictly concave in the interior of $B_{r_{+}}(sa_i)$.  To verify these, we will use Proposition \ref{prop:det_concavity} to upper-bound the eigenvalues of the Riemmanian Hessian of $F_{\hat \CA}$.

So for the strict $n$-concavity condition, we want to show:
\begin{equation} \label{eq:want-n-strict}
    -2n + 2n^2 \frac{2 - \rho_n}{1 - \rho_n}(1 - (1 - \frac{\Delta_{\CA}}{n})^{2n}) + 2n(3n-2) \frac{\rho_n}{1-\rho_n} + 8n \Delta_{\CA} < -n.
\end{equation}
Since $(1 - x)^{2n} \geq 1 - 2nx$ for $x \in \mathbb{R}$, we have
\begin{equation*}
    \left(1 - \frac{\Delta_{\CA}}{n}\right)^{2n} \geq 1 - 2 \Delta_{\CA},
\end{equation*}
whence \eqref{eq:want-n-strict} is implied by
\begin{equation}\label{eq:want-n-strict-2}
    -2n + 4n^2 \frac{2 - \rho_n}{1 - \rho_n}\Delta_{\CA} + 2n(3n-2) \frac{\rho_n}{1-\rho_n} + 8n \Delta_{\CA} < -n.
\end{equation}
Rearranging terms, Eq.~\eqref{eq:want-n-strict-2} is equivalent to
\begin{equation}
    \left( 4n(2 - \rho_n) + 8(1 - \rho_n) \right) \Delta_{\CA} + (6n-3)\rho_n < 1. \label{eq:this-works}
\end{equation}
However indeed, \eqref{eq:this-works} holds as a consequence of the assumptions in Theorem~\ref{thm:main_result_deterministic}. 

As for the concavity condition on $B_{r_{+}}(sa_i)$, we want to show:
\begin{equation} \label{eq:want-n-strict2}
    -2n + 2n^2 \frac{2 - \rho_n}{1 - \rho_n}(1 - (1 - r_{+})^{2n}) + 2n(3n-2) \frac{\rho_n}{1-\rho_n} + 8n \Delta_{\CA} \leq 0.
\end{equation}
However upon substituting in the definition of $r_{+}$, we see that the left-hand side of \eqref{eq:want-n-strict2} is precisely $0$, i.e., $r_+$ was chosen to be maximal so that  \eqref{eq:want-n-strict2} holds.  This completes the proof of the corollary.
\end{proof}

\medskip

\subsection{Completing the proof of Theorem \ref{thm:main_result_deterministic}}

Finally, we can conclude the proof of Theorem \ref{thm:main_result_deterministic} by combining
the previous three steps.
\begin{proof}[Proof of Theorem \ref{thm:main_result_deterministic}]
Let $\ell = \frac{3n^2 + 2}{2\tau} \Delta_{\CA} + \Delta_{\CA}$.
It suffices to verify the following two claims:
\vspace{-0.3em}
\begin{enumerate}
    \item Each second-order critical point $x^*$ of \textup{\ref{prob:nspm}}  satisfying $F_{\hat \CA}(x^*) > \ell$ must lie in one of the spherical caps $B_{r_+}(a_1), B_{r_+}(-a_1), \ldots, B_{r_+}(-a_K)$ from Corollary~\ref{cor:det_unique_maximizer}.
    \item The level $\ell$ satisfies $\ell < 1 - \Delta_{\CA}$.
\end{enumerate}
\vspace{-0.3em}
It is easy to see that these statements imply Theorem~\ref{thm:main_result_deterministic}.  By the first claim and Corollary~\ref{cor:det_unique_maximizer}, there are \textit{at most} $2K$ second-order critical points $x^*$ with $F_{\hat \CA}(x^*) > \ell$.
They are all strict local maximizers of $F_{\hat \CA}$.  For each such point, there exists unique $i \in [K], s \in \{-1,+1\}$ with $\| x^* - sa_i \|_2^2 \leq \frac{2\Delta_{\CA}}{n}$.  Meanwhile by the second claim, for each $s \in \{-1, +1\}, i \in [K]$ the  maximizer $x^*$ of $F_{\hat \CA}$ in $B_{r_+}(sa_i)$ from Corollary~\ref{cor:det_unique_maximizer} satisfies $F_{\hat \CA}(x) \geq F_{\hat \CA}(sa_i) \geq F_{\CA}(sa_i) - \Delta_{\CA} = 1-\Delta_{\CA} > \ell$. Thus there are \textit{at least} $2K$ second-order critical points in the superlevel set, and the theorem follows.

Now we verify the first claim.  Letting $\zeta = A^{\top}x^*$, we want $\|\zeta\|_{\infty} \geq 1 - r_+$. Indeed using the bound
\begin{align*}
\frac{\Delta_{\CA}}{F_{\CA}(x^*)} &\leq \frac{\Delta_{\CA}}{F_{\hat \CA}(x^*) - \Delta_{\CA}} < \frac{\Delta_{\CA}}{\ell - \Delta_{\CA}} = \frac{2\tau}{2+3n^2}\\
&=	\frac{2\tau}{3n^2} \frac{3n^2}{2+3n^2} \le \frac{2\tau}{3n^2} \frac{3n^2 + 4 \tau}{2+3n^2 + 4\tau} = \frac{2\tau}{3n^2} \left(1 - \frac2{2+3n^2 + 4\tau}\right)=\frac{2}{3n^2} (\tau - \Delta_0) \\
&\le \frac{2}{3n^2} (\tau - \Delta_{\CA}),
\end{align*}
where we used $\tfrac{a+c}{b+c}\ge \tfrac{a}{b}$ if $b\ge a\ge 0$ and $c\ge 0$, Proposition \ref{prop:one_big_deterministic_case} gives
\begin{align*}
   \| \zeta \|_{\infty}^2 \geq 1 - 2 \rho_2 - 2 \rho_n - 3 \frac{\Delta_{\CA}}{F_{\CA}(x)} &\geq 1 - 2 \rho_2 - 2 \rho_n - \frac{2}{n^2} (\tau - \Delta_{\CA}) \\ &= 1 - 2\rho_2 - 2\rho_n - \frac{2}{n^2}\left( \frac{1}{6} - \Delta_{\CA} - n^2 \rho_2 - (n^2 + n)\rho_n \right) \\
    &= 1 - \frac{1}{3n^2} + \frac{2}{n} \rho_n + \frac{2}{n^2}\Delta_{\CA} \\
    & \geq \left(1 - \frac{1}{3n} + 2\rho_n + \frac{2}{n}\Delta_{\CA} \right)^{\!\frac{1}{n}} \\
    &\geq (1 - r_+)^2.
\end{align*}
Note that the penultimate line used  $(1 - x)^{\frac{1}{n}} \leq 1 - \frac{x}{n}$ for $x \leq 1$, and the last line was a part of Corollary~\ref{cor:det_unique_maximizer}.

Next we check the second claim.  Substituting the definition of $\ell$, we require
\begin{align*}
    \frac{3n^2 + 2}{2\tau} \Delta_{\CA} + \Delta_{\CA} < 1 - \Delta_{\CA} \,\,\, \Longleftrightarrow \,\,\, \Delta_{\CA} < \frac1{\frac{3n^2 + 2}{2\tau} + 2} = \Delta_0.
\end{align*}
But this is guaranteed by the assumption $\Delta_{\CA}<\Delta_0$.  The proof of Theorem~\ref{thm:main_result_deterministic} is finished.
\end{proof}

\smallskip

\section{Proof of Theorem~\ref{thm:main_result_for_now}}
\label{subsec:proof_theorem_main_overcomplete}
In this section we prove our random overcomplete theorem.  Here we define $\zeta$, $\sigma$ and $R_{i,s}$ as in Definition~\ref{def:basic-setup}.
We recall the shorthand
$\varepsilon_K = K\log^n(K)/D^n$. Furthermore, as required by the statement of Theorem~\ref{thm:main_result_for_now}, we assume that conditions hold \ref{enum:RIP}-\ref{enum:GInverse} hold.

\subsection[Bounds involving error terms]{Bounds on $R_{i,s}$}
A central part of the proof is to bound the remainder $|R_{i,s}|$. We start with a general result that bounds inner products with the
basis coefficients $\sigma$. 
Then we deduce a first bound on $R_{i,s}$.

\begin{lemma}\label{lem:sigma_eta_bound}
Assume that conditions \textup{\ref{enum:correlation_supp}} and \textup{\ref{enum:GInverse}} hold.
	For any vector $\xi \in \bbR^K$, we have
	\begin{equation} \label{eq:inner-product-sigma}
		\SN{\sigma^\top \xi} \le \sqrt{c_2 F_{\CA}(x)} \|\xi\|_2.
	\end{equation}
	In particular, letting $R_s = \max_i |R_{i,s}|$ as in Definition~\ref{def:basic-setup}, we have
	\begin{equation}\label{eq:sqrtfxrhobound}
		R_s \le \sqrt{c_2 c_1^s F_{\CA}(x)} \, \varepsilon_K ^{\frac{s}{2n}} \N{\zeta}_{2n}^{n-s}\!.
	\end{equation}
\end{lemma}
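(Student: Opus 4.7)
The plan is to prove the two bounds sequentially, deducing the second from the first by a clever choice of test vector.

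\textbf{Bound \eqref{eq:inner-product-sigma}.} The key identity is $\sigma = G_n^{-1} \zeta^{\odot n}$ from Lemma~\ref{lem:frob_norms_extended}, which yields
\begin{equation*}
\sigma^\top G_n \sigma = (\zeta^{\odot n})^\top G_n^{-1} (\zeta^{\odot n}) = F_{\CA}(x).
\end{equation*}
Thus $G_n$ is positive definite (by Assumption \ref{enum:GInverse}) and we may factor $\sigma^\top \xi = (G_n^{1/2}\sigma)^\top (G_n^{-1/2}\xi)$. Cauchy-Schwarz then gives
\begin{equation*}
|\sigma^\top \xi| \le \|G_n^{1/2}\sigma\|_2 \, \|G_n^{-1/2}\xi\|_2 = \sqrt{\sigma^\top G_n\sigma} \cdot \sqrt{\xi^\top G_n^{-1} \xi} \le \sqrt{F_{\CA}(x)} \cdot \sqrt{c_2}\,\|\xi\|_2,
\end{equation*}
where the last step uses $\|G_n^{-1}\|_2 \le c_2$. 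This is precisely \eqref{eq:inner-product-sigma}.

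\textbf{Bound \eqref{eq:sqrtfxrhobound}.} Given $i \in [K]$, define $\xi \in \mathbb{R}^K$ by $\xi_i := 0$ and $\xi_\ell := \zeta_\ell^{n-s}\langle a_i,a_\ell\rangle^s$ for $\ell\neq i$, so that $R_{i,s} = \sigma^\top\xi$. Applying \eqref{eq:inner-product-sigma} reduces the problem to bounding $\|\xi\|_2$. Here I will use Assumption \ref{enum:correlation}, which gives $|\langle a_i,a_\ell\rangle|^{2s} \le (c_1\log(K)/D)^s$ for $\ell\neq i$, so that
\begin{equation*}
\|\xi\|_2^2 \le \left(\frac{c_1\log(K)}{D}\right)^{\!s} \sum_{\ell\neq i} \zeta_\ell^{2(n-s)} \le \left(\frac{c_1\log(K)}{D}\right)^{\!s} \|\zeta\|_{2(n-s)}^{2(n-s)}.
\end{equation*}
To convert the $\ell^{2(n-s)}$-norm into an $\ell^{2n}$-norm (as appears in the statement), I invoke the monotonicity of $p$-norms on $\mathbb{R}^K$: since $2(n-s) \le 2n$,
\begin{equation*}
\|\zeta\|_{2(n-s)} \le K^{\frac{1}{2(n-s)} - \frac{1}{2n}} \|\zeta\|_{2n} = K^{\frac{s}{2n(n-s)}} \|\zeta\|_{2n},
\end{equation*}
hence $\|\zeta\|_{2(n-s)}^{2(n-s)} \le K^{s/n}\|\zeta\|_{2n}^{2(n-s)}$. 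Combining and recognizing $\varepsilon_K^{s/n} = K^{s/n}\log^s(K)/D^s$, we obtain $\|\xi\|_2 \le c_1^{s/2}\varepsilon_K^{s/(2n)}\|\zeta\|_{2n}^{n-s}$, which together with \eqref{eq:inner-product-sigma} yields the stated bound on $|R_{i,s}|$ uniformly in $i$, hence on $R_s$.

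No step here poses a real obstacle; the main content is spotting the algebraic identity $F_{\CA}(x) = \sigma^\top G_n \sigma$ that makes the Cauchy-Schwarz step work with the weight matrix $G_n$, and then carefully book-keeping the power of $\varepsilon_K$ via the Hölder-type comparison of norms on $\mathbb{R}^K$.
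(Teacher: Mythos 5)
Your proof is correct and follows essentially the same route as the paper's: Cauchy--Schwarz with respect to the inner product induced by $G_n^{-1}$ (using $\sigma = G_n^{-1}\zeta^{\odot n}$ and $\sigma^\top G_n\sigma = F_{\CA}(x)$) for \eqref{eq:inner-product-sigma}, and the same test vector $\xi$ for \eqref{eq:sqrtfxrhobound}; the only cosmetic difference is that the paper estimates $\|\xi\|_2$ by applying H\"older directly to $\sum_{\ell\neq i}\zeta_\ell^{2(n-s)}\langle a_i,a_\ell\rangle^{2s}$, whereas you pull out the sup of $\langle a_i,a_\ell\rangle^{2s}$ via \ref{enum:correlation} and then compare the $\ell^{2(n-s)}$ and $\ell^{2n}$ norms on $\bbR^K$, arriving at the identical constant. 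One tiny caveat: your norm-comparison step degenerates formally at $s=n$ (where $2(n-s)=0$ is not a norm), though the inequality you need there, $K-1\le K^{s/n}$, holds trivially.
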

\begin{proof}
	Since $G_n^{-1}$ is symmetric and positive definite, we can reason as follows for the first claim:
	\begin{align*}
	    \SN{\sigma^{\top} \xi} &= \SN{(\zeta^{\odot n})^{\top} G_n^{-1} \xi} & (\textup{Eq.~\eqref{eq:coefficient_identity}}) \\[0.15em]
	    & \leq \sqrt{(\zeta^{\odot n})^{\top} G_n^{-1} \zeta^{\odot n}} \sqrt{\xi^{\top} G_n^{-1} \xi} & \textup{(Cauchy-Schwartz w.r.t. $G_n^{-1}$)} \\[0.15em]
	    & = \sqrt{F_{\CA}(x)} \sqrt{\xi^{\top}G_n^{-1}\xi} & \textup{(Eq.~\eqref{eq:norm_identity})}\\[0.15em]
	    & \leq \sqrt{F_{\CA}(x)} \sqrt{\|G_n^{-1}\|_2} \|\xi\|_2 & \\[0.15em]
	    & \leq \sqrt{c_2 F_{\CA}(x)} \| \xi \|_2 & \textup{(condition \ref{enum:GInverse})}.
	\end{align*}

	For \eqref{eq:sqrtfxrhobound}, we note that $R_{i,s} = \sigma^\top \xi$, where $\xi \in \mathbb{R}^K$ is given by $\xi_j = \mathbf{1}(j\;\!\!\!\neq\;\!\!\! i) \zeta_{j}^{n-s}\langle a_i, a_j\rangle^{s}$ (each $j \in [K]$).
	Substituting this into \eqref{eq:inner-product-sigma} gives \eqref{eq:sqrtfxrhobound}, because
	\begin{align*}
		\|\xi \|_2 &= \sqrt{\sum_{j: j\neq i}\zeta_{j}^{2n-2s}\langle a_i, a_j\rangle^{2s}} & \\[0.15em]
		& \leq \sqrt{\left( \sum_{j : j \neq i} \zeta_j^{2n}\right)^{\!\!\!\frac{2n-2s}{2n}} \left( \sum_{j : j \neq i} \langle a_i, a_j \rangle^{2n} \right)^{\!\!\!\frac{2s}{2n}}} & \textup{(H\"older's inequality)}\\[0.15em]
		& \leq \N{\zeta}_{2n}^{n-s}\left( K \left(\frac{c_1\log(K)}{D}\right)^{\!\!n}\right)^{\!\!\frac{s}{2n}} & \textup{(condition \ref{enum:correlation_supp})}\\[0.15em]
		&= c_1^{\frac{s}2}\varepsilon_K^\frac{s}{2n} \N{\zeta}_{2n}^{n-s},
	\end{align*}
	where the last equation follows from the definition of $\varepsilon_K$.  This completes the proof of the lemma.
\end{proof}

The restricted isometry property plays an important part in our analysis of the overcomplete regime.
In the next statement, we present the inequalities that we will use which rely on the RIP assumption.
Then as a corollary, we obtain a bound on the remainders $R_{i,s}$ that will be more useful than \eqref{eq:sqrtfxrhobound}.

\begin{lemma}[Consequences of RIP]\label{lem:RIP_inequalities}
Assume that conditions \textup{\ref{enum:RIP_supp}-\ref{enum:GInverse}} hold.
Let $x\in \bbS^{D-1}$ and let $\CI=\CI(x) \subseteq [K]$ satisfy the conclusions of Lemma~\ref{prop:RIP_consequence}.
Then it holds
\begin{equation}\label{eq:z2nnf_lemaux}
	\SN{F_{\CA}(x) - \sum_{i\in \CI} \sigma_i \zeta_i^n}  \le  \sqrt{ c_2 \tilde c_\delta^n F_{\CA}(x) \varepsilon_K}.
\end{equation}
Further, there exists a constant $C_1$, depending only on $n$, $c_1$, $c_2$ and $\tilde c_\delta$, and another constant $C_2$, depending only on $n$ and $\tilde c_{\delta}^n$, such that we have
\begin{equation} \label{eq:bound-zeta-2n}
\N{\zeta}_{2n}^{2n} \le F_{\CA}(x) + C_1 \sqrt{\varepsilon_K F_{\CA}(x)} + C_2 \varepsilon_K.
\end{equation}
\end{lemma}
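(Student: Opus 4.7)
Both inequalities will follow from combining three ingredients: the expansion $F_\CA(x) = \sum_{i=1}^K \sigma_i \zeta_i^n$ (an immediate consequence of $P_\CA(x^n) = \sum_i \sigma_i a_i^n$), the inner-product bound $|\sigma^\top \xi| \le \sqrt{c_2 F_\CA(x)}\,\|\xi\|_2$ supplied by Lemma~\ref{lem:sigma_eta_bound}, and the per-coordinate partitioning of correlation coefficients from Lemma~\ref{prop:RIP_consequence}. All the work lies in choosing the test vector $\xi$ well and in splitting each sum according to $\CI \cup ([K]\setminus\CI)$.

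For \eqref{eq:z2nnf_lemaux}, I would first write the discrepancy as $F_\CA(x) - \sum_{i\in\CI}\sigma_i\zeta_i^n = \sum_{i\notin\CI}\sigma_i\zeta_i^n = \sigma^\top \xi$ with $\xi_i := \mathbf{1}(i\notin\CI)\,\zeta_i^n$. Lemma~\ref{lem:sigma_eta_bound} then yields $|\sigma^\top\xi| \le \sqrt{c_2 F_\CA(x)}\,\|\xi\|_2$, while the RIP tail estimate $\zeta_i^2 \le \tilde c_\delta \log(K)/D$ for $i\notin\CI$ together with $|[K]\setminus\CI| \le K$ gives $\|\xi\|_2^2 = \sum_{i\notin\CI}\zeta_i^{2n} \le K(\tilde c_\delta \log(K)/D)^n = \tilde c_\delta^n\varepsilon_K$. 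Combining these yields the claimed bound $\sqrt{c_2\tilde c_\delta^n F_\CA(x)\varepsilon_K}$.

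For \eqref{eq:bound-zeta-2n}, I would split $\|\zeta\|_{2n}^{2n} = \sum_{i\in\CI}\zeta_i^{2n} + \sum_{i\notin\CI}\zeta_i^{2n}$; the tail is at most $\tilde c_\delta^n\varepsilon_K$ by the very same argument. For the head, Lemma~\ref{lem:trivial-Ris-stuff} gives $\zeta_i^{2n} = \zeta_i^n(\sigma_i + R_{i,n})$, hence $\sum_{i\in\CI}\zeta_i^{2n} = \sum_{i\in\CI}\sigma_i\zeta_i^n + \sum_{i\in\CI}R_{i,n}\zeta_i^n$. The first summand is $\le F_\CA(x) + \sqrt{c_2\tilde c_\delta^n F_\CA(x)\varepsilon_K}$ by what we just proved. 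For the second summand, the decisive observation is that, since $n\ge 2$ and $|\zeta_i|\le 1$, we have $\sum_{i\in\CI}|\zeta_i|^n = \sum_{i\in\CI}|\zeta_i|^{n-2}\zeta_i^2 \le \sum_{i\in\CI}\zeta_i^2 \le 1+\delta$ by the head bound of Lemma~\ref{prop:RIP_consequence}; combined with the uniform estimate $|R_{i,n}| \le R_n \le \sqrt{c_2 c_1^n F_\CA(x)\varepsilon_K}$ from \eqref{eq:sqrtfxrhobound} with $s=n$, this gives $|\sum_{i\in\CI}R_{i,n}\zeta_i^n| \le (1+\delta)\sqrt{c_2 c_1^n F_\CA(x)\varepsilon_K}$. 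Collecting everything yields \eqref{eq:bound-zeta-2n} with $C_1 := \sqrt{c_2\tilde c_\delta^n} + 2\sqrt{c_2 c_1^n}$ and $C_2 := \tilde c_\delta^n$, matching the advertised dependencies.

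The main subtlety is why one must work coordinate-by-coordinate rather than through an operator-norm estimate. A natural alternative is to write $\|\zeta\|_{2n}^{2n} - F_\CA(x) = (\zeta^{\odot n})^\top(I - G_n^{-1})(\zeta^{\odot n})$ and try to bound this by $\|G_n - I\|_2 \cdot c_2 F_\CA(x)$; however, in the overcomplete regime the only control on $\|G_n - I\|_2$ available under \ref{enum:correlation_supp} comes from Gershgorin and gives $O(\sqrt{K\varepsilon_K})$, which is useless once $K \gg 1$. The RIP partition from Lemma~\ref{prop:RIP_consequence} is precisely what bypasses this, supplying both the uniform bound $\sum_{i\in\CI}\zeta_i^2 \le 1+\delta$ on $\CI$ and the sharper tail $\zeta_i^2 \le \tilde c_\delta \log(K)/D$ off $\CI$, which together keep the error at the level $\sqrt{F_\CA(x)\varepsilon_K} + \varepsilon_K$.
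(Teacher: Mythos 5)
Your proof is correct and follows essentially the same route as the paper's: the same test vector $\xi_i = \mathbf{1}(i\notin\CI)\zeta_i^n$ fed into Lemma~\ref{lem:sigma_eta_bound} for \eqref{eq:z2nnf_lemaux}, and the same head/tail split with $\zeta_i^{2n}=\sigma_i\zeta_i^n+R_{i,n}\zeta_i^n$, the bound $\sum_{i\in\CI}|\zeta_i|^n\le\sum_{i\in\CI}\zeta_i^2\le 1+\delta$, and \eqref{eq:sqrtfxrhobound} at $s=n$ for \eqref{eq:bound-zeta-2n}. Your assignment of $C_1$ and $C_2$ is in fact the one consistent with the dependencies advertised in the lemma statement.
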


\begin{proof}
    To show \eqref{eq:z2nnf_lemaux}, start with the identity $F_{\CA}(x) = \sum_{i=1}^K \sigma_i \zeta_i^n$, then apply the inner product bound \eqref{eq:inner-product-sigma} with $\xi \in \mathbb{R}^K$ given by $\xi_j = \mathbf{1}(j \notin \mathcal{I}) \zeta_j^n$, and then use Lemma~\ref{prop:RIP_consequence} based on RIP:
    \begin{align*}
        \SN{F_{\CA}(x) - \sum_{i \in \mathcal{I}} \sigma_i \zeta_i^n} &= \SN{\sum_{i \notin \mathcal{I}} \sigma_i \zeta_i^n} & \\[0.1em]
        & \leq \sqrt{c_2 F_{\CA}(x)} \sqrt{\sum_{i \notin \mathcal{I}} \zeta_i^{2n}} & \textup{(Eq.~\eqref{eq:inner-product-sigma})} \\[0.1em]
        & \leq \sqrt{c_2 F_{\CA}(x)} \sqrt{K \! \left(\tilde c_{\delta}^n  \frac{\log(K)}{D}\right)^{\!\!n}} & \textup{(Lemma~\ref{prop:RIP_consequence})} \\[0.1em]
        & = \sqrt{c_2 \tilde c_{\delta}^n F_{\CA}(x) \varepsilon_K}.
    \end{align*}
    Next to show \eqref{eq:bound-zeta-2n}, split $\N{\zeta}_{2n}^{2n}$ into a sum over $\mathcal{I}$ and $[K] \setminus \mathcal{I}$:
    \begin{align} \label{eq:zeta2n-1}
        \N{\zeta}_{2n}^{2n} = \sum_{i \in \mathcal{I}} \zeta_i^{2n} + \sum_{i \notin \mathcal{I}} \zeta_i^{2n}.
    \end{align}
    As before, the sum over $[K] \setminus \mathcal{I}$ is upper-bounded using the second conclusion in Lemma~\ref{prop:RIP_consequence}:
    \begin{align} \label{eq:zeta2n-2}
        \sum_{i \notin \mathcal{I}} \zeta_i^{2n} \leq \tilde c_{\delta}^{n} \varepsilon_K.
    \end{align}
    As for the sum over $\mathcal{I}$ in \eqref{eq:zeta2n-1}, we can relate this to $F_{\CA}(x)$ by using
    \begin{align} \label{eq:zeta2n-3}
        \sum_{i \in \mathcal{I}} \zeta_i^{2n} = \sum_{i \in \mathcal{I}} \sigma_i \zeta_i^n + \sum_{i \in \mathcal{I}} R_{i,n} \zeta_i^n.
    \end{align}
    Indeed we have already proven that the first sum on the right-hand side of \eqref{eq:zeta2n-3} is approximately $F_{\CA}(x)$; more precisely \eqref{eq:z2nnf_lemaux} holds.  We can see that the other sum is small by using our initial bound on $R_s$ \eqref{eq:sqrtfxrhobound} together with the first conclusion of Lemma~\ref{prop:RIP_consequence}:
    \begin{align*}
        \SN{\sum_{i \in \mathcal{I}} R_{i,n} \zeta_i^n} &\leq R_n \sum_{i \in \mathcal{I}} \SN{\zeta_i}^n \leq  R_n \sum_{i \in \mathcal{I}} \zeta_i^2 \leq R_n (1 + \delta) \\ &\leq (1+ \delta)\sqrt{c_2 c_1^n F_{\CA}(x) \varepsilon_K} \leq 2 \sqrt{c_2 c_1^n F_{\CA}(x) \varepsilon_K}.
    \end{align*}
    Putting the last three sentences together via the triangle inequality gives
    \begin{align*} 
        \SN{\sum_{i \in \mathcal{I}} \zeta_i^{2n} - F_{\CA}(x)} \leq \sqrt{c_2 \tilde  c_{\delta}^n F_{\CA}(x) \varepsilon_K} + 2\sqrt{c_2 c_1^n F_{\CA}(x) \varepsilon_K} .
    \end{align*}
    Combining \eqref{eq:zeta2n-1}, \eqref{eq:zeta2n-2}, \eqref{eq:zeta2n-3}, we conclude that we may set
    \begin{align*}
      C_1 = \tilde c_{\delta}^n  \quad \quad \textup{and} \quad \quad  C_2 = \sqrt{c_2 \tilde c_{\delta}^n}  + 2 \sqrt{c_2 c_1^n},
    \end{align*}
    and \eqref{eq:bound-zeta-2n} follows as desired.
\end{proof}

\begin{corollary}\label{cor:Rs_bound}
Assume that conditions \textup{\ref{enum:RIP_supp}-\ref{enum:GInverse}} hold.	Let $x \in \mathbb{S}^{D-1}$ be arbitrary and $s \in \{0, \ldots, n\}$.
Then there exists a constant $\tilde C_s$, depending only on $s$, $n$, $c_1$, $c_2$ and $\tilde c_\delta$, such that
	\begin{equation}\label{eq:sqrtfxrhobound2}
		R_s \le \tilde C_s \varepsilon_K ^{\frac{s}{2n}}  \max\{\varepsilon_K, F_{\CA}(x)\}^{1-\frac{s}{2n}}.
	\end{equation}

\end{corollary}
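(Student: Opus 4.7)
The plan is to combine the two prior results in Lemma~\ref{lem:sigma_eta_bound} and Lemma~\ref{lem:RIP_inequalities} and observe that the right-hand side of \eqref{eq:sqrtfxrhobound} can be repackaged using the simple inequality $\sqrt{ab}\le\max\{a,b\}$ for $a,b\ge 0$. The only real work is bookkeeping of exponents so that everything collapses into the target form $\varepsilon_K^{s/(2n)} \max\{\varepsilon_K,F_{\CA}(x)\}^{1-s/(2n)}$.

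First I would recall the initial estimate $R_s \le \sqrt{c_2 c_1^s F_{\CA}(x)}\,\varepsilon_K^{s/(2n)}\|\zeta\|_{2n}^{n-s}$ from \eqref{eq:sqrtfxrhobound}. The factors $\sqrt{F_{\CA}(x)}$ and $\varepsilon_K^{s/(2n)}$ are already the right shape; so the task reduces to controlling $\|\zeta\|_{2n}^{n-s}$. Here I would invoke the RIP-based bound \eqref{eq:bound-zeta-2n},
\begin{equation*}
\|\zeta\|_{2n}^{2n} \le F_{\CA}(x) + C_1 \sqrt{\varepsilon_K F_{\CA}(x)} + C_2 \varepsilon_K,
\end{equation*}
and use $\sqrt{\varepsilon_K F_{\CA}(x)} \le \max\{\varepsilon_K,F_{\CA}(x)\}$ together with $F_{\CA}(x),\varepsilon_K \le \max\{\varepsilon_K,F_{\CA}(x)\}$ to conclude $\|\zeta\|_{2n}^{2n}\le (1+C_1+C_2)\max\{\varepsilon_K,F_{\CA}(x)\}$, whence
\begin{equation*}
\|\zeta\|_{2n}^{n-s} \le (1+C_1+C_2)^{(n-s)/(2n)}\max\{\varepsilon_K,F_{\CA}(x)\}^{(n-s)/(2n)}.
\end{equation*}

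Finally I would substitute this back into \eqref{eq:sqrtfxrhobound} and absorb the factor $\sqrt{F_{\CA}(x)}\le \max\{\varepsilon_K,F_{\CA}(x)\}^{1/2}$ into the same maximum. Adding the exponents $\tfrac12+\tfrac{n-s}{2n}=\tfrac{2n-s}{2n}=1-\tfrac{s}{2n}$ yields exactly the claimed bound, with the constant $\tilde C_s := \sqrt{c_2 c_1^s}\,(1+C_1+C_2)^{(n-s)/(2n)}$ which depends only on $s,n,c_1,c_2,\tilde c_\delta$ (since $C_1,C_2$ did). There is no genuine obstacle here; the only subtlety is making sure the two cases (whether $F_{\CA}(x)$ dominates $\varepsilon_K$ or vice versa) are handled uniformly, and that is precisely the role of introducing the $\max$ before splitting exponents. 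I would present the whole argument as one short chain of inequalities.
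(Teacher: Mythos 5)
Your proof is correct and follows exactly the same route as the paper: plug the RIP-based bound \eqref{eq:bound-zeta-2n} into \eqref{eq:sqrtfxrhobound}, majorize every term by $\max\{\varepsilon_K,F_{\CA}(x)\}$, and combine exponents $\tfrac12+\tfrac{n-s}{2n}=1-\tfrac{s}{2n}$ to arrive at the same constant $\tilde C_s = \sqrt{c_2 c_1^s}(1+C_1+C_2)^{(n-s)/(2n)}$. The only difference is that you spell out the intermediate $\max$-inequalities that the paper leaves implicit.
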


\begin{proof}
Plugging \eqref{eq:bound-zeta-2n} into \eqref{eq:sqrtfxrhobound} yields
\begin{align*}
    R_s \leq \sqrt{c_2 c_1^s F_{\CA}(x)} \, \varepsilon_K ^{\frac{s}{2n}} \left( F_{\CA}(x) + C_1 \sqrt{\varepsilon_K F_{\CA}(x)} + C_2 \varepsilon_K \right)^{\!\!\frac{n-s}{2n}} \!\! \leq \tilde C_s \varepsilon_K^{\frac{s}{2n}} \max\{\varepsilon_K, F_{\CA}(x)\}^{1-\frac{s}{2n}},
\end{align*}
where $\tilde C_s = \sqrt{c_2 c_1^s} \left( 1 + C_1 + C_2 \right)^{\! \frac{n-s}{2n}}$\!.
\end{proof}

\medskip

\subsection{Lower bound on maximum correlation coefficient}

We now provide a lower bound on $\|\zeta\|_\infty$ depending on $\frac{\varepsilon_K}{F_{\CA}(x)}$.

\begin{proposition}
	\label{prop:zetainf_lower_bound}
	Suppose that $x$ is a second-order critical point of \textup{\ref{prob:nspm}} and $F_{\CA}(x) \ge \varepsilon_K$.
	Then there exists a constant $\overline{C}_2$ depending only on $n$, $c_1$, $c_2$, $\tilde c_\delta, \delta$, and another constant $\overline{C}_n$ depending only on $n, c_1, c_2, \tilde c_{\delta}$, such that we have
	\begin{align}\label{eq:zetainf_lower_bound}
		\|\zeta\|_{\infty}^2 &\ge 1 - \frac{2n-1}{2n-2}\frac{\delta}{1+\delta} - \overline{C}_2 \left(\frac{\varepsilon_K}{F_{\CA}(x)}\right)^{\!\!\frac1{2}} - \overline{C}_n \left(\frac{\varepsilon_K}{F_{\CA}(x)}\right)^{\!\!\frac1{n}} - 3\frac{\Delta_{\CA}}{F_{\CA}(x)},
		\\
		\nonumber
		&\ge 1 - \frac{2n-1}{2n-2}\frac{\delta}{1+\delta} - (\overline{C}_2 + \overline{C}_n) \left(\frac{\varepsilon_K}{F_{\CA}(x)}\right)^{\!\!\frac1{n}} - 3\frac{\Delta_{\CA}}{F_{\CA}(x)}.
	\end{align}
\end{proposition}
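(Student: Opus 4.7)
}
The overall plan mimics the three-step structure of Proposition~\ref{prop:one_big_deterministic_case}, but substitutes the RIP-based tools of Lemma~\ref{prop:RIP_consequence}, Lemma~\ref{lem:RIP_inequalities} and Corollary~\ref{cor:Rs_bound} for the frame-constant estimates used there. I would start from Tool~\#1 (Proposition~\ref{prop:auxliary_one_big}) at an index $i$ to be chosen,
$$(1+2(n-1)\zeta_i^2) F_{\CA}(x) \geq (2n-1)\sigma_i\zeta_i^{n-2} + n\zeta_i^{n-2}R_{i,n} + (n-1)R_{i,2} - (4n-2)\Delta_{\CA},$$
and, as in the deterministic proof, assume without loss of generality $\zeta_\ell\ge 0$ for every $\ell$ (by flipping signs of the $a_\ell$). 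It also suffices to treat the case when the right-hand side of \eqref{eq:zetainf_lower_bound} is strictly positive, since otherwise the conclusion is automatic from $\|\zeta\|_\infty^2\ge 0$.

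For the selection of $i$, I would take $\CI=\CI(x)$ as in Lemma~\ref{prop:RIP_consequence} and pick $i\in\argmax_{\ell\in\CI}\sigma_\ell\zeta_\ell^{n-2}$, restricting the maximum to $\CI$ precisely to exploit the RIP bound $\sum_{\ell\in\CI}\zeta_\ell^2\le 1+\delta$. Combining this with \eqref{eq:z2nnf_lemaux} gives the chain
$$F_{\CA}(x) - \sqrt{c_2\tilde c_\delta^n F_{\CA}(x)\,\varepsilon_K}\le\sum_{\ell\in\CI}\sigma_\ell\zeta_\ell^{n}\le \sigma_i\zeta_i^{n-2}\sum_{\ell\in\CI}\zeta_\ell^2\le (1+\delta)\,\sigma_i\zeta_i^{n-2},$$
which yields both the nonnegativity of $\sigma_i\zeta_i^{n-2}$ (needed to validate the monotone bound) and the lower bound $\sigma_i\zeta_i^{n-2}\ge (1+\delta)^{-1}\bigl(F_{\CA}(x)-\sqrt{c_2\tilde c_\delta^n F_{\CA}(x)\,\varepsilon_K}\bigr)$. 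Next I would bound the remainders via Corollary~\ref{cor:Rs_bound}: using $F_{\CA}(x)\ge\varepsilon_K$ to identify $\max\{\varepsilon_K,F_{\CA}(x)\}=F_{\CA}(x)$, and $|\zeta_i|^{n-2}\le 1$, one obtains $|R_{i,n}|\le \tilde C_n\sqrt{\varepsilon_K F_{\CA}(x)}$ and $|R_{i,2}|\le\tilde C_2\,\varepsilon_K^{1/n}F_{\CA}(x)^{1-1/n}$. Substituting these estimates into Tool~\#1, dividing through by $F_{\CA}(x)$, and using the algebraic identity
$$\frac{2n-1}{1+\delta}-1=2(n-1)\left(1-\frac{2n-1}{2n-2}\cdot\frac{\delta}{1+\delta}\right)$$
together with $\frac{4n-2}{2(n-1)}\le 3$ for $n\ge 2$, the first asserted inequality would follow with constants
$$\overline C_2=\frac{1}{2(n-1)}\!\left(\frac{(2n-1)\sqrt{c_2\tilde c_\delta^n}}{1+\delta}+n\tilde C_n\right),\qquad \overline C_n=\tfrac{1}{2}\tilde C_2,$$
the latter indeed depending only on $n,c_1,c_2,\tilde c_\delta$. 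The second inequality then follows at once from the elementary fact that $(\varepsilon_K/F_{\CA}(x))^{1/2}\le (\varepsilon_K/F_{\CA}(x))^{1/n}$ whenever $\varepsilon_K/F_{\CA}(x)\le 1$ and $n\ge 2$.

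The main obstacle I anticipate is the sign issue underlying the chain in the second paragraph: strictly speaking the monotone step $\sum_{\ell\in\CI}\sigma_\ell\zeta_\ell^n\le\sigma_i\zeta_i^{n-2}\sum_{\ell\in\CI}\zeta_\ell^2$ only goes through when $\sigma_i\zeta_i^{n-2}\ge 0$, and this requires $F_{\CA}(x)>c_2\tilde c_\delta^n\varepsilon_K$ rather than the weaker assumed $F_{\CA}(x)\ge\varepsilon_K$. The escape is that under the nontriviality reduction made at the outset, the term $\overline C_n(\varepsilon_K/F_{\CA}(x))^{1/n}$ cannot exceed $1-\tfrac{2n-1}{2n-2}\tfrac{\delta}{1+\delta}$, which forces a quantitative lower bound of the form $F_{\CA}(x)\gtrsim \varepsilon_K$ with sufficiently large implied constant; taking $\overline C_n$ as above (or enlarging it slightly) makes this implication automatic and closes the argument.
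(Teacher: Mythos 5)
Your proof is correct, but it departs from the paper's argument in one structurally important way, and you pay for the departure with an extra bookkeeping step that the paper never needs.

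The paper multiplies Tool~\#1 at index $i$ by $\zeta_i^2$ and then \emph{sums over all $i\in\CI$}, obtaining
\[
\|\zeta\|_{2,\CI}^2\bigl(1+2(n-1)\|\zeta\|_\infty^2\bigr)F_{\CA}(x)\ \ge\ (2n-1)\sum_{i\in\CI}\sigma_i\zeta_i^n+\sum_{i\in\CI}\zeta_i^2\bigl(n\zeta_i^{n-2}R_{i,n}+(n-1)R_{i,2}-(4n-2)\Delta_{\CA}\bigr).
\]
The weights $\zeta_i^2$ are automatically nonnegative, the main term $\sum_{i\in\CI}\sigma_i\zeta_i^n$ is directly controlled by \eqref{eq:z2nnf_lemaux}, and the prefactor $\|\zeta\|_{2,\CI}^2$ is controlled on both sides by the RIP estimate $1-\delta\le\|\zeta\|_{2,\CI}^2\le 1+\delta$. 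No sign discussion is required at any point. You instead pick a single index $i\in\argmax_{\ell\in\CI}\sigma_\ell\zeta_\ell^{n-2}$, which reproduces the structure of the deterministic Proposition~\ref{prop:one_big_deterministic_case} but forces you to justify $\sigma_i\zeta_i^{n-2}\ge 0$ before you can multiply the RIP inequality $\sum_{\ell\in\CI}\zeta_\ell^2\le 1+\delta$ through. You correctly flag this as a potential gap and your escape hatch (reduce to the case where the claimed right-hand side is positive, then observe this forces $F_{\CA}(x)\gtrsim\varepsilon_K$ with a large enough implied constant) does close the argument, but note the relevant constraint actually comes from the $\overline C_2$ term, not the $\overline C_n$ term you cite: with your $\overline C_2\ge\frac{(2n-1)\sqrt{c_2\tilde c_\delta^n}}{2(n-1)(1+\delta)}$, the identity $\frac{2n-1}{2(n-1)(1+\delta)}\ge 1-\frac{2n-1}{2n-2}\frac{\delta}{1+\delta}$ (equivalent to $2n-1\ge 2n-2$) already yields $F_{\CA}(x)>c_2\tilde c_\delta^n\varepsilon_K$ on the nontrivial region, so no enlargement is even needed. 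Both routes give comparable constants and both are valid; the summing trick is just the cleaner implementation, which is presumably why the paper chose it.
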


\begin{remark}
Note that while the statement is for a constrained second-order critical point of $F_{\hat \CA}$,  \eqref{eq:zetainf_lower_bound} contains $F_{\CA}(x)$. This is intentional: later we use this result together with a bound on $F_{\CA}(x)$.
\end{remark}
\begin{proof}
    Our starting point is Eq.~\eqref{eq:starting_point_max_correlation_coeff} in Proposition~\ref{prop:auxliary_one_big},
which says that for each $i \in [K]$ we have
\begin{align*}
\left(1+2(n-1)\zeta_i^2\right) F_{\CA}(x) &\geq (2n-1)\sigma_i\zeta_i^{n-2} + n\zeta_i^{n-2}R_{i,n} + (n-1)R_{i,2} - (4n-2)\Delta_{\CA}.
\end{align*}
Multiplying both sides by $\zeta_i^2$ and using that $1 + 2(n-1)\| \zeta \|_{\infty}^2 \geq 1+ 2(n-1)\zeta_i^2$, we have
\begin{align} \label{eq:nice-startingpoint}
\zeta_i^2 \left(1+2(n-1)\|\zeta\|_{\infty}^2\right) \!F_{\CA}(x) &\geq (2n-1)\sigma_i\zeta_i^{n} + \zeta_i^2 \left(n\zeta_i^{n-2}R_{i,n} + (n-1)R_{i,2} - (4n-2)\Delta_{\CA}\right)\!.
\end{align}
Now let $\mathcal{I} = \mathcal{I}(x) \subseteq [K]$ satisfy the conclusions of Lemma~\ref{prop:RIP_consequence}, and sum the inequalities \eqref{eq:nice-startingpoint} as $i$ ranges over $\mathcal{I}$.
Denoting $\|\zeta\|_{2, \mathcal{I}}^2 = \sum_{i \in \mathcal{I}} \zeta_i^2$, this gives
\begin{align} \label{eq:lower-max-0}
    \|\zeta\|_{2, \mathcal{I}}^2 \left(1 + 2(n-1) \|\zeta\|_{\infty}^2 \right) F_{\CA}(x) \geq (2n-1) \sum_{i \in \mathcal{I}} \sigma_i \zeta_i^n + \sum_{i \in \mathcal{I}} \zeta_i^2(n & \zeta_i^{n-2} R_{i,n} +(n-1)R_{i,2} \nonumber \\[-0.8em] &-(4n-2)\Delta_{\CA}).
\end{align}
We lower-bound each of the sums on the right-hand side in turn.  Firstly by \eqref{eq:z2nnf_lemaux},
\begin{align} \label{eq:lower-max-1}
    (2n-1) \sum_{i \in \mathcal{I}} \sigma_i \zeta_i^n \geq (2n-1) \left( F_{\CA}(x) - \sqrt{c_2 \tilde c_{\delta}^n F_{\CA}(x) \varepsilon_K} \right).
\end{align}
Secondly by combining the triangle inequality, the trivial bound $|\zeta_i^{n-2}| \leq 1$,  Corollary~\ref{cor:Rs_bound} with $s=2, n$ and the assumption $F_{\CA}(x) \geq \varepsilon_K$, we have
\begin{align} \label{eq:lower-max-2}
     & \sum_{i \in \mathcal{I}} \zeta_i^2(n \zeta_i^{n-2} R_{i,n} +(n-1)R_{i,2}  -(4n-2)\Delta_{\CA}) \nonumber \\
     & \geq - \| \zeta \|_{2, \mathcal{I}}^2 \left( n R_n + (n-1) R_{2} + (4n-2) \Delta_{\CA} \right) \nonumber \\
     & \geq - \| \zeta \|_{2, \mathcal{I}}^2 \left(n \tilde C_n \sqrt{\varepsilon_K F_{\CA}(x)} + (n-1) \tilde C_2 \varepsilon_K^{\frac{1}{n}} F_{\CA}(x)^{\frac{n-1}{n}} + (4n-2) \Delta_{\CA} \right).
\end{align}
Inserting \eqref{eq:lower-max-1} and \eqref{eq:lower-max-2} into \eqref{eq:lower-max-0} gives
\begin{align*}
    \| \zeta \|_{2, \mathcal{I}}^2 ( 1 + 2(n-1) & \| \zeta \|_{\infty}^2) F_{\CA}(x) \geq  (2n-1) \left( F_{\CA}(x) - \sqrt{c_2 \tilde c_{\delta}^n F_{\CA}(x) \varepsilon_K} \right) \\ & - \| \zeta \|_{2, \mathcal{I}}^2 \left(n \tilde C_n \sqrt{\varepsilon_K F_{\CA}(x)} + (n-1) \tilde C_2 \varepsilon_K^{\frac{1}{n}} F_{\CA}(x)^{\frac{n-1}{n}} + (4n-2) \Delta_{\CA} \right).
\end{align*}
Isolating $\| \zeta \|_{\infty}^2$, and using $1 - \delta \leq \| \zeta \|_{2, \mathcal{I}}^2 \leq 1 + \delta$ and $\frac{4n-2}{2n-2} \leq 3$, yields \eqref{eq:zetainf_lower_bound} as desired where
\begin{align*}
    \overline{C}_2 = \frac{1}{1-\delta} \frac{\sqrt{c_2 \tilde c_{\delta}^n}}{2n-2} + \frac{n}{2n-2} \tilde C_{n} \quad \quad \textup{and} \quad \quad \overline{C}_n = \frac{1}{2} \tilde C_2.
\end{align*}
This completes the proof of the proposition.
\end{proof}

	\medskip

\subsection{Concavity}

We now show that $F_{\CA}$ is strictly geodesically concave in a spherical cap around each $\pm a_i$.

\begin{proposition}\label{prop:concavity_plus}
	There exist constants $C$, $\Delta_0$ and $D_0$, where $C$ and $\Delta_0$ depend only on $n$, $c_2$, and $D_0$ depends additionally on $c_1$, $\tilde c_\delta$, such that $C<1$ and if $\Delta_{\CA} <  \Delta_0$, and $D\ge D_0$, then for all $i\in [K]$ and $s\in \{-1, 1\}$, it holds
	\begin{enumerate}
	\item $F_{\hat \CA}$ is strictly concave on the spherical cap $B_{r_+}(s a_i)$, where	
	\begin{align*}
		r_{+} &:=  1- \left(1-C\left(0.99 - 4\Delta_{\CA}
		\right)^2\right)^{\frac1{2n}}.
	\end{align*}
	\item There exists exactly one local maximum in each spherical cap $B_{r_+}(s a_i)$, and denoting it by $x_*$, we have
	\begin{equation}\label{eq:existence_max}
		\| x_* - s a_i \|^2 \le \frac{2\Delta_{\CA}}{n}.
	\end{equation}
	\end{enumerate} 
\end{proposition}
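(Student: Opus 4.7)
The strategy is to mirror the deterministic argument in Proposition \ref{prop:det_concavity} and Corollary \ref{cor:det_unique_maximizer}, but replacing the frame constants $\rho_s$ by RIP-based estimates that shrink to $0$ with $D$. The starting point is again the universal upper bound
\begin{equation*}
\frac{1}{2n}z^\top \nabla_{\bbS^{D-1}}^2 F_{\hat \CA}(x) z \leq n\|P_{\CA}(x^{n-1}z)\|_F^2 + (n-1)\langle P_{\CA}(x^n), x^{n-2}z^2\rangle - \zeta_i^{2n} + 4\Delta_{\CA}
\end{equation*}
from Proposition \ref{prop:concavity_base}, where $i$ is the index (unique up to sign, in the cap of interest) with $|\zeta_i|=\|\zeta\|_\infty$. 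The goal is to show that the right hand side is bounded above by a quantity of the form $-C_0(1-\zeta_i^{2n})+C_1\varepsilon_K^{(n-1)/n}+4\Delta_{\CA}$ for a universal $C_0>0$ depending only on $n,c_2$, which immediately gives strict concavity once $\zeta_i^{2n}\geq 1-r_+$ for the claimed $r_+$.

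First, I would bound the quadratic term $\|P_{\CA}(x^{n-1}z)\|_F^2$. Using Lemma \ref{lem:frob_norms_extended} and \ref{enum:GInverse},
\begin{equation*}
\|P_{\CA}(x^{n-1}z)\|_F^2 = \eta(x^{n-1}z)^{\top} G_n^{-1}\eta(x^{n-1}z) \leq c_2\sum_{\ell=1}^K \zeta_\ell^{2n-2}\langle a_\ell,z\rangle^2.
\end{equation*}
Now split the sum according to the RIP partition $\mathcal{I}(x)$ of Lemma \ref{prop:RIP_consequence}. The term $\ell = i$ is controlled by Bessel's inequality $\langle a_i,z\rangle^2\leq 1-\zeta_i^2$, yielding $\zeta_i^{2n-2}(1-\zeta_i^2)\leq 1-\zeta_i^{2n}$. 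The remaining $\ell\in\mathcal{I}(x)\setminus\{i\}$ satisfy $\sum \zeta_\ell^2 \leq (1+\delta)-\zeta_i^2 \leq (1+\delta)(1-\zeta_i^{2n})^{1/n}$ (after absorbing the small defect), and the tail $\ell\notin\mathcal{I}(x)$ is handled via $\zeta_\ell^{2n-2}\leq(\tilde c_\delta\log(K)/D)^{n-1}$, contributing $\CO(\varepsilon_K^{(n-1)/n})$. This mimics the deterministic bound \eqref{eq:wow-first-term} but replaces $\rho_n$ by quantities vanishing with $D$.

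For the trilinear term $\langle P_{\CA}(x^n),x^{n-2}z^2\rangle$, expand $P_{\CA}(x^n)=\sum_\ell \sigma_\ell a_\ell^n$ and write it as $\sum_\ell \zeta_\ell^{n-2}\langle a_\ell,z\rangle^2\zeta_\ell^n + \sum_\ell (\sigma_\ell-\zeta_\ell^n)\zeta_\ell^{n-2}\langle a_\ell,z\rangle^2$. Applying H\"older's inequality, the first sum is treated identically to above (yielding a $1-\zeta_i^{2n}+\CO(\varepsilon_K^{(n-1)/n})$ term), while the second is controlled by $\|\sigma-\zeta^{\odot n}\|_\infty\|\zeta\|_{2n}^{n-2}\|\langle a_\cdot,z\rangle\|_{2n}^{2}$, which by Corollary \ref{cor:Rs_bound} and Lemma \ref{lem:RIP_inequalities} is also $\CO(\varepsilon_K^{(n-1)/n})$ uniformly on the sphere. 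Substituting back, there exist constants $C_0,C_1>0$ depending only on $n,c_1,c_2,\tilde c_\delta$ such that
\begin{equation*}
\frac{1}{2n}z^\top \nabla_{\bbS^{D-1}}^2 F_{\hat \CA}(x) z \leq -C_0(\zeta_i^{2n}-C_1\varepsilon_K^{(n-1)/n}) + 4\Delta_{\CA}.
\end{equation*}
Choosing $D_0$ large enough that $C_1\varepsilon_K^{(n-1)/n}\leq 0.01$ for $D\geq D_0$, and choosing $\Delta_0$ so the RHS is strictly negative whenever $\zeta_i^{2n}\geq 1-C(0.99-4\Delta_{\CA})^2$, gives the stated $r_+$ and part (1).

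For part (2), I would invoke Proposition \ref{prop:concavity_and_local_maxes} (Tool \#3) exactly as in Corollary \ref{cor:det_unique_maximizer}. The outer cap is $B_{r_+}(sa_i)$, on which strict concavity has just been verified, and the inner cap is $B_{\Delta_{\CA}/n+\varepsilon}(sa_i)$ for a small slack $\varepsilon$. On this much smaller cap $\zeta_i^{2n}$ is within $\CO(\Delta_{\CA})$ of $1$, so the Hessian bound strengthens to $\leq -n$ (i.e., $n$-strong concavity) after choosing $\Delta_0$ small. Disjointness of the $2K$ caps and uniqueness follow verbatim from the first two paragraphs of the proof of Corollary \ref{cor:det_unique_maximizer}, with $\rho_2$ replaced by the random bound $\max_{i\neq j}\langle a_i,a_j\rangle^2 = \CO(\varepsilon_K^{1/n})$, which is $\ll 1-\tfrac{1}{n}$ for $D\geq D_0$. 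Proposition \ref{prop:concavity_and_local_maxes} then delivers both the unique local maximizer $x_*$ and the quantitative bound \eqref{eq:existence_max}.

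The main obstacle is the quadratic term $\|P_{\CA}(x^{n-1}z)\|_F^2$: unlike the deterministic case where $\|G_n^{-1}\|_2\leq 1/(1-\rho_n)$ is nearly $1$, here $c_2$ can be any constant in \ref{enum:GInverse}, so the factor $c_2$ does not shrink. Obtaining a leading coefficient $C_0>0$ \emph{independent of $c_2$ being close to $1$} forces a delicate combination of the RIP split and the Bessel bound for the $\ell=i$ term; the budget for $1-\zeta_i^{2n}$ must be shared between both terms and match the $\zeta_i^{2n}$ coming from Tool \#2. Verifying that this budget closes at some $C<1$ of the form advertised in $r_+$ is the most delicate part.
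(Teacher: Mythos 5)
Your overall architecture (Tool \#2 for the Hessian bound, then Tool \#3 for existence/uniqueness and the distance bound, with the inner/outer cap device) matches the paper, and your treatment of the $\ell=i$ term via Bessel and of the tail $\ell\notin\CI(x)$ is fine. But there is a genuine gap in how you control the middle group $\ell\in\CI(x)\setminus\{i\}$. The chain $\sum_{\ell\in\CI(x)\setminus\{i\}}\zeta_\ell^2\le(1+\delta)-\zeta_i^2\le(1+\delta)(1-\zeta_i^{2n})^{1/n}$ is false precisely where you need it: as $\zeta_i^2\to1$ the left side tends to $\delta>0$ while the right side tends to $0$. The RIP second-moment bound only gives $(1+\delta)-\zeta_i^2=\delta+(1-\zeta_i^2)$, and the additive $\delta$ is a \emph{fixed} constant that does not vanish with $D$ and cannot be folded into the $\CO(\varepsilon_K^{(n-1)/n})$ error. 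Carried through, your Hessian bound at $\eta=1-\zeta_i^{2n}=0$ reads $-1+ (nc_2+n-1)\delta+4\Delta_\CA+o(1)$ rather than $-0.99+4\Delta_\CA$, so strict concavity at the center of the cap fails unless you additionally impose $\delta<\delta_0(n,c_2)$ --- a hypothesis that is not in the proposition statement and would make $C$ and $\Delta_0$ depend on $\delta$, contrary to the claimed dependencies. You flag ``closing the budget'' as the delicate part; this $\delta$ defect is exactly where it does not close.

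The paper avoids the RIP partition entirely in this step. It first upgrades Lemma~\ref{lem:RIP_inequalities} to the uniform bound $\sum_{j}\langle a_j,y\rangle^{2n}\le 1+\mu$ for all $y\in\bbS^{D-1}$, where $\mu=\CO(\sqrt{\varepsilon_K})$ can be made arbitrarily small by enlarging $D_0$ (this is Eq.~\eqref{eq:concativity_minimal_assumption}). Then H\"older with exponents $n$ and $n/(n-1)$ gives $\sum_{j\ne i}\langle a_j,x\rangle^{2n-2}\langle a_j,z\rangle^2\le(1+\mu)^{1/n}(1+\mu-\zeta_i^{2n})^{(n-1)/n}$, so the excess over $\eta^{(n-1)/n}$ is controlled by the \emph{vanishing} $\mu$ rather than the fixed $\delta$. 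For the trilinear term the paper also does something different and simpler than your $\sigma$-expansion: since $z\perp x$ one has $\langle x^n,x^{n-2}z^2\rangle=0$, hence $|\langle P_\CA(x^n),x^{n-2}z^2\rangle|\le\|P_{\CA^\perp}(x^n)\|_F\|\sym(x^{n-2}z^2)\|_F\le\tfrac{2}{n-1}\sqrt{\eta}$, with no Grammian or $R_{i,s}$ estimates needed. Your $\sigma$-expansion route for that term is not wrong in principle (it mirrors Proposition~\ref{prop:det_concavity}), but its leading sum is again the quantity afflicted by the $\delta$ defect above. If you replace your middle-group estimate by the $2n$-th-moment/H\"older argument, the rest of your plan (including the Tool \#3 application for part~(2)) goes through; the cap-disjointness discussion you include is not needed for this proposition.
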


\begin{proof}
We first note that Lemma~\ref{lem:RIP_inequalities} implies that, for all $y\in \bbS^{D-1}$,
\begin{align*}
\sum_{i=1}^{K}\langle y, a_i\rangle^{2n} =\|\zeta(y)\|_{2n}^{2n} &\le F_{\CA}(y) + C \sqrt{\varepsilon_K \max\{\varepsilon_K, F_{\CA}(y)\}}
\le 1 + O(\sqrt{\varepsilon_K}),
\end{align*}
thus since $\lim_{D\to \infty} \varepsilon_K =0$, for all $\mu \in (0,1)$ there exist $D_0$ such that if $D\ge D_0$,
\begin{align}
	\label{eq:concativity_minimal_assumption}
	\sum_{i=1}^{K}\langle y, a_i\rangle^{2n} \leq 1 + \mu,\quad \textrm{ uniformly for all } y \in \bbS^{D-1}.
\end{align}

Let $i\in [K]$, $s\in \{-1, 1\}$ and $y \perp x$ arbitrary and recall \eqref{eq:lb_hessian_value},
\begin{align}
	\label{eq:concav_tbc}
	\frac{1}{2n}y^\top \nabla_{\bbS^{D-1}}^2 F_{\hat \CA}(x) y \leq n\N{P_{\CA}(x^{n-1}y)}^2 + (n-1)\langle P_{\CA}(x^n), x^{n-2}y^2\rangle - \zeta_i^{2n} + 4\Delta_{\CA}.
\end{align}
The second term can be bounded using Lemma~\ref{lem:DeltaCA_techlemma}.
\begin{align*}
	\SN{\langle P_{\CA}(x^n), x^{n-2}y^2\rangle} &\leq \SN{\langle x^n, x^{n-2}y^2\rangle} + \SN{\langle P_{\CA}(x^n) -x^n, x^{n-2}y^2\rangle}\\
	&\leq 0 + \N{P_{\CA^\perp}(x^n)}_F \|\sym(x^{n-2}y^2)\|\\ &= \frac{\sqrt 2}{\sqrt{n(n-1)}} \sqrt{1-\N{P_{\CA}(x^n)}_F^2} \leq \frac{2}{n-1}\sqrt{1-\langle a_i, x\rangle^{2n}}.
\end{align*}
The first term in \eqref{eq:concav_tbc} is slightly more involved. We first
use Lemma~\ref{lem:frob_norms_extended} and \ref{enum:GInverse_supp} to write
\begin{align*}
	\N{P_{\CA}(x^{n-1}y)}_F^2 = \beta(x^{n-1} y)^\top G_n^{-1} \beta(x^{n-1}y) \leq c_2 \N{\beta(x^{n-1} y)}_2^2.
\end{align*}
Then, by leveraging the fact that $y\perp x$, we further get
\begin{align*}
	\N{\beta(x^{n-1},y)}_2^2 &= \sum_{j=1}^{K}\langle a_j, x\rangle^{(2n-2)}\langle a_j, y\rangle^2 \leq \langle a_i, x\rangle^{2n-2}\langle a_i, y\rangle^2 + \sum_{j\neq i}^{K}\langle a_j, x\rangle^{(2n-2)}\langle a_j, y\rangle^2\\
	&\leq \langle a_i, x\rangle^{2n-2}(\N{a_i}^2 - \langle a_i, x\rangle^2) + \sum_{j\neq i}^{K}\langle a_j, x\rangle^{(2n-2)}\langle a_j, y\rangle^2\\
	&= \langle a_i, x\rangle^{2n-2} - \langle a_i, x\rangle^{2n} + \sum_{j\neq i}^{K}\langle a_j, x\rangle^{(2n-2)}\langle a_j, y\rangle^2,
\end{align*}
where $\langle a_i, y\rangle^2 + \langle a_i, x\rangle^2 \le \|a_i\|^2$ is a consequence of Bessel's inequality.
On one hand, since the function $f(x) = (1-x)^{\frac{n-1}{n}} - (1-x)$ is concave in $[0,1]$, and $f'(0)=\frac1{n}$, we have $f(x)\le \frac{x}{n}$, which implies $\langle a_i, x\rangle^{2n-2}(1-\langle a_i, x\rangle^2)\le \frac{1-\langle a_i, x\rangle^{2n}}{n}$.  On the other hand, using H\"older's inequality with $p=n$ and $q = \frac{n}{n-1}$
satisfying $1/n + (n-1)/n = 1$, and Equation \eqref{eq:concativity_minimal_assumption}, we get
\begin{align*}
	\sum_{j\neq i}^{K}\langle a_j, x\rangle^{(2n-2)}\langle a_j, y\rangle^2 &\leq \left(\sum_{j\neq i}^{K}\langle a_j, x\rangle^{2n}\right)^{\frac{n-1}{n}}
	\left(\sum_{j\neq i}\langle a_j, y\rangle^{2n}\right)^{\frac{1}{n}}\\
	&\leq (1+\mu)^{\frac{1}{n}}\left(1+\mu -\langle a_i, x\rangle^{2n}\right)^{\frac{n-1}{n}}\\
	&\le \left(1+\frac{\mu}{n}\right)\left(1+\mu -\langle a_i, x\rangle^{2n}\right)^{\frac{n-1}{n}}.
\end{align*}
Combining the estimates and writing $\eta := 1-\langle a_i, x\rangle^{2n}$ for short,  we obtain
\begin{align}
	\nonumber \frac{1}{2n}y^\top \nabla_{\bbS^{D-1}}^2 F_{\hat\CA}(x) y &\leq
	\N{G_n^{-1}}_2\left(\eta + (n+\mu)\left(\eta+\mu \right)^{\frac{n-1}{n}}\right) + (n-1)\sqrt{\eta}-1+\eta + 4\Delta_{\CA}\\
	\nonumber &\leq \N{G_n^{-1}}_2\left(\eta + (n+\mu)(\eta^{\frac{n-1}{n}}+\mu^{\frac{n-1}{n}})\right) + (n-1)\sqrt{\eta}-1+\eta + 4\Delta_{\CA}\\
	\nonumber &\leq \N{G_n^{-1}}_2 \left((1+n+\mu)\sqrt{\eta}  +(n+\mu)\mu^{\frac{n-1}{n}}\right) + n\sqrt{\eta} +4\Delta_{\CA} - 1\\
	\nonumber
	 &\le \left(n+(n+\mu+1)c_2\right)\sqrt{\eta} + (n+\mu)c_2 \mu^{\frac{n-1}{n}} + 4\Delta_{\CA} - 1.
\end{align}
Now choosing $\mu$ (and consequently $D_0$) such that $(n+\mu)c_2 \mu^{\frac{n-1}{n}} \le 0.01$, noticing that $\mu c_2 \le (n+\mu)c_2 \mu^{\frac{n-1}{n}}$, and setting $C=\left(\frac{1}{n+0.01+(n+1)c_2}\right)^2$, we get
\begin{equation}
\label{eq:concavity_oc_no_mu}
\frac{1}{2n}y^\top \nabla_{\bbS^{D-1}}^2 F_{\hat\CA}(x) y \le \sqrt{\frac{\eta}{C}} +  4\Delta_{\CA} - 0.99,
\end{equation}
and thus, by rearranging terms, we get that $F_{\hat\CA}$ is strictly geodesically concave in $B_{r_+}{s a_i}$.

We now define 
\begin{align*}
	\tilde r_+:= 1 - \left(1-C\left(0.49-4\Delta_{\CA}\right)^2\right)^{\frac{1}{2n}},
\end{align*}
and notice that for all $x\in B_{\tilde r_+}(s a_i)$ and $y\in \bbS^{D-1}$, \eqref{eq:concavity_oc_no_mu} implies that $y^\top \nabla_{\bbS^{D-1}}^2 F_{\hat\CA}(x) y \le -n$. We also have
\begin{align*}
	1-\sqrt[2n]{1-C\left(0.49-4\Delta_{\CA}\right)^2}
	&>1 - \left(1-\frac{C}{2n}\left(0.49-4\Delta_{\CA}\right)^2\right) =0.49^2\frac{C}{2n}\left(1-\frac{4}{0.49}\Delta_{\CA}\right)^2 \\
	&\ge 0.49^2\frac{C}{2n}\left(1-\frac{8}{0.49}\Delta_{\CA}\right)>\frac{\Delta_{\CA}}{n},
\end{align*}
where the last equality follows if we choose $\Delta_0 = 0.49^2\frac{C}{2n} / \left(\frac1n + 3.92\frac{C}{2n}\right)$, since $\Delta_{\CA}<\Delta_0$. Therefore, applying Proposition~\ref{prop:concavity_and_local_maxes}, we obtain that it exists exactly one local maximum $x_*\in B_{\tilde r_+}(s a_i)$, and $x_*$ satisfies \eqref{eq:existence_max}.

Finally, since $F_{\hat\CA}$ is strictly geodesically concave on $B_{r_+}(s a_i)$, Proposition~\ref{prop:concavity_and_local_maxes} implies that there exists at most one local maximum in $B_{r_+}(s a_i)$, thus since $B_{\tilde r_+}(s a_i)\subset B_{r_+}(s a_i)$, $x_*$ is also the only local maximum of~$B_{r_+}(s a_i)$.
\end{proof}

\medskip

\subsection{Completing the proof}

We now have all the pieces to conclude the argument for Theorem~\ref{thm:main_result_for_now}.
\begin{proof}[Proof of Theorem~\ref{thm:main_result_for_now}]
Suppose that $x$ is a local maximum and $F_{\hat\CA}(x)\ge  C \varepsilon_K + 5 \Delta_{\CA}$, where $C$ will be defined below.
Our proof strategy is to show that this implies that $x$ is in one of the $2K$ spherical caps defined in Proposition~\ref{prop:concavity_plus}. The rest of the proof then follows from Proposition~\ref{prop:concavity_plus}, since each spherical cap contains exactly one local maximum.

Denote $\tilde C = \overline{C}_2 + \overline{C}_n$ from the statement of Proposition~\ref{prop:zetainf_lower_bound}, and define $C$ so that $C\ge 1$. Therefore $F_{\hat\CA}(x)\ge \varepsilon_K$ and Proposition~\ref{prop:zetainf_lower_bound} implies
\begin{equation*}
	\|\zeta\|_{\infty}^2 \ge 1 - \frac{2n-1}{2n-2}\frac{\delta}{1+\delta} -\tilde C \left(\frac{\varepsilon_K}{F_{\CA}(x)}\right)^{\frac1{n}}  - 3\frac{\Delta_{\CA}}{F_{\CA}(x)}.
\end{equation*}
By the AM-GM inequality, we have
\begin{equation*}
\tilde C \left(\frac{\varepsilon_K}{F_{\CA}(x)}\right)^{\frac1{n}} \le \frac1{n}\left(4^{n-1} \tilde C^n \frac{\varepsilon_K}{F_{\CA}(x)} + \frac14(n-1)\right),
\end{equation*}
thus
\begin{align*}
\tilde C \left(\frac{\varepsilon_K}{F_{\CA}(x)}\right)^{\frac1{n}}  + 3\frac{\Delta_{\CA}}{F_{\CA}(x)} &\le \frac1{F_{\CA}(x)}\left(\frac{4^{n-1}}{n} \tilde C^n \varepsilon_K + 3\Delta_{\CA} \right) + \frac14 - \frac1{4n}.
\end{align*}
We now define $C:= \frac13 4^{n} \tilde C^n$, thus having
\begin{equation*}
F_{\CA}(x) \ge F_{\hat\CA}(x) -  \Delta_{\CA} \ge C \varepsilon_K + 4\Delta_{\CA} \ge \frac{4}{3}\left(\frac{4^{n-1}}{n}\tilde C^n \varepsilon_K + 3 \Delta_{\CA}\right),
\end{equation*}
and
\begin{align*}
	\tilde C \left(\frac{\varepsilon_K}{F_{\CA}(x)}\right)^{\frac1{n}}  + 3\frac{\Delta_{\CA}}{F_{\CA}(x)}
	&\le \frac34 + \frac14 - \frac1{4n}\le 1 - \frac1{4n}.
\end{align*}
If we now choose $\delta_0$ such that $\frac{2n-1}{2n-2}\frac{\delta_0}{1+\delta_0}\le \frac1{8n}$, we have that if $\delta< \delta_0$, then
\begin{align}\label{eq:1/8nbound}
	\|\zeta\|_{\infty}^2 \ge 1 - \frac{2n-1}{2n-2}\frac{\delta}{1+\delta} - \left(1-\frac1{4n}\right) \ge \frac1{8n}.
\end{align}
Note that we always have $F_{\CA}(x)\ge \|\zeta\|_{\infty}^{2n}$. To show this, define $i$ such that $\|\zeta\|_{\infty} = |\zeta_i|$. Then $F_{\CA}(x) = \|P_{\CA}(x^n)\|^2 \ge \|P_{a_i^n}(x^n)\|^2 = \zeta_i^{2n} = \|\zeta\|_{\infty}^{2n}$. Therefore, \eqref{eq:1/8nbound} implies that  $F_{\CA}(x)\ge\left(\frac1{8n}\right)^{n}$, and since $\lim_{D\to \infty} \varepsilon_K = 0$, for any $\nu\in (0,1)$, there exists $D_0$ such that if $D\ge D_0$, then
\begin{equation*}
\tilde C \left(\frac{\varepsilon_K}{F_{\CA}(x)}\right)^{\frac1{n}} \le 8n \tilde C \varepsilon_K^{\frac1{n}} \le \frac12 \nu.
\end{equation*}

Furthermore, choosing $\delta_0$ such that $\frac{2n-1}{2n-2}\frac{\delta_0}{1+\delta_0}\le \min\left\{\frac1{8n}, \frac12  \nu\right\}$, we obtain, by applying Proposition~\ref{prop:zetainf_lower_bound} again, that for all $\nu \in (0,1)$ there exists $D_0$ and $\delta_0$ such that, if $D\le D_0$ and $\delta\le \delta_0$,

\begin{equation}
\label{eq:zeta_infty_last_bound}
	\|\zeta\|_{\infty}^2 \ge 1 - \nu - 3\frac{\Delta_{\CA}}{F_{\CA}(x)}.
\end{equation}
Simply to improve the implicit constant, we can choose $\nu = 0.01$, and notice that $F_{\CA}(x)\ge 4 \Delta_{\CA}$, to get $\|\zeta\|_{\infty}^2 \ge 0.96/4$, and consequently, $F_{\CA}(x) \ge (0.96/4)^{n}$. Plugging this in \eqref{eq:zeta_infty_last_bound}, we obtain
$$\|\zeta\|_{\infty}^2 \ge 1 - \nu - 3(4/0.96)^n\Delta_{\CA}.$$
We now further choose $\Delta_0$ and a smaller $\nu$ such that, if $\Delta_{\CA}<\Delta_0$, then
\begin{align}
\nonumber
\sqrt[n]{1-C\left(0.99 - 4\Delta_{\CA}\right)^2} &\le 1-\frac{C}{n}\left(0.99 - 4\Delta_{\CA}\right)^2\\
\nonumber
&\le 1-\frac{C}{n}\left(0.98 - 7.92\Delta_{\CA}\right)\\
\label{eq:main_resultc5delta0}
&\le 1 - \nu - 3(4/0.96)^n \Delta_{\CA}\le \|\zeta\|_{\infty}^2.
\end{align}
This then implies that $x$ must lie in one of the caps defined in Proposition~\ref{prop:concavity_plus}, and the rest of the result follows from Proposition~\ref{prop:concavity_plus}. For \eqref{eq:main_resultc5delta0} to hold, it suffices to choose $\nu = \min(0.49\frac{C}{n},0.01)$ and $\Delta_0 = 0.49\frac{C}{n} / \left(3(4/0.96)^n + 7.92\frac{C}{n}\right)$.
\end{proof}

\section{Deflation bounds and proof of Theorem~\ref{thm:SPM_deflation_analysis}}
\label{sec:deflation}

Suppose we want to decompose the tensor $\hat T$, which is an estimate of the low rank tensor $T=\sum_{i=1}^K \lambda_i a_i^{\otimes m}$. We will consider the following slight modification of the SPM algorithm \cite{kileel2019subspace}.

\begin{algorithm}
	\caption{SPM with modified deflation step}
	\label{alg:modified_SPM}
	\begin{algorithmic}
		\Require $\hat T \in \CT_{D}^m$  
		\algrenewcommand\algorithmicrequire	{\textbf{Hyper-parameters:}}
		\Require $\alpha, \tau \in \bbR^+$ \quad \quad \phantom{a}
		\Ensure $(\hat a_k, \hat \lambda_k)_{k\in [K]}$
		\State $\hat M \gets \opmyreshape(\hat T, [D^{n}, D^{m-n}])$.
		\State Set $K$ as the number of singular values of $\hat M$ exceeding $\alpha$.
		\State Set $\hat M_K$ as the rank-$K$ truncation of the SVD of $\hat M$. 
		\State Let $\hat{\mathcal{A}}=\operatorname{colspan}(\hat U)$ and $\hat{\mathcal{A}}_{1} = \hat{\mathcal{A}}$
		\For{$k=1,\dots,K$}
			\State Obtain $\tilde a_k$ by applying \textsc{Power Method} \cite{kileel2019subspace} on functional $F_{\hat{ \mathcal{A}}_{k}}$ until convergence.
			\State Repeat last step with new initializations until $F_{\hat{ \mathcal{A}}_{k}}(\tilde a_k)\ge \tau$.
			\If{$k=1$}{ $\hat a_1 \gets \tilde a_1$} 
				\Else  \ Obtain $\hat a_k$ by applying \textsc{Power Method} on  $F_{\hat{ \mathcal{A}}}$ with $\tilde a_k$ as starting point until convergence.
				\EndIf
			\State \vspace{-15pt}
			\begin{align}\label{eq:lambda_formula}
				\hat \lambda_k \gets \frac{1}{\opvec(\hat a_k^{\otimes n})^\top (\hat M_K^\top)^\dagger \opvec(\hat a_k^{\otimes m-n})} \hspace{170pt}&
			\end{align}
			\If{$k<K$}
			\State $\hat{ \mathcal{A}}_{k+1} \gets \hat{ \mathcal{A}}_{k} \cap ( (\hat M_K^\top)^\dagger \opvec(\hat a_k^{\otimes m-n}))^\perp$
			\EndIf
		\EndFor
	\end{algorithmic}
\end{algorithm}

\newpage

We analyze Algorithm~\ref{alg:modified_SPM} assuming the conditions of Theorem \ref{thm:main_result_deterministic} or \ref{thm:main_result_for_now}.  The conclusion will be Theorem~\ref{thm:SPM_deflation_analysis}. We note the sign and permutation ambiguity there are inherent to the CP decomposition.

A result on the stability of matrix pseudo-inversion is needed; we include a proof for completeness.

\begin{lemma}\label{lem:pseudo-inverse-bound}
	Let $W, \hat W\in \bbR^{p\times q}$ and suppose that $W$ has exactly $r$ nonzero singular values\linebreak $\sigma_1(W) \geq \ldots \geq \sigma_r(W) > 0$ and $\Delta_W := \| W - \hat W \|_2 < \sigma_r(W)$. Define $\hat W_r$ as the SVD of $\hat W$ truncated at the $r$-th singular value. Then
	$$\|W^\dagger - \hat W_r^\dagger\|_2 \le \frac{\Delta_W}{\sigma_r(W) - \Delta_W} \left(\frac{2}{\sigma_r(W)} + \frac{1}{\sigma_r(W) - \Delta_W}\right).$$
\end{lemma}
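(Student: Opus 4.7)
The plan is to derive the bound from the algebraic identity
\begin{equation*}
W^\dagger - \hat W_r^\dagger \;=\; W^\dagger(\hat W_r - W)\hat W_r^\dagger \;+\; W^\dagger(I - \hat W_r \hat W_r^\dagger) \;-\; (I - W^\dagger W)\hat W_r^\dagger,
\end{equation*}
which one verifies by direct expansion: the four cross-terms involving $W^\dagger\hat W_r\hat W_r^\dagger$ and $W^\dagger W\hat W_r^\dagger$ cancel pairwise. Taking spectral norms and applying the triangle inequality reduces matters to estimating the three pieces above separately.

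For the first piece I would make crucial use of the fact that $\hat W - \hat W_r$ is precisely the sum of the singular components of $\hat W$ beyond the $r$-th, whose right-singular vectors are orthogonal to those of $\hat W_r$. This gives $(\hat W - \hat W_r)\hat W_r^\dagger = 0$, and hence $W^\dagger(\hat W_r - W)\hat W_r^\dagger = W^\dagger(\hat W - W)\hat W_r^\dagger$. Now $\|\hat W - W\|_2 = \Delta_W$, $\|W^\dagger\|_2 = 1/\sigma_r(W)$, and by Weyl's inequality $\sigma_r(\hat W) \ge \sigma_r(W) - \Delta_W$, so that $\|\hat W_r^\dagger\|_2 \le 1/(\sigma_r(W) - \Delta_W)$; thus this piece is bounded by $\Delta_W/[\sigma_r(W)(\sigma_r(W) - \Delta_W)]$. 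This ``tail-annihilation'' observation is the source of the factor $2$ (rather than $3$) in the claimed constant, and is the one subtle ingredient in the proof.

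For the remaining two pieces I would write the thin SVDs $W = U\Sigma V^\top$ and $\hat W_r = \hat U\hat\Sigma\hat V^\top$, so that $\hat W_r \hat W_r^\dagger$ and $W^\dagger W$ are the orthogonal projections onto $\mathrm{Range}(\hat U)$ and $\mathrm{Range}(V)$ respectively. Lemma~\ref{lem:error_in_subspace}, applied to the pair $(W, \hat W)$ and to the pair $(W^\top, \hat W^\top)$, supplies the canonical-angle bounds
\begin{equation*}
\|P_{\mathrm{Range}(U)} - P_{\mathrm{Range}(\hat U)}\|_2 \;\le\; \frac{\Delta_W}{\sigma_r(W) - \Delta_W}, \qquad \|P_{\mathrm{Range}(V)} - P_{\mathrm{Range}(\hat V)}\|_2 \;\le\; \frac{\Delta_W}{\sigma_r(W) - \Delta_W}.
\end{equation*}
Factoring $W^\dagger = V\Sigma^{-1}U^\top$ out of the second piece and $\hat W_r^\dagger = \hat V\hat\Sigma^{-1}\hat U^\top$ out of the third, these subspace bounds combine with $\|\Sigma^{-1}\|_2 = 1/\sigma_r(W)$ and $\|\hat\Sigma^{-1}\|_2 \le 1/(\sigma_r(W)-\Delta_W)$ to yield $\Delta_W/[\sigma_r(W)(\sigma_r(W)-\Delta_W)]$ and $\Delta_W/[(\sigma_r(W)-\Delta_W)^2]$, respectively. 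Summing the three contributions reproduces the claimed right-hand side exactly. Apart from the tail-annihilation step, the remainder is a routine application of Lemma~\ref{lem:error_in_subspace} and Weyl's inequality, so I do not anticipate any serious obstacle.
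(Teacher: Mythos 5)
Your proof is correct and, at bottom, is the same argument as the paper's: both reduce to a three-term Wedin-style splitting whose pieces are (i) a $W^\dagger(\hat W - W)\hat W_r^\dagger$-type product bounded via Weyl's inequality, and (ii)--(iii) two singular-subspace projector differences bounded by Lemma~\ref{lem:error_in_subspace} applied to $(W,\hat W)$ and $(W^\top,\hat W^\top)$, giving identical per-term estimates and the same final constant. The only difference is how the splitting is obtained --- you start from the classical pseudo-inverse difference identity together with the tail-annihilation fact $(\hat W-\hat W_r)\hat W_r^\dagger=0$, whereas the paper telescopes by successively inserting $\hat U_1\hat U_1^\top$ and $\hat V_1\hat V_1^\top$ --- and both routes are sound.
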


\begin{proof}
	We denote the singular value decomposition of matrices $W,\hat W \in \bbR^{p\times q}$ as
	\begin{align*}
		W= \begin{pmatrix}
			U_1 & U_2
		\end{pmatrix}
		\begin{pmatrix}
			\Sigma_1 & 0 \\
			0 & \Sigma_2
		\end{pmatrix}
		\begin{pmatrix}
			V_1^\top \\
			V_2^\top
		\end{pmatrix}
		\quad \textrm{ and }\quad
		\hat W = \begin{pmatrix}
			\hat U_1 & \hat U_2
		\end{pmatrix}
		\begin{pmatrix}
			\hat \Sigma_1 & 0 \\
			0 & \hat \Sigma_2
		\end{pmatrix}
		\begin{pmatrix}
			\hat V_1^\top \\
			\hat V_2^\top
		\end{pmatrix},
	\end{align*}
	where $\Sigma_1$ and $\hat\Sigma_1$ are diagonal matrices with the largest $r$ singular values of $W$ and $\hat W$ on the diagonal, respectively. We have $\|W^\dagger - \hat W_r^\dagger\|_2 =\N{U_1 \Sigma_1^{-1} V_1^\top - \hat U_1 \hat \Sigma_1^{-1} \hat V_1^\top}_2$. Then
	\begin{align*}
		U_1 \Sigma_1^{-1} V_1^\top - \hat U_1 \hat \Sigma_1^{-1} \hat V_1^\top
		&= (U_1 U_1^{\top} - \hat U_1 \hat U_1^{\top}) U_1 \Sigma_1^{-1} V_1^\top + \hat U_1 \hat U_1^{\top} (U_1 \Sigma_1^{-1} V_1^\top - \hat U_1 \hat \Sigma_1^{-1} \hat V_1^\top)\\[1pt]
		&= (U_1 U_1^{\top} - \hat U_1 \hat U_1^{\top}) U_1 \Sigma_1^{-1} V_1^\top + \hat U_1 \hat \Sigma_1^{-1} \hat V_1^\top ( \hat W^\top U_1 \Sigma_1^{-1} V_1^\top - \hat V_1 \hat V_1^\top),\\[0.15pt]
	\end{align*}
	where we used $ U_1 \hat U_1^{\top} \hat U_1 \hat \Sigma_1^{-1} \hat V_1^\top = \hat U_1 \hat \Sigma_1^{-1} \hat V_1^\top = \hat U_1 \hat \Sigma_1^{-1} \hat V_1^\top \hat V_1 \hat V_1^\top$ and \\[0.15pt]
	\begin{equation*}
		\hat U_1 \hat \Sigma_1^{-1} \hat V_1^\top W^\top = \hat U_1 \hat \Sigma_1^{-1} \hat V_1^\top (\hat V_1 \hat \Sigma_1 \hat U_1^\top + \hat V_2 \hat \Sigma_2 \hat U_2^\top ) = \hat U_1 \hat U_1^\top.
	\end{equation*}
	Furthermore,
	\begin{align*}
		\hat W^\top U_1 \Sigma_1^{-1} V_1^\top - \hat V_1 \hat V_1^\top
		&= \hat W^\top U_1 \Sigma_1^{-1} V_1^\top - V_1 V_1^\top + (V_1 V_1^\top - \hat V_1 \hat V_1^\top)\\
		&= (\hat W^\top - V_1 \Sigma_1 U_1^\top) U_1 \Sigma_1^{-1} V_1^\top + (V_1 V_1^\top - \hat V_1 \hat V_1^\top)\\
		&= (\hat W^\top - W^\top) U_1 \Sigma_1^{-1} V_1^\top + (V_1 V_1^\top - \hat V_1 \hat V_1^\top).
	\end{align*}
	We then have
	\begin{align*}
		\|W^\dagger - \hat W_r^\dagger\|_2
		&=\N{U_1 \Sigma_1^{-1} V_1^\top - \hat U_1 \hat \Sigma_1^{-1} \hat V_1^\top}_2\\
		&=  \big\|(U_1 U_1^\top - \hat U_1 \hat U_1^\top) U_1 \Sigma_1^{-1} V_1^\top + \hat U_1 \hat \Sigma_1^{-1} \hat V_1^\top (\hat W^\top - W^\top) U_1 \Sigma_1^{-1} V_1^\top \\
		&\hspace{200pt} + \hat U_1 \hat \Sigma_1^{-1} \hat V_1^\top(V_1 V_1^\top - \hat V_1 \hat V_1^\top)\big\|_2 \\
		&\le \mathscalebox{.965}{\N{U_1 U_1^\top - \hat U_1 \hat U_1^\top}_2 \N{U_1 \Sigma_1^{-1} V_1^\top}_2 + \N{\hat U_1 \hat \Sigma_1^{-1} \hat V_1^\top}_2 \N{\hat W^\top - W^\top}_2 \N{U_1 \Sigma_1^{-1} V_1^\top}_2} \\
		&\hspace{195pt} \mathscalebox{.965}{+ \N{\hat U_1 \hat \Sigma_1^{-1} \hat V_1^\top}_2 \N{V_1 V_1^\top - \hat V_1 \hat V_1^\top}_2} \\
		&\le \frac{\Delta_W}{\sigma_r(W) - \Delta_W} \left(\frac{2}{\sigma_r(W)} + \frac{1}{\sigma_r(W) - \Delta_W}\right).
	\end{align*}
	The last line uses Wedin's bound and $\textN{\hat U_1 \hat \Sigma_1^{-1} \hat V_1^\top}_2 = \frac1{\sigma_r(\hat W)}\le \frac1{\sigma_r(W) - \Delta_W}$, which comes from Weyl's inequality.
\end{proof}

\smallskip
\smallskip

Next we control the propagation of error on the weights and intermediate subspaces in Algorithm~\ref{alg:modified_SPM}.
\begin{proposition}\label{lem:deflation_error_bounds}
	Define $T, \hat T$ as in Algorithm \ref{alg:modified_SPM},
	with $M:=\opmyreshape(T, [D^{n}, D^{m-n}])$ and its singular values $\sigma_1(M) \geq \ldots \geq \sigma_K(M) > 0$.
	Let $\hat M := \opmyreshape(\hat T, [D^{n}, D^{m-n}])$,
	and suppose that $\Delta_M := \| M - \hat M \|_2 < \sigma_K(M)$.
	Then the error for $\hat \lambda_k$ obtained by Algorithm \ref{alg:modified_SPM} is bounded as follows:
	\begin{equation}\label{eq:lambda_err_bound}
		\SN{\frac1{\lambda_k} - \frac1{\hat \lambda_k}} \le \frac1{\sigma_K(M)}\sqrt{2m} \|a_k - \hat a_k\| +  \frac{\Delta_M}{\sigma_K(M) - \Delta_M} \left(\frac{2}{\sigma_K(M)} + \frac{1}{\sigma_K(M) - \Delta_M}\right).
	\end{equation}
	Additionally, let $\mathcal{A}_{k}=\Span{a_i^{\otimes n}, i\in \{k,\dots, K\}}$, $A_{[k-1]}$ the submatrix of $A$ with columns in $[k-1]$, $G_{d,[k-1]} = (A_{[k-1]}^{\bullet s})^{\top} (A_{[k-1]}^{\bullet k}) = (A_{[k-1]}^\top A_{[k-1]})^{\odot k}$ and $\varphi_{d,k-1} = \sqrt{\|G_{d, [k-1]}\|_2}$. Then the error of the deflated subspace is bounded by
	\begin{align}
		\nonumber \N{P_{\mathcal{A}_{k}} -  P_{\hat{\mathcal{A}}_{k}}}_2
		&\le \frac{\Delta_M}{\sigma_K(M) - \Delta_M}\left(1+  \max_{i\in [K]} |\lambda_i| \sqrt{\|G_{n}\|_2 \|G_{m-n}\|_2}  \left(\mathscalebox{.8}{\frac{2}{\sigma_K(M)}} + \mathscalebox{.8}{\frac{1}{\sigma_K(M) - \Delta_M}}\right)\right)  \\
		\label{eq:subspace_err_bound}
		&\hspace{35pt} + \max_{i\in [K]}  |\lambda_i| \sqrt{\|G_{n}\|_2} \frac{1}{\sigma_K(M) - \Delta_M} \sqrt{(m-n) \sum_{i=1}^{k-1} \|a_s - \hat a_s\|^2}.
	\end{align}
\end{proposition}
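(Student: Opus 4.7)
The two bounds \eqref{eq:lambda_err_bound} and \eqref{eq:subspace_err_bound} decouple and can be established independently. Both arguments rest on an exact algebraic identity in the noiseless case, after which the error is controlled by a telescoping perturbation expansion with factors from Lemma~\ref{lem:pseudo-inverse-bound}.

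\emph{The weight bound \eqref{eq:lambda_err_bound}.} First I would establish the noiseless identity
\[
\frac{1}{\lambda_k} \;=\; \opvec(a_k^{\otimes n})^{\top}(M^{\top})^{\dagger}\opvec(a_k^{\otimes m-n}),
\]
by writing $M = A^{\bullet n}\diag(\lambda)(A^{\bullet m-n})^{\top}$ and expanding $(M^{\top})^{\dagger} = A^{\bullet n}G_n^{-1}\diag(\lambda)^{-1}G_{m-n}^{-1}(A^{\bullet m-n})^{\top}$. The Grammian factors collapse on both sides because $(A^{\bullet n})^{\top}\opvec(a_k^{\otimes n}) = G_n e_k$ and $(A^{\bullet m-n})^{\top}\opvec(a_k^{\otimes m-n}) = G_{m-n} e_k$, leaving $e_k^{\top}\diag(\lambda)^{-1}e_k = 1/\lambda_k$. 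Writing $u_k = \opvec(a_k^{\otimes n})$, $v_k = \opvec(a_k^{\otimes m-n})$ and $\hat u_k,\hat v_k$ analogously, I then telescope
\[
\tfrac{1}{\lambda_k} - \tfrac{1}{\hat\lambda_k} \;=\; (u_k - \hat u_k)^{\top}(M^{\top})^{\dagger}v_k \,+\, \hat u_k^{\top}\bigl((M^{\top})^{\dagger}-(\hat M_K^{\top})^{\dagger}\bigr)v_k \,+\, \hat u_k^{\top}(\hat M_K^{\top})^{\dagger}(v_k - \hat v_k),
\]
bound the middle term by Lemma~\ref{lem:pseudo-inverse-bound}, and bound the two outer terms by combining $\|\opvec(a^{\otimes s})-\opvec(\hat a^{\otimes s})\|_2 \le \sqrt{s}\,\|a-\hat a\|_2$ (which follows from $1-\langle a,\hat a\rangle^s \le s(1-\langle a,\hat a\rangle)$) with $\|(M^{\top})^{\dagger}\|_2 \le 1/\sigma_K(M)$. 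Collecting terms via $\sqrt{n}+\sqrt{m-n}\le \sqrt{2m}$ produces \eqref{eq:lambda_err_bound}.

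\emph{The subspace bound \eqref{eq:subspace_err_bound}.} The noiseless identity underlying the deflation step is $(M^{\top})^{\dagger}\opvec(a_k^{\otimes m-n}) = w_k/\lambda_k$, where $w_k := A^{\bullet n}G_n^{-1}e_k$ is the biorthogonal dual of $u_k$ inside $\CA$, satisfying $\langle w_k, u_j\rangle = \delta_{jk}$. Since $w_k$ is orthogonal to every $u_j$ with $j\ne k$, projecting this direction out of $\CA_k$ returns exactly $\CA_{k+1}$. Iterating yields the explicit descriptions
\[
\CA_k \,=\, \CA \cap \opmyspan\{w_1,\dots,w_{k-1}\}^{\perp}, \qquad \hat\CA_k \,=\, \hat\CA \cap \opmyspan\{\hat z_1,\dots,\hat z_{k-1}\}^{\perp},
\]
where $\hat z_i := (\hat M_K^{\top})^{\dagger}\opvec(\hat a_i^{\otimes m-n})$. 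A Wedin-type comparison for intersections of a subspace with the orthogonal complement of a matrix of directions then separates $\|P_{\CA_k}-P_{\hat\CA_k}\|_2$ into (a) the base-subspace error $\|P_{\CA}-P_{\hat\CA}\|_2$, controlled by Lemma~\ref{lem:error_in_subspace}, and (b) the spectral norm of the column-difference matrix $[w_1/\lambda_1-\hat z_1\mid\cdots\mid w_{k-1}/\lambda_{k-1}-\hat z_{k-1}]$. Term (b) telescopes column-wise into a pseudoinverse perturbation (bounded via Lemma~\ref{lem:pseudo-inverse-bound}) and a tensor-vectorization perturbation (bounded by $\sqrt{m-n}\|a_i-\hat a_i\|_2 \cdot \|(\hat M_K^{\top})^{\dagger}\|_2 \le \sqrt{m-n}\|a_i-\hat a_i\|_2/(\sigma_K(M)-\Delta_M)$); the amplification factors $\max_i|\lambda_i|\sqrt{\|G_n\|_2\|G_{m-n}\|_2}$ and $\max_i|\lambda_i|\sqrt{\|G_n\|_2}$ arise from the estimate $\|w_i/\lambda_i\|_2 \le \max_i|\lambda_i|\sqrt{\|G_n\|_2\|G_{m-n}\|_2}/\sigma_K(M)$ and from converting between a direction-matrix perturbation and a subspace perturbation via the geometry of $A^{\bullet n}$. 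Finally, upper-bounding the spectral norm by the Frobenius norm of the column-difference matrix yields the $\sqrt{(m-n)\sum_{i<k}\|a_i-\hat a_i\|_2^2}$ factor.

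\emph{Main obstacle.} The technical heart lies in the Wedin-type lemma for iterated deflation: one must show that spectral-norm error in the \emph{matrix} of deflation directions $\hat Z = [\hat z_1|\dots|\hat z_{k-1}]$ transfers to the intersected subspace \emph{additively}, without a multiplicative blow-up in $k$. This demands a careful accounting of the conditioning of the biorthogonal family $\{w_i\}$ relative to $\sigma_K(M)$ and the Grammians $G_n, G_{m-n}$—in particular explaining why the amplification is $\max_i|\lambda_i|\sqrt{\|G_n\|_2\|G_{m-n}\|_2}$ rather than a worse combination—and it is here that the $\ell^2$ (rather than $\ell^1$) accumulation of the per-step errors $\|a_i-\hat a_i\|_2$ must be extracted.
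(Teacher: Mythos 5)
Your plan follows the same overall architecture as the paper's proof, but there are a few execution problems, one minor and one substantial.

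For the weight bound \eqref{eq:lambda_err_bound}, your clean-case identity and the idea of a three-term telescope are exactly what the paper does, but the telescope you wrote places $(\hat M_K^\top)^\dagger$ on the third term, which can only be bounded by $1/(\sigma_K(M)-\Delta_M)$, not $1/\sigma_K(M)$. To recover the stated constant you need the paper's arrangement, which keeps $(M^\top)^\dagger$ on both outer terms:
\begin{align*}
\frac1{\hat\lambda_k}-\frac1{\lambda_k}
&= \hat u_k^\top\bigl((\hat M_K^\top)^\dagger-(M^\top)^\dagger\bigr)\hat v_k
 + \hat u_k^\top(M^\top)^\dagger(\hat v_k - v_k)
 + (\hat u_k - u_k)^\top(M^\top)^\dagger v_k.
\end{align*}
This is a small fix, but as written your split does not deliver \eqref{eq:lambda_err_bound}.

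For the subspace bound \eqref{eq:subspace_err_bound}, your high-level picture---$\CA_k$ and $\hat\CA_k$ as intersections with orthogonal complements of deflation directions, split into a base-subspace piece and a directions piece---is the paper's. But the part you label the ``main obstacle'' is precisely what is missing, and two specific tools are needed there. First, the ``Wedin-type comparison'' is made rigorous by introducing orthogonal bases $U_1 O_1$, $U_1 O_2$ for $\colspan((M^\top)^\dagger A_{[k-1]}^{\bullet m-n})$ and $\CA_k$ respectively (and hatted analogues), so that $P_{\CA_k} = U_1 U_1^\top - U_1 O_1 O_1^\top U_1^\top$, and then invoking Theorem~2.5 of \cite{chen2016perturbation}:
\begin{equation*}
\N{U_1 O_1 O_1^\top U_1^\top - \hat U_1 \hat O_1 \hat O_1^\top \hat U_1^\top}_2
\le \N{(B-\hat B)\,B^\dagger}_2,
\qquad B := (M^\top)^\dagger A_{[k-1]}^{\bullet m-n}.
\end{equation*}
Second---and this is the point your sketch gets wrong---the amplification factors $\max_i|\lambda_i|\sqrt{\|G_n\|_2}$ and $\max_i|\lambda_i|\sqrt{\|G_n\|_2\|G_{m-n}\|_2}$ do not come from a bound on $\|w_i/\lambda_i\|_2$ (indeed $\|w_i/\lambda_i\|_2 = \|(M^\top)^\dagger\opvec(a_i^{\otimes m-n})\|_2 \le 1/\sigma_K(M)$, so that estimate is not the binding one). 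They come from $\|B^\dagger\|_2$, the conditioning of the direction matrix: one uses Thompson's interlacing theorem to pass from the truncated matrix to the full one, $\|B^\dagger\|_2 \le \|((M^\top)^\dagger A^{\bullet m-n})^\dagger\|_2 = \|\Lambda(A^{\bullet n})^\top\|_2 \le \max_i|\lambda_i|\sqrt{\|G_n\|_2}$, and the extra $\sqrt{\|G_{m-n}\|_2}$ factor enters through $\|A_{[k-1]}^{\bullet m-n}\|_2$ when bounding $\|B-\hat B\|_2$. Without Theorem~2.5 of \cite{chen2016perturbation} and Thompson's interlacing your argument does not close, and the reasoning you offer for where the amplification constants arise would not produce the stated bound even if it did.
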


\begin{proof}
	We start by showing that \eqref{eq:lambda_formula} holds in the clean case, that is,
	\begin{equation*}
		\lambda_k = \frac{1}{\opvec(a_k^{\otimes n})^\top (M^\top)^\dagger \opvec(a_k^{\otimes m-n})}.
	\end{equation*}
	We have
	$$M = \sum_{i=1}^K \lambda_i \opvec(a_i^{\otimes n}) \opvec(a_i^{\otimes m-n})^\top = A^{\bullet n} \Lambda (A^{\bullet m-n})^\top,$$
	where $\Lambda = \diag(\lambda_1, \dots, \lambda_K)$. Therefore, $(M^\top)^\dagger = ((A^{\bullet n})^\top)^\dagger \Lambda^{-1}(A^{\bullet m-n})^\dagger$ and
	\begin{equation}\label{eq:pseudo_inv_lambda}
		(A^{\bullet n})^\top (M^\top)^\dagger A^{\bullet m-n} = \Lambda^{-1}.
	\end{equation}
	Denoting by $e_k$ the $k$-th canonical basis vector, we have $\opvec(a_k^{\otimes n}) = (A^{\bullet n}) e_k$, thus $${\opvec(a_k^{\otimes n})^\top (M^\top)^\dagger \opvec(a_k^{\otimes m-n})} = e_k^{\top} \left((A^{\bullet n})^\top (M^\top)^\dagger A^{\bullet m-n} \right) e_k = e_k^{\top} \Lambda^{-1} e_k = \frac1{\lambda_k}.$$
	Then
	\begin{align*}
		\SN{\frac1{\hat \lambda_k} - \frac1{\lambda_k}}&=\SN{\opvec(\hat a_k^{\otimes n})^\top (\hat M_K^\top)^\dagger \opvec(\hat a_k^{\otimes m-n}) - \opvec(a_k^{\otimes n})^\top (M^\top)^\dagger \opvec(a_k^{\otimes m-n})}\\
		&\le \SN{\opvec(\hat a_k^{\otimes n})^\top ((\hat M_K^\top)^\dagger - (M^\top)^\dagger) \opvec(\hat a_k^{\otimes m-n})} \\
		&\hspace{70pt}+ \SN{\opvec(\hat a_k^{\otimes n})^\top(M^\top)^\dagger (\opvec(\hat a_k^{\otimes m-n}) - \opvec(a_k^{\otimes m-n}))}\\
		&\hspace{140pt}+ \SN{(\opvec(\hat a_k^{\otimes n})-\opvec(a_k^{\otimes n}))^\top (M^\top)^\dagger \opvec(a_k^{\otimes m-n})}
		\\
		&\le \N{\hat M_K^\dagger - M^\dagger}_2 + \|M^\dagger\|_2\left(\|a_i^{\otimes n} - \hat a_i^{\otimes n}\|_F + \|a_i^{\otimes m-n} - \hat a_i^{\otimes m-n}\|_F\right).
	\end{align*}
	The first term is bounded using Lemma \ref{lem:pseudo-inverse-bound}, while the bound for the second term follows from $\|M^\dagger\|_2 = \frac1{\sigma_k(M)}$ and
	\begin{align}
		\nonumber
		\|x^{\otimes d} - y^{\otimes d}\|^2 &= 2 - 2 (x^\top y)^d =
		2 - 2 \left(1-\frac12\|x-y\|^2\right)^d\\
		&\le 2 - 2 + d\|x-y\|^2 = d\|x-y\|^2,
		\label{eq:d_norm_bound}
	\end{align}
	where we used $(1-x)^d\ge 1 - d x$ for all $x\le 1$ and $d\ge 1$. Using AM -- GM, we obtain
	\begin{align*}
		\|a_i^{\otimes n} - \hat a_i^{\otimes n}\|_F + \|a_i^{\otimes m-n} - \hat a_i^{\otimes m-n}\|_F\le \left(\sqrt{m-n} + \sqrt{n}\right)\|a_i - \hat a_i\|\le \sqrt{2m}\|a_i - \hat a_i\|,
	\end{align*}
	and the proof of \eqref{eq:lambda_formula} is complete.
	Regarding, \eqref{eq:subspace_err_bound}, we have that
	\begin{equation*}
		\hat \CA_k = \hat{ \mathcal{A}} \cap \bigcap_{i=1}^{k-1} ( (\hat M_K^\top)^\dagger \opvec(\hat a_i^{\otimes m-n}))^\perp = \hat{ \mathcal{A}} \cap \operatorname{colspan}( (\hat M_K^\top)^\dagger \hat A_{[k-1]}^{\bullet m-n})^\perp
	\end{equation*}
	and
	\begin{equation}\label{eq:hatAk_contained}
		\hat{\mathcal{A}} = \colspan(\hat M_K) = \colspan((\hat M_K^\top)^\dagger) \supseteq \colspan((\hat M_K^\top)^\dagger \hat A_{[k-1]}^{\bullet m-n}).
	\end{equation}
	We first show that if $\hat T = T$, $\hat \CA_k = \CA_k$, that is,
	$$\CA \cap \operatorname{colspan}( (M^\top)^\dagger A_{[k-1]}^{\bullet m-n})^\perp = \Span{a_i^{\otimes n}, i\in \{k,\dots, K\}}.$$
	It is enough to show that $\dim(\hat \CA_k) = K-k+1 = \dim(\CA_k)$ and that $\CA_k \subset \hat \CA_k$.
	We have that $\dim(\hat \CA) = K$, $\dim(\colspan((\hat M_K^\top)^\dagger A_{[k-1]}^{\bullet m-n})) = k-1$, thus \eqref{eq:hatAk_contained} implies that $\dim(\hat \CA_k) = K - k +1$. To show that $\hat \CA_k \subset \CA \cap \operatorname{colspan}( (M^\top)^\dagger A_{[k-1]}^{\bullet n})^\perp$, note that,
	$$\CA_k = \Span{a_i^{\otimes n}, i\in \{k,\dots, K\}} = \colspan(A_{[k:K]}^{\bullet n}) \subset \colspan(A^{\bullet m-n}) = \CA,$$
	where $[k:K] = \{k,\dots, K\}$.
	On other hand, $\colspan(A_{[k:K]}^{\bullet n}) \subset ( (M^\top)^\dagger A_{[k-1]}^{\bullet m-n})^\perp$ is equivalent to, for all $i\in [k:K], j\in [k-1]$,
	$\opvec(a_i^{\otimes n})^\top (M^\top)^\dagger \opvec(a_k^{\otimes m-n})=0$. In fact, \eqref{eq:pseudo_inv_lambda} implies that, $\opvec(a_i^{\otimes n})^\top (M^\top)^\dagger \opvec(a_k^{\otimes m-n})=(\Lambda^{-1})_{ij} = 0$ for all  $i\in [k:K], j\in [k-1]$, as we wanted to show.

	Let $U=(U_1, U_2), \hat U=(\hat U_1, \hat U_2)$ be orthogonal matrices such that the columns of $U_1$ and $\hat U_1$ are orthonormal basis for $\CA$ and $\hat \CA$, respectively. Moreover, since $\colspan((\hat M_K^\top)^\dagger \hat A_{[k-1]}^{\bullet m-n})\subset \hat \CA$, there exist orthogonal matrices $O = (O_1, O_2)$ and $\hat O = (\hat O_1, \hat O_2)$ such that $\colspan(U_1 O_1) = \colspan((M^\top)^\dagger A_{[k-1]}^{\bullet m-n})$ and $\colspan(\hat U_1 \hat O_1) = \colspan((\hat M_K^\top)^\dagger \hat A_{[k-1]}^{\bullet m-n})$, respectively.
	We then have $\CA_k = \colspan(U_1 O_2)$ $\hat \CA_k = \colspan(\hat U_1 \hat O_2)$, thus:
	\begin{align*}
		\N{P_{\CA_k} - P_{\hat \CA_k}}_2 &= \N{U_1 O_2 O_2^\top U_1^\top - \hat U_1 \hat O_2 \hat O_2^\top \hat U_1^\top}_2\\
		&= \N{U_1 U_1^\top - \hat U_1 \hat U_1^\top - U_1 O_1 O_1^\top U_1^\top - \hat U_1 \hat O_1 \hat O_1^\top \hat U_1^\top}_2\\
		&\le \N{U_1 U_1^\top - \hat U_1 \hat U_1^\top}_2 + \N{U_1 O_1 O_1^\top U_1^\top - \hat U_1 \hat O_1 \hat O_1^\top \hat U_1^\top}_2.
	\end{align*}
	The first term is bounded by Lemma~\ref{lem:error_in_subspace}, while the second term is bounded using \cite[Theorem 2.5]{chen2016perturbation}:
	\begin{align*}
		\N{U_1 O_1 O_1^\top U_1^\top - \hat U_1 \hat O_1 \hat O_1^\top \hat U_1^\top}_2
		&\le \N{\left((M^\top)^\dagger A_{[k-1]}^{\bullet m-n} - (\hat M_K^\top)^\dagger \hat A_{[k-1]}^{\bullet m-n}\right)\left((M^\top)^\dagger A_{[k-1]}^{\bullet m-n}\right)^{\dagger}}_2\\
		&\le \N{(M^\top)^\dagger A_{[k-1]}^{\bullet m-n} - (\hat M_K^\top)^\dagger \hat A_{[k-1]}^{\bullet m-n}}_2\N{\left((M^\top)^\dagger A_{[k-1]}^{\bullet m-n}\right)^{\dagger}}_2.
	\end{align*}
	Let $\eta_1 \ge \dots \ge \eta_K$ be the singular values of $(M^\top)^\dagger A^{\bullet m-n}$, and $\nu_1\ge\dots\ge \nu_{k-1}$ the singular values of $(M^\top)^\dagger A_{[k-1]}^{\bullet m-n}$. We have $\N{\left((M^\top)^\dagger A^{\bullet m-n}\right)^{\dagger}}_2 = \frac1{\eta_K}$ and $\N{\left((M^\top)^\dagger A_{[k-1]}^{\bullet m-n}\right)^{\dagger}}_2 = \frac1{\nu_{k-1}}$. Moreover, by \cite[Theorem 1]{thompson1972principal}, we have $\nu_{k-1}\ge \eta_K$, thus $\N{\left((M^\top)^\dagger A_{[k-1]}^{\bullet m-n}\right)^{\dagger}}_2 \le \N{\left((M^\top)^\dagger A^{\bullet m-n}\right)^{\dagger}}_2$.
	Furthermore \eqref{eq:pseudo_inv_lambda} implies that $\left((M^\top)^\dagger A^{\bullet m-n}\right)^{\dagger} = \Lambda (A^{\bullet n})^{\top}$, therefore
	$$\N{\left((M^\top)^\dagger A_{[k-1]}^{\bullet m-n}\right)^{\dagger}}_2 \le \N{\Lambda (A^{\bullet n})^{\top}}_2 \le  \max_{i\in [K]}  |\lambda_i| \sqrt{\|G_{n}\|_2}.$$
	On the other hand, we have
	\begin{align*}
		\hspace{90pt}&\hspace{-90pt}\N{(M^\top)^\dagger A_{[k-1]}^{\bullet m-n} - (\hat M_K^\top)^\dagger \hat A_{[k-1]}^{\bullet m-n}}_2 \\
		&\le \N{\left((M^\top)^\dagger - (\hat M_K^\top)^\dagger\right) A_{[k-1]}^{\bullet m-n}}_2 + \N{(\hat M_K^\top)^\dagger \left(A_{[k-1]}^{\bullet m-n} - \hat A_{[k-1]}^{\bullet m-n}\right)}_2\\
		&\le \N{(M^\top)^\dagger - (\hat M_K^\top)^\dagger}_2 \N{A_{[k-1]}^{\bullet m-n}}_2 + \N{(\hat M_K^\top)^\dagger}_2 \N{A_{[k-1]}^{\bullet m-n} - \hat A_{[k-1]}^{\bullet m-n}}_2.
	\end{align*}

	The term $\N{(M^\top)^\dagger - (\hat M_K^\top)^\dagger}_2$ is bounded by Lemma~\ref{lem:pseudo-inverse-bound}, $$\N{A_{[k-1]}^{\bullet m-n}}_2 \le \N{A^{\bullet m-n}}_2 = \sqrt{\|G_{m-n}\|_2},$$ and $\N{(\hat M_K^\top)^\dagger}_2 =  \frac1{\sigma_K(\hat M_K )} \le \frac1{\sigma_K(M) - \Delta_{M}}$. Finally,
	\begin{align*}
		\N{A_{[k-1]}^{\bullet m-n} - \hat A_{[k-1]}^{\bullet m-n}}_2
		&\le \N{A_{[k-1]}^{\bullet m-n} - \hat A_{[k-1]}^{\bullet m-n}}_F
		= \sqrt{\sum_{i=1}^{k-1} \N{a_i^{\otimes m-n} - \hat a_i^{\otimes m-n}}_F^2}\\
		&\le \sqrt{(m-n)\sum_{i=1}^{k-1} \N{a_i - \hat a_i}_2^2},
	\end{align*}
	where we used \eqref{eq:d_norm_bound} in the last line.
\end{proof}

We can now prove our guarantee for end-to-end symmetric tensor decomposition using SPM.

\begin{proof}[Proof of Theorem~\ref{thm:SPM_deflation_analysis}]
	First, to show that Algorithm \ref{alg:modified_SPM} picks the correct tensor rank $K$, we use the assumptions of Theorem~\ref{thm:SPM_deflation_analysis} and Weyl's inequality to obtain
	$$\sigma_K(\hat M) \ge \sigma_K(M) - \Delta_M > \alpha > \Delta_{M} \ge \sigma_{K+1}(\hat M).$$
	Then, we show the bound on $\|\hat a_k - a_k\|$ using induction on $k$. When $k=1$, we have $\hat\CA_{1} = \hat\CA$. Then, Lemma~\ref{lem:error_in_subspace} implies that $\|P_{\hat\CA} - P_{\CA}\| \le \hat \Delta_{\CA}$. Since $\hat a_1$ was returned by the \textsc{Power Method}, it is a second order critical point of $F_{\hat\CA}$, and Algorithm \ref{alg:modified_SPM} asserts that $F_{\hat\CA}(\hat a_i) > \tau \ge \ell(\tilde \Delta_{\CA}) \ge \ell(\hat \Delta_{\CA})$. Therefore $\hat a_1$ is a second order critical point in the level set stated in Theorem \ref{thm:main_result_deterministic} / \ref{thm:main_result_for_now}, which implies that there exists $\pi(1):=i\in [K]$ and $s_1\in \{-1,1\}$ such that $\|\hat a_1- s_1 a_{\pi(1)}\|\le \sqrt{\tfrac{2\hat \Delta_{\CA}}{n}}$. If $k>1$, let $\pi([k-1])=\{\pi(i), i\in [k-1]\}$ and $\mathcal{A}_{k} = \Span{a_i^{\otimes n}, i\in [K]\backslash \pi([k-1])}$. Since the subspace deflation step in Algorithm \ref{alg:modified_SPM} does not depend on the sign of $\hat a_i$, $i\in [k-1]$, we can flip the sign in the bound provided in Proposition~\ref{lem:deflation_error_bounds}
	\begin{align*}
		\N{P_{\mathcal{A}_{k}} - P_{\hat{\mathcal{A}}_{k}}}_2
		&\le \frac{\Delta_M}{\sigma_K(M) - \Delta_M}\left(1+  \max_{i\in [K]} |\lambda_i| \sqrt{\|G_{n}\|_2 \|G_{m-n}\|_2}  \left(\mathscalebox{.81}{\frac{2}{\sigma_K(M)}} + \mathscalebox{.81}{\frac{1}{\sigma_K(M) - \Delta_M}}\right)\right)  \\
		&\hspace{50pt} + \max_{i\in [K]}  |\lambda_i| \sqrt{\|G_{n}\|_2} \frac{1}{\sigma_K(M) - \Delta_M} \sqrt{(m-n) \sum_{i=1}^{k-1} \|a_{\pi(i)} - s_i \hat a_i\|^2} \\
		&\le \hat \Delta_{\CA}\left(1+  \max_{i\in [K]} |\lambda_i| \sqrt{\|G_{n}\|_2 \|G_{m-n}\|_2}  \frac{4}{\sigma_K(M)}\right)  \\
		&\hspace{50pt} + \max_{i\in [K]}  |\lambda_i| \sqrt{\|G_{n}\|_2} \frac{2}{\sigma_K(M)} \sqrt{\frac{2(m-n)(k-1)}{n}} \sqrt{\hat \Delta_{\CA} }\\
		&\le \hat \Delta_{\CA}\left(1+  \max_{i\in [K]} |\lambda_i| \sqrt{\|G_{n}\|_2 \|G_{m-n}\|_2}  \frac{4}{\sigma_K(M)}\right)  \\
		&\hspace{50pt} + \max_{i\in [K]}  |\lambda_i| \sqrt{\|G_{n}\|_2} \frac{2}{\sigma_K(M)} \sqrt{\frac{2(m-n)K}{n}} \sqrt{\hat \Delta_{\CA} }\\
		&= \tilde \Delta_{\CA},
	\end{align*}
	where we set
	\begin{equation}
    \label{eq:defl_C1}
      C_1 = 1+  \max_{i\in [K]} |\lambda_i| \sqrt{\|G_{n}\|_2 \|G_{m-n}\|_2}  \frac{4}{\sigma_K(M)}
  \end{equation}
	and
	\begin{equation}
    \label{eq:defl_C2}
    C_2 =  \max_{i\in [K]}  |\lambda_i| \sqrt{\|G_{n}\|_2} \frac{2}{\sigma_K(M)} \sqrt{\frac{2(m-n)K}{n}}.
  \end{equation}

	Then, since $F_{\hat\CA_k}(\tilde a_k) > \tau \ge \ell(\tilde \Delta_{\CA})$, and $\tilde a_k$ is a second order critical point of $\hat\CA_k$, Theorem \ref{thm:main_result_deterministic} / \ref{thm:main_result_for_now} implies that there exists $\pi(k)~\in~[K]\backslash \pi([k~-~1]$ and $s_k \in \{-1,1\}$, such that $\|\hat a_k- s_k a_{\pi(k)}\|\le \sqrt{\tfrac{2\tilde  \Delta_{\CA}}{n}}$. In particular, it is shown in the proof of both theorems that $\hat a_k$ lies in a spherical cap centered around $s_k a_{\pi(k)}$ where $F_{\hat\CA_k}$ is concave. However, in both theorem statements, the radius of the spherical cap where concavity holds is a decreasing function of $\Delta_{\CA}$, therefore $\hat a_k$ is also in a spherical cap centered around $s_k a_{\pi(k)}$ where $F_{\hat\CA}$ is concave. Applying the \textsc{Power method} with the functional $F_{\hat\CA}$, using $\tilde a_i$ as a starting point will then converge for the local maxima in this concave region, and the bound for $\|\hat a_k - s_k a_{\pi(k)}\|$ follows.

	Finally, the error bound for $\hat \lambda_k$ follows from the bound on $\|\hat a_k-s_k a_{\pi(k)}\|$ and Proposition~\ref{lem:deflation_error_bounds}.
\end{proof}

We conclude with a bound on the error in terms of the scalar coefficients $\lambda_i$, which follows from a bound on $\sigma_K(M)$.

\begin{lemma}\label{lem:sigmak(M)_bound}
	With $\sigma_K(M)$ defined and under the same conditions of Theorem~\ref{thm:SPM_deflation_analysis}, it holds
	\begin{equation}
		\sigma_K(M) \ge \frac{\min_i |\lambda_i|} {\sqrt{\|G_n^{-1}\|_2 \|G_{m-n}^{-1}\|_2}}.
	\end{equation}
\end{lemma}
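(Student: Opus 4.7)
\textbf{Proof proposal for Lemma~\ref{lem:sigmak(M)_bound}.}
The key observation, already recorded in the proof of Proposition~\ref{lem:deflation_error_bounds}, is the factorization
\begin{equation*}
M \;=\; A^{\bullet n}\,\Lambda\,(A^{\bullet m-n})^{\top}, \qquad \Lambda = \diag(\lambda_1,\ldots,\lambda_K),
\end{equation*}
where $A^{\bullet n} \in \bbR^{D^n \times K}$ and $A^{\bullet m-n} \in \bbR^{D^{m-n}\times K}$. Under the hypotheses of Theorem~\ref{thm:SPM_deflation_analysis} (in particular $\lambda_i \neq 0$ and $G_n, G_{m-n}$ invertible), all three factors have rank $K$, and hence $M$ has rank $K$.

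The plan is to compute $\sigma_K(M)$ via the variational characterization
\begin{equation*}
\sigma_K(M)^2 \;=\; \min_{\substack{v \in \operatorname{rowspan}(M)\\ \|v\|_2 = 1}} \|Mv\|_2^2 .
\end{equation*}
Since $\operatorname{rowspan}(M) = \colspan(A^{\bullet m-n})$, every unit-norm $v$ in this subspace can be written $v = A^{\bullet m-n} w$ for a unique $w \in \bbR^K$, and the constraint becomes $w^\top G_{m-n} w = 1$. Setting $z := G_{m-n} w$ gives the equivalent problem
\begin{equation*}
\sigma_K(M)^2 \;=\; \min_{z\,:\,z^\top G_{m-n}^{-1} z = 1} \; z^\top \Lambda G_n \Lambda\, z.
\end{equation*}

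The rest is a short chain of eigenvalue estimates. From $z^\top G_{m-n}^{-1} z \le \|G_{m-n}^{-1}\|_2 \|z\|_2^2$ and the constraint, one gets $\|z\|_2^2 \ge 1/\|G_{m-n}^{-1}\|_2$. Next, $z^\top \Lambda G_n \Lambda z = (\Lambda z)^\top G_n (\Lambda z) \ge \mu_K(G_n) \|\Lambda z\|_2^2 \ge \|\Lambda z\|_2^2/\|G_n^{-1}\|_2$ by Lemma~\ref{lem:incoherence_scalars} (or directly from $\mu_K(G_n)^{-1} = \|G_n^{-1}\|_2$). Finally $\|\Lambda z\|_2^2 = \sum_i \lambda_i^2 z_i^2 \ge (\min_i |\lambda_i|)^2 \|z\|_2^2$. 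Combining,
\begin{equation*}
\sigma_K(M)^2 \;\ge\; \frac{(\min_i |\lambda_i|)^2\, \|z\|_2^2}{\|G_n^{-1}\|_2} \;\ge\; \frac{(\min_i |\lambda_i|)^2}{\|G_n^{-1}\|_2 \,\|G_{m-n}^{-1}\|_2},
\end{equation*}
and taking square roots yields the claim. There is no real obstacle here: the only care point is to restrict the variational minimization to $\operatorname{rowspan}(M)$ (so that the smallest nonzero singular value, rather than $0$, is selected) and to track the change of variables correctly so that the factors $\|G_n^{-1}\|_2$ and $\|G_{m-n}^{-1}\|_2$ enter symmetrically.
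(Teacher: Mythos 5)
Your proof is correct, and it takes a genuinely different route from the paper's. The paper first establishes the auxiliary eigenvalue inequality $\mu_n(A^\top B A) \ge \mu_n(B)\,\mu_n(A^\top A)$ via Weyl's inequality, and then applies it twice to $MM^\top = A^{\bullet n}\Lambda G_{m-n}\Lambda (A^{\bullet n})^\top$ to successively peel off the factors $\mu_K(G_{m-n})$, $(\min_i|\lambda_i|)^2$, and $\mu_K(G_n)$. You instead invoke the Courant--Fischer characterization of $\sigma_K(M)$ restricted to the row space, parameterize that space via $A^{\bullet m-n}$, and change variables $z = G_{m-n}w$ to collapse the problem into a constrained $K$-dimensional quadratic minimization, which you then bound with three elementary estimates. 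The two derivations are comparably short, but yours has the advantage of being more self-contained (no appeal to Weyl) and of making the origin of the two Grammian factors fully transparent through the change of variables; the paper's has the advantage of producing, as a by-product, a reusable eigenvalue inequality for products of the form $A^\top B A$. Both proofs rely on the same implicit facts that $\sigma_K(M) > 0$ forces $\Lambda$ to be invertible and $G_n, G_{m-n}$ to be nonsingular; you note this explicitly, which is appropriate since you need $G_{m-n}^{-1}$ for the substitution.
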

\begin{proof}
	Let $\mu_j(A)$ denote the $j$-th algorithm (in descending order) of $A$, that is $\mu_1(A) \ge \mu_2(A) \ge \cdots$. We first show that for all matrices $A_{n\times m}$ and $B_{n\times n}$ such that $m\ge n$ and $B$ is symmetric and positive definite, we have
	$$\mu_n(A^\top B A) \ge \mu_n(B) \mu_n(A^\top A)$$
	In fact, we have $A^\top B A = \mu_n(B) A^\top A + A^\top (B- \mu_n(B) I)A$, therefore by Weyl's inequality \cite{weyl1912asymptotische}: 
	$$\mu_n(A^\top B A) \ge \mu_n(B) \mu_n(A^\top A) + \mu_n(A^\top (B- \mu_n(B) I)A) \ge \mu_n(B) \mu_n(A^\top A).$$
	Applying this twice, we have
	\begin{align*}
		\sigma_K(M) &= \sigma_K\left(A^{\bullet n} \Lambda (A^{\bullet m-n})^\top\right) = \sqrt{\mu_K\left(A^{\bullet n} \Lambda (A^{\bullet m-n})^\top A^{\bullet m-n}\Lambda (A^{\bullet n})^\top \right)},\\
		&\ge \sqrt{\mu_K(G_{m-n})} \sqrt{\mu_K\left(A^{\bullet n} \Lambda^2  (A^{\bullet n})^\top \right)},\\
		&\ge \min_i |\lambda_i| \sqrt{\mu_K(G_{m-n})} \sqrt{\mu_K\left(A^{\bullet n} (A^{\bullet n})^\top \right)} = \min_i |\lambda_i| \sqrt{\sigma_K(G_n) \sigma_K(G_{m-n})}.
	\end{align*}
	Now the lemma follows from $\sigma_K(G_n) = 1/\|G_n^{-1}\|_2$.
\end{proof}

\smallskip

\begin{remark}[Small $\lambda_i$] \label{rem:small-lambda}
 Lemma \ref{lem:sigmak(M)_bound} indicates that the smaller $\min_{i} |\lambda_i|$ is, the larger is the error of the decomposition obtained by SPM. For instance, the lemma can be used to obtain the bound 
$$\hat \Delta_{\CA} \le \frac{\Delta_M \sqrt{\|G_n^{-1}\|_2 \|G_{m-n}^{-1}\|_2}}{\min_{i} |\lambda_i| - \Delta_M \sqrt{\|G_n^{-1}\|_2 \|G_{m-n}^{-1}\|_2}},$$
which suggests  a smaller $\min_i |\lambda_i|$ increases the subspace error $\Delta_{\CA}$.
This is also evident in the error arising from deflation. Substituting Lemma \ref{lem:sigmak(M)_bound} into the definition of $C_1$ and $C_2$ in Theorem~\ref{thm:SPM_deflation_analysis}, 
\begin{equation*}
	C_1 \le 1+ 4 \frac{\max_{i} |\lambda_i|}{\min_{i} |\lambda_i|}\sqrt{\kappa(G_{n}) \kappa(G_{m-n})},
\end{equation*}
and
\begin{equation*}
	C_2 \le  2  \frac{\max_{i} |\lambda_i|}{\min_{i} |\lambda_i|} \sqrt{\kappa(G_{n}) \N{G_{m-n}^{-1}}_2} \sqrt{\frac{2(m-n)K}{n}}.
\end{equation*}
Here we denoted the condition number of a matrix $A$ by $\kappa(A):= \N{A}_2 \N{A^{-1}}_2$.

Finally, the issue also shows up in the bound of the tensor coefficient error of Theorem~\ref{thm:SPM_deflation_analysis}:

$$\SN{\frac{s_k^m}{\lambda_{\pi(k)}} - \frac1{\hat \lambda_k}} \le \frac {\sqrt{\|G_n^{-1}\|_2 \|G_{m-n}^{-1}\|_2}}{\min_i |\lambda_i|}
\left(2\sqrt{m/n\hat \Delta_{\CA}}  +  4\hat \Delta_{\CA}\right).$$
\end{remark}

\end{document}